\documentclass[11pt]{amsart}
\usepackage[margin=3cm]{geometry}
\usepackage{lipsum}

\usepackage{amssymb,amsfonts, amsmath, amsthm}
\usepackage[all,arc]{xy}
\usepackage{enumerate}
\usepackage{mathrsfs}
\usepackage{tikz-cd}
\newtheorem{thm}{Theorem}[section]
\newtheorem{cor}[thm]{Corollary}
\newtheorem{prop}[thm]{Proposition}
\newtheorem{lem}[thm]{Lemma}
\newtheorem{conj}[thm]{Conjecture}

\newtheorem{prob}[thm]{Problem}

\theoremstyle{definition}
\newtheorem{defn}[thm]{Definition}
\newtheorem{defns}[thm]{Definitions}

\newtheorem{exmp}[thm]{Example}

\newtheorem{assump}[thm]{Assumption}

\newtheorem{defn/prop}[thm]{Definition/Proposition}

\theoremstyle{remark}
\newtheorem{rem}[thm]{Remark}
\newtheorem{rems}[thm]{Remarks}

\DeclareMathOperator{\E}{\mathcal{E}}
\DeclareMathOperator{\Pic}{Pic}

\DeclareMathOperator{\Hom}{Hom}

\DeclareMathOperator{\Spec}{Spec}
\DeclareMathOperator{\Ad}{Ad}

\DeclareMathOperator{\bO}{\overline{\mathcal{O}}}
\DeclareMathOperator{\O'}{\mathcal{O}}
\DeclareMathOperator{\S'}{\mathcal{S}}
\DeclareMathOperator{\A'}{\mathbb{A}}
\DeclareMathOperator{\bA'}{\bar{\mathbb{A}}}

\DeclareMathOperator{\N}{\mathcal{N}}
\DeclareMathOperator{\bC}{\overline{C}}
\DeclareMathOperator{\bun}{Bun}

\DeclareMathOperator{\g}{\mathfrak{g}}
\DeclareMathOperator{\omegac}{\omega_{\overline{C}}}

\DeclareMathOperator{\tS}{\tilde{\mathcal{S}}}
\DeclareMathOperator{\ind}{Ind}
\DeclareMathOperator{\M}{\mathcal{M}}
\DeclareMathOperator{\diag}{diag}
\DeclareMathOperator{\Hk}{\mathcal{H}_K}
\DeclareMathOperator{\ad}{ad}
\DeclareMathOperator{\inv}{^{-1}}
\newcommand{\str}{\mathrm{Strng}}
\newcommand{\bF}{\overline{F}}
\DeclareMathOperator{\rk}{rk} 
\newcommand{\PGL}{\mathrm{PGL}}
\newcommand{\St}{\mathrm{St}}
\newcommand{\GL}{\mathrm{GL}}
\newcommand{\Na}{N_\mathbb{A}}
\newcommand{\Nf}{N_F}

\makeatletter
\let\c@equation\c@thm
\makeatother
\numberwithin{equation}{section}

\title{Automorphic functions for square-zero extensions of curves over finite fields}
\author{Ka Fai Wong}

\date{}

\begin{document}
\begin{abstract}
We study automorphic functions for square-zero extensions $C$ of curves $\overline{C}$ over finite fields, a study initiated by Braverman-Kazhdan-Polishchuk in [BKP23]. More precisely, we study the cuspidality and Hecke-finiteness of the functions in the orbit decomposition introduced in loc. cit. for split connected reductive groups $G$, generalizing some of the results for $G=\mathrm{PGL}_2$. As a result, for $G=\mathrm{PGL}_3$, we prove a new case of a conjecture in [BK23] concerning the finite-dimensionality of the space of unramified Hecke-finite functions. We also introduce a formulation of support bounds for spherical cuspidal and Hecke-finite functions in terms of the Harder-Narasimhan stratification of $G$-bundles on the reduced curve. Using representation-theoretic constructions together with their geometric interpretations in terms of $G$-bundles on $C$ and certain twisted $G$-Higgs bundles on $\overline{C}$, we compute the optimal bounds in several cases and, in particular, determine the optimal bound for $G=\mathrm{PGL}_3$.
\end{abstract}

\maketitle
\section{Introduction}
\subsection{Motivation}
\subsubsection{The classical Langlands program over global function fields and the geometric Langlands}

Let $C$ be a smooth projective curve over a finite field $k$, and let $G$ be a connected split reductive group. The classical (unramified) Langlands program aims to understand the Hecke module $\S'_{cusp}(\bun_G(C))$ of cuspidal functions on the groupoid of $G$-bundles on $C$, i.e. the space of unramified cuspidal automorphic functions, in terms of Galois-theoretic data. Roughly speaking, it seeks to describe the decomposition of $\S'_{cusp}(\bun_G(C))$ into Hecke eigenspaces in terms of conjugacy classes of irreducible homomorphisms $\pi_1(C)\to G^\vee(\overline{\mathbb{Q}}_\ell)$.

This correspondence admits a geometric reformulation in which automorphic functions are replaced by sheaves on $\bun_G(C)$. In this setting, Hecke eigenfunctions are replaced by Hecke eigensheaves on the automorphic side, while the homomorphisms $\pi_1(C)\to G^\vee(\overline{\mathbb{Q}}_\ell)$ are replaced by irreducible $G^\vee$-local systems on $C$.

This geometric reformulation not only provides powerful geometric tools for studying automorphic forms over global function fields and the Langlands correspondence, but also leads naturally to a new direction obtained by replacing the finite field $k$ with $\mathbb{C}$, namely the geometric Langlands program.

\subsubsection{The analytic Langlands program}
Let $C$ be a smooth projective curve over a local field $F$, for example, $F=\mathbb{C} \text{ or  } \mathbb{Q}_p$. Motivated by a question of Langlands, Etingof, Frenkel and Kazhdan proposed several approaches to an analytic theory of automorphic objects on $C$, in which one studies suitable spaces of ``automorphic functions'' rather than automorphic sheaves (see \cite{EFK1}, \cite{EFK2} and \cite{EFK3}).

When $F$ is a nonarchimedean local field with ring of integers $\O'$, and $G$ is a reductive group over $\mathbb{Z}$, one possible approach to the automorphic side of the analytic Langlands program concerns the space $\S':=\S'(\bun_G(C), |\omega|^{1/2})$ of half-densities on the groupoid of $G$-bundles on $C$.  This approach, which we will briefly describe below, shares some similarities with the theory of automorphic functions for curves over finite fields, while also exhibiting several novel features (see \cite{BK2} for more details).

When $C$ admits a smooth model $C_{\O'}$ over $\O'$, one considers the subgroupoid $\bun_G^{\O'}(C) \subset \bun_G(C)$ of $G$-bundles that can be extended to $C_{\O'}$. Under some mild conditions on $G$, it is known that $\bun_G^{\O'}(C)$ coincides with the subgroupoid of the generically trivial $G$-bundles, and that the half-densities supported on  $\bun_G^{\O'}(C)$ arise from smooth functions on the groupoid $\bun_G(C_{\O'})$ of $G$-bundles on $C_{\O'}$ (see \cite{BKPW}).

Let $\mathbf{m}$ be the maximal ideal of $\O'$. For 
$$C_n:= C_{\O'}\times_{\Spec \O'} \Spec(\O'/\mathbf{m}^n),$$ there is a natural map
$$ \iota_n : \S'(\bun_G(C_n)) \to \S'(\bun_G(C_{\O'})),$$
where $\S'(\bun_G(C_n))$ (resp. $\S'(\bun_G(C_{\O'}))$) denote the space of smooth functions on $\bun_G(C_n)$ (resp. $\bun_G(C_{\O'})$). Furthermore, both spaces are equipped with actions of Hecke algebras and $\iota_n$ is compatible with the Hecke actions. 

Therefore, it is natural to study the Hecke modules $\S'(\bun_G(C_n))$ for $n\geq 1$. When $n=1$, this is the classical study of automorphic functions for curves over finite fields. When $n=2$ i.e. $C_2$ is a square-zero extension of a curve over a finite field, a detailed study was initiated in \cite{BKP}, and the theory for $G=\PGL_2$ is well understood. The purpose of this paper is to extend the theory to split connected reductive groups.

\subsection{Main conjectures and results}
Let $\overline{C}$ be a smooth projective curve of genus $g$ over a finite field $k$, and let $C$ be a square-zero extension of $\bar{C}$. We denote the function field of $\overline{C}$ (resp. the stalk of $\mathcal{O}_C$ at the generic point of $C$) by $\bF$ (resp. $F$), its ring of adeles by $\bA'$ (resp. $\A'$) and its ring of integral adeles by $\bO$ (resp. $\O'$). Let $\N \subset \mathcal{O}_C$ be the nilradical. Then we say that $C$ is a special nilpotent extension of $\overline{C}$ if $\N$ is a line bundle of degree zero on $\overline{C}$. In the present paper, we always assume that our square-zero extension is special.

Let $G$ be a connected split reductive group over $k$. The automorphic theory is concerned with the space $\S'(G(F) \backslash G(\A'))$ of automorphic functions, namely, the space of locally constant functions on $G(F) \backslash G(\A')$ with compact support, and its subspace of right $G(\O')$-invariant functions, i.e. the unramified functions.

Fix a Borel subgroup $B$ of $G$. Let $P$ be a standard parabolic subgroup of $G$ with  unipotent radical $U_P$ and Levi quotient $M_P$.  The \textit{constant-term operator}  $$CT_P: \S'(G(F)\backslash G(\A')) \to \mathbb{C}(M_P(F)U_P(\A')\backslash G(\A'))$$ is defined as
$$CT_P(f)(g)= \int_{u\in U_P(F) \backslash U_P(\A')} f(ug) du. $$
\begin{defn}
    A function $f$ is cuspidal if $CT_P(f)=0$ for all proper standard parabolic subgroups $P$. 
\end{defn}

Another notion closely related to cuspidality (indeed conjecturally equivalent) is Hecke-finiteness. For every compact open subgroup $K \subset G(\A')$, we have the global Hecke algebra $\Hk := \mathbb{C}_c(K\backslash G(\A')/ K)$ acting on the subspace $\S'(G(F)\backslash G(\A'))^K$ of $K$-invariants (though, unlike the classical theory, $\Hk$ is not commutative).  

\begin{defn}
    A function $f \in \S'(G(F)\backslash G(\A'))^K$ is $\Hk$-finite if the Hecke orbit $\Hk \cdot f $ is finite-dimensional. Given a smooth $G(\A')$-subrepresentation $V \subset \S'(G(F)\backslash G(\A'))$, we say that $V$ is Hecke-finite, if for every compact open subgroup $K\subset G(\A')$, every element in the subspace $V^K$ is $\Hk$-finite.
\end{defn}

\subsubsection{Braverman-Kazhdan-Polishchuk Conjectures}
In \cite{BK2} and \cite{BKP}, the authors proposed the following conjectures and proved them for the case $G=\PGL_2$.
\begin{conj}\label{conj1}\footnote{The second part of the conjecture is stated in \cite{BK2}; in \cite{BKP} it is stated for cuspidal functions instead of Hecke-finite ones. }
Let $G$ be a connected split reductive group over $\mathbb{Z}$.
\begin{enumerate}
    \item An unramified function $f$ is cuspidal if and only if it is Hecke-finite.
    \item The space $$\S'_{HF}(G(F) \backslash G(\A')/G(\O'))$$ of unramified Hecke-finite functions  is finite-dimensional if $G$ is semisimple.

\end{enumerate}
    
\end{conj}

\subsubsection{A formulation on cuspidal support and the Harder-Narasimhan stratification}
In the classical theory of automorphic functions on $\bun_G(\overline{C})$, one has a bound for the total support of the cuspidal functions , i.e., the union of the supports of all cuspidal functions, in terms of Harder-Narasimhan stratification of $\bun_G(\overline{C})$. When $G=\GL_n$, an easy way to formulate such a bound is to require the Harder-Narasimhan slope sequence $(\mu_1,...,\mu_n)$ of a rank-n vector bundle satisfy $\mu_i-\mu_{i+1}\leq 2g-2$ for every $i$. When $G$ is an arbitrary connected split reductive group, the formulation is similar using simple roots, but is more subtle and related to a notion of strangeness by Drinfeld-Gaitsgory (see Section \ref{drinfeld-gaitsgory} and  \cite[10.3]{DG2} for details).

We now propose a formulation of cuspidal/Hecke-finite support bounds in the setting of square-zero extensions. We follow \cite{Beh}, \cite{DG2} and \cite{Sc} for our convention of Harder-Narasimhan stratification regarding $\bun_G(\overline{C})$, although for the purpose of this paper we are only concerned with the $k$-points. Let $\Lambda$ be the coweight lattice of $G$, $\Lambda_\mathbb{Q}:= \Lambda \otimes \mathbb{Q}$, and $\Lambda^+_\mathbb{Q} \subset \Lambda_\mathbb{Q} $ be the set of dominant coweights. The Harder-Narasimhan stratification for $\bun_G(\overline{C})$ is parametrized by $\lambda \in \Lambda^+_\mathbb{Q} $. Let $m\in \mathbb{Q}$ and $\Sigma_m$ be the set of $\lambda \in \Lambda^+_\mathbb{Q} $ such that its image $\lambda'$ in the space $\Lambda_\mathbb{Q}^{ad}$ of coweights of the adjoint group of $G$  satisfies $\langle \lambda', \alpha_h \rangle \leq (2g-2)m$ for the highest root $\alpha_h$ of each simple factor of $G^{ad}$ . Let 
$$ \mathfrak{U}_m= \bigcup_{\lambda \in \Sigma_m} \bun_G^\lambda(\overline{C}). $$

Let $n$ be the semisimple rank of $G$,  $c_G$ the Coxeter number\footnote{For a connected reductive group $G$, the Coxeter number of $G$ is the maximum of the Coxeter numbers of the simple factors of $G^{ad}$.} of $G$ and  $r:\bun_G(C) \to \bun_G(\overline{C})$ the reduction map.  Let $S_c$ (resp. $S_h$) be the union of the supports of all cuspidal (resp. Hecke-finite) functions in $\S'(\bun_G(\overline{C}))$. We are interested in the following problem:

\begin{prob}\label{HN problem}
    Find the smallest constant $b_h=b_h(C,G) \in \mathbb{Q}$ (resp. $b_c=b_c(C,G) \in \mathbb{Q}$ ) such that $S_h$ (resp. $S_c$) is contained in $r^{-1}(\mathfrak{U}_{b_h})$ (resp. $r^{-1}(\mathfrak{U}_{b_c}))$.
\end{prob}

\begin{conj}\label{conj2}Let $G$ be a split connected reductive group.  
\begin{enumerate}
    \item $S_c$ (resp. $S_h$) is contained in $r^{-1}(\mathfrak{U}_c)$, where $c$ is a constant depending only on the Coxeter number $c_G$ of $G$, the genus $g$ of $\overline{C}$, and the strangeness \footnote{See Section \ref{drinfeld-gaitsgory} for the definition.} of all simple roots of $G$.
    \item When $G$ is a semisimple group whose simple factors are all of type $A$,  $b_h(C,G)$ (and $b_c(C,G)$) grows at most quadratically\footnote{The growth may in fact be linear.} with the rank of $G$.
\end{enumerate}
 
\end{conj}

\begin{rems} \label{hn formulation rem} 
In contrast to the use of simple roots in formulating the Harder-Narasimhan bound for unramified cuspidal functions in the classical setting, it is essential for us to formulate the bound using the highest roots to capture the optimal bounds. Another notable feature of the square-zero setting is that the optimal bound can often be determined exactly (see Section 1.5 for a more detailed discussion).

\end{rems}

\subsubsection{Main results}
The main result of the paper is the following:
\begin{thm}\label{main theorem}
     Let $G=\PGL_3$, and assume that  $\text{char}(k)>3$.
     \begin{enumerate}
         \item Conjecture \ref{conj1}(2) holds.
         \item  If $f \in \S'(G(F) \backslash G(\A')/G(\O')) $ is Hecke-finite,  then it is cuspidal.
         \item $S_h$ is contained in $r^{-1}(\mathfrak{U_3})$.
     \end{enumerate}
     
\end{thm}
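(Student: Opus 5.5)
The plan follows the orbit decomposition of Braverman--Kazhdan--Polishchuk. Write $G(\A') = G(\bA')\ltimes \mathfrak{g}(\N_{\bA'})$, where $\N_{\bA'}$ denotes the adelic points of the nilradical, using a splitting of $C\to\bC$ if necessary. Take Fourier transform in the abelian normal subgroup $\mathfrak{g}\otimes \N(\bA')$ modulo $\mathfrak{g}\otimes\N(\bF)$. An unramified function on $G(F)\backslash G(\A')/G(\O')$ then decomposes as $f=\sum_{\mathcal{O}} f_{\mathcal{O}}$. The sum runs over $G(\bF)$-orbits $\mathcal{O}$ of "Higgs fields" $\phi\in \mathfrak{g}^*\otimes \omega_{\bC}\otimes\N^{-1}(\bF)$, and each $f_{\mathcal{O}}$ lives on twisted $G$-Higgs bundles $(E,\phi)$ with $\phi\in H^0(\bC,\mathfrak{g}^*_E\otimes\omegac\otimes\N^{-1})$. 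I would then treat the orbits by type of the generic element $\phi$: regular semisimple (elliptic or not), regular nilpotent or mixed, and the remaining non-regular ones, including $\phi=0$. For $\PGL_3$ this is a short list, since $\mathfrak{sl}_3$ has few Jordan types and $\mathrm{char}(k)>3$ lets us identify $\mathfrak g\simeq\mathfrak g^*$ and use Jordan decomposition.

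\textbf{Step 1 (Hecke-finite $\Rightarrow$ cuspidal, part (2)).} The constant term $CT_P$ picks out exactly the components $f_{\mathcal{O}}$ for which $\phi$ can be conjugated into $(\mathfrak{u}_P)^\perp=\mathfrak{p}^*$. Equivalently, cuspidality means that $f_{\mathcal O}$ vanishes unless $\phi$ is "$\bF$-elliptic", meaning it lies in no proper rational parabolic. So I must show that Hecke-finiteness kills every non-elliptic component. Hecke operators at a place $x$ commute with the decomposition, because the Hecke action preserves the $G(\bF)$-orbit of $\phi$. On a non-elliptic component, $\phi$ lies in $\mathfrak{p}^*$ for some $P$ and is stabilized by a non-compact subgroup. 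The strategy is then as follows:
\begin{itemize}
\item Use the characters of this subgroup, or the grading by degree of the $P$-reduction, to produce Hecke operators $T_x$ that act on $f_{\mathcal O}$ by shifting the support along unbounded directions.
\item From this, a finite-dimensional Hecke orbit forces $f_{\mathcal{O}}=0$.
\end{itemize}
For $\PGL_2$ this is the argument of BKP. For $\PGL_3$ I would run it separately for the Borel and the two maximal parabolics, and for each nilpotent or partially split Jordan type. The case $\phi=0$ reduces to the classical statement on $\bun_G(\bC)$, where Hecke-finite functions are cuspidal.

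\textbf{Step 2 (support bound, part (3)).} For a cuspidal $f$, each surviving $f_{\mathcal O}$ has elliptic generic $\phi$. By the twisted spectral/Hitchin correspondence, the pair $(E,\phi)$ is then semistable in a strong sense: no $\phi$-invariant parabolic reduction exists. A Harder--Narasimhan type $\lambda$ of $E$ with $\langle\lambda',\alpha_h\rangle$ large would make the component of $\phi$ in the most negative root spaces vanish, because $\omegac\otimes\N^{-1}$ has degree $2g-2$. Then $\phi$ would preserve a canonical parabolic. Iterating this over the two steps of the HN filtration of a rank 3 bundle gives $\mu_1-\mu_3\le 3(2g-2)$, i.e. $E\in\mathfrak U_3$. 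The $\phi=0$ component is handled by the classical bound $\mu_i-\mu_{i+1}\le 2g-2$, which is stronger.

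\textbf{Step 3 (finite-dimensionality, part (1)).} After Steps 1--2 the space of Hecke-finite functions is a subspace of functions supported on $r^{-1}(\mathfrak U_3)$. Since $\mathfrak U_3$ has finitely many $k$-points up to isomorphism for $\PGL_3$, and each fiber of $r$ on isomorphism classes is finite (a quotient of $H^1(\bC,\mathfrak g_E\otimes\N)$), the space is finite-dimensional.

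The main obstacle is Step 1 for non-regular, non-elliptic $\phi$, for example $\phi$ regular semisimple but split over a quadratic extension with a $\GL_2\times\GL_1$ centralizer, or mixed Jordan type. There the shifting Hecke operators are not obviously available, and I would first try an explicit computation with the $\GL_2$-Levi, reducing to the known $\PGL_2$ result.
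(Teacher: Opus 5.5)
\textbf{Step 1 rests on a false equivalence.} The condition ``$f_{\mathcal O}=0$ unless $\phi$ lies in no proper rational parabolic'' characterizes \emph{strong} cuspidality. Strong cuspidality means vanishing of the integrals over $U_P(\N F)\backslash U_P(\N\mathbb A)$ alone (Proposition~\ref{str cusp = are}). The ordinary constant term also integrates over $U_P(\bF)\backslash U_P(\overline{\mathbb A})$, and this further integration can kill components with $\phi\in\mathfrak p$. Your own $\phi=0$ case already contradicts the claim, since classical cusp forms live there.

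The serious casualty is the nilpotent orbits. For $\PGL_3$, the regular nilpotent and subregular nilpotent summands contain Hecke-finite cuspidal spherical functions; these are in general nonzero, and they are what make the bound in (3) sharp. So ``Hecke-finiteness kills every non-elliptic component'' is false. There is also no split torus to shift with: the regular nilpotent stabilizer is unipotent, and $\St_\eta(\bF)\backslash\St_\eta(\overline{\mathbb A})$ is compact.

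The missing idea is a finer decomposition of each nilpotent summand by characters of a compact abelian piece of the stabilizer. For the regular orbit this is the highest root group $U_h$. For the subregular orbit it is the center, and then $(\chi_x,\chi_y)$ on $H_x\times H_y$. With this decomposition you then:
\begin{enumerate}
\item show the trivial-character pieces are Hecke-infinite, by producing infinitely many double cosets with the same invariants modulo the subgroup acting trivially;
\item bound conductors to get finite-dimensionality of the nontrivial-character pieces;
\item prove those pieces cuspidal, by checking that whenever $\Ad(\gamma^{-1})\eta\in\mathfrak p$ the group $U_h$ (resp.\ $H_x$ or $H_y$) lies in $\gamma U_P\gamma^{-1}$, so the nontrivial character kills the constant term.
\end{enumerate}
Conversely, what you call the main obstacle is the easy case. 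A non-elliptic semisimple or mixed $\phi$ in $\mathfrak{pgl}_3$ has a split torus, central in its stabilizer, with nonzero degree. This gives Hecke-infiniteness directly (Lemma~\ref{gmquot}, Proposition~\ref{mixed}). Since elliptic elements of $\mathfrak{pgl}_3$ are regular, no mixed orbit has elliptic semisimple part.

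\textbf{Step 2 inherits the same gap.} A nilpotent Higgs field always preserves a flag (its kernel filtration), so your semistability argument says nothing about supports in the nilpotent summands. Yet those summands determine the answer:
\begin{enumerate}
\item the zero and elliptic orbits give $\mu_i-\mu_{i+1}\le 2g-2$, hence only $\mathfrak U_2$ (your iteration to $3(2g-2)$ is unjustified, though weaker and harmless);
\item the regular nilpotent orbit also lands in $\mathfrak U_2$;
\item the subregular orbit is what forces $\mathfrak U_3$.
\end{enumerate}
The paper gets the nilpotent bounds by writing $r(E)$ as an iterated extension of the line bundles attached to the torus part of a $B$-reduction. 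It bounds their degree differences using the integrality of $\Ad(g^{-1})\eta$ together with the conductor of the nontrivial character, then passes to the Harder--Narasimhan type via Lemma~\ref{ext gap implies HN gap}.

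Your Step 3 (finite-dimensionality from the support bound plus finiteness of isomorphism classes in $r^{-1}(\mathfrak U_3)$) is a legitimate alternative to the paper's orbit-by-orbit count. It is only as good as a support bound that genuinely covers the Hecke-finite functions in the nilpotent summands.

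A minor point: a global splitting of $C\to\bC$ need not exist, and none is needed. The orbit decomposition uses only the extension $1\to\mathfrak g\otimes\N(\overline{\mathbb A})\to G(\mathbb A)\to G(\overline{\mathbb A})\to 1$.
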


Our approach to proving Theorem \ref{main theorem} (1)-(2) is as follows. For more general groups $G$, using the machinery of the orbit decomposition introduced in \cite{BKP} and a more general version of that which we call \textit{global Mackey theory} (this is formulated and discussed in Section 3), we are able to decompose the automorphic representation $\S'(G(F) \backslash G(\A'))$ into subrepresentations for which we can directly show that the subspaces of spherical vectors for some components are either Hecke-finite, cuspidal and finite-dimensional (and there are finitely many such components), or Hecke-infinite. Specializing these results to $G=\PGL_3$ yields (1) and (2). The proof of (3) relies on a geometric interpretation of our representation-theoretic construction, which allows us to compute the support of spherical functions in terms of functions on Hitchin fibers. Using the Fourier transform of \cite{KP}, which relates functions on Hitchin fibers to functions on $\bun_G(C)$, we can compute the reduced image of the support in $\bun_G(\overline{C})$.

\subsection{Orbit decomposition}
One important feature of the automorphic theory in the setting of square-zero extensions is that the automorphic representation admits an orbit decomposition. While the discussion of the general machinery of global Mackey theory is deferred to Section \ref{orbitdecomp}, here we briefly explain how one obtains an orbit decomposition for $ \S'(G(F) \backslash G(\A'))$.

Given the square-zero extension introduced in the previous subsection, we have the following short exact sequence:
$$1 \to  \g\otimes \N(\bA') \to G(\A') \to G(\bA') \to 1, $$

The coadjoint action of $G(\bF)$ on $\mathfrak{g}^\vee \otimes \N \inv \omega_{\bC} (\bF)$ decomposes the space of automorphic functions $\S'=\S'(G(F) \backslash G(\A'))$ into $\bigoplus_\Omega \S'_\Omega$, where $\Omega$ ranges over all coadjoint orbits. We will also identify the coadjoint orbits with the adjoint orbits by identifying $\mathfrak{g}$ with $\mathfrak{g}^\vee$ via an $\Ad(G)$-invariant nondegenerate bilinear form.

In \cite{BKP}, the authors study the orbit decomposition for $G=\PGL_2$ in full detail. For general reductive groups $G$, they also prove that the zero summand corresponds to the classical automorphic functions on $\bar{C}$,  and the regular semisimple summands that are not elliptic contain no admissible subrepresentations, i.e., contains no finitary function. Our goal is to study the orbit decomposition for general reductive groups $G$, and obtain the cuspidality and/or Hecke-finiteness for each orbit.

\subsection{Summary of the results on the orbit decomposition}
Here is a summary of our results on cuspidality and Hecke-finiteness/infiniteness for various orbits:

\begin{enumerate}
    \item For any connected split reductive group $G$, the direct sum $\bigoplus_\Omega \S'_\Omega$, where $\Omega$ runs through the \textit{almost regular elliptic} orbits\footnote{See Definition \ref{are def} for the definition of almost regular elliptic element. Any regular elliptic element is almost regular elliptic, and the converse is true if $G=\GL_n$.}, is precisely the space of strongly cuspidal functions. If $G$ is semisimple, then the space of spherical vectors is finite-dimensional.

    \item Let $\eta= \eta_{ss}+\eta_n$ be the Jordan-Chevalley decomposition of $\eta$. If $\eta_{ss}$ is a nonzero semisimple element that is not elliptic, then all functions in $\S'_{\Omega_\eta}$ are Hecke-infinite.

    \item When $G$ is semisimple of adjoint type, the regular nilpotent summand further admits a decomposition indexed by characters $\chi=(\chi_1,...,\chi_r)$ of a compact group $(\bA'/\bF)^r$, where $r$ is the number of simple factors of $G$.  If all $\chi_i$'s are nontrivial, then all the functions in that component are cuspidal, and the subspace of spherical functions is finite-dimensional. If we further assume that all simple factors of $G$ are of types $A$, $B$, $C$, or $G_2$, then all nonzero functions in the components corresponding to $\chi$, where at least one of $\chi_i$'s is trivial, are Hecke-infinite.

    \item Let $G=\PGL_n$, where $n \geq 3$. The subregular nilpotent orbit is related to the automorphic representation of an extension of $\mathbb{G}_m$ by a Heisenberg group. We decompose this subrepresentation $\S'_\Omega$ explicitly into a Hecke-finite component and a Hecke-infinite component, and show that the spherical Hecke-finite subspace is finite-dimensional. For one component in the Hecke-finite part, we also show its cuspidality. When $n=3$, the entire Hecke-finite component is cuspidal.

\end{enumerate}
\begin{rem}
    The proofs for the Hecke-finiteness and Hecke-infiniteness in the regular and subregular nilpotent orbits can be viewed as a generalization of the results of Kazhdan-Yom Din in \cite{KY} on the nilpotent orbit in $\mathfrak{pgl}_2$.
\end{rem}

\begin{rem}
    By the above results, the 
study of orbit decompositions for connected split reductive groups beyond \(\PGL_3\) 
reduces to understanding elliptic orbits that are not almost regular elliptic, mixed orbits with nonregular elliptic semisimple part, and nilpotent orbits beyond the regular and subregular orbits.
\end{rem}

\subsection{Summary of the results on the support bounds for Hecke-finite functions in various orbits} Passing to the spherical vectors, we have a decomposition $ \S'(\bun_G(C)) = \bigoplus_\Omega \S'_\Omega^{G(\O')}$ of Hecke modules. Let $S_{h, \Omega}$ be the union of the supports of all Hecke-finite functions in $ \S'_\Omega^{G(\O')}$. Here is a summary of our results on the bounds of $S_{h, \Omega}$ for various orbits:
\begin{enumerate}
    \item When $G$ is any connected split reductive group and $\Omega$ is the zero orbit or an almost regular elliptic orbit, the classical bound for $\S'_{cusp}(\bun_G(\overline{C}))$ gives a bound for $r(S_{h, \Omega})$. In particular, for $G=\PGL_n$, we have $r(S_{h, \Omega}) \subset \bigcup_\lambda \bun_G(\overline{C})$ where $\lambda$ ranges over the dominant coweights with $\langle \lambda, \alpha \rangle \leq 2g-2$ for all simple roots $\alpha$, which implies that $\langle \lambda, \alpha_h \rangle \leq (n-1)(2g-2)$.

    \item When $G$ is semisimple of adjoint type and all simple factors of $G$ are of types $A$, $B$, $C$ or $G_2$, and $\Omega$ is the regular nilpotent orbit, then $r(S_{h, \Omega}) \subset \mathfrak{U}_{c-1}$, where $c$ is the Coxeter number of $G$.

    \item When $G=\PGL_n$ and $\Omega$ is the subregular nilpotent orbit, then $r(S_{h, \Omega}) \subset \mathfrak{U}_{2n-3}$.

    \item When $G=\PGL_{2n}$, we obtain a class of spherical Hecke-finite functions in the nilpotent orbit of type $(2,2,..,2)$, and show that the support is contained in $r^{-1}(\mathfrak{U}_{3n+3})$.

    \item Combining (1)-(3), for $G=\PGL_3$, we obtain that $S_h \subset r^{-1}(\mathfrak{U}_3)$.
    
\end{enumerate}

\begin{rems} We have a few important remarks.

\begin{enumerate}
  \item For each $G(\bF)$-orbit $\Omega \subset \mathfrak{g} \otimes \N \inv \omega (\bF)$, we can define $b_\Omega$ to be the smallest number $l$ such that $S_{h,\Omega} \subset r^{-1}(\mathfrak{U}_l)$. Then we have $b_h(C,G)= \sup_\Omega b_\Omega $.

    \item In the classical theory, for example when $G=\PGL_n$, it is known that  $b_h(\overline{C},G)=b_c(\overline{C},G)\leq n-1$ but the exact value is not known. But in the square-zero extension, when $\Omega$ is a regular (resp. subregular) nilpotent orbit, our representation-theoretic construction actually shows that the above bound $n-1$ (resp. $2n-3$) is sharp. We can use this to deduce that $b_h(C, \PGL_3)=3$.

    \item The invariant $b_h(C,G)$ also appears to be more accessible than its conjecturally equivalent counterpart $b_c(C,G)$ (Conjecture \ref{conj1}(1)). The reason is that our representation-theoretic description of the individual orbit contributions gives considerably finer control over the support of Hecke-finite functions than is presently available for cuspidal functions. Finding a conceptual geometric framework for understanding $b_c(C,G)$, even for $G=\PGL_2$, would be an interesting direction for future work.
  
\end{enumerate}

\end{rems}

\subsection{Organization of the paper}
In Section \ref{basictheory}, we fix the notation and recall the necessary definitions and concepts in the context of the automorphic theory for square-zero extensions. We prove an analog of the Iwasawa decomposition for $G=\GL_n$ over square-zero extensions, generalizing the case $G=GL_2$ in \cite{BKP}, and discuss the Eisenstein operators as adjoints to the constant-term operators. 

In Section \ref{secorbitdecomp}, we formulate a global Mackey theory generalizing the representation-theoretic construction underlying the orbit decomposition in \cite{BKP}. Although the relevant results in loc. cit. are formulated and applied to the particular short exact sequences arising from square-zero extensions, the underlying arguments extend to a more general setting. For the present paper, this broader formulation is essential for our analysis of the more complicated nilpotent orbits arising beyond $\PGL_2$. We also recall the connection between the subrepresentations $\S'_\Omega$ and certain spaces of functions on Hitchin fibers and their identification via the Fourier transform described in \cite{KP}. This gives a geometric interpretation of our representation-theoretic constructions in Sections 4-6, and will be used to compute the supports of spherical Hecke-finite functions in Section 7 when we prove Theorem \ref{main theorem}(3).

In Section \ref{regellip and mixed}, we prove the main results for $\S'_{\Omega_\eta}$, where $\eta$ is an almost regular elliptic or mixed-type element whose semisimple part is nonelliptic.

In Sections \ref{regnilpmain} and \ref{subregnilp}, we study regular nilpotent orbits for connected split reductive groups $G$ and subregular nilpotent orbits for $\PGL_n$, respectively. 

In Section 7, we gather the results from the previous sections to prove Theorem \ref{main theorem}. 

In Section \ref{misc}, we study the nilpotent orbits of type $(2,...,2)$ for $\PGL_{2n}$ for $n\geq 2$. Interestingly, this is related to the automorphic representation of $\PGL_n$ for the split square-zero extension. We obtain a class of spherical Hecke-finite functions and compute their support.

\section*{Acknowledgement}
I would like to thank my advisor, Alexander Polishchuk, for introducing me to the subject of this paper, for sharing many of his ideas with me, and for the many conversations we have had over the years.

\section{Automorphic functions for square-zero extensions of curves over finite fields}\label{basictheory}
In this section, we recall the basic setup for automorphic functions for square-zero extensions of curves over finite fields.

\subsection{Square-zero extension}
Throughout this paper, we let $\overline{C}$ be a smooth projective curve over a finite field $k$, and let $C$ be a square-zero extension of $\overline{C}$, and we denote by $\N$ the nilradical of the structure sheaf $\mathcal{O}_C$. In other words, the induced reduced scheme of $C$ is $\overline{C}$ and $\N^2=0$. We assume throughout that $\N$ is a line bundle of degree zero on $\overline{C}$. Based on this setup, we fix the following notation and conventions:

\begin{enumerate}
    \item[\textbullet] The genus of $\overline{C}$ is denoted by $g$, and any number denoted by $g$ is the genus, unless specified otherwise. We also denote by $\omega$ the canonical sheaf $\omegac$ sometimes.
    \item[\textbullet] $\bF=k(\bC)$: the field of rational functions on $\bC$.
    \item[\textbullet] $F$: the stalk of the structure sheaf of $C$ at the generic point.
    \item[\textbullet] For a closed point $p$ of $C$, let $\O'_p$ (resp. $F_p$) denote the completed local ring at $p$ (resp. its total ring of fractions). 
    \item[\textbullet] $\O'=\Pi_{p \in \bC} \O'_p$ is the ring of integral adeles for $F$, and $\A'$ is the ring of adeles i.e. the restricted product $\Pi_{p\in \bC} F_p$ with respect to $\O'_p$.
    \item[\textbullet] $\bA'$ (resp. $\bO$) is the ring of adeles (resp. the ring of integral adeles) of $\bF$.
    \item[\textbullet] For a line bundle $M$ on $\bC$, we let the $\bA'-$module $M(\bA')$ of twisted adeles be the restricted product of $M_p \otimes \bF_p$, and denote the principal twisted adeles by $M(\bF)$.
    \item[\textbullet]
     The ring $F_p$ is a square-zero extension of the local field $\overline{F}_p$ by the completed stalk $\N_p$. We fix a generator $\epsilon_p \in \N _p$ for each $p$ and let
    $$ \epsilon=(\epsilon_p) \in \N \A'. $$

    \item[\textbullet]
    When $X$ is a topological space, we denote by $\mathbb{C}(X)$ the space of all locally constant $\mathbb{C}$-valued functions, and by $\S'(X) \subset \mathbb{C}(X)$ the subspace of functions with compact support. In particular, when $X$ discrete, $\mathbb{C}(X)$ (resp. $\S'(X)$) is the space of all functions (resp. finitely supported functions).

\end{enumerate}

\subsection{Groups and automorphic functions}
Throughout this paper, we let $G$ be a connected split reductive group over $\mathbb{Z}$, unless specified otherwise, and denote its Lie algebra by $\mathfrak{g}$. We also fix a Borel subgroup $B$ with a split maximal torus $T$, and let $U$ be the unipotent radical of $B$. A parabolic subgroup of $G$ that contains $B$ is called standard.

Most of the time, we will assume that the characteristic of $k$ is greater than the semisimple rank of $G$, which guarantees the existence of Jordan-Chevalley decompositions of elements in $\mathfrak{g}(\bF)$. We need this for two reasons. First, this allows us to classify the (co)adjoint orbits systematically. Second, for a technical reason, it gives a neater proof for the fact that all strongly cuspidal functions are in the regular elliptic summands.

\subsection{Cuspidality}
We recall the following notion of strong cuspidality introduced in \cite{BKP}. Let $P$ be a standard parabolic subgroup of $G$ with  unipotent radical $U_P$ and Levi quotient $M_P$. A function $f$ is strongly cuspidal if for any proper standard parabolic $P$
$$\int_{u\in U_P(\N F) \backslash U_P(\N\A')} f(ug) du=0. $$
Note that the strong cuspidality implies cuspidality, because $$\int_{u\in U_P( F) \backslash U_P(\A')} f(ug) du= \int_{u'\in U_P(F)U_P(\N \A') \backslash U_P(\A')}\int_{u\in U_P(\N F) \backslash U_P(\N\A')} f(uu'g) dudu'.$$

\subsection{Unramified functions and geometric interpretations}

Let $K$ be a compact open subgroup of $G(\A')$. We consider the subspace $\S' (G(F)\backslash G(\A')/K) \subset \S' (G(F)\backslash G(\A'))$ of right $K$-invariant functions. We denote the groupoid $G(F)\backslash G(\A')/K$ by $\bun_G(C,K)$. When $K=G(\O')$, we have the identification of groupoids
 $$ \bun_G(C,K) \simeq \text{$\bun_G(C)$  : the groupoid of generically trivial $G$-bundles on $C$}.  $$

The proof of this isomorphism is similar to the classical case when the curve is over finite field (see \cite{BKP} Proposition 1.2). We call $\S'(\bun_G(C))$ the space of unramified functions.

There is also a geometric interpretation of the constant-term operator. But here we only consider the formal groupoid interpretation. For that we  consider the groupoids
$$ Q\bun_{M}(C,K):= M(F)U(\A')\backslash G(\A') /K$$
and 
$$ Q\bun_{P}(C,K):= P(F)\backslash G(\A') /K, $$
and the correspondence
$$ Q\bun_{M}(C,K) \xleftarrow{p} Q\bun_{P}(C,K) \xrightarrow{q} \bun_G(C,K), $$
where the maps $p$ and $q$ are the natural quotient maps. Here we write $M=M_P$ and $U=U_P$ to simplify the notation.

The constant-term operator $f \mapsto CT_P(f)$ restricts to a map 
$$CT_P^K: \S'(\bun_G(C,K)) \to \mathbb{C}(Q\bun_M(C,K)),$$
and is essentially the same as the operator $p_*q^*$ (see Proposition 2.6 for a precise formulation).

\subsection{Iwasawa decomposition for square-zero extension} In this subsection, for simplicity, we assume $G=\GL_n$ (the results and proofs work also for any semisimple group of type A) \footnote{We choose to work with these groups for two technical reasons. First, this guarantees that every standard maximal parabolic subgroup of $G$ is cominuscule, and that implies that $U_P$ is commutative and that the adjoint action of $U_P$ on $\mathfrak{g}/\mathfrak{p}$ is trivial. Second, the classification of the orbits of the adjoint action of $M$ on $\mathfrak{g}/\mathfrak{p}$ is easy to describe. See the proof of Lemma \ref{local iwasawa} for why these properties are desired. }, and fix the Borel $B$ to be the upper triangular matrices.

Let $P$ be the standard parabolic subgroup of type $(n_1,n_2)$ of $G$ and let $m=\min \lbrace n_1,n_2\rbrace$. We consider $K=G(\O')$. Let $\Phi(U_P)$ denotes the set of roots associated to $U_P$. We let $U_P^-$ be the unipotent radical of the opposite parabolic subgroup of $P$ and identify it with the space of $n_2 \times n_1$ matrices. Let $D$ denote a collection of effective divisors $(D_1,...,D_m)$ on $\overline{C}$, and $f_{D_i}$ denote the idele associated to $D_i$ i.e. $v_p(f_{D_i})$ equals the multiplicity of a point $p$ in $D_i$. We set
$$ g_D = \diag(f_{D_1}^{-1},..., f_{D_m}^{-1},0,...,0) \in U_P^{-}(\bA'),$$
and  let $D_1 \leq ... \leq D_m$.

\begin{prop}
\begin{enumerate}
    \item (Iwasawa decomposition) $G(\A')=\bigsqcup_DP(\A')(1+\epsilon g_D)G(\mathcal{O})$
    \item We have the following decomposition of groupoids: 
    $$Q\bun_P(C) = \bigsqcup_D P(F) \backslash P(\A')(1+\epsilon g_D)G(\O')/G(\O')$$  $$Q\bun_M(C) = \bigsqcup_D M_P(F)U_P(\A') \backslash P(\A')(1+\epsilon g_D)G(\O')/G(\O')   $$

    \item Let $P(\O')[D]:= P(\A') \cap g_D G(\O')g_D^{-1}$ and $M_P(\O')[D]$ be the image of $P(\O')[D] \to M_P(\A')$. Then we have the following identification of groupoids 
     $$ M_P(F) \backslash M_P(\A')/ M_P(\O')[D] \simeq Q\bun_{M_P}(C,D)$$
\end{enumerate}
    
\end{prop}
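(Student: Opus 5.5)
The plan is to reduce the global statement to a local one, point by point; the footnote's reference to Lemma \ref{local iwasawa} suggests this is how the paper proceeds. Fix a closed point $p$ and write $K_p=G(\O'_p)$, $\bar K_p = G(\bar{\O}_p)$. I will prove the local Iwasawa decomposition
$$G(F_p)=\bigsqcup_{d} P(F_p)(1+\epsilon_p g_{d})K_p,$$
where $d=(d_1\le\dots\le d_m)$ runs over nondecreasing $m$-tuples of nonnegative integers and $g_d=\diag(\varpi_p^{-d_1},\dots,\varpi_p^{-d_m},0,\dots)\in U_P^-(\bar F_p)$. Taking the restricted product then gives (1). At almost all points all $d_i=0$, which is exactly the condition that $\sum_p d_{i,p}\,p$ be effective divisors $D_1\le\dots\le D_m$. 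Part (2) follows formally by dividing (1) on the left by $P(F)$, respectively by $M_P(F)U_P(\A')$. Part (3) is the usual orbit--stabilizer identification. The map $m\mapsto m(1+\epsilon g_D)$ identifies $M_P(F)U_P(\A')\backslash P(\A')(1+\epsilon g_D)G(\O')/G(\O')$ with $M_P(F)\backslash M_P(\A')$ modulo the image in $M_P$ of the stabilizer $P(\A')\cap (1+\epsilon g_D)G(\O')(1+\epsilon g_D)^{-1}$. I would then check that this is the group $P(\O')[D]$ of the statement. I read the definition as conjugation by $1+\epsilon g_D$, since conjugating by $g_D$ itself would not make sense for a matrix with zero diagonal entries.

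For the local statement, start from the classical Iwasawa decomposition $G(\bar F_p)=P(\bar F_p)\bar K_p$ and the surjection $K_p\to \bar K_p$. Given $x\in G(F_p)$, this lets me write $x=p\cdot(1+\epsilon_p X)\cdot k$ with $X\in\mathfrak g(\bar F_p)$. Since $(1+\epsilon_p X)$ with $X\in \mathfrak p(\bar F_p)+\mathfrak g(\bar{\O}_p)$ can be absorbed into $P(F_p)$ or $K_p$, the double coset $P(F_p)\backslash G(F_p)/K_p$ fibres over $P(\bar F_p)\backslash G(\bar F_p)/\bar K_p$, which is a point. The fibre is the set of orbits of $\mathfrak g(\bar F_p)/(\mathfrak p(\bar F_p)+\mathfrak g(\bar{\O}_p))\cong (\mathfrak g/\mathfrak p)(\bar F_p)/(\mathfrak g/\mathfrak p)(\bar{\O}_p)$ under the stabilizing group $P(\bar{\O}_p)$. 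Here I use that $\Ad$ of the $\bar K_p$-part and the $\bar P$-part act on the $\epsilon$-term, and that the surviving action is through $P(\bar F_p)\cap \bar K_p=P(\bar{\O}_p)$.

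Next I need to identify this action. Because $P$ is cominuscule, $U_P$ acts trivially on $\mathfrak g/\mathfrak p\cong U_P^-$, so only $M_P(\bar{\O}_p)=\GL_{n_1}(\bar{\O}_p)\times\GL_{n_2}(\bar{\O}_p)$ acts. It acts by $Y\mapsto AYB^{-1}$ on $n_2\times n_1$ matrices modulo integral ones. By the Smith normal form (elementary divisors over the DVR $\bar{\O}_p$), every class has a unique representative $\diag(\varpi^{-d_1},\dots,\varpi^{-d_m})$ with $d_i\ge 0$ (entries with $d_i=0$ vanish modulo integral matrices). After reordering, $d_1\le\dots\le d_m$, and this gives the indexing by $g_d$.

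The main obstacle is the bookkeeping in the fibre computation. I must check that the actions of $P(F_p)$ and $K_p$ on the $\epsilon$-part really reduce to the $P(\bar{\O}_p)$-action on $(\mathfrak g/\mathfrak p)(\bar F_p/\bar{\O}_p)$, with no extra identifications coming from elements of $P(\bar F_p)$ not in $\bar K_p$. I also need the disjointness, which comes from uniqueness of the elementary divisors. I would handle this by a direct computation with $(1+\epsilon X)$ and the relation $\epsilon^2=0$.
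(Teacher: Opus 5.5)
Your proposal is correct and follows the paper's proof essentially step for step. The paper also deduces (1) from the local Lemma~\ref{local iwasawa}, using the classical Iwasawa decomposition, the identity $X'=Y+Z+\Ad(\bar p)X$ with $\bar p\in P(\overline{\mathcal{O}}_p)$, the triviality of the $U_P$-action on $\mathfrak{g}/\mathfrak{p}$ (cominuscularity), and elementary divisors for the $M_P(\overline{\mathcal{O}}_p)$-action; it gets (2) formally and obtains (3) from $m\mapsto m\cdot g_D$, where, as you correctly read, $g_D$ in that map and in $P(\mathcal{O})[D]$ stands for the group element $1+\epsilon g_D$. For disjointness, the elementary-divisor uniqueness you need is for classes of $n_2\times n_1$ matrices modulo integral ones, not for a fixed lift; a clean invariant is the isomorphism type of $(Y\overline{\mathcal{O}}_p^{\,n_1}+\overline{\mathcal{O}}_p^{\,n_2})/\overline{\mathcal{O}}_p^{\,n_2}\cong\bigoplus_i\overline{\mathcal{O}}_p/\varpi^{d_i}$, which is unchanged both by integral changes of the lift $Y$ and by the $M_P(\overline{\mathcal{O}}_p)$-action.
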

\begin{proof}
    The proof for (1) follows by the next local lemma, and (2) is a immediate consequence of (1).  The identification in (3) is given by 
    $$  M_P(F) \backslash M_P(\A')/ M_P(\O')[D] \xrightarrow{\simeq} Q\bun_{M_P}(C,D) : m \mapsto m\cdot g_D. $$
    The proof that this is well-defined and indeed a bijection is the same as that in \cite[Corollary 4.5]{BKP}.
    
\end{proof}
\begin{rem}
    When $G=GL_3$,the decomposition has a simpler description: it is parametrized by effective divisors on $\overline{C}$.
\end{rem}

Let $\overline{K}$ be a local field with valuation $v$. Let $a_i \in \overline{K}$ and $0 \leq v(a_1)\leq ... \leq v(a_m)$. Let $$\phi_{a_1,...,a_m} \in \mathfrak{u}_P^-(\overline{K})$$ be the diagonal element with the $i$-th entry being $a_i$ for the first m entries and zero for $i>m$, and $$g_{a_1,...a_m}=1+\epsilon \phi_{a_1,...,a_m} $$.
\begin{lem}\label{local iwasawa}
    Let $K$ be a square-zero extension of the local field $\overline{K}$ with ring of integers $\bO$.  We have:
    \begin{enumerate}
        \item $\g(\overline{K}) / (\mathfrak{p}(\overline{K})+\g(\bO))= \bigsqcup_{(a_1,...,a_m)} \Ad (P(\bO))\cdot \phi_{a_1,...,a_m}$
        \item $G(K)=\bigsqcup_{(a_1,...,a_m)}P(K)g_{a_1,...,a_m} G(\O')$
        
    \end{enumerate}
\end{lem}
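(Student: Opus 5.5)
Part (1) is essentially linear algebra over $\bO$, and part (2) reduces to it through the kernel $\g\otimes\N(\overline{K})$ of $G(K)\to G(\overline{K})$. Use the cominuscule structure. Since $U_P$ is abelian and acts trivially on $\g/\mathfrak{p}$, we have $\g/\mathfrak{p}\simeq \mathfrak{u}_P^-$, the space of $n_2\times n_1$ matrices. On this space $P$ acts only through $M_P=\GL_{n_1}\times\GL_{n_2}$, by $X\mapsto hXk^{-1}$ for $(k,h)\in \GL_{n_1}\times\GL_{n_2}$. Hence
$$\g(\overline{K})/(\mathfrak{p}(\overline{K})+\g(\bO)) \simeq \mathfrak{u}_P^-(\overline{K})/\mathfrak{u}_P^-(\bO),$$
with $P(\bO)$ acting through $M_P(\bO)$ by left and right multiplication. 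The $M_P(\bO)$-orbits on $\overline{K}$-matrices modulo integral matrices are classified by the Smith normal form (elementary divisor theory over the DVR $\bO$). Any $X$ can be brought to the diagonal form $\diag(\varpi^{-e_1},\dots,\varpi^{-e_m})$. Entries with $e_i\le 0$ are absorbed into $\mathfrak{u}_P^-(\bO)$, and the remaining ones can be normalized by units. This gives representatives $\phi_{a_1,\dots,a_m}$, with the indexing matching the convention of the statement, i.e. valuations of the inverses. Disjointness follows because the invariant factors of $X$ modulo $\bO$ are well defined: they are the invariants of the finite-length $\bO$-module $(\bO^{n_1}+X^{-1}\bO^{n_2})/\bO^{n_1}$, or equivalently of the image of $X$ in $\mathrm{Hom}(\bO^{n_1},\overline{K}^{n_2}/\bO^{n_2})$. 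These invariants are preserved by $M_P(\bO)$. This settles (1).

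For (2), apply the classical Iwasawa decomposition $G(\overline{K})=P(\overline{K})G(\bO)$ to the reduction. Lifting, any $g\in G(K)$ can be written as $g=p\,(1+\epsilon X)\,k$ with $p\in P(K)$, $k\in G(\O')$, and $X\in\g(\overline{K})$. Here I use that $P(K)\to P(\overline{K})$ and $G(\O')\to G(\bO)$ are surjective, by smoothness and Hensel's lemma. It therefore suffices to understand the double cosets $P(K)\backslash P(K)(1+\epsilon\g(\overline{K}))G(\O')/G(\O')$. The main step is to show that $1+\epsilon X$ and $1+\epsilon Y$ lie in the same double coset iff $X$ and $Y$ lie in the same $P(\bO)$-orbit in $\g(\overline{K})/(\mathfrak{p}(\overline{K})+\g(\bO))$. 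I plan to establish this in three moves.

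\begin{enumerate}
\item Moving $X$ by $\mathfrak{p}(\overline{K})$ is absorbed into $P(K)$ on the left, and moving it by $\g(\bO)$ is absorbed into $G(\O')$ on the right.
\item Conjugating by $p_0\in P(\bO)$, lifted to $P(\O')\subset P(K)\cap G(\O')$, replaces $X$ by $\Ad(p_0)X$.
\item For the converse, suppose $p(1+\epsilon X)=(1+\epsilon Y)k$. Reducing modulo $\epsilon$ gives $\bar p=\bar k\in P(\overline{K})\cap G(\bO)=P(\bO)$. Writing $p=\tilde p_0(1+\epsilon A)$ and $k=\tilde p_0(1+\epsilon B)$ with $A\in\mathfrak{p}(\overline{K})$ and $B\in\g(\bO)$, and comparing first-order terms, yields $\Ad(\bar p)^{-1}Y\equiv X$ modulo $\mathfrak{p}(\overline{K})+\g(\bO)$.
\end{enumerate}

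Combined with (1), this gives the disjoint decomposition.

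The main obstacle is the converse direction in (2). The issue is carefully tracking how $\epsilon$ interacts with non-integral elements. Since $\N$ is a module over $\overline{K}$, products such as $\epsilon\cdot\varpi^{-e}$ make sense, but one must check that the lifts $\tilde p_0$ can be chosen in $P(\O')$. One must also check that the ambiguity in those lifts only changes $A$ and $B$ within $\mathfrak{p}(\overline{K})$ and $\g(\bO)$ respectively. This bookkeeping is where the triviality of the $U_P$-action on $\g/\mathfrak{p}$ is used. It ensures that the $\Ad(P)$-action factors through $M_P$, so the invariant-factor classification of (1) really is the relevant one.
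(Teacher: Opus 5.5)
Your proposal is correct and follows the paper's proof essentially step for step. In (1), the $P(\bO)$-action on $\g(\overline{K})/(\mathfrak{p}(\overline{K})+\g(\bO))$ factors through $M_P(\bO)$ acting by left and right multiplication on lower-left blocks, and elementary divisors finish. In (2), one reduces to representatives $1+\epsilon X$ via $G(\overline{K})=P(\overline{K})G(\bO)$, and $p(1+\epsilon X)k=1+\epsilon X'$ forces $\bar p=\bar k^{-1}\in P(\bO)$ and $X'\equiv\Ad(\bar p)X$ modulo $\mathfrak{p}(\overline{K})+\g(\bO)$, which is exactly the paper's identity $X'=Y+Z+\Ad(\bar p)X$.

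One small slip: $(\bO^{n_1}+X^{-1}\bO^{n_2})/\bO^{n_1}$ is not the right invariant, since it vanishes for invertible $X$ with poles. Use instead the image $(X\bO^{n_1}+\bO^{n_2})/\bO^{n_2}$ of your map $\bO^{n_1}\to\overline{K}^{n_2}/\bO^{n_2}$, as in your alternative formulation.
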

\begin{proof}
    First, we note that the adjoint action of $P(\bO)$ on $\g(\overline{K})$ descends to $\g(\overline{K}) / (\mathfrak{p}(\overline{K})+\g(\bO))$, so to prove (1) it suffices to prove the following claims:
   \begin{itemize}
       \item $\phi_{a_1,...,a_m}$ are in different $P(\bO)$-orbits 
       \item Every element $x$ in $\g(\overline{K}) $ is in the orbit of some $\phi_{a_1,...,a_m}$.
   \end{itemize}
  In fact, the adjoint action of $P(\bO)$ descends to an action of $M(\bO)$ because $U_P(\bO)$ acts trivially on $\g(\overline{K}) / (\mathfrak{p}(\overline{K})+\g(\bO))$ (one can check this by explicit matrix calculation for $G=GL_n$, but in general for any reductive group $G$, this is also true when $P$ is cominuscule). Now we note that given $(A,B) \in M_P(\bO)$, and $X \in \mathfrak{u}_P^-$, $$(A,B) \cdot X = A^{-1}XB.$$
    So now both claims follow by the elementary divisor theory for PIDs. This finishes the proof for (1).

    Now we prove (2). By the Iwasawa decomposition $G(\overline{K})= P(\overline{K})G(\bO)$, the $P(K)-G(\O')$ double cosets in $G(\overline{K})$ have representatives of the form $1 + \epsilon X$, where $X \in \mathfrak{g}(\overline{K})$. We show that $X$ can be modified to be some $\phi_{a_1,...,a_m}$. Indeed, let $p \in P(\O')$ and $g\in G(\O')$ and consider 
    $$1+\epsilon X' := p(1+\epsilon X) g,$$
    then $\overline{p}\overline{g}=1 \in G(\overline{K})$ so we can write $p=(1+\epsilon Y)\overline{p}$ and $g=\overline{p}^{-1}(1+\epsilon Z)$, where  $Y\in \mathfrak{p}(\overline{K}) $ and $Z \in \mathfrak{g}(\bO)$. It follows that $X'= Y+Z+\Ad(\overline{p})\cdot X$. By (1), we see that $X'$ can be of the form $\phi_{a_1,...,a_m}$ with a suitable choice of $Y$ and $Z$. Also, if $\phi_{a_1,...,a_m}$ and $\phi_{a_1',...,a_m'}$  are in different $P(\bO)$-orbits, then 
$P(K)g_{a_1,...,a_m}G(\O') \neq P(K)g_{a_1',...,a_m'}G(\O')$.

\end{proof}

Returning to the global situation,  we set  $$Q\bun_{M,D}(C):=M_P(F)U_P(\A') \backslash P(\A')g_DG(\O')/G(\O'),$$
then we can view the constant-term operator $CT_P$ as a collection of operators $$CT_{P,D}:\S'(\bun_G(C)) \to \mathbb{C}(Q\bun_{M,D}(C)): CT_{P,D}f (l):= \int_{U_P(F)\backslash U_P(\A')} f(ulg_D)du,$$
and the space of unramified cuspidal functions is just the intersection of the kernels of all $CT_{P,D}$'s. 

\subsection{Groupoid theoretic formulation of constant-terms and Eisenstein series}

We keep the notations from the previous subsection. We will work with groupoids, functions on their isomorphism classes, and the groupoid theoretic (also called stacky) pushforward of functions. We refer to Appendix A of BKP for the general definitions and background.

We consider the following correspondence $$ Q\bun_{M_P,D}(C) \xleftarrow{p_D} Q\bun_{P,D}(C) \xrightarrow{q_D} \bun_G(C) $$
where $p_D$ and $q_D$ are the restrictions of the maps $p$ and $q$. 

Similar to the theory of classical automorphic forms for curves over finite fields, we can define scalar products on our functional spaces.

\begin{defn}
    Let $f,h \in \S'(\bun_G(C))$, we define
    $$\langle f,h\rangle = \sum_{x\in \bun_G(C)} \frac{f(x)\overline{h(x)}}{|Aut(x)|}.$$
     When $f,h \in \mathbb{C}(Q\bun_{M,D}(C))$ and assume at least one of them has finite support, then we define 
    $$ \langle f,h\rangle= \sum_{x\in Q\bun_{M,D}(C)} \frac{f(x)\overline{g(x)}}{vol(Aut(x))}. $$

\end{defn}

\begin{rem}
    We can also define for $f,h \in \S'(G(F)\backslash G(\A'))$ the scalar product $\langle f,h\rangle_{adelic} = \int_{G(F)\backslash G(\A')} f(x)\overline{h(x)} dx$, where the Haar measure on $G(F)\backslash G(\A')$ is normalized such that $G(\O')$ has volume 1. The two definitions agree when we restrict the adelic version to the subspace of right $G(\O')$-invariant functions.
\end{rem}

\begin{prop}
Let $f\in \S'(\bun_G(C))$ and $h\in \S'(Q\bun_M(C))$.
\begin{enumerate}
    \item We have $$CT_{P,D}f(l)= \frac{1}{vol(U_{l,D})}p_{D*}q_D^*f(l),$$ where $p_{D*}$ and $q_D^*$ are the groupoid-theoretic pushforward and pullback, and 
    $$ U_{l,D}:= U(\A') \cap lg_DG(\O')g_D^{-1}l^{-1}.$$
    \item Let us denote $E_{P,D}=q_{D,*}p_D^*$,
    then we have $$\langle E_{P,D}f,h \rangle= \langle f, p_{D,*}q_D^*h\rangle$$
    \item  Let $Eis^{unr}$ be the sum of the images of $E_{P,D}$ for all maximal standard parabolic $P$. Then we have $\S'_{cusp}(\bun_G(C)) \oplus Eis^{unr} \subset\S'(\bun_G(C)) $\footnote{The inclusion is an equality if and only if $\S'_{cusp}(\bun_G(C)^\perp \cap \mathbb{C}_{cusp}(\bun_G(C))=0$ by an argument in \cite{F}. But we do not know if this is true.  }.
\end{enumerate}
    
\end{prop}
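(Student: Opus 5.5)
For part (1) I will unfold the groupoid-theoretic pushforward and pullback along the correspondence $Q\bun_{M_P,D}(C) \xleftarrow{p_D} Q\bun_{P,D}(C) \xrightarrow{q_D} \bun_G(C)$ in adelic terms. Fix $l\in P(\A')$ representing a point of $Q\bun_{M,D}(C)$, with image $lg_D$ in $M_P(F)U_P(\A')\backslash G(\A')/G(\O')$. The fiber of $p_D$ over this point is the groupoid $P(F)\backslash P(F)U_P(\A')lg_DG(\O')/G(\O')$. Since $U_P$ is normal in $P$, this is $U_P(F)\backslash U_P(\A')\,lg_D G(\O')/G(\O')$, i.e. the action groupoid of $U_P(F)$ acting on $U_P(\A')/U_{l,D}$. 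Here $U_{l,D}=U(\A')\cap lg_DG(\O')g_D^{-1}l^{-1}$ is the stabilizer of the coset $lg_DG(\O')$ in $U_P(\A')$, and it is compact open. By definition, $p_{D*}q_D^*f(l)$ is the sum of $f(ulg_D)$ over isomorphism classes in this fiber, each weighted by the inverse of the relative automorphism group. I will take care with the factor $\mathrm{vol}(\mathrm{Aut})$ of the target point; the convention is the one from Appendix A of \cite{BKP}.

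By the orbit–stabilizer principle this weighted sum equals $\sum_{u\in U_P(F)\backslash U_P(\A')/U_{l,D}} f(ulg_D)/|\mathrm{Stab}|$. If the Haar measure on $U_P(\A')$ gives $U_P(\O')$ volume $1$, this sum is $\mathrm{vol}(U_{l,D})^{-1}\int_{U_P(F)\backslash U_P(\A')} f(ulg_D)\,du$. That is exactly the claimed identity $CT_{P,D}f(l)=\mathrm{vol}(U_{l,D})^{-1}p_{D*}q_D^*f(l)$. The only delicate point is the bookkeeping of automorphism groups, where $\mathrm{Aut}$ of the point in $Q\bun_{M,D}$ is an infinite but locally compact group. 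I expect this to be the main obstacle, and I would check it against the $\GL_2$ computation in \cite{BKP}.

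Part (2) is formal adjunction for groupoid correspondences: $\langle q_{D*}p_D^*f,h\rangle=\langle p_D^*f,q_D^*h\rangle_{Q\bun_{P,D}}=\langle f,p_{D*}q_D^*h\rangle$. Each equality follows from the projection formula for the stacky pushforward, namely $\sum_{y}\frac{(\pi_*\phi)(y)\overline{\psi(y)}}{|\mathrm{Aut}\,y|}=\sum_x\frac{\phi(x)\overline{\psi(\pi x)}}{|\mathrm{Aut}\,x|}$. One of the two functions is finitely supported; I read the hypotheses as $f\in\S'(Q\bun_M)$ and $h\in\S'(\bun_G)$ in (2). Since $q_D$ has finite fibers on supports, all sums are finite.

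For part (3) I first use that $\S'_{cusp}(\bun_G(C))$ is the intersection of the kernels of the $CT_{P,D}$ over maximal standard $P$ and all $D$. This holds because every proper standard parabolic lies in a maximal one and the constant terms are transitive. By (1) and (2), $f$ is cuspidal exactly when it is orthogonal to every $E_{P,D}h$. So $\S'_{cusp}\subset (Eis^{unr})^\perp$. The scalar product is positive definite on $\S'(\bun_G(C))$, so $\S'_{cusp}\cap Eis^{unr}=0$. This gives the direct sum inside $\S'(\bun_G(C))$, once I note that $E_{P,D}$ of finitely supported functions is finitely supported, which follows because $q_D$ is finite on supports.
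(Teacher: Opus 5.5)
Your route matches the paper's: unfold for (1), formal pull--push adjunction for (2), orthogonality plus positive-definiteness for (3). Part (3) is fine as written, since it only uses that $CT_{P,D}f$ and $p_{D*}q_D^*f$ differ by a pointwise nonzero factor. The gap is the constant in (1), the one piece of content the paper's ``direct computation'' leaves to you, and your computation does not produce it.

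\textbf{The inversion.} Your own identity says that the weighted count $\Sigma(l)$ equals $\mathrm{vol}(U_{l,D})^{-1}CT_{P,D}f(l)$, i.e. $CT_{P,D}f(l)=\mathrm{vol}(U_{l,D})\,\Sigma(l)$. If $p_{D*}q_D^*f=\Sigma$, this is the reciprocal of the displayed factor, so ``exactly the claimed identity'' is an inversion. As a sanity check, shrinking $U_{l,D}$ adds points to the fiber, so the count grows like $1/\mathrm{vol}(U_{l,D})$.

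\textbf{The fiber.} The fiber of $p_D$ is not the action groupoid of $U_P(F)$ on $U_P(\mathbb{A})/U_{l,D}$, and normality of $U_P$ does not give this. The automorphism group of $y=[lg_D]$ in $Q\bun_{M,D}(C)$ is $A=P(F)U_P(\mathbb{A})\cap lg_DG(\mathcal{O})g_D^{-1}l^{-1}$. This is an extension of a finite subgroup of $M_P(F)$ by $U_{l,D}$. For $l=1$, $D=0$ it contains $M_P(k)$, and $m\in M_P(k)$ puts the points $u\,lg_DG(\mathcal{O})$ and $mum^{-1}\,lg_DG(\mathcal{O})$ in the same $P(F)$-orbit. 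Unfolding $\int_{P(F)\backslash P(F)U_P(\mathbb{A})}$ correctly gives $\sum_{[x]\mapsto y}q_D^*f(x)/|\mathrm{Aut}\,x|=\mathrm{vol}(A)^{-1}CT_{P,D}f(l)$, where $\mathrm{vol}(A)=[A:U_{l,D}]\,\mathrm{vol}(U_{l,D})$.

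\textbf{The convention.} What actually decides the constant is the convention for $p_{D*}$ into a groupoid with non-discrete automorphism groups. You postponed it, and it must be the same in (1) and (2). The paper's inner product on $Q\bun_{M,D}(C)$ weights by $1/\mathrm{vol}(\mathrm{Aut}\,y)$. With that and the usual pushforward along $q_D$, the adjunction in (2) holds exactly when $p_{D*}\phi(y)=\mathrm{vol}(\mathrm{Aut}\,y)\sum_{[x]\mapsto y}\phi(x)/|\mathrm{Aut}\,x|$. So the projection formula you quote, with $|\mathrm{Aut}\,y|$, is not the relevant one. With this convention, the formula above gives $p_{D*}q_D^*f(l)=CT_{P,D}f(l)$ with no volume factor at all. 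With the naive count you get $CT_{P,D}f(l)=\mathrm{vol}(A)\,p_{D*}q_D^*f(l)$. In neither case is $p_{D*}q_D^*f$ multiplied by $\mathrm{vol}(U_{l,D})^{-1}$. You need to fix one convention explicitly, use it in both (1) and (2), and record the constant it actually yields, rather than asserting agreement with the display.

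\textbf{A smaller point.} In (2) and (3) the finiteness you need comes from $p_D$, not $q_D$. The map $p_D$ has finite fibers because $U_P(F)\backslash U_P(\mathbb{A})$ is compact and $U_{l,D}$ is open. This makes $p_D^*$ preserve finite support and $p_{D*}q_D^*h$ pointwise finite. By contrast, $q_D$ may have infinite fibers, and $q_{D*}$ preserves finite support trivially.
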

\begin{proof}
    (1) follows from direct computations and comparison for the $CT_{P,D}$ and $p_{D*}q_D^*$. (2) is a formal consequence of the stacky inner product and the adjunction of stacky pull-push. (3) is just a direct consequence of (2).
\end{proof}
\subsection{Ramified automorphic functions and Eisenstein series}
We keep the notation and assumptions from  the previous section. We can generalize our groupoid theoretic constuctions of constant-terms and Eisenstein series to more general compact open subgroup $K$ of $G(\A')$.

Let $K$ be a normal compact open subgroup of $G(\O')$. We generalize the construction of $CT_{P,D}$ to $\S'(\bun_G(C))$. We have

\begin{prop}
Let $E_{P,D,K}: \S'(Q\bun_{M_P}(C,D,K) \to \S'(\bun_G(C,K)) :$
\begin{enumerate}
    \item We have 
    $$ \langle E_{P,D,K}(f),g\rangle= \langle f, CT_{P,D}^K (g) \rangle  $$
    \item Let $Eis_K$ be the sum of the images of $(E_{P,D,K})$ for all standard maximal parabolic $P$ and all $D$, then we have $ \S'_{cusp}(\bun_G(C,K)) \oplus Eis_K \subset \S'(\bun_G(C,K)) $.
\end{enumerate}
\end{prop}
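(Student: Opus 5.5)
The plan is to run the same argument as in the unramified case (the previous proposition), with $G(\O')$ replaced by the normal compact open subgroup $K$. First we fix the groupoids. Set
$$Q\bun_{P}(C,D,K):=P(F)\backslash P(\A')g_DG(\O')/K, \qquad Q\bun_{M_P}(C,D,K):=M_P(F)U_P(\A')\backslash P(\A')g_DG(\O')/K.$$
Since $K\subset G(\O')$, each $P(F)\backslash G(\A')/K$ decomposes as the disjoint union over $D$ of these pieces, by the Iwasawa decomposition $G(\A')=\bigsqcup_D P(\A')g_DG(\O')$. We then take the correspondence
$$Q\bun_{M_P}(C,D,K)\xleftarrow{p_{D,K}} Q\bun_P(C,D,K)\xrightarrow{q_{D,K}}\bun_G(C,K),$$
define $E_{P,D,K}:=q_{D,K,*}p_{D,K}^*$, and define $CT^K_{P,D}(g)(l):=\int_{U_P(F)\backslash U_P(\A')}g(ul)\,du$ for $l\in P(\A')g_DG(\O')$.

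For (1), the first step is the analogue of part (1) of the previous proposition:
$$CT^K_{P,D}g=\frac{1}{\mathrm{vol}(U_{l,D,K})}\,p_{D,K*}q_{D,K}^*g, \qquad U_{l,D,K}:=U_P(\A')\cap lKl^{-1}.$$
To prove it, note that the fiber of $p_{D,K}$ over $l$ is $U_P(F)\backslash U_P(\A')/U_{l,D,K}$, taken with its stacky measure. The integral over $U_P(F)\backslash U_P(\A')$ is therefore a finite sum over this fiber, with each point weighted by $\mathrm{vol}(U_{l,D,K})$ divided by the order of its stabilizer. This volume factor depends only on the point $l$ of $Q\bun_{M_P}$. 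The second step is to absorb it into the scalar product on $\mathbb{C}(Q\bun_{M_P}(C,D,K))$: we define $\langle\cdot,\cdot\rangle$ using the weights $1/\mathrm{vol}(\mathrm{Aut}(x))$, where $\mathrm{Aut}(x)$ is the automorphism group of $x$ in $M_P(F)U_P(\A')\backslash G(\A')/K$. Unwinding this, $\mathrm{vol}(\mathrm{Aut}(x))$ is $\mathrm{vol}(U_{l,D,K})$ times the order of the finite automorphism group in $M_P$. The adjunction $\langle q_*p^*f,g\rangle=\langle f,p_*q^*g\rangle$ for stacky pull-push (BKP, Appendix A) then gives the claimed identity, up to checking that the normalizations match.

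To make this bookkeeping transparent, I would work adelically instead. For $f$ supported on a single piece, unfold
$$\langle E_{P,D,K}f,g\rangle_{adelic}=\int_{P(F)\backslash G(\A')}f(x)\overline{g(x)}\,dx,$$
where $f$ is viewed as a function on $M_P(F)U_P(\A')\backslash G(\A')/K$. Then integrate first over $U_P(F)\backslash U_P(\A')$, which produces $\overline{CT^K_{P,D}g}$, and afterwards over $M_P(F)U_P(\A')\backslash G(\A')$. For the two sides to agree, the Haar measure must be normalized with $\mathrm{vol}(K)=1$, and the measure on $Q\bun_{M_P}$ must be the quotient measure. The main obstacle is exactly this step: matching the stacky normalization in the definition of the pairing with the quotient measures. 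Convergence is not an issue, because $f$ is finitely supported.

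For (2), the argument is formal. If $g$ is cuspidal, then $CT^K_{P,D}g=0$ for every maximal standard $P$ and every $D$. By (1), $g$ is then orthogonal to every $E_{P,D,K}f$, and hence to all of $Eis_K$. The pairing on $\S'(\bun_G(C,K))$ is positive definite, since it is a weighted sum of $|f(x)|^2$ with positive weights. So if $g\in\S'_{cusp}\cap Eis_K$, then $\langle g,g\rangle=0$, which forces $g=0$. The sum is therefore direct, and it lies in $\S'(\bun_G(C,K))$ because $E_{P,D,K}$ preserves finite support. This last point holds because $q_{D,K}$ has finite fibers on finitely supported functions; more precisely, $q_{D,K}$ restricted to the support of $p_{D,K}^*f$ has finite image by the usual reduction-theory finiteness.
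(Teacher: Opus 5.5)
Your proposal is correct and follows the same route as the paper, which treats this statement as the $K$-level version of the preceding proposition. Part (1) is the pull--push adjunction (equivalently, your adelic unfolding over $P(F)\backslash G(\mathbb{A})$), with the factor $1/\mathrm{vol}(U_{l,D,K})$ in $CT^K_{P,D}$ absorbed into the weights $1/\mathrm{vol}(\mathrm{Aut}(x))$. Part (2) is the formal orthogonality argument, using that the pairing is positive definite.

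One small correction: $E_{P,D,K}f$ has finite support simply because $p_{D,K}$ has finite fibers. Each fiber is a quotient of the finite set $U_P(F)\backslash U_P(\mathbb{A})/U_{l,D,K}$, so $p_{D,K}^*f$ is already finitely supported. No reduction-theory input or finiteness of the fibers of $q_{D,K}$ is needed, and $q_{D,K}$ does not in fact have finite fibers.
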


\begin{defn}\label{eis defn}
    Let $Eis= \sum_{K} Eis_K$, and we call an element in $Eis$ an Eisenstein series.
\end{defn}

\subsection{Hecke-finite functions and finitary functions}
Recall that a smooth subrepresentation $V \subset \S'(G(F)\backslash G(\A'))$ is $admissible$ if $\dim_\mathbb{C}V^K$ is finite for all compact open subgroups $K$ of $G(\A')$. A function $f$ is $finitary$ if it is contained in some admissible representation, or equivalently, $V_f$ is admissible, where $V_f$ is the $G(\A')$-representation generated by $f$.

We have the following proposition analogous to a standard fact in the classical theory.
\begin{prop}\label{heckefinite}
    Let $f \in \S'(G(F)\backslash G(\A'))^K $ and $V_f$ be the $G(\A')$-representation generated by $f$. Then we have $V_f^K = \Hk \cdot f$. In particular, we have that $f$ is $\Hk$-finite if and only if $V_f^K$ is finite-dimensional. Thus, a spherical vector is Hecke-finite if and only if $V_f^{G(\O')}$ is finite-dimensional.
    
\end{prop}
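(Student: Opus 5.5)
The statement is the standard fact that the $K$-invariants of the cyclic representation generated by a $K$-fixed vector are obtained by averaging over $K$. I will first describe $V_f$ explicitly, then project onto $K$-invariants, and finally identify the result with the Hecke orbit. Here $\Hk$ acts on $\S'(G(F)\backslash G(\A'))^K$ by right convolution,
$$(h\cdot f)(x)=\int_{G(\A')} h(y) f(xy)\,dy .$$
The Haar measure is normalized so that $K$ has volume $1$; this normalization only rescales and does not affect the spans.

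First, $V_f$ is the linear span of the right translates $R_g f$, $g\in G(\A')$, where $(R_g f)(x)=f(xg)$. This span is already $G(\A')$-stable, and it is smooth since $\S'(G(F)\backslash G(\A'))$ is smooth.

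Second, consider the averaging operator $e_K=\int_K R_k\,dk$. For any $v$ in the ambient smooth space, $e_K v$ is a finite sum, because $v$ is fixed by an open subgroup of finite index in $K$. The operator $e_K$ is a projection onto $K$-invariants, and it preserves $V_f$ since $V_f$ is $G(\A')$-stable. Hence $V_f^K=e_K(V_f)$, which is spanned by the vectors $e_K R_g f$.

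Third, compute $e_K R_g f$ directly:
$$(e_K R_g f)(x)=\int_K f(xkg)\,dk .$$
I compare this with $\mathbf 1_{KgK}\cdot f$. Since $f$ is right $K$-invariant, the map $y\mapsto f(xy)$ on $KgK=\bigsqcup_i k_i gK$ gives
$$(\mathbf 1_{KgK}\cdot f)(x)=\sum_i f(xk_i g)=[K:K\cap gKg^{-1}]\int_K f(xkg)\,dk .$$
The last equality holds because $k\mapsto f(xkg)$ is right-invariant under $K\cap gKg^{-1}$.

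So $e_K R_g f$ is a nonzero scalar multiple of $\mathbf 1_{KgK}\cdot f$. The functions $\mathbf 1_{KgK}$ span $\Hk$, so
$$V_f^K=\operatorname{span}\{\mathbf 1_{KgK}\cdot f\}=\Hk\cdot f .$$

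The consequences follow immediately. By definition $f$ is $\Hk$-finite exactly when $\Hk\cdot f$ is finite-dimensional, which is now the same as $V_f^K$ being finite-dimensional. Taking $K=G(\O')$ gives the spherical statement.

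The only point requiring care is the non-commutativity of $\Hk$ and whether the action is on the left or the right. I will check that the convolution convention makes $\Hk\cdot f$ land in $K$-invariants; this holds by the left $K$-invariance of $h$. The argument itself never uses commutativity, so I expect no real obstacle.
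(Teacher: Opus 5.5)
Your proof is correct and follows essentially the same route as the paper. Both arguments identify the $K$-average $\int_K f(xkg)\,dk$ with a nonzero multiple of the Hecke operator attached to $KgK$ applied to $f$: the finite-sum expression gives $\Hk\cdot f\subset V_f^K$, and averaging a $K$-invariant combination $\sum_i c_i R_{g_i}f$ over $K$ gives $V_f^K\subset \Hk\cdot f$. Your projector $e_K$ and the explicit index $[K:K\cap gKg^{-1}]$ simply package the paper's two inclusions into one computation.
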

\begin{proof}
    Given $h \in K\backslash G(\A')/K$, we have $T_h \cdot f (g) = \int_K f(gkh)dk$. Let $H_h= K \cap hKh^{-1}$, we can rewrite the integral as a finite sum
    $$ \int_K f(gkh)dk = vol(H_h) \sum_{h' \in K/H_h} f(gh'h),$$
    which shows that $\Hk \cdot f \subset  V^K $.
    Conversely, suppose we are given $v \in V^K$, we can write it as $v=\sum_{i}c_i g_i \cdot f$, where $c_i \in \mathbb{C}$ and $g_i \in G(\A')$. Since $v$ is right $K$-invariant, we have 
    $$ v(g)=\int_Kv(gk)dk= \sum_{i}c_i \int_K (g_i \cdot f)(gk)dk= \sum_{i}c_i \int_K f(gkg_i)dk,$$
    and hence it follows that $ V^K \subset \Hk \cdot f $

\end{proof}
The proposition implies:
\begin{prop}\label{heckeadmiss}
    Let $V$ be a smooth subrepresentation of $\S'(G(F)\backslash G(\A'))$. If for every compact open subgroup $K$ of $G(\A')$ and every nozero element $f\in V^K$, $f$ is $\Hk$-infinite, then $V$ has no admissible subrepesentation. 
  
\end{prop}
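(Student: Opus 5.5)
We argue by contraposition. Suppose $V$ contains a nonzero admissible subrepresentation $W \subset V$; we will produce a compact open subgroup $K$ and a nonzero $f \in V^K$ that is $\Hk$-finite, contradicting the hypothesis.

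\textbf{Choosing $K$ and $f$.} Since $W$ is a nonzero smooth representation, pick any nonzero $w \in W$. By smoothness, $w$ is fixed by some compact open subgroup $K \subset G(\A')$, so $w \in W^K \subset V^K$ is a nonzero vector.

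\textbf{Hecke-finiteness of $f = w$.} Let $V_w \subset W$ be the $G(\A')$-subrepresentation generated by $w$. It sits inside $W$ because $W$ is $G(\A')$-stable. Then
$$V_w^K \subset W^K,$$
and $W^K$ is finite-dimensional by admissibility of $W$. Proposition \ref{heckefinite} gives $\Hk \cdot w = V_w^K$. Hence $\Hk\cdot w$ is finite-dimensional, so $w$ is $\Hk$-finite. This contradicts the assumption that every nonzero element of $V^K$ is $\Hk$-infinite, and so $V$ has no nonzero admissible subrepresentation.

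The only point needing care is the conventions. The zero subrepresentation is trivially admissible, so the statement must be read as ``no nonzero admissible subrepresentation.'' Also, Proposition \ref{heckefinite} is stated for $f \in \S'(G(F)\backslash G(\A'))^K$ with $K$ arbitrary compact open, and $w$ meets this requirement since $V \subset \S'(G(F)\backslash G(\A'))$. Beyond these checks there is no real obstacle: the argument is a direct consequence of the equality $V_f^K = \Hk\cdot f$.
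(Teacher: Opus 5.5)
Your proof is correct and is the argument the paper has in mind. The paper gives no separate proof: it simply records this statement as an immediate consequence of Proposition \ref{heckefinite}, namely that a nonzero $K$-fixed vector $w$ in an admissible subrepresentation $W$ satisfies $\Hk\cdot w = V_w^K \subset W^K$, which is finite-dimensional. Your remark that the statement must be read as ``no nonzero admissible subrepresentation'' is also the right reading.
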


\section{Automorphic representation theory and the orbit decomposition}\label{secorbitdecomp}
In this section, we discuss the representation-theoretic approach in the study of automorphic functions in our setting. The main tool is a global Mackey theory for totally disconnected locally compact groups (see \ref{globalmackey} below). Using this we will recall the orbit decomposition for $\S'(G(F)\backslash G(\A'))$ in \ref{orbitdecomp}. Here (and for the rest of the paper), we will work with smooth representations. All induction functors are compact inductions unless specified otherwise.

\subsection{A global Mackey theory for totally disconnected locally compact groups}\label{globalmackey}
Now we discuss a global Mackey theory following \cite[Section 3.2]{BKP}. In loc. cit. the global Mackey theory is developed for the extension
$$ 1 \to \g\otimes \N (\bA') \to G(\A') \to G(\bA') \to 1,$$
yielding the orbit decomposition for $\S'(G(F)\backslash G(\A'))$ (which we will recall in \ref{orbitdecomp}).

The theory presented in loc. cit. can be adapted in a more general setting, and our goal here is to give a treatment of this generalization, which will be essential for our study of more complicated nilpotent orbits.

\subsubsection{First construction  }
Let $G$ be a totally disconnected locally compact group and $N$ be a commutative normal subgroup with quotient $H$; that is, we have a short exact sequence
$$ 1 \to N \to G \to H \to 1.$$
Let $\Gamma$ be a discrete subgroup of $G$, and set $N_\Gamma := N \cap \Gamma$ and let $H_\Gamma$ to be the image of $\Gamma$ in $H$. Also, we assume that the quotient $N/N_\Gamma$ is compact. Let $\Xi$ be the group of continuous characters of $N/N_\Gamma$, which is equipped with an action of $G$ by conjugation.

Throughout this section, we assume the following:

\begin{assump}
   For any compact open subgroup $K$ of $G$, there are finitely many continuous characters $N/N_\Gamma$ such that the restrictions to $ K \cap N$ are trivial. 
\end{assump}

Let $\S'(\Gamma \backslash G)$ be the locally constant functions on $\Gamma \backslash G$ with compact support. It is equipped with a $G$-action by right translations.

Let $\mathbb{C}_{lc}(N_\Gamma \backslash G)$ be the space of locally constant functions on $N_\Gamma \backslash G$. It is equipped with a $G$-action by right translations and a $N/N_\Gamma$-action by left translations. Suppose $\eta \in \Xi$, we let 
$$\mathbb{C}_{lc}(N_\Gamma \backslash G)_\eta = \lbrace f \in \mathbb{C}_{lc}(N_\Gamma \backslash G) | f(ng)=\eta(n) f(g) \text{    for $n\in N$} \rbrace, $$
and we have a collection of mutually orthogonal projectors 
$$ \Pi_\eta: \mathbb{C}_{lc}(N_\Gamma \backslash G) \to \mathbb{C}_{lc}(N_\Gamma \backslash G)_\eta : \Pi_\eta f(g):= \int_{N/N_\Gamma} f(ng)\eta(n)^{-1}dn,$$
where the measure $dn$ is normalized such that $\int_{N/N_\Gamma} dn=1$.

There is a well-defined action of $\Gamma$ on $\Xi$ defined by $(g \cdot \eta) (n) :=  \eta (gng^{-1})$, and this action descends to an action of $H_\Gamma$ on $\Xi$ (because $N_\Gamma$ acts trivially). Now, we denote by $\Omega_\eta$ the $H_\Gamma$-orbit of $\eta$ in $\Xi$.

\begin{prop}\label{basicmackey}

\begin{enumerate}
    \item The sum 
    $\sum_{\eta \in \Xi} (\Pi_\eta f)(g)$
    is finite for every $g \in G$, and is equal to $f(g)$.

    \item If $f$ is left $\Gamma$-invariant, then for any $H_\Gamma$-orbit $\Omega$ in $\Xi$,
    the sum $$\Pi_\Omega f :=\sum_{ \eta \in \Omega} \Pi_\eta f$$ is also left $\Gamma$-invariant, and if $f$ has compact support, then so does $\Pi_\Omega f$. In other words, we obtain
    $$ \Pi_\Omega: \S'(\Gamma\backslash G) \to \S'(\Gamma\backslash G). $$

    \item Let $\S'(\Gamma\backslash G)_\Omega \subset \S'(\Gamma\backslash G)$ be the image of $\Pi_\Omega$, then we have $$ \S'(\Gamma\backslash G)= \bigoplus_\Omega \S'(\Gamma\backslash G)_\Omega $$
    
\end{enumerate}
    
\end{prop}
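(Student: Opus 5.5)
The plan is to reduce everything to Fourier analysis on the compact abelian group $N/N_\Gamma$, using the Assumption to get finiteness.

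\textbf{Step 1: part (1).} Fix $f \in \mathbb{C}_{lc}(N_\Gamma\backslash G)$ and $g \in G$. Since $f$ is locally constant, there is a compact open subgroup $K$ of $G$ with $f(gk)=f(g)$ for $k\in K$. Put $K_g := gKg^{-1}\cap N$. This is a compact open subgroup of $N$, and for $n\in K_g$ we have $f(ng)=f(g\cdot g^{-1}ng)=f(g)$. So the function $\phi_g(n):=f(ng)$ is a locally constant function on the compact group $N/N_\Gamma$, and it is invariant under the open subgroup $K_g N_\Gamma/N_\Gamma$. Then $\Pi_\eta f(g)=\int \phi_g(n)\eta(n)^{-1}dn$ vanishes unless $\eta$ is trivial on $K_g$, because otherwise averaging over the open subgroup $K_g$ kills it. The Assumption, applied to the compact open subgroup $gKg^{-1}$ of $G$, leaves only finitely many such $\eta$. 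Fourier inversion on the finite abelian group $N/K_gN_\Gamma$, whose characters are exactly those $\eta$, gives $\sum_\eta \Pi_\eta f(g)=\phi_g(1)=f(g)$.

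\textbf{Step 2: part (2).} For $\gamma\in\Gamma$ and left $\Gamma$-invariant $f$, substitute $n\mapsto \gamma n\gamma^{-1}$, which preserves the normalized Haar measure on $N/N_\Gamma$ since $N$ is normal and $\gamma$ normalizes $N_\Gamma$:
\[\Pi_\eta f(\gamma g)=\int f(n\gamma g)\eta(n)^{-1}dn=\int f(\gamma^{-1}n\gamma g)\eta(n)^{-1}dn=\Pi_{\eta'}f(g),\]
where $\eta'$ is the $\gamma$-conjugate of $\eta$. So $\Gamma$ permutes the summands within an orbit $\Omega$, and $\Pi_\Omega f$ is left $\Gamma$-invariant. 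Here $\Pi_\Omega f$ means the pointwise sum, which is finite by Step 1.

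Local constancy and compact support of $\Pi_\Omega f$ come from the same estimate. Take $f$ right $K$-invariant with support in $\Gamma\backslash \Gamma S$ for a compact set $S$. Each $\Pi_\eta f$ is right $K$-invariant, so $\Pi_\Omega f$ is too. Each $\Pi_\eta f$ is supported in $\Gamma N S$. Since $N/N_\Gamma$ is compact, we can write $N=N_\Gamma C$ with $C$ compact. Then $\Gamma NS=\Gamma CS$, whose image in $\Gamma\backslash G$ is compact.

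\textbf{Step 3: part (3), and the main obstacle.} Part (1) gives $f=\sum_\Omega \Pi_\Omega f$. The projectors satisfy $\Pi_\eta\Pi_{\eta'}=\delta_{\eta\eta'}\Pi_\eta$, by orthogonality of characters on $N/N_\Gamma$. Hence the $\Pi_\Omega$ are mutually orthogonal idempotents, which gives directness.

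The main obstacle is that, a priori, $f$ could have infinitely many nonzero components $\Pi_\Omega f$. To rule this out I would use that $f$ is right $K$-invariant with compact support. Its support is covered by finitely many double cosets $\Gamma g_iK$. On each of these, only finitely many $\eta$ occur, namely those trivial on $g_iKg_i^{-1}\cap N$. So the set of $\eta$ with $\Pi_\eta f\neq 0$ is finite, and only finitely many orbits $\Omega$ contribute. The sum is therefore finite, and the decomposition holds.
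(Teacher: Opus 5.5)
Your route, Fourier expansion of $n\mapsto f(ng)$ on the compact abelian group $N/N_\Gamma$ with the Assumption supplying finiteness, is the same one the paper imports from \cite[Lemmas 3.9 and 3.10]{BKP}. However, two steps do not work as written.

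The more serious problem is in Step 3. The claim that only finitely many $\eta$ have $\Pi_\eta f\neq 0$ is false. Your own Step 2 computation gives $\Pi_\eta f(\gamma x)=\Pi_{\gamma\cdot\eta}f(x)$, so this set is a union of entire $\Gamma$-orbits. These orbits are typically infinite: in the orbit decomposition of Section~\ref{orbitdecomp}, this holds for the $G(\overline{F})$-orbit of any $\eta$ not fixed by $G(\overline{F})$. Your sentence ``on each of these, only finitely many $\eta$ occur, namely those trivial on $g_iKg_i^{-1}\cap N$'' is correct only at the point $g_i$ itself. At $\gamma g_ik$ the relevant characters are those trivial on $\gamma g_iKg_i^{-1}\gamma^{-1}\cap N$.

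The correct statement is about orbits. If $\Pi_\eta f(\gamma g_ik)\neq 0$, then $\Pi_{\gamma\cdot\eta}f(g_i)\neq 0$, so $\gamma\cdot\eta$ lies in the finite set $E_i$ of characters trivial on $g_iKg_i^{-1}\cap N$. Hence only the finitely many orbits meeting $\bigcup_i E_i$ contribute, and that is all (3) needs. You also need to justify looking only at points of $\operatorname{supp} f$, since the $\Pi_\eta f$ live on the larger set $\Gamma CS$. To do this: if $\Pi_\eta f(g)\neq 0$, pick $n$ with $f(ng)\neq 0$; then $\Pi_\eta f(ng)=\eta(n)\Pi_\eta f(g)\neq 0$.

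The second problem is in Step 1. Choosing $K$ with $f(gk)=f(g)$ only shows that $\phi_g$ is constant on $K_g$. What you actually use is translation invariance: both the vanishing of $\Pi_\eta f(g)$ for $\eta$ nontrivial on $K_g$ and the Fourier inversion on $N/K_gN_\Gamma$ need $\phi_g(mk)=\phi_g(m)$ for all $m\in N$, $k\in K_g$. That condition reads $f(mg\cdot g^{-1}kg)=f(mg)$, which is right $K$-invariance of $f$ at every point of $Ng$, and your choice of $K$ does not give this.

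There are two easy fixes:
\begin{enumerate}
\item Write $N=N_\Gamma C$ with $C$ compact, as you already do in Step 2. Choose $K$ so that $f$ is right $K$-invariant at each point of the compact set $Cg$. Left $N_\Gamma$-invariance then extends this to all of $Ng$.
\item Alternatively, $\phi_g$ is invariant under some open subgroup $U$ of $N$, by compactness of $N/N_\Gamma$. Van Dantzig's theorem gives a compact open $K'\subset G$ with $K'\cap N\subset U$, and the Assumption applies to $K'$.
\end{enumerate}

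With these two repairs, the rest of Step 2 and the orthogonality argument in Step 3 go through.
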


\begin{proof}
    The proofs for (1) and (2) are the same as the proofs for \cite[Lemmas 3.9 and 3.10]{BKP}. 
    
    (3) follows by (1) and (2).
\end{proof}

\subsection{A global Mackey description for $\S'(\Gamma\backslash G)_\Omega$}
Let $\tilde{\Xi}$ be the group of characters of $N$ (we view $\Xi \subset \tilde{\Xi}$). Then we have an action of $G$ on $\tilde{\Xi}$ defined by $g \cdot \eta (n)= \eta(gn g^{-1})$. If $K$ is an compact open subgroup of $G$, we denote $\tilde{\Xi}_K$ be the subgroup of characters that are trivial on $K \cap N$. We denote $\Delta_\Omega(K)$ to be the set of $K$-orbits in $\tilde{\Xi}_K$. 

Given $\eta \in \Xi $ we denote the stabilizer of $\eta$ in $G$ by $G_\eta$ and the image of $G_\eta$ in $H$ by $H_\eta$. We also set $\Gamma_\eta= \Gamma \cap G_\eta$. It is clear that $N \subset G_\eta$ and hence we have a short exact sequence 
$$ 1 \to N \to G_\eta \to H_\eta \to 1.$$

\begin{defn}(\cite[Definition 3.2]{BKP}) Let 
 $\tS_\eta \subset \mathbb{C}_{lc}(\Gamma_\eta \backslash G)_\eta$ consist of the subspace of locally constant functions $f$ with compact support modulo $N$ and $f(ng)=\eta(n)f(g)$ for every $n \in N$.
\end{defn}

The following proposition is useful for proving cuspidality of functions in $\S'_{\Omega_\eta}$.

\begin{prop}\label{pushforward formula}
Assume that for every compact open subgroup $K$ of $G$, the intersection $\Gamma \cdot \eta \cap (N\cap K)^\perp $ is finite, where $(N\cap K)^\perp$ denotes the set of characters of $N$ whose restrictions to $N \cap K$ are trivial. Then the map 
    $$\kappa_\eta: \tilde{S}_\eta \to \S'_{\Omega_\eta}: (\kappa_\eta f)(g) := \sum_{\gamma \in \Gamma_\eta \backslash \Gamma}f(\gamma g)  $$
    defines an isomorphism of $G$-representations. 
\end{prop}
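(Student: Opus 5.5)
We will prove that $\kappa_\eta$ is well defined, $G$-equivariant, injective and surjective, by comparing it with the projector $\Pi_{\Omega_\eta}$ of Proposition \ref{basicmackey}.

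\textbf{Well-definedness.} Take $f\in\tS_\eta$ and fix a compact open $K$ with $f$ right $K$-invariant. Write $\Gamma_\eta\backslash\Gamma$ as $\Gamma_\eta\backslash\Gamma/N_\Gamma$ times cosets of $N_\Gamma$. Since $f(n\gamma g)=\eta(n)f(\gamma g)$, summing over $N_\Gamma$ directly makes no sense. So we interpret the sum over $\Gamma_\eta\backslash\Gamma$, where $N_\Gamma\subset\Gamma_\eta$ because $N$ is commutative and fixes $\eta$. For $\gamma\in\Gamma$, the function $\lambda_\gamma f(g):=f(\gamma g)$ lies in $\mathbb{C}_{lc}(N_\Gamma\backslash G)_{\gamma^{-1}\cdot\eta}$ (up to the convention for the action). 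It is right invariant under $K_g:=gKg^{-1}$ near $g$, so it can be nonzero at $g$ only if $\gamma^{-1}\cdot\eta$ is trivial on $N\cap gKg^{-1}$. By the finiteness hypothesis applied to $gKg^{-1}$, only finitely many characters $\gamma\cdot\eta$ occur, and these correspond bijectively to cosets in $\Gamma_\eta\backslash\Gamma$ (with the appropriate side convention). Hence the sum is pointwise finite, and locally finite uniformly on a neighborhood of $g$. The result is left $\Gamma$-invariant by construction and right $K$-invariant.

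\textbf{Compact support and equivariance.} Compact support of $\kappa_\eta f$ on $\Gamma\backslash G$ needs an argument. Since $f$ has support compact modulo $N$, $\operatorname{supp} f\subset N\cdot C$ with $C$ compact. The support of $\kappa_\eta f$ lies in $\Gamma N C$, which has compact image in $\Gamma\backslash G$ because $N/N_\Gamma$ is compact. $G$-equivariance for right translation is obvious.

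\textbf{Image and bijectivity.} Next we show $\Pi_{\eta'}\kappa_\eta f=\lambda_{\gamma}f$ when $\eta'=\gamma^{-1}\cdot\eta$. To see this, apply $\Pi_{\eta'}$ termwise; the terms lie in distinct isotypic components, and the $\Pi$'s are orthogonal projectors. Consequently $\kappa_\eta f\in\S'_{\Omega_\eta}$, and $\Pi_\eta\kappa_\eta f=f$, which gives injectivity. For surjectivity, take $h\in\S'_{\Omega_\eta}$ and set $f:=\Pi_\eta h$. Left $\Gamma$-invariance of $h$ gives $f(\gamma g)=\Pi_{\gamma\cdot\eta}h(g)$-type identities, and these show $f$ is left $\Gamma_\eta$-invariant and $\eta$-equivariant under $N$. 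Proposition \ref{basicmackey}(1) then gives $h=\sum_{\eta'\in\Omega_\eta}\Pi_{\eta'}h=\sum_{\Gamma_\eta\backslash\Gamma}f(\gamma\,\cdot)=\kappa_\eta f$.

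\textbf{Main obstacle.} The main remaining point is that $f=\Pi_\eta h$ has support compact modulo $N$. We will handle it as follows. The support of $h$ is $\Gamma C$ with $C$ compact. Then $f(g)\neq0$ forces $g\in\gamma C$ with $\gamma^{-1}\cdot\eta$ trivial on $N\cap cKc^{-1}$ for some $c\in C$. The conjugates $cKc^{-1}$, $c\in C$, contain a common compact open subgroup $K'$. By the hypothesis applied to $K'$, only finitely many cosets $\Gamma_\eta\gamma$ occur. Thus $\operatorname{supp}f\subset\Gamma_\eta\{\gamma_1,\dots,\gamma_r\}C$, and this set is compact modulo $\Gamma_\eta$. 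Bookkeeping of left versus right conventions for the action on characters is the main care point throughout.
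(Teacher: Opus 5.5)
Your proposal is correct and is essentially the paper's argument, which defers to \cite[Lemma 3.13]{BKP}: the finiteness hypothesis makes the sum defining $\kappa_\eta f$ locally finite, and $\Pi_\eta$ gives the inverse, since $\Pi_\eta\kappa_\eta f=f$ and $h=\kappa_\eta(\Pi_\eta h)$. One small slip to fix: in your last paragraph, $f(g)\neq 0$ only gives $g\in N\gamma C$, not $g\in\gamma C$, so first replace $C$ by $DC$, where $D\subset N$ is compact with $N=N_\Gamma D$; after that your support argument goes through verbatim.
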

\begin{proof}
    This is essentially Lemma 3.13 in \cite{BKP}. The assumption that $\Gamma \cdot \eta \cap (N\cap K)^\perp $ is finite guarantees that the map $\kappa_\eta$ is well-defined, i.e. $ \sum_{\gamma \in \Gamma_\eta \backslash \Gamma}f(\gamma g)$ is a finite sum for each $g$. The proof that $\kappa_\eta$ is bijective is the same as in $loc.$ $ cit.$
\end{proof}

Also, let $\S'_\eta$ be the space of locally constant functions $f$ with compact support on $\Gamma_\eta \backslash G_\eta$ such that $f(ng)=\eta(n)f(g)$ for $n \in N$. This is equipped with a $G_\eta$-action. The next proposition follows immediately:

\begin{prop}\label{cpt induction}
    We have an isomorphism of $G$-representations $$\ind_{G_\eta}^G \S'_\eta \simeq \tilde{\S'}_\eta.$$
\end{prop}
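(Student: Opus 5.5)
We will write down the isomorphism explicitly and check it directly. Recall that $\ind_{G_\eta}^G \S'_\eta$ is the compact induction. Its elements are locally constant functions $\phi: G \to \S'_\eta$ with the following two properties:
\[
\phi(hg) = h\cdot \phi(g) \quad (h\in G_\eta),
\]
and $\phi$ is compactly supported modulo $G_\eta$. The group $G$ acts by right translation.

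We define
\[
\Phi:\ind_{G_\eta}^G \S'_\eta \to \tS_\eta, \qquad \Phi(\phi)(g) := \phi(g)(1).
\]
The inverse is
\[
\Psi: \tS_\eta \to \ind_{G_\eta}^G \S'_\eta,\qquad \Psi(f)(g) := \bigl(h \mapsto f(hg)\bigr)\big|_{G_\eta}.
\]
This is the standard identification of the compact induction of a function space on $\Gamma_\eta\backslash G_\eta$ with a function space on $\Gamma_\eta\backslash G$.

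The steps are as follows.

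\textbf{Step 1: $\Phi(\phi)$ lies in $\tS_\eta$.} For $\gamma\in\Gamma_\eta \subset G_\eta$ we have
\[
\phi(\gamma g)(1) = (\gamma\cdot\phi(g))(1) = \phi(g)(\gamma) = \phi(g)(1),
\]
since $\phi(g)$ is left $\Gamma_\eta$-invariant. So $\Phi(\phi)$ is left $\Gamma_\eta$-invariant. For $n\in N\subset G_\eta$, the same computation together with $\phi(g)(n)=\eta(n)\phi(g)(1)$ gives the $\eta$-equivariance $\Phi(\phi)(ng)=\eta(n)\Phi(\phi)(g)$.

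\textbf{Step 2: $\Psi(f)$ lies in the induced space.} The restriction of $h\mapsto f(hg)$ to $G_\eta$ is left $\Gamma_\eta$-invariant and $\eta$-equivariant under $N$. The induction property is immediate from the definition.

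\textbf{Step 3: the two maps are mutually inverse and $G$-equivariant.} Both facts follow from direct evaluation: $\Phi(\Psi(f))(g)=f(g)$, and $\Psi(\Phi(\phi))(g)(h)=\phi(hg)(1)=\phi(g)(h)$. Equivariance for right translation is clear from the formulas.

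\textbf{Step 4 (main obstacle): matching the finiteness conditions.} It remains to check that the smoothness and support conditions correspond under $\Phi$ and $\Psi$.

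For smoothness, local constancy of $\phi$ in $g$, together with smoothness of the values $\phi(g)$, gives local constancy of $\Phi(\phi)$. Conversely, $f\in\tS_\eta$ is invariant under some compact open $K$ on the right. This gives smoothness of $\Psi(f)$ in $g$, and $g$-uniform local constancy of its values on $G_\eta$.

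For support, take $\phi$ supported in $G_\eta\cdot C$ with $C$ compact. Each $\phi(c)$ is compactly supported modulo $\Gamma_\eta$. Because $\phi$ is locally constant and $C$ is compact, only finitely many values $\phi(c)$ occur, up to the right action of a compact open subgroup. Hence the support of $\Phi(\phi)$ lies in $\Gamma_\eta\cdot(\text{compact})\cdot C$, which is compact modulo $\Gamma_\eta$.

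The converse direction runs the same argument in reverse. Cover the compact support of $f$ modulo $\Gamma_\eta$ by finitely many sets $\Gamma_\eta x_i K$. The support of $\Psi(f)$ modulo $G_\eta$ is then contained in the image of finitely many compact sets $x_iK$. For each $g$, the value $\Psi(f)(g)$ is supported on $\{h\in G_\eta : hg \in \Gamma_\eta x_i K\}$, and we must show this set is compact modulo $\Gamma_\eta$.

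This uses the fact that $G_\eta$ is closed in $G$ and $\Gamma_\eta=\Gamma\cap G_\eta$, so that $\Gamma_\eta\backslash G_\eta \to \Gamma_\eta\backslash G$ is a closed embedding. The preimage of a compact set under this embedding is therefore compact. This closedness is the one point requiring care. $G_\eta$ is closed because it is the stabilizer of a continuous character under a continuous action.

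Finally, "compact support" in $\tS_\eta$ is only modulo $N$. This matches $\S'_\eta$ provided compactness there is likewise read modulo $N$, which is consistent since $N\subset G_\eta$ and $N/N_\Gamma$ is compact. With these conventions both conditions correspond under $\Phi$ and $\Psi$, and the proof is complete.
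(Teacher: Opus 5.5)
Your proof is correct and is the standard identification $\phi\mapsto\bigl(g\mapsto\phi(g)(1)\bigr)$ that the paper intends when it says the proposition ``follows immediately'' (the paper gives no further argument). You also supply the routine checks the paper leaves implicit: closedness of $G_\eta$, so that $\Gamma_\eta\backslash G_\eta\to\Gamma_\eta\backslash G$ is a closed embedding, and compactness of $N/N_\Gamma$, to reconcile ``compact modulo $N$'' with compact support.
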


\subsubsection{Multiple-step Induction}
Given a short exact sequence
$$ 1 \to U(G_\eta) \to G_\eta \to M(G_\eta) \to 1,$$
where $U(G_\eta)$ is a commutative normal closed subgroup of $G_\eta$, and assume that $U(G_\eta)/ (\Gamma_\eta\cap U(G_\eta))$ is compact, we can further decompose the $G_\eta$-representation $S_\eta$ into subrepresentations $S_{\eta,\chi}$ parametrized by conjugacy classes of characters $\chi$ of $U(G_\eta)/ (\Gamma_\eta\cap U(G_\eta))$ using the procedures defined previously. More precisely, let $$S_{\eta,\chi} = \ind_{\St(\chi)}^{G_\eta} \S'_{\St(\chi)},$$ where $\St(\chi)$ is the centralizer of $\chi$ in $G_\eta$ and $\S'_{\St(\chi)}$ denotes the space of locally constant functions $f$ with compact support on $(\St(\chi)\cap \Gamma_\eta) \backslash \St(\chi)$ such that $f(ng)=\chi(n)f(g)$ for $n\in U(G_\eta)$. Let $\S'_{\Omega_\eta,\chi}= \kappa_\eta(\ind_{G_\eta}^G \kappa_\chi S_{\eta,\chi})$, where $(\kappa_\chi f)(g):=\sum_{\gamma \in (\St(\chi)\cap \Gamma_\eta) \backslash \Gamma_\eta} f(\gamma g)$,  then we have 
$$ \S'_{\Omega_\eta} = \bigoplus_\chi \S'_{\Omega_\eta,\chi}. $$

\begin{prop} We have an isomorphism of $G$-representations
$$\kappa_{\chi,\eta}: \ind_{\St(\chi)}^G \S'_{\St(\chi)} \xrightarrow{\simeq} \S'_{\Omega_\eta,\chi}, $$
where $(\kappa_{\chi,\eta} f)(g):=\sum_{\gamma \in (\St(\chi)\cap \Gamma) \backslash \Gamma} f(\gamma g). $
\end{prop}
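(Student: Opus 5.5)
The isomorphism $\kappa_{\chi,\eta}$ should factor as a composite of isomorphisms already available, using transitivity of compact induction and compatibility of the averaging maps.

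\emph{Step 1: induction in stages.} Since $\St(\chi)\subset G_\eta\subset G$, transitivity of compact induction gives
$$\ind_{\St(\chi)}^G \S'_{\St(\chi)}\simeq \ind_{G_\eta}^G\ind_{\St(\chi)}^{G_\eta}\S'_{\St(\chi)}=\ind_{G_\eta}^G S_{\eta,\chi}.$$
Concretely, a function $f$ on $(\St(\chi)\cap\Gamma)\backslash G$, transforming by $\chi$ under $U(G_\eta)$ and compactly supported modulo $U(G_\eta)$, corresponds to $g\mapsto (x\mapsto f(xg))$ with $x\in G_\eta$.

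\emph{Step 2: apply the inner Mackey isomorphism.} The sequence $1\to U(G_\eta)\to G_\eta\to M(G_\eta)\to 1$ with discrete subgroup $\Gamma_\eta$ satisfies the hypotheses of Section \ref{globalmackey}. Applying Propositions \ref{pushforward formula} and \ref{cpt induction} to this sequence shows that $\kappa_\chi$ maps $S_{\eta,\chi}$ isomorphically onto the $\chi$-orbit summand of $\S'_\eta$. Compact induction $\ind_{G_\eta}^G$ is exact, so $\ind_{G_\eta}^G\kappa_\chi$ is an isomorphism onto $\ind_{G_\eta}^G(\S'_\eta)_{\chi}$. Composing with the identification of Proposition \ref{cpt induction} and with $\kappa_\eta$ from Proposition \ref{pushforward formula}, which is an isomorphism on all of $\tS_\eta$ and so restricts to an isomorphism on each summand, the image is exactly $\S'_{\Omega_\eta,\chi}$ by definition.

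\emph{Step 3: identify the composite with $\kappa_{\chi,\eta}$.} Unwinding the definitions, the composite sends $f$ to
$$g\mapsto\sum_{\gamma\in\Gamma_\eta\backslash\Gamma}\ \sum_{\delta\in(\St(\chi)\cap\Gamma_\eta)\backslash\Gamma_\eta} f(\delta\gamma g).$$
Since $\St(\chi)\subset G_\eta$, we have $\St(\chi)\cap\Gamma=\St(\chi)\cap\Gamma_\eta$. The pairs $(\delta,\gamma)$ therefore parametrize $(\St(\chi)\cap\Gamma)\backslash\Gamma$ bijectively, and the double sum is $\kappa_{\chi,\eta}f(g)$. $G$-equivariance holds because every map in the chain is $G$-equivariant.

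\emph{Main obstacle.} The delicate point is finiteness and convergence. One must check that the inner sum over $\Gamma_\eta$ is finite for $f$ supported compactly modulo $U(G_\eta)$ and that the outer sum stays finite, so that the combined sum defining $\kappa_{\chi,\eta}$ is well-defined. This requires a finiteness hypothesis of the type in Proposition \ref{pushforward formula} for the $\Gamma_\eta$-orbit of $\chi$, for instance that $\Gamma_\eta\cdot\chi\cap(U(G_\eta)\cap K)^\perp$ is finite for every compact open $K$, together with the analogue of the Assumption for $U(G_\eta)/(\Gamma_\eta\cap U(G_\eta))$. I would state these as the implicit standing hypotheses of the multiple-step setup. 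Given them, the Fubini-type regrouping in Step 3 is justified because all sums are finite at each point.
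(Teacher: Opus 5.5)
Your proposal is correct and follows the argument the paper leaves implicit; the paper gives no written proof, since $\mathcal{S}_{\Omega_\eta,\chi}$ is \emph{defined} as $\kappa_\eta(\ind_{G_\eta}^G\kappa_\chi S_{\eta,\chi})$, and transitivity of compact induction plus the reindexing $(\St(\chi)\cap\Gamma)\backslash\Gamma \leftrightarrow \bigl((\St(\chi)\cap\Gamma_\eta)\backslash\Gamma_\eta\bigr)\times(\Gamma_\eta\backslash\Gamma)$ is exactly what identifies the composite with $\kappa_{\chi,\eta}$. You are also right that the inner step needs the finiteness hypotheses of Proposition \ref{pushforward formula}, applied to the $\Gamma_\eta$-orbit of $\chi$, which the paper does not state explicitly in the multiple-step setting.
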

 
\subsubsection{$K$-invariant vectors of induced representations} We keep the notation in \ref{globalmackey}. We further assume that $N$ is the union of an increasing family of compact subgroups \footnote{For example, the additive group $\bA'$ of adeles satisfies this condition, while $\bA'^*$ does not.}.

Let $W$ be a smooth subrepresentation of $\S'_\eta$.

\begin{prop}
(The global version of \cite[Proposition 3.5]{BKP})\label{sphvecgen}
    Let $K$ be a compact open subgroup of $G$, then we have the following formula that computes the $K$-invariants of the induced representation, namely,
    $$(\ind_{G_\eta}^{G }W)^K \simeq \bigoplus_{g\in G_\eta \backslash G / K, \text{ } \Ad(g^{-1}) \cdot \eta \in \Delta_\Omega(K)} W^{gKg^{-1} \cap G_\eta}$$    
\end{prop}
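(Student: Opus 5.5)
The plan is to follow the usual Mackey-type computation of $K$-invariants in a compactly induced representation. Recall that $\ind_{G_\eta}^G W$ consists of functions $\phi: G \to W$ with $\phi(hg) = h\cdot\phi(g)$ for $h \in G_\eta$, locally constant, and compactly supported modulo $G_\eta$. A $K$-invariant such $\phi$ is determined by its values on a set of representatives $g$ of the double cosets $G_\eta\backslash G/K$. For each $g$ the value $w_g := \phi(g)$ must satisfy $h\cdot w_g = w_g$ whenever $h = gkg^{-1} \in G_\eta \cap gKg^{-1}$. Hence $w_g \in W^{gKg^{-1}\cap G_\eta}$, and conversely any such vector defines a $K$-invariant function supported on $G_\eta gK$, which is compactly supported mod $G_\eta$ because $K$ is compact. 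Compact support forces only finitely many $g$ to contribute, so the $K$-invariants are the direct sum over double cosets of these spaces. This is the standard first step, exactly as in the local \cite[Proposition 3.5]{BKP}.

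The second step cuts the index set down to the double cosets with $\Ad(g^{-1})\cdot\eta\in\Delta_\Omega(K)$, i.e.\ those for which $g^{-1}\cdot\eta$ is trivial on $K\cap N$. Take $n \in N\cap K$. Then $gng^{-1}\in N\subset G_\eta\cap gKg^{-1}$, and by definition of $\S'_\eta$ it acts on $W$ through the scalar $\eta(gng^{-1}) = (g^{-1}\cdot\eta)(n)$, up to the paper's convention for the action. So if $w_g\neq 0$ is fixed by this element, we get $(g^{-1}\cdot\eta)(n)=1$ for all $n\in N\cap K$. Thus $g^{-1}\cdot \eta\in\tilde{\Xi}_K$, and its $K$-orbit is an element of $\Delta_\Omega(K)$. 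Otherwise $W^{gKg^{-1}\cap G_\eta}=0$, so including or excluding that double coset makes no difference. This condition depends only on the double coset $G_\eta gK$, since $G_\eta$ fixes $\eta$ and replacing $g$ by $gk$ only moves $g^{-1}\cdot\eta$ within its $K$-orbit.

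The main obstacle is the finiteness and well-definedness of the direct sum in the global setting, where $G_\eta\backslash G/K$ may be infinite. We need that only finitely many relevant double cosets occur, or at least that the isomorphism is with the algebraic direct sum. This is where the extra assumption enters: $N$ is an increasing union of compact subgroups, and the Assumption (finitely many characters of $N/N_\Gamma$ trivial on $K\cap N$) holds. I would handle this step as follows.

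\begin{enumerate}
\item Use that $g\mapsto g^{-1}\cdot\eta$ identifies $G_\eta\backslash G$ with the $G$-orbit of $\eta$ in $\tilde\Xi$.
\item Conclude that the relevant double cosets correspond to the $K$-orbits on $G\cdot\eta\cap\tilde\Xi_K$.
\item Since every element of the algebraic direct sum has finite support anyway, the identification holds as stated. Any local-constancy or compact-support subtlety is absorbed by $K$-invariance and compactness of $K$.
\item Use the exhaustion of $N$ by compact subgroups only to justify that the characters, and hence the functions involved, are locally constant, so that the smoothness conditions on both sides match.
\end{enumerate}
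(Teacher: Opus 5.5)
Your proposal is correct and follows the same route as the paper's sketch. You record a $K$-invariant function by its values $\phi(g)\in W^{gKg^{-1}\cap G_\eta}$ on double cosets, and conversely build the function supported on $G_\eta gK$. You then discard double cosets on which $N\cap gKg^{-1}$ acts by a nontrivial character, and identify the surviving ones with $K$-orbits of $\Ad(g^{-1})\cdot\eta$ in $\tilde{\Xi}_K$.

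Your final ``obstacle'' paragraph is unnecessary. Finiteness is already settled in your first step: each $G_\eta gK$ is open, so a function compactly supported modulo $G_\eta$ meets only finitely many of them. Neither the finiteness Assumption nor the exhaustion of $N$ by compact subgroups is needed here; the latter is used only for the Jacquet-functor statement that follows.
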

\begin{proof}(Sketch, the proof is essentially the same as Proposition 3.5 in $loc.$ $cit.$)
    Let $f \in (\ind_{G_\eta}^{G }W)^K $ i.e. for any $k \in K$ we have 
    $$ f(g^{-1})=f(g^{-1}k)= \eta(g^{-1}kg)f(g^{-1}),$$
    and this implies $f=0$ unless $\psi_\eta$ is trivial on $N \cap g^{-1}Kg$ i.e. $\Ad(g^{-1}) \cdot \eta \in \tilde{\Xi}_K$.
    
    Also, for any $g$ such that $\Ad(g^{-1})\cdot \eta \in  \tilde{\Xi}_K$ and a function $w\in W^{gKg^{-1} \cap \St_\eta}$, we can define a function $f_{w,g,K} \in (\ind_{G_\eta}^{G }W)^K$ by

    \[
f_{w,g,K}(g') =
\left\{ 
    \begin{array}{l}
        s \cdot w \text{   ,if $g'=sgk$ for some $s \in G_\eta$ and $k\in K$} \\
        0 \text{  if $g' \notin G_\eta g K$}
    \end{array} 
\right.
\]
And finally, observe that if $g \cdot \eta $ and $g' \cdot \eta$ agree in $ \Delta_\Omega(K) $, they define the same functions in $(\ind_{G_\eta}^{G}W)^K$.
\end{proof}

For each $\eta$, we denote by $J_\eta$ the Jacquet functor from the category $R_{\tS_\eta}$ of smooth $G$-subrepresentations of $\tS_\eta$ to the category $ R_{\S'_\eta}$ of smooth $G_\eta$-subrepresentations of $\S'_\eta$ , taking a smooth $G$-subrepresentations $V$ to its $\eta$-coinvariants with respect to $N$.
\begin{prop}\label{jacquet} We have:
    \begin{enumerate}
        \item The functor $J_\eta$ is left adjoint to the induction functor, and both of them are exact.
        \item The functors $J_\eta: R_{\tS_\eta} \to R_{\S'_\eta}$ and $\ind_{G_\eta}^G : R_{\S'_\eta} \to R_{\tS_\eta}$ are equivalences of categories. In particular, any smooth subrepresentation $V$ of $\tS_\eta$ is isomorphic to $\ind_{G_\eta}^G W$ for some smooth $G_\eta$-subrepresentation $W$ of $\S'_\eta$.
        
    \end{enumerate}
\end{prop}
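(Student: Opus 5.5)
The plan is to run the standard Mackey-type argument for induction from the stabilizer of a character of an abelian normal subgroup. Throughout we use the description of $\tS_\eta$ from Proposition \ref{cpt induction} as $\ind_{G_\eta}^G \S'_\eta$.

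\emph{Step 1: construct the adjunction.} For a smooth $G$-representation $V \subset \tS_\eta$, define
$$J_\eta(V) := V/V(N,\eta), \qquad V(N,\eta) = \mathrm{span}\lbrace nv-\eta(n)v \rbrace .$$
Since $G_\eta$ normalizes $N$ and fixes $\eta$, this quotient is a smooth $G_\eta$-representation. For $f\in\tS_\eta$, evaluation at the identity,
$$\mathrm{ev}(f)=f|_{G_\eta},$$
lands in $\S'_\eta$. We check it kills $V(N,\eta)$: $(nf)(g)=f(gn)=f(gng^{-1}g)$, and for $g\in G_\eta$ this equals $\eta(n)f(g)$. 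So $\mathrm{ev}$ gives the counit $J_\eta(\ind W)\to W$, while Frobenius reciprocity provides the unit $V\to \ind J_\eta V$. The triangle identities are then formal, which proves the adjunction in (1).

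\emph{Step 2: exactness.} Compact induction is exact. For $J_\eta$, we use that $N$ is a union of an increasing family of compact open subgroups $N_i$. An element $v$ lies in $V(N,\eta)$ if and only if
$$\int_{N_i}\eta(n)^{-1}\,nv\,dn=0$$
for some $i$. The usual Jacquet-lemma argument therefore shows that $J_\eta$ is exact.

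\emph{Step 3: counit and unit are isomorphisms.} We show that $J_\eta(\ind W)\to W$ is an isomorphism, i.e.\ that the $\eta$-coinvariants of $\ind W$ are detected on the single coset $G_\eta$.

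Decompose $f\in \ind W$ by support on $G_\eta\backslash G$. For $g\notin G_\eta$, the character $\Ad(g)\cdot\eta$ differs from $\eta$ on $N$. The assumption in the paper that only finitely many characters of $N/N_\Gamma$ are trivial on a given $K\cap N$ supplies a compact open $N_i$ on which $\eta$ and $\Ad(g)\eta$ differ, and this can be done locally uniformly in $g$. Averaging by $\eta^{-1}$ over $N_i$ then kills the part of $f$ supported on a small neighbourhood of $G_\eta g$, so that part lies in $V(N,\eta)$. Surjectivity of the counit is clear by extending $w$ by a bump supported near $G_\eta$.

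For the unit $V\to \ind J_\eta V$ with $V\subset\tS_\eta$, exactness of both functors reduces injectivity to the claim $J_\eta(V)=0\Rightarrow V=0$. This holds because a nonzero $f\in V$ has a translate with $f(1)\neq 0$, and hence $\mathrm{ev}(f)\neq0$. Surjectivity follows by applying exactness to the cokernel together with the isomorphic counit. This gives (2), and the final statement of (2) is the case $W=J_\eta V$.

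The main obstacle I expect is the localization in Step 3, namely showing that functions supported on cosets $G_\eta g$ with $g\notin G_\eta$ lie in $V(N,\eta)$ uniformly in $g$. This requires controlling which $N_i$ separates $\eta$ from $\Ad(g)\eta$ over compact families of $g$. I would handle it via the finiteness assumption and Proposition \ref{sphvecgen}, working $K$-invariants by $K$-invariants.
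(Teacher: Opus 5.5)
Your Steps 1--2 agree with the paper's cited argument: exactness of $J_\eta$ comes from $N$ being an increasing union of compact subgroups. The counit part of Step 3 is also fine in outline. The uniformity you single out as the main obstacle is in fact routine: it follows from local constancy of $g'\mapsto \eta(g'ng'^{-1})$ on a compact $N_i$ together with compactness of the support modulo $G_\eta$, and it does not need the finiteness assumption.

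\textbf{The gap: surjectivity of the unit.} The real content of (2) is the surjectivity of the unit $V\to \ind J_\eta V$. Applying exactness to the cokernel $Q=\ind J_\eta V/V$ only gives $J_\eta Q=0$. To conclude $Q=0$ you need $J_\eta$ to be conservative on $Q$. Your conservativity argument (translate $f$ so that $f(1)\neq 0$, hence $\mathrm{ev}(f)\neq 0$) uses that the object is a space of functions inside $\tilde{\mathcal{S}}_\eta$. So it covers subrepresentations, but not the quotient $Q$. This is not a formality. After identifying $J_\eta V$ with $W:=\mathrm{ev}(V)\subset\mathcal{S}_\eta$ (via exactness), the unit is just the inclusion $V\subset \ind W$. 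Its surjectivity says exactly that a $G$-stable subspace $V\subset\ind W$ with $\mathrm{ev}(V)=W$ is all of $\ind W$.

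\textbf{The missing idea: a cut-off inside $V$ using only the group action.} Given $\phi\in\ind W$ and $g$ in its support, write $\phi(g)=\mathrm{ev}(v)$ with $v\in V$. Then $g^{-1}v\in V$ agrees with $\phi$ on some $G_\eta gK$. You may not simply multiply $g^{-1}v$ by $\mathbf{1}_{G_\eta gK}$, since $V$ is not known to be stable under that operation. Instead, use the following steps.
\begin{enumerate}
\item An element $n\in N$ acts on $\ind W$ by multiplication by the function $g'\mapsto \eta(g'ng'^{-1})$.
\item Hence finite sums of the operators $\int_{N_i}\chi(n)^{-1}\,n\,dn$, which preserve $V$, act as multiplication by indicator functions of sets $\{g' : (n\mapsto \eta(g'ng'^{-1}))|_{N_i}\in S\}$, with $S$ a finite set of characters of $N_i$.
\item The map $G_\eta g'\mapsto \eta(g'\,\cdot\,g'^{-1})$ is injective on $G_\eta\backslash G$, and the support of $g^{-1}v$ is compact modulo $G_\eta$. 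So for $N_i$ large enough, such sets separate $G_\eta gK$ from the rest of that support.
\item This yields $\mathbf{1}_{G_\eta gK}\, g^{-1}v\in V$, and finitely many such pieces sum to $\phi$.
\end{enumerate}
This is the argument behind Proposition 3.5(3) of \cite{BKP}, to which the paper refers for (2). It is the same mechanism as the operators $P_i$ in the paper's own proof of the analogous equivalence for $J^*$ in the subregular section.
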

\begin{proof}
    This is the global version of \cite[Lemma 3.4 and Proposition 3.5(3)]{BKP}. Since $N$ is an increasing union of compact subgroups, (1) follows from \cite[Proposition 10]{B}. The proof for (2) is essentially the same as that of in \cite[Proposition 3.5(3)]{BKP}.
\end{proof}

\subsection{The orbit decomposition for square-zero extension}\label{orbitdecomp} Now we specialize our construction from the previous subsection to construct the orbit decomposition for the automorphic representation of $G(\A')$. Nothing is new here, but we include it for the reader's convenience.  From now on, we fix a nontrivial character $\psi:k \to U(1)$ which induces 
$$ \psi_{\bC}: \omega_{\bC}(\bA')/\omega_{\bC}(\bF) \to U(1): \alpha \mapsto \psi(\Sigma_p Res_p\alpha).$$

Let $\Na$ be the kernel of the reduction map $G(\A') \to G(\bA')$. We have an isomorphism $$\g \otimes \N(\bA') \xrightarrow{\sim }\Na : X \mapsto 1+\epsilon X. $$
Let $\Nf=\Na \cap G(F)$, then the above isomorphism restricts to an isomorphism $\g \otimes \N (\overline{F}) \simeq \Nf$.

Let $\Xi$ be the group of characters of $\g \otimes \N(\bA') /\g \otimes \N (\overline{F})$ which is isomorphic to $\g^\vee \otimes \N^{-1} \omegac(\bF)$. For each $\eta \in \g^\vee \otimes \N^{-1} \omegac(\bF)$, we define a character
$$ \g \otimes \N(\bA') /\g \otimes \N (\overline{F}) \to U(1):  X \mapsto \psi_\eta(X):=\psi(\langle \eta, X \rangle).$$

Now, we consider the space $\mathbb{C}_{l.c.}(\Nf \backslash G(\A'))$ of locally constant functions on $\Nf \backslash G(\A')$, then it is equipped with a left $\Na \!  / \!\Nf$-action (by left translation) and a right $G(\A')$-action (by right translation). Since  the group $\Na \!  / \!\Nf$ is compact abelian, we have a decomposition of $G(\A')$-representation
$$\mathbb{C}_{l.c.}(\Nf \backslash G(\A')) = \bigoplus_{\eta \in \g^\vee \otimes \N^{-1} \omegac(\bF) } \mathbb{C}_{l.c.}(\Nf \backslash G(\A'))_\eta, $$
where $\mathbb{C}_{l.c.}(\Nf \backslash G(\A'))_\eta= \lbrace f \in \mathbb{C}_{l.c.}(\Nf \backslash G(\A') | f(lg)=\psi_\eta(l)f(g) \text{  for $l\in \Na$} \rbrace$. There is a natural projection $\Pi_\eta: \mathbb{C}_{l.c.}(\Nf \backslash G(\A') )\to \mathbb{C}_{l.c.}(\Nf \backslash G(\A'))_\eta$ given by
$$\Pi_\eta f (g) = \int_{\Na \!  / \!\Nf} \psi_\eta(u)^{-1} f(ug)du, $$
where we normalize $du$ such that the volume of $\Na \!  / \!\Nf$ is 1. Then for each $G(\bF)$-orbit $\Omega$ in $\g^\vee \otimes \N^{-1} \omegac(\bF)$ we can define an operator (Proposition \ref{basicmackey})
$$ \Pi_\Omega = \sum_{\eta\in \Omega} \Pi_\eta : \S' (G(F) \backslash G(\A')) \to \S' (G(F)\backslash G(\A')),  $$
and this gives a decomposition $$\S' ( G(F) \backslash G(\A')) = \bigoplus_\Omega \S' (G(F)\backslash G(\A'))_\Omega.$$ 
From now on,  we will call $\S' (G(F)\backslash G(\A'))_\Omega$ an orbit summand in $\S' ( G(F) \backslash G(\A'))$).

\subsubsection{An alternative description for $\S' (G(F)\backslash G(\A'))_\Omega$}

Let $\eta  \in \g^\vee \otimes \N^{-1} \omegac(\bF)$. There is an action of $G(\A')$ on $\bigoplus_{\eta' \in \Omega_\eta}\mathbb{C}_{l.c.}(\Nf \backslash G(\A'))_{\eta'}$. We denote the stabilizer by $\St_\eta$, its image in $G(\bA')$ by $\St_\eta(\bA')$ and $\St_\eta(F):=\St_\eta \cap G(F)$.  We have the short exact sequence:
$$ 1 \to \Na \to \St_\eta \to \St_\eta(\bA') \to 1.$$

Now we let $$\tS_\eta=\lbrace f\in \mathbb{C}_{l.c.}(\St_\eta(F) \backslash G(\A')) | f(ug)=\psi_\eta(u)f(g) \text{  for $u\in \Na$}\rbrace $$
We have the following isomorphism of $G(\A')$-representation (Proposition \ref{pushforward formula} )
$$ \kappa_\eta: \tS_\eta \xrightarrow{\sim}  \S' (G(F)\backslash G(\A'))_\Omega, $$
where $\kappa_\eta f(g):=\sum_{\gamma\in \St_\eta(F) \backslash G(F)} f(\gamma g)$.

Let $\S'_{\St_\eta}$ be the space of locally constant functions $f$ on $\St_\eta(F)\backslash \St_\eta$ such that $f(ug)=\psi_\eta(u)f(g)$ for $u\in \Na$ and its support is compact modulo $\Na$. Proposition \ref{cpt induction} gives the following isomorphism of $G(\A')$-representations $$\ind_{\St_\eta}^{G(\A')} \S'_{\St_\eta} \simeq \tS_\eta.$$

\subsubsection{Induced representation and $K$-invariants}
Let $\eta \in \Xi \subset \tilde{\Xi} \simeq \g^\vee\otimes \N^{-1} \omegac (\bA')$, where $\tilde{\Xi}$ denotes character group of $\Na$. Let $\tilde\Omega_\eta$ be the $G(\bA')$-orbit of $\eta$ in $\tilde{\Xi}$, and $K$ be a compact open subgroup of $G(\A')$. We denote $\tilde{\Xi}_K \subset \tilde{\Xi}$ to be the subgroup of characters whose restriction to $\Na \cap K$ are trivial. Let $\Delta_\Omega(K):= (\Omega\cap \tilde{\Xi}_K)/ \overline{K}  $, where $\overline{K}$ is the image of $K$ along the reduction map $G(\A')\to G(\bA')$.
Let $W$ be a smooth subrepresentation of $\S'_{\St_\eta}$. Applying Proposition \ref{sphvecgen} we have the following proposition:
\begin{prop}\label{sphvec}
    Let $K$ be a compact open subgroup of $G(\A')$, then we have the following formula computing the $K$-invariants of the induced representation, namely,
    $$(\ind_{\St_\eta}^{G(\A') }W)^K \simeq \bigoplus_{g\in \St_\eta \backslash G(\A') / K, \text{ } \Ad(g^{-1}) \cdot \eta \in \Delta_\Omega(K)} W^{gKg^{-1} \cap \St_\eta}$$
\end{prop}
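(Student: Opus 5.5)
The plan is to specialize Proposition \ref{sphvecgen} to the extension
$$1 \to \Na \to G(\A') \to G(\bA') \to 1,$$
with $N=\Na$, $\Gamma=G(F)$, $G_\eta=\St_\eta$ and $W\subset \S'_{\St_\eta}$. First I check the hypotheses of the general machinery. $\Na\simeq \g\otimes\N(\bA')$ is commutative and normal. It is an increasing union of the compact subgroups $\g\otimes\N(\bA')(D)$ of adeles with poles bounded by effective divisors $D$. The quotient $\Na/\Nf$ is compact, since $\bA'/\bF$ is compact. For any compact open $K$, the group $\Na\cap K$ contains some $\g\otimes\N(\bO)(-D)$. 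Characters of $\Na/\Nf$ trivial on it correspond to $\eta\in\g^\vee\otimes\N^{-1}\omegac(\bF)$ lying in $H^0$ of a twist by $\mathcal{O}(D)$, which is a finite-dimensional $k$-space, hence a finite set. This verifies the standing Assumption. The same finiteness of global sections gives the finiteness of $G(F)\cdot\eta\cap(\Na\cap K)^\perp$, so the identifications $\kappa_\eta$ and $\ind_{\St_\eta}^{G(\A')}\S'_{\St_\eta}\simeq\tS_\eta$ hold, and in particular $\ind_{\St_\eta}^{G(\A')}W$ makes sense.

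Next I unwind the proof of Proposition \ref{sphvecgen} concretely. For a $K$-invariant $f$ in the induced representation, realized as a function $G(\A')\to W$ with $f(sg)=s\cdot f(g)$, I decompose $G(\A')$ into double cosets $\St_\eta g K$. On each double coset, $f$ is determined by $w=f(g)$. The invariance $f(gk)=f(g)$ for $k\in gKg^{-1}\cap \St_\eta$ (conjugated appropriately) forces $w\in W^{gKg^{-1}\cap\St_\eta}$. Taking $k\in\Na\cap K$ and using $f(g k)=f((gkg^{-1})g)=\psi_\eta(gkg^{-1})f(g)$ forces $f(g)=0$ unless $\psi_{\Ad(g^{-1})\eta}$ is trivial on $\Na\cap K$, i.e. $\Ad(g^{-1})\eta\in\tilde\Xi_K$. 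Conversely, each such $(g,w)$ yields the function $f_{w,g,K}$, which is well defined because of the invariance condition. Compact support modulo $\St_\eta$ reduces to finiteness of the number of contributing double cosets, and this is automatic for elements of the compact induction.

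Finally I reconcile the index set. Since $\Na$ acts trivially on characters, the $\St_\eta\backslash G(\A')/K$ double cosets with $\Ad(g^{-1})\eta\in\tilde\Xi_K$ correspond to $\bar K$-orbits of elements of the $\St_\eta(\bA')$-translate orbit $\tilde\Omega_\eta\cap\tilde\Xi_K$. That is exactly $\Delta_\Omega(K)$ as defined above, with the orbit map $g\mapsto \Ad(g^{-1})\eta$ identifying $\St_\eta(\bA')\backslash G(\bA')$ with $\tilde\Omega_\eta$. The main obstacle I expect is this bookkeeping. I need to check that different representatives $g,g'$ in the same double coset give isomorphic invariant spaces $W^{gKg^{-1}\cap\St_\eta}$ via translation by $s\in\St_\eta$. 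I also need to make sure the condition is stated on the right orbit ($\tilde\Omega$ over $\bA'$ versus $\Omega$ over $\bF$), since the stabilizer is taken adelically.
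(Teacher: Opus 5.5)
Your proposal is correct and follows the paper's own route. The paper obtains this statement simply by specializing Proposition \ref{sphvecgen} to $N=\Na$, $\Gamma=G(F)$, $G_\eta=\St_\eta$; that proposition's proof is exactly the double-coset analysis with the functions $f_{w,g,K}$ that you unwind. Your additional checks of the hypotheses, and your identification of the index set with $\Delta_\Omega(K)$ via $\St_\eta\backslash G(\A')/K \cong \St_\eta(\bA')\backslash G(\bA')/\overline{K}$ (using $\Na\subset\St_\eta$), are correct bookkeeping that the paper leaves implicit.
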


This yields the following corollary, which reduces the non-admissibility of $\tS_\eta$ to that of $\S'_{\St_\eta}$.
\begin{lem}\label{nonadmissible stablizer}
    If $\S'_{\St_\eta}$ has no admissible subrepresentation, then neither does $\tS_\eta \simeq \ind_{\St_\eta}^{G(\A') }\S'_{\St_\eta}$.
\end{lem}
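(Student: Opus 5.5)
We argue by contraposition. Suppose $V\subset \tS_\eta$ is a nonzero admissible $G(\A')$-subrepresentation. By Proposition \ref{jacquet}(2) (the equivalence of categories given by $\ind_{\St_\eta}^{G(\A')}$ and $J_\eta$), there is a smooth $\St_\eta$-subrepresentation $W\subset \S'_{\St_\eta}$ with $V\simeq \ind_{\St_\eta}^{G(\A')}W$, and $W\neq 0$ since $V\neq 0$. We will show that $W$ is admissible. Since $\S'_{\St_\eta}$ has no admissible subrepresentation by hypothesis, this contradiction proves the lemma.

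The key step is to compare $K'$-invariants of $W$ with $K$-invariants of $V$ using Proposition \ref{sphvec}. Fix a compact open subgroup $K'$ of $\St_\eta$. Choose a compact open subgroup $K$ of $G(\A')$ with $K\cap \St_\eta\subset K'$; this is possible because $\St_\eta$ carries the subspace topology. We may take $K$ small enough, so that $\eta$ is trivial on $\Na\cap K$; shrinking $K$ only makes $\tilde{\Xi}_K$ larger. Then $\eta$ itself lies in $\Omega\cap\tilde{\Xi}_K$. The double coset of $g=1$ therefore occurs in the direct sum of Proposition \ref{sphvec}, and it contributes the summand $W^{K\cap \St_\eta}$. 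Hence
$$\dim W^{K'}\le \dim W^{K\cap\St_\eta}\le \dim V^K<\infty .$$
So every $W^{K'}$ is finite-dimensional. Any compact open subgroup of $\St_\eta$ contains one of the form $K\cap\St_\eta$, so checking such $K'$ suffices, and $W$ is admissible.

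The only delicate point is verifying that the summand indexed by the identity coset genuinely appears. This requires that $\Ad(1)\cdot\eta$ define a class in $\Delta_\Omega(K)$, that is, that $\psi_\eta$ be trivial on $\Na\cap K$. It holds after shrinking $K$, because $\psi_\eta$ is a continuous character of $\Na$ and hence trivial on some compact open subgroup $\g\otimes\N(\bO)$-lattice. A second point is that the isomorphism $V\simeq\ind W$ must be one of $G(\A')$-representations, so that $K$-invariants correspond; this is exactly what Proposition \ref{jacquet}(2) provides.
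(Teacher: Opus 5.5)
Your proposal is correct and follows essentially the same argument as the paper. Both write $V\simeq\ind_{\St_\eta}^{G(\A')}W$ via Proposition \ref{jacquet}(2). Both then choose a compact open $K$ with $K\cap\St_\eta\subset K'$ and $\psi_\eta$ trivial on $\Na\cap K$, and use the identity-coset summand of Proposition \ref{sphvec} to get $W^{K'}\subset W^{K\cap\St_\eta}\hookrightarrow V^K$. The only difference is cosmetic: you argue by contraposition that $W$ would be admissible, while the paper directly concludes $\dim V^K=\infty$ from $\dim W^{K'}=\infty$.
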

\begin{proof}
    First, let $\eta \in \tilde{\Xi}_{K''}$ for some compact open subgroup $K''$ of $G(\A')$.
    Now, suppose $V$ is an admissible subrepresentation of $\ind_{\St_\eta}^{G(\A') }\S'_{\St_\eta}$, then there exists a smooth subrepresentation $W$ of $\S'_{\St_\eta}$ such that $V\simeq \ind _{\St_\eta}^{G(\A') } W$. Since $W$ is not admissible, there exists a compact open subgroup $K'\subset \St_\eta$ such that $dim_\mathbb{C}W^{K'}=\infty$. Let $K'''$ be a compact open subgroup such that $K''' \cap \St_\eta$ is contained in $K'$, then let $K:=K''\cap K'''$ and we have $W^{K\cap \St_\eta} \supset W^{K'}$ being infinite-dimensional. It is clear that $\eta \in \tilde{\Xi}_{K}$, so it follows that $V^K$ is infinite-dimensional.
\end{proof}

\subsection{Hitchin fibers and Fourier transform}
We recall how $\tS_\eta^{G(\O')}$ is related to some functions on the Hitchin fibers and interpret the embedding $\kappa_\eta: \tS_\eta^{G(\O')} \to \S'(\bun_G(C))$ via a Fourier transform following \cite{KP}.

\subsubsection{Hitchin fibers}
\begin{defn}
    Let $\M_\eta^{\N^{-1}} (\overline{C})$ be the groupoid of $\N^{-1}$-twisted $G$-Higgs bundles on $\overline{C}$ i.e., the pairs $(P_0 , \phi )$, where $P_0 \in \bun_G(\overline{C})$ and $  \phi \in H^0(\overline{C}, \omegac \N^{-1} \otimes \g_{P_0} )$ and $\phi$ is generically in the $G(\bF)$-orbit of $\eta$.  Similarly, we let  $\M_\eta^{\N^{-1}} (C)$ be the groupoid of the pairs $(P \in \bun_G(C), \phi \in H^0(\overline{C}, \omegac \N^{-1} \otimes \g_{\overline{P}} ))$ such that the restriction of $\phi$ to the generic point of $\bC$ lies in the $G(F)$-orbit $\Omega_\eta$ of $\eta$.
\end{defn}

 We have the following diagram
\[
\begin{tikzcd}[row sep=2.5em, column sep=4em]
& \M_\eta^{\N^{-1}}(C)
    \arrow[dl,"p'"']
    \arrow[dr,"q'"] & \\
\M_\eta^{\N^{-1}}(\overline{C})
    \arrow[dr,"p"]
&
&
\bun_G(C)
    \arrow[dl,"q"'] \\
& \bun_G(\overline{C}) &
\end{tikzcd}
\]
where $p$ (and $q'$) forgets the Higgs field $\phi$ and $q$ (and $p'$) pulls back the underlying $G$-bundles along $\overline{C} \to C$. Note that $q$ (resp. $p'$) is a torsor where for each point $P_0 \in \bun_G(\overline{C})$ (resp. $(P_0,\phi)\in \M_\eta^{\N^{-1}}(\overline{C})$) the fiber is the vector space $H^1(\overline{C},\mathfrak{g}_P \otimes \N)$, and hence the fibers of $p$ and $q$ are dual vector spaces by Serre's duality.

\begin{rem}
    When $C=\overline{C} \times_{\Spec k} \Spec k[\epsilon]/\epsilon^2$, by deformation theory, we can view $\bun_G(C)$ as the groupoid of pairs $(P_0,\xi)$, where $\xi \in H^1(\overline{C},\mathfrak{g}_P)$, and the map $q$ just forgets the lifting $\xi$.
\end{rem}

There are natural identifications (\cite{BKP}, Proposition 3.20)
\begin{align}
    \M_\eta^{\N^{-1}} (C) \simeq \St_\eta(F)\backslash G(\A')_\eta/G(\O') \text{  and  }\M_\eta^{\N^{-1}} (\overline{C}) \simeq \St_\eta(\bF)\backslash G(\bA')_\eta/G(\bO),
\end{align}
where $  G(\bA')_\eta=\lbrace g\in G(\bA')| \Ad(g^{-1})\eta \in \mathfrak{g} \otimes \N\inv \omega (\bO) \rbrace$ and  $G(\A')_\eta=\lbrace g\in G(\A')| \Ad(g^{-1})\eta \in \mathfrak{g} \otimes \N\inv \omega (\O') \rbrace$. 

\subsubsection{$\mathbb{C}^*$-torsor $L_\psi$ and Fourier transform} Note that $p'$ is a $H^1(\overline{C},\N\otimes \mathfrak{g}_{P_0})$-torsor, and given $\phi \in H^0(\overline{C},\omega\N\inv\otimes \mathfrak{g}_{P_0}^\vee)$, we compose this with the character $\psi:k \to \mathbb{C}^*$ to get a map
$$H^1(\overline{C},\N\otimes \mathfrak{g}_{P_0}) \to \mathbb{C}^*. $$
Now, we push out $p'$ along this map and obtain a $\mathbb{C}^*$-torsor $L_\psi$ on the groupoid $\M^{\N^{-1}} (\overline{C}) $, and we have an identification of $\tS_\eta^{G(\O')}$ with the space $\S'(\M_\eta^{\N^{-1}} (\overline{C}),L_\psi)$ of finitely supported sections on $\M_\eta^{\N^{-1}}(\overline{C})$.

\begin{prop}(See 2.3 and Proposition 2.6 in  \cite{KP} for the precise formulation) 
The embedding $\kappa_\eta$ is identified with the Fourier transform  $\S'(\M_\eta^{\N^{-1}} (\overline{C}),L_\psi) \to \S'(\bun_G(C))$.
    
\end{prop}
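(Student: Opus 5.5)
The plan is to unwind both maps through the adelic identifications $\M_\eta^{\N^{-1}}(C) \simeq \St_\eta(F)\backslash G(\A')_\eta/G(\O')$ and $\M_\eta^{\N^{-1}}(\overline{C}) \simeq \St_\eta(\bF)\backslash G(\bA')_\eta/G(\bO)$, and to check that the Fourier transform of \cite{KP} is the composite of a pullback along $p'$ with a pushforward along $q'$. Concretely:
\begin{enumerate}
\item First identify $\tS_\eta^{G(\O')}$ with sections of $L_\psi$. A $G(\O')$-invariant $f\in\tS_\eta$ satisfies $f(g)=f(gk)=\psi_\eta(gkg^{-1}\cdot)$-twisted, so, as in the proof of Proposition \ref{sphvecgen}, $f(g)\neq 0$ forces $\psi_\eta$ to be trivial on $\Na\cap gG(\O')g^{-1}$. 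This is exactly $\Ad(g^{-1})\eta\in\g\otimes\N^{-1}\omega(\O')$, i.e. $g\in G(\A')_\eta$. Hence $f$ is a function on $\St_\eta(F)\backslash G(\A')_\eta/G(\O')=\M_\eta^{\N^{-1}}(C)$ that transforms under the fibres of $p'$ by the character $\psi(\langle\phi,\cdot\rangle)$. The fibres of $p'$ are $\Na/(\Na\cap gG(\O')g^{-1})\Nf$-cosets, i.e. $H^1(\overline{C},\g_{P_0}\otimes\N)$-torsors. Such functions are precisely finitely supported sections of $L_\psi$ on $\M_\eta^{\N^{-1}}(\overline{C})$. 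Compact support modulo $\Na$ matches finite support downstairs.
\item Next rewrite $\kappa_\eta f(g)=\sum_{\gamma\in\St_\eta(F)\backslash G(F)}f(\gamma g)$. For $x\in\bun_G(C)$ represented by $g$, the terms with $f(\gamma g)\neq0$ are indexed by $\gamma$ with $\gamma g\in G(\A')_\eta$. These correspond to Higgs fields $\phi=\Ad((\gamma g)^{-1})\eta$ on the reduced bundle $\overline{P}$ lying generically in $\Omega_\eta$, modulo automorphisms. So they are exactly the points of the fibre $q'^{-1}(x)$, and $\kappa_\eta f$ is $q'_*p'^*f$ as a groupoid pushforward.
\item Then compare with the Fourier transform of \cite[2.3, Prop.~2.6]{KP}, which is defined as $q'_!(p'^*s)$ with the $L_\psi$-trivialisation given by pairing $\phi$ with the class of the lift $P$ under Serre duality between the fibres of $p$ and $q$. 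Under the identifications above, the pairing becomes $\psi_\eta$ evaluated on $\Na$, matching the equivariance built into $\tS_\eta$.
\end{enumerate}

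The main obstacle is step 2: the sum over $\gamma$ must reproduce the groupoid pushforward with the correct automorphism weights, not merely its support. I would handle this by following \cite[Prop.~3.20]{BKP}. The stabilizer of $(P,\phi)$ inside $\mathrm{Aut}(P)$ is $\St_\eta(F)\cap gG(\O')g^{-1}$, so the orbit–stabilizer counting turns the unweighted coset sum into $\sum_{y\in q'^{-1}(x)}|\mathrm{Aut}(x)|/|\mathrm{Aut}(y)|$. That is the stacky pushforward, up to the normalisation convention in \cite{KP}, which I would fix by comparing measures (vol $G(\O')=1$). Finiteness of the sum follows from the finiteness hypothesis in Proposition \ref{pushforward formula}.
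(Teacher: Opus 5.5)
Your proposal is correct. The paper gives no argument of its own here: it defers entirely to \cite{KP} (2.3 and Proposition 2.6), together with the adelic models of the Hitchin groupoids from \cite{BKP}. Your three steps are the natural unwinding of what that reference encodes:
\begin{itemize}
\item Spherical vectors of $\tilde{\mathcal{S}}_\eta$ are forced onto $G(\mathbb{A})_\eta$, because $\psi_\eta$ must be trivial on $N_\mathbb{A}\cap gG(\mathcal{O})g^{-1}$.
\item Their $\psi_\eta$-equivariance along the $H^1(\overline{C},\mathfrak{g}_{P_0}\otimes\mathcal{N})$-fibres of $p'$ is exactly the Serre-duality character used to build $L_\psi$. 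Triviality of $\psi_\eta$ on $N_F$ also takes care of the inertia $H^0(\overline{C},\mathfrak{g}_{P_0}\otimes\mathcal{N})$.
\item Grouping the cosets $\gamma\in\mathrm{St}_\eta(F)\backslash G(F)$ into $\mathrm{Aut}(x)$-orbits turns $\kappa_\eta$ into the groupoid pushforward along $q'$, with weights $|\mathrm{Aut}(x)|/|\mathrm{Aut}(y)|$.
\end{itemize}

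The only input you take from \cite{KP} rather than prove is that its transform is $q'_!p'^*$ with the tautological trivialisation of $p'^*L_\psi$. Any normalising constant in that definition only rescales the identification, and this is the ``precise formulation'' the paper leaves to the reference.
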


Unraveling the idelic description of $\M_\eta^{\N^{-1}}(C)$ and using the Fourier description of $\kappa_\eta$,  we have the following geometric interpretation of Proposition \ref{sphvec}:
\begin{prop}\label{geom_sphvec}
    An element $g$ with $\Ad(g^{-1}) \cdot \eta \in \tilde{\Xi}_{G(\O')}$ and a function $w \in \S'_{\St_\eta} ^{gG(\O')g^{-1}\cap \St_\eta}$ define a function $\delta_{g,w} \in \S'(\M_\eta^{\N^{-1}} (C))$ (see proof of Proposition 3.7 how such a function is constructed). If an element $P$ is in the support of $\delta_{g,w}$, then $pp'(P)$ is the $G$-bundle on $\overline{C}$ corresponding to $\overline{sg} \in G(\bA')$, where $s$ is in the support of $w$.
    
    Also, given $f\in \S'(\M_\eta^{\N^{-1}} (C))$, then support of $\kappa_\eta f$ is equal to $q^{-1}(pp'(\text{supp}(f)))$.
\end{prop}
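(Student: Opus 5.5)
The proof splits into two parts: unwinding the adelic dictionary for $\delta_{g,w}$, and a fiberwise Fourier argument over $\bun_G(\overline{C})$ for the support of $\kappa_\eta f$.

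\emph{Part one.} I will make explicit the identification $\tS_\eta^{G(\O')}\simeq \S'(\M_\eta^{\N^{-1}}(C))$, using the idelic description $\M_\eta^{\N^{-1}}(C)\simeq \St_\eta(F)\backslash G(\A')_\eta/G(\O')$ (Proposition 3.20 of \cite{BKP}). Take $g$ with $\Ad(g^{-1})\cdot\eta\in\tilde{\Xi}_{G(\O')}$, i.e.\ $\Ad(g^{-1})\eta\in \g\otimes\N\inv\omega(\O')$, and take $w\in \S'^{\,gG(\O')g^{-1}\cap\St_\eta}_{\St_\eta}$. The proof of Proposition \ref{sphvecgen} produces the induced vector $f_{w,g,G(\O')}$. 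Composing it with evaluation at $1$ gives a function on $\St_\eta(F)\backslash G(\A')$, namely
$$\delta_{g,w}(sgk)=w(s)\quad (s\in\St_\eta,\ k\in G(\O')),$$
extended by zero off $\St_\eta gG(\O')$.
\begin{itemize}
\item Since $\Ad((sgk)^{-1})\eta=\Ad(k^{-1})\Ad(g^{-1})\eta$ is integral, every point of the support lies in $G(\A')_\eta$. Hence $\delta_{g,w}$ is a finitely supported function on $\M_\eta^{\N^{-1}}(C)$.
\item A point $P$ in its support has the form $sgk$ with $w(s)\neq 0$.
\item Applying $p'$ reduces modulo $\Na$, giving the class of $\overline{sgk}$ in $\St_\eta(\bF)\backslash G(\bA')_\eta/G(\bO)$. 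Applying $p$ then forgets the Higgs field, giving the class of $\overline{sg}$ in $G(\bF)\backslash G(\bA')/G(\bO)=\bun_G(\overline{C})$.
\end{itemize}
This is the first assertion. The only care needed is checking that left $\St_\eta(F)$-invariance and the $\psi_\eta$-equivariance along $\Na$ are respected, so that $\delta_{g,w}$ is well defined; this is the same verification as in Proposition \ref{sphvecgen}.

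\emph{Part two, reduction.} I will work fiberwise over $P_0\in\bun_G(\overline{C})$, using the Fourier description of $\kappa_\eta$ (Proposition 2.6 of \cite{KP}).
\begin{itemize}
\item The fiber $q^{-1}(P_0)$ is a torsor under $V=H^1(\overline{C},\g_{P_0}\otimes\N)$.
\item The fiber $p^{-1}(P_0)$ sits inside $V^\vee=H^0(\overline{C},\omega\N\inv\otimes\g_{P_0})$ by Serre duality. Here $\M_\eta$ picks out the Higgs fields generically in $\Omega_\eta$.
\item A function $f$ on $\M^{\N^{-1}}_\eta(C)$ is $\psi$-equivariant along the $p'$-torsor. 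It therefore amounts to a section of $L_\psi$ on $\M_\eta^{\N^{-1}}(\overline{C})$.
\end{itemize}
In this language, $\kappa_\eta f$ restricted to $q^{-1}(P_0)$ is, up to the automorphism-group normalizations, the finite Fourier transform over $V$ of $f|_{p^{-1}(P_0)}$. The inclusion $\mathrm{supp}(\kappa_\eta f)\subset q^{-1}(pp'(\mathrm{supp} f))$ follows at once: if $f$ vanishes on $p^{-1}(P_0)$, its transform vanishes on the whole fiber $q^{-1}(P_0)$.

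\emph{Part two, the reverse direction, which is the main obstacle.} Conversely, suppose $P_0\in pp'(\mathrm{supp} f)$. Then $f|_{p^{-1}(P_0)}$ is a nonzero function on the finite $k$-vector space $V^\vee$, extended by zero. By Fourier inversion (Plancherel for the finite abelian group $V$, twisted by the torsor $L_\psi$), its transform is nonzero somewhere on $q^{-1}(P_0)$. Hence $P_0\in q(\mathrm{supp}\,\kappa_\eta f)$.

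The delicate point is that pointwise Fourier transforms can vanish at individual points of the fiber. The reverse inclusion I can actually establish is therefore $q(\mathrm{supp}\,\kappa_\eta f)=pp'(\mathrm{supp} f)$. The stated equality should be read as an equality of supports saturated along the fibers of $q$, i.e.\ of their reduced images in $\bun_G(\overline{C})$; this is exactly what is used later to place supports inside $r^{-1}(\mathfrak{U}_b)$ with $r=q$.

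I will also handle the groupoid normalizations, namely the factors $|\mathrm{Aut}|$ coming from the stacky pushforward. Since these are nonzero scalars, they do not affect supports. I expect this bookkeeping, together with matching the idelic sum $\sum_{\gamma\in\St_\eta(F)\backslash G(F)}$ with the fiberwise Fourier sum, to be the only technical step.
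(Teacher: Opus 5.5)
Your proposal follows essentially the same route as the paper, which justifies this proposition only by unraveling the idelic description of $\M_\eta^{\N^{-1}}(C)$ (your computation $\delta_{g,w}(sgk)=w(s)$, followed by reduction and forgetting the Higgs field) and by invoking the Fourier description of $\kappa_\eta$ from \cite{KP} fiberwise over $\bun_G(\overline{C})$. Your caveat on the second assertion is also correct: the literal equality can fail. For instance, if $\mathrm{Aut}(P_0)$ is trivial and $f$ is supported over two distinct Higgs fields $\phi_1,\phi_2$ on the same $P_0$, with values $1$ and $-1$ at a chosen lift $P_*$, then $\kappa_\eta f(P_*)=0$. So what genuinely holds is $\mathrm{supp}(\kappa_\eta f)\subset q^{-1}(pp'(\mathrm{supp} f))$ together with $q(\mathrm{supp}\,\kappa_\eta f)=pp'(\mathrm{supp} f)$, and this is all the later support bounds use, since they are phrased via the $q$-saturated sets $r^{-1}(\mathfrak{U}_b)$.
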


\section{Almost regular elliptic orbits and mixed orbits for connected split reductive groups}\label{regellip and mixed}

Let $G$ be a connected split reductive group. In Section \ref{regellip}, we introduce a notion called \textit{almost regular elliptic} for semisimple elements in the Lie algebra $\mathfrak{g}$ of $G$. We prove that a function $f\in \S'(G(F)\backslash G(\A'))$ is strongly cuspidal if and only if $f \in \bigoplus_{\Omega} \S'_\Omega$ where $\Omega$ ranges over the almost regular elliptic orbits. We also show that the unramified strongly cuspidal functions form a finite-dimensional space when $G$ is semisimple. In Section \ref{nonregss and mixed}, we show that $\S'_{\Omega_\eta}$ contains no finitary or Hecke-finite functions when $\eta$ is a mixed type element whose semisimple part is not elliptic. As a result, to prove Conjecture \ref{conj1}(1) for $\PGL_3$, it remains to study the nilpotent orbits because all elliptic elements in $\mathfrak{pgl}_3$ are regular.

\subsection{Almost regular elliptic orbits}\label{regellip}
In this subsection, we assume the characteristic of $\overline{F}$ is greater than $c(G)-1$, where $c(G)$ is the Coxeter number of $G$. This guarantees that every element in $\mathfrak{g}$ has Jordan decomposition \cite[Proposition 48]{Mc}.  Recall the following definition. 
\begin{defn}
   Let $\eta \in \mathfrak{g}(\bF)$ and let $C(\eta)^\circ$ be the identity component of the centralizer of $\eta$ in $G$. Then $\eta$ is elliptic if it is nonzero semisimple and  $C_G (\eta)^\circ/Z(G)^\circ$ contains no nontrivial split central torus. An elliptic element is regular if $C_G (\eta)^\circ$ is a torus. 
\end{defn}
For our purpose of characterizing orbits $\Omega$ such that $\S'_\Omega$ contains strongly cuspidal functions, we also introduce the following definition.
\begin{defn}\label{are def}
    An element $\eta \in \mathfrak{g}(\bF)$ is \textit{almost regular elliptic} if it is nonzero semisimple and $C_G (\eta)^\circ/Z(G)^\circ$ contains no nontrivial split torus. We will also denote almost regular elliptic by ARE.
\end{defn}
\begin{lem}
  Let $G$ be a split reductive group over $\overline{F}$ whose Lie algebra is $\mathfrak{g}$. Let $x \in \mathfrak{g}(\bF)$. Then we have
  \begin{enumerate}
      \item $x$ is regular elliptic $\implies$ $x$ is almost regular elliptic $\implies$ $x$ is elliptic.
      \item $x$ is almost regular elliptic if and only if $x$ is not contained in any $G(\overline{F})$-conjugate of a standard $\overline{F}$-parabolic subalgebra of $\mathfrak{g}$.

  \end{enumerate}

\end{lem}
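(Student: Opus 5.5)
Part (1) follows by comparing the three definitions through the connected centralizer $H:=C_G(x)^\circ$, a connected reductive $\bF$-group containing $Z(G)^\circ$ (the paper assumes $\mathrm{char}$ large, so $x$ semisimple implies $H$ reductive), and the quotient $\bar H := H/Z(G)^\circ$. If $x$ is regular elliptic, then $H=T_x$ is a torus and $\bar H$ is a torus with no nontrivial split central torus. Since $\bar H$ is commutative, every subtorus is central, so $\bar H$ has no nontrivial split torus at all: $x$ is ARE. If $x$ is ARE, then $\bar H$ contains no nontrivial split torus, in particular no split central one, so $x$ is elliptic. Nonzero semisimplicity is carried along by the definitions.

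For (2), the plan is the standard dictionary between split tori and parabolics (Borel–Tits). Suppose $x$ is not ARE. Then $\bar H$ contains a nontrivial split torus $\bar S$. I would first lift it to a split torus $S\subset H$ not contained in $Z(G)^\circ$: take the preimage of $\bar S$ in $H$, pick a maximal torus of it, and observe that an extension of a split torus by a torus has split image up to isogeny, so a suitable split subtorus survives. Taking a cocharacter $\lambda:\mathbb{G}_m\to S$ defined over $\bF$ and noncentral in $G$, the subgroup $P_\lambda=\{g:\lim_{t\to0}\lambda(t)g\lambda(t)^{-1}\text{ exists}\}$ is a proper $\bF$-parabolic of $G$. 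Since $S\subset C_G(x)$, $x\in\mathrm{Lie}(C_G(\lambda))\subset \mathrm{Lie}(P_\lambda)$. Because $G$ is split, every $\bF$-parabolic is $G(\bF)$-conjugate to a standard one, so $x$ lies in a conjugate of a proper standard parabolic subalgebra. (Here ``parabolic'' must mean proper parabolic in the statement.)

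Conversely, suppose $x\in \mathrm{Ad}(g)\mathfrak{p}$ with $P$ proper, and after conjugating assume $x\in\mathfrak{p}$. Write $P=M\ltimes U_P$ with $M$ a Levi. Since $x$ is semisimple, I want to conjugate it by $U_P(\bF)$ into $\mathfrak{m}$. This is done by lifting: $x$ lies in a Cartan subalgebra of $\mathfrak{p}$, and any maximal torus of $P$ is $U_P(\bF)$-conjugate into a Levi, which is a rationality statement for solvable unipotent groups. Once $x\in\mathfrak{m}$, the split central torus $Z(M)^\circ_{split}$ centralizes $x$. It is not contained in $Z(G)^\circ$ because $P$ is proper (it contains the image of a noncentral cocharacter defining $P$). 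Hence its image in $\bar H$ is a nontrivial split torus, so $x$ is not ARE.

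The main obstacle I expect is the rationality over the imperfect-free but nonclosed field $\bF$ in both directions. In one direction we need to lift a split torus from $H/Z(G)^\circ$ to a split noncentral torus in $H$. In the other we need to conjugate the semisimple element $x\in\mathfrak{p}$ into a Levi subalgebra over $\bF$. For the second, I would argue that the $U_P$-orbit map $u\mapsto \mathrm{Ad}(u)x$ onto $x+\mathfrak{u}_P$ is a torsor structure, filtering $\mathfrak{u}_P$ by the lower central series. The difference $x-x_{\mathfrak m}$ is then killed step by step using the invertibility of $\mathrm{ad}(x_{\mathfrak m})$ on the graded pieces where it is nonzero, together with semisimplicity of $x$. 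Each step solves a linear equation over $\bF$, so rationality is automatic. The characteristic hypothesis is used to guarantee that Jordan decomposition and reductivity of centralizers hold.
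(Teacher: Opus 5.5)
You handle (1) and the semisimple case of (2) correctly, along the same lines as the paper. A noncentral split cocharacter $\lambda$ centralizing $x$ places $x$ in $\mathfrak{g}_0\subset\mathfrak{p}_\lambda$. For the other implication, your $U_P(\bF)$-conjugation of a semisimple $x\in\mathfrak{p}$ into a Levi subalgebra supplies a step the paper only asserts.

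The gap is in the implication ``not ARE $\Rightarrow$ contained in a proper parabolic''. You write that if $x$ is not ARE, then $C_G(x)^\circ/Z(G)^\circ$ contains a nontrivial split torus. That only follows when $x$ is semisimple, because ARE is defined to include semisimplicity. A non-semisimple $x$ is never ARE, yet its centralizer modulo the center need not contain any split torus. For example, if $x$ is regular nilpotent then $C_G(x)^\circ=Z(G)^\circ\cdot R_u(C_G(x))$, so the quotient is unipotent and contains no torus at all. For such $x$ your argument gives no proper parabolic containing $x$, but the statement still requires one.

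The missing step is a separate argument through the nilpotent part, which is how the paper handles it. Suppose $x=x_s+x_n$ with $x_n\neq 0$. Take the parabolic subalgebra $\mathfrak{p}$ associated to $x_n$, coming from an associated cocharacter or Jacobson--Morozov triple; this is available over $\bF$ under the standing characteristic assumption. Then $\mathfrak{p}$ is proper, since $x_n$ has weight $2$ and so the cocharacter is noncentral. It also contains the centralizer $\mathfrak{c}_{\mathfrak g}(x_n)$, hence contains $x_s$ because $[x_s,x_n]=0$, so $x\in\mathfrak{p}$.

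An equivalent version: take a cocharacter of $C_G(x_s)^\circ$ associated to $x_n$. It fixes $x_s$ and gives $x_n$ weight $2$, so it is noncentral in $G$, and $x$ lies in its nonnegative-weight parabolic.

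With this case added, your proof is complete.
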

\begin{proof}
(1) follows from the definitions. We now prove (2). Since any proper parabolic subalgebra over $\overline{F}$ is $G(\overline{F})$-conjugate to standard parabolic subalgebra, it suffices to prove $x$ is almost regular elliptic if and only if $x$ is not contained in any proper parabolic subalgebra over $\overline{F}$. 

First, we assume that $x$ is semisimple. Suppose $x$ is almost regular elliptic. If $x$ is contained in a proper parabolic subalgebra, then $C_G (x)^\circ$ contains a split torus $\mathbb{G}_m$ that is not in $Z(G)^\circ$, but this contradicts to the assumption that $x$ is almost regular elliptic. Conversely, suppose $x$ is not contained in any proper parabolic. If $x$ is not almost regular elliptic, then there exists a split torus $\mathbb{G}_m \to C_G(x)^\circ$ and its image in $G$ is not in the center of $G$. This defines a grading $\mathfrak{g}=\bigoplus_{i\in \mathbb{Z}}\mathfrak{g}_i$ and $x$ is in the Levi subalgebra $\mathfrak{g}_0$ of the proper parabolic subalgebra $\bigoplus_{i\geq 0} \mathfrak{g}_i$, hence a contradiction.

In general for $x$ not necessarily semisimple, we consider the Jordan decomposition $x=x_n+x_{ss}$, and assume $x_n \neq 0$. But $x_n$ is contained in some parabolic subalgebra $\mathfrak{p}$ (the canonical choice is the parabolic associated to the Jacobson–Morozov triple of $x_n$) that contains its centralizer in $\mathfrak{g}$, and this implies $x_{ss} \in \mathfrak{p}$, and hence $x \in \mathfrak{p}$. 
\end{proof}

\begin{prop}\label{are for gln}
    If $G=\GL_n$, then $x\in \mathfrak{g}(\overline{F})$ is regular elliptic if and only if it is almost regular elliptic.
\end{prop}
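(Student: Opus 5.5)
One implication is already available: regular elliptic implies almost regular elliptic by part (1) of the preceding lemma. So the plan is to prove only the converse, using the standard description of centralizers in $\GL_n$.

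Let $x \in \mathfrak{gl}_n(\bF)$ be almost regular elliptic. By definition $x$ is nonzero semisimple. Regard $V=\bF^n$ as an $\bF[t]$-module with $t$ acting by $x$. Since $x$ is semisimple, $V \simeq \bigoplus_i W_i^{\oplus m_i}$, where the $p_i$ are distinct monic irreducible polynomials in $\bF[t]$ and $W_i=\bF[t]/(p_i)$. The centralizer is then
$$C_G(x)=\prod_i \GL_{m_i}(E_i), \qquad E_i=\bF[t]/(p_i),$$
viewed as a product of Weil restrictions; it is connected. Its center is $\prod_i \mathrm{Res}_{E_i/\bF}\mathbb{G}_m$, and $Z(G)^\circ=\mathbb{G}_m$ sits inside it diagonally.

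Now look for split tori in $C_G(x)/\mathbb{G}_m$. There are two ways to produce one.

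First, suppose there are at least two distinct irreducible factors $p_1\neq p_2$. The cocharacter $\lambda\mapsto(\lambda\cdot 1_{V_1},1,\dots,1)$ is defined over $\bF$, where $V_1$ is the $p_1$-isotypic part. It lands in $C_G(x)$ and is not central in $G$. Its image in $C_G(x)/Z(G)^\circ$ is therefore a nontrivial split torus.

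Second, suppose some $m_i\ge 2$. Then $\GL_{m_i}(E_i)$ contains the split torus $\mathbb{G}_m\subset \mathrm{Res}_{E_i/\bF}\mathbb{G}_m$, embedded in the first diagonal slot of $\GL_{m_i}$. This torus is again noncentral in $G$, so it gives a nontrivial split torus in the quotient.

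Almost regular ellipticity rules out both cases. Hence $V\simeq E$ for a single field extension $E=\bF[t]/(p)$ of degree $n$. This means $x$ is regular semisimple: its characteristic polynomial $p$ is irreducible and separable, which is automatic since $x$ is semisimple. Its centralizer is then the torus $\mathrm{Res}_{E/\bF}\mathbb{G}_m$. Modulo $\mathbb{G}_m$ this torus is anisotropic: a split subtorus would correspond to a nonzero $\bF$-rational cocharacter of $\mathrm{Res}_{E/\bF}\mathbb{G}_m/\mathbb{G}_m$, and the $\bF$-rational cocharacters of $\mathrm{Res}_{E/\bF}\mathbb{G}_m$ are just the multiples of the diagonal $\mathbb{G}_m$. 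So $C_G(x)^\circ$ is a torus and $x$ is elliptic, i.e. $x$ is regular elliptic.

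The only delicate point is semisimplicity over a possibly imperfect $\bF$. Here I would use the paper's standing characteristic assumption and read "semisimple" as "semisimple over $\overline{\bF}$", so that each $p_i$ is separable and the $E_i$ are separable extensions. Under that reading, the rest is the standard structure of centralizers in $\GL_n$.
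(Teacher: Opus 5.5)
Your argument is correct and has the same skeleton as the paper's proof. Both identify $C_G(x)$ as a Weil restriction of general linear groups, then use the almost-regular-elliptic hypothesis to rule out any split torus beyond the center. The difference is where the structure of $C_G(x)$ comes from.

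The paper observes that $C_G(x)^\circ$ is a twisted Levi subgroup whose maximal split torus is $Z(G)^\circ$. It then cites \cite[Lemma 4.3.4]{FS} to get $C_G(x)^\circ\simeq\operatorname{Res}_{E/\bF}\GL_d$ in one step, and excludes $d>1$ using the $d$-dimensional diagonal split torus. You instead compute $C_G(x)=\prod_i\operatorname{Res}_{E_i/\bF}\GL_{m_i}$ directly from the $\bF[t]$-module decomposition. You then exclude both several isotypic components and some $m_i\ge 2$ by writing down explicit noncentral split cocharacters. In effect you derive the structure result the paper imports, so your version is longer but self-contained.

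Your version also makes explicit a hypothesis the paper uses silently when it calls $C_G(x)^\circ$ a twisted Levi: $x$ must be semisimple over $\overline{\bF}$, i.e.\ have separable minimal polynomial. This is automatic when $\operatorname{char}\bF>n$, since an inseparable irreducible polynomial has degree divisible by the characteristic. So your caveat only matters in the edge case $\operatorname{char}\bF=n$, which the standing assumption allows.

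Your closing anisotropy argument for $\operatorname{Res}_{E/\bF}\mathbb{G}_m/\mathbb{G}_m$ is unnecessary. As written, it also skips the passage from cocharacters of the quotient torus to those of $\operatorname{Res}_{E/\bF}\mathbb{G}_m$. Ellipticity follows directly from the hypothesis: having no nontrivial split torus in $C_G(x)^\circ/Z(G)^\circ$ already includes having no nontrivial split central one.
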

\begin{proof}
    Suppose $x$ is almost regular elliptic, we need to show it is regular elliptic. Note that $C_G(x)^\circ$ is a twisted Levi subgroup of $G$ and has maximal split torus equal to $Z(G)^\circ$. But \cite[Lemma 4.3.4]{FS} implies that $C_G(x)^\circ \simeq \operatorname{Res}_{E/\overline{F}}\GL_d$, where $E$ is a degree $\frac{n}{d}$ extension of $\overline{F}$. If $d>1$, then $\operatorname{Res}_{E/\overline{F}}\GL_d$ contains a split torus of dimension $d$ and this contradicts to the almost regularity of $x.$ Therefore, $d=1$ and $C_G(x)^\circ \simeq \text{Res}_{E/F} \mathbb{G}_m$ is a torus, and hence $x$ is regular elliptic.
\end{proof}
\begin{rem}
    For more general groups $G$, Proposition \ref{are for gln} does not necessarily hold. For example, when $G=\text{Sp}_4$, there are nontoral twisted Levi subgroups $M$ of $G$ that contain no split torus. See  \cite[Example 5.9.1]{De}.
\end{rem}

    The following result is a generalization of \cite[Proposition 6.5]{BKP}. 
\begin{prop}\label{str cusp = are}
A function $f$ is strongly cuspidal if and only if $f \in \bigoplus_{\Omega: \text{ARE}} \S'_{\Omega}$.
\end{prop}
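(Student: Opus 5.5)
We argue through the orbit decomposition, translating strong cuspidality into a condition on the Fourier support of $f$ along $\Na/\Nf$.

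\emph{Step 1: reduction to orbit summands.} For a standard parabolic $P$, the subgroup $U_P(\N\A')\subset \Na$ corresponds to $\mathfrak{u}_P\otimes\N(\bA')$. Since $\Pi_\eta$ is an integral over the compact abelian group $\Na/\Nf$, it commutes with integration over $U_P(\N F)\backslash U_P(\N\A')=\mathfrak{u}_P\otimes\N(\bA')/\mathfrak{u}_P\otimes\N(\bF)$. Hence
$$\int_{U_P(\N F)\backslash U_P(\N\A')} f(ug)\,du=\sum_{\eta}\Pi_\eta f(g)\cdot\mathbf{1}\bigl[\psi_\eta|_{\mathfrak{u}_P\otimes \N(\bA')}\equiv 1\bigr],$$
where the sum is finite for each $g$ by Proposition \ref{basicmackey}(1). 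The condition $\psi_\eta|_{\mathfrak{u}_P}\equiv 1$ means $\langle\eta,\mathfrak{u}_P\rangle=0$. Under the invariant form this says $\eta\in\mathfrak{u}_P^\perp=\mathfrak{p}(\bF)$, twisted by $\N^{-1}\omega$. So the strong constant term along $P$ equals $\sum_{\eta\in\mathfrak p}\Pi_\eta f$.

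Since the functions $\Pi_\eta f$ for distinct $\eta$ lie in distinct $\Na$-isotypic components, the strong constant term of $f$ along $P$ vanishes if and only if $\Pi_\eta f=0$ for all $\eta\in\mathfrak{p}(\bF)$.

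\emph{Step 2: passing to all conjugates of $\mathfrak p$.} The strong constant term along $P$ vanishes for all $g$ if and only if it vanishes for the translates $f(\gamma\,\cdot)=f$ with $\gamma\in G(F)$. Moreover $\Pi_{\eta}f(\gamma g)=\Pi_{\Ad(\gamma)\eta}f(g)$ up to the obvious conjugation, since $f$ is left $G(F)$-invariant. Therefore strong cuspidality of $f$ is equivalent to
$$\Pi_\eta f=0\quad\text{for all $\eta$ lying in some $G(\bF)$-conjugate of a proper standard parabolic subalgebra}.$$
Because $\Pi_\Omega f=\sum_{\eta\in\Omega}\Pi_\eta f$ and the summands are independent, this is in turn equivalent to $\Pi_\Omega f=0$ for every orbit $\Omega$ meeting a proper parabolic subalgebra.

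\emph{Step 3: identifying the surviving orbits.} By part (2) of the preceding lemma, these orbits are exactly those that are not ARE. This uses the Jordan decomposition, which holds under the characteristic assumption. The zero orbit is included among the non-ARE orbits, since $0$ lies in every parabolic subalgebra. Hence $f$ is strongly cuspidal if and only if $f=\sum_{\Omega\ \mathrm{ARE}}\Pi_\Omega f$, using Proposition \ref{basicmackey}(3).

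\emph{Main obstacle.} The main point needing care is the Fourier computation in Step 1. One must normalize Haar measures compatibly on $\mathfrak{u}_P\otimes\N(\bA')/\mathfrak{u}_P\otimes\N(\bF)$ and $\Na/\Nf$. One must also check that the orthogonal complement of $\mathfrak u_P$ under the invariant form is $\mathfrak p$ in the twisted setting; this requires the form to be nondegenerate, which holds for the good characteristic assumed. Finally, the interchange of the $u$-integral with the infinite sum over $\eta$ is justified because, for fixed $g$ and local constancy, only finitely many $\eta$ contribute.
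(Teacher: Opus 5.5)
Your proposal is correct and is essentially the paper's argument. It uses Fourier analysis along $\Na/\Nf$, the fact that $\psi_\eta$ is trivial on $\mathfrak{u}_P\otimes\N(\bA')$ exactly when $\eta\in\mathfrak{p}$, the $G(F)$-equivariance of the projectors $\Pi_\eta$, and the lemma identifying ARE elements with those lying in no proper parabolic subalgebra.

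The only difference is packaging. You isolate the identity that the strong constant term along $P$ equals $\sum_{\eta\in\mathfrak{p}}\Pi_\eta f$ and read off both implications at once, whereas the paper verifies the two directions by separate computations.
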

\begin{proof}

    Let $f \in \S'(G(F)\backslash G(\A'))$, $P$ be a maximal parabolic subgroup of $G$ and $U$ be its unipotent radical.We show the vanishing of the constant-terms of $\Pi_{\Omega_\eta} f$. Consider
    
  \[\int_{U(\N F) \backslash U(\N \A')} \Pi_{\Omega_\eta} f(ug) du=\sum_{\eta\in \Omega_\eta}  \int_{X \in \mathfrak{g}(\N\A')/\mathfrak{g}(\N F)}  \int_{a \in \mathfrak{ u}(\N\A')/\mathfrak{u}(\N F)} \psi_\eta(-X)f((1+X+a)g)dadX  \]
   
   \[=\sum_{\eta\in \Omega_\eta}  \int_{X \in \mathfrak{g}(\N \A')/\mathfrak{g}(\N F)}  \int_{a \in \mathfrak{u}(\N\A')/\mathfrak{u}(\N F)} \psi_\eta(-X)\psi_\eta(a)f((1+X)g)dadX
    \]

Since $\eta$ is almost regular elliptic, it is not contained any proper parabolic subgroup. Hence $\psi_\eta$ is nontrivial when restricted to $\mathfrak{u}$ for any parabolic subalgebra $\mathfrak{p}$, and hence the integral vanishes.

For the other direction, we need to show that if $f$ is strongly cuspidal then the projected image $\Pi_{\Omega_\eta} f=0$ for all orbits $\Omega_\eta$ that are not almost regular elliptic.

Now let $\eta$ is not almost regular elliptic, and hence it is contained in some parabolic subalgebra (by choosing a representative in $\Omega_\eta$ we may assume $\eta$ is contained in some standard parabolic subalgebra $\mathfrak{p}$ with nilpotent radical $\mathfrak{u}$), and hence the restriction $\psi_\eta|_{\mathfrak{u}(\N \A')}$ is trivial. Now for $g_o\in G(F)$, we have
  \begin{align*} \Pi_{g_0\eta} f(g) &= \int_{X \in \mathfrak{g}(\N\A')/\mathfrak{g}(\N F)}   \psi_{g_0\eta}(-X)f((1+X)g)dX \\
 &= \int_{X \in \mathfrak{g}(\N\A')/\mathfrak{g}(\N F)}   \psi_{\eta}(-X)f(g_0(1+X)g_0^{-1}g)dX  \\
 &= \int_{X \in \mathfrak{g}(\N\A')/(\mathfrak{g}(\N F)+\mathfrak{u}(\N\A'))} \int_{a \in \mathfrak{u}(\N\A')/\mathfrak{u}(\N F)}  \psi_{\eta}(-(X+a))f((1+X+a)g_0^{-1}g)da\;dX  \\
 &= \int_{X \in \mathfrak{g}(\N\A')/(\mathfrak{g}(\N F)+\mathfrak{u}(\N\A'))} \int_{a \in \mathfrak{u}(\N\A')/\mathfrak{u}(\N F)}  \psi_{\eta}(-X)f((1+X+a)g_0^{-1}g)da\;dX, 
 \end{align*}

 The last equality holds because $\psi_\eta$ is trivial on $\mathfrak{u}(\N\A')$. Finally, the strong cuspidality implies the last integral vanishes, and hence $\Pi_{\Omega_\eta}f$ is zero.

\end{proof}

\begin{cor}\label{regellipfd}
 Let $G$ be semisimple. The space of unramified strongly cuspidal functions is finite-dimensional.
\end{cor}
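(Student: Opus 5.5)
By Proposition \ref{str cusp = are}, the space of unramified strongly cuspidal functions is $\bigoplus_{\Omega\ \mathrm{ARE}} \S'_\Omega^{G(\O')}$. I would prove two things: only finitely many ARE orbits $\Omega$ have $\S'_\Omega^{G(\O')}\neq 0$, and each such summand is finite-dimensional.

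\textbf{Finitely many orbits.} Use Proposition \ref{sphvec} with $K=G(\O')$. A nonzero spherical vector in $\S'_{\Omega_\eta}\simeq \ind_{\St_\eta}^{G(\A')}\S'_{\St_\eta}$ requires some $g$ with $\Ad(g^{-1})\eta$ integral, that is, $\Ad(g^{-1})\eta\in \mathfrak{g}\otimes\N^{-1}\omega(\bO)$. Geometrically, $\M_\eta^{\N^{-1}}(\overline{C})$ is then nonempty: there is a twisted Higgs bundle $(P_0,\phi)$ whose generic fiber lies in $\Omega_\eta$. The characteristic polynomial of $\phi$ therefore defines a point of the twisted Hitchin base $\bigoplus_i H^0(\overline{C},(\omega\N^{-1})^{\otimes d_i})$, which is a finite set because $k$ is finite. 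For semisimple $\eta$ (ARE elements are semisimple), the $G(\bF)$-orbit is determined by the invariant polynomials up to a finite ambiguity: the rational orbits inside a stable geometric orbit are indexed by $\ker(H^1(\bF,C_G(\eta))\to H^1(\bF,G))$. I expect this set to be finite here, or at least finite after restricting to orbits having an integral representative everywhere locally. This gives finitely many relevant orbits. The step will use the char $>c(G)-1$ hypothesis to ensure the Chevalley restriction theorem holds.

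\textbf{Each summand is finite-dimensional.} Via the identification $\tS_\eta^{G(\O')}\simeq \S'(\M_\eta^{\N^{-1}}(\overline{C}),L_\psi)$, it suffices to show that $\M_\eta^{\N^{-1}}(\overline{C})(k)$ has finitely many isomorphism classes, i.e. that the Hitchin fiber over an ARE point is of finite type. I would argue via semistability. If $P_0$ admitted a reduction to a parabolic $P$ that is $\phi$-invariant, then generically $\phi$ would lie in a conjugate of a proper $\bF$-parabolic subalgebra, which is impossible for ARE $\eta$ by part (2) of the preceding lemma. Hence every $(P_0,\phi)$ is stable as a Higgs bundle. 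For the Hecke-finite counting this is insufficient, so I would instead bound directly: $\phi$ preserves no HN-type reduction, and the standard Nitsure-type argument then bounds the HN slopes of $P_0$ in terms of $\deg(\omega\N^{-1})=2g-2$. This places $P_0$ in a finite-type open substack, which has finitely many $k$-points, and for each $P_0$ the space of $\phi$ is a finite-dimensional $k$-vector space. Alternatively, working idelically: $\St_\eta(\bF)\backslash G(\bA')_\eta/G(\bO)$ is finite because $C_G(\eta)^\circ$ is anisotropic modulo the center, so $\St_\eta(\bF)\backslash\St_\eta(\bA')$ is compact (semisimple $G$), and the set of $g$ with $\Ad(g^{-1})\eta$ integral is compact modulo $\St_\eta(\bA')$ on the left.

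The main obstacle is the finiteness of orbits: controlling the rational forms within a given characteristic polynomial and showing that only finitely many admit integral representatives. For this step I would first try the local-global finiteness of $H^1$ of the centralizer torus and twisted Levi, together with the anisotropy coming from the ARE hypothesis.
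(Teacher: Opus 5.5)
Your finiteness argument for a single summand is sound, and it is the key point of the paper's proof. Since $\phi$ is generically ARE, no parabolic reduction of $P_0$ is $\phi$-invariant. Hence $(P_0,\phi)$ is (semi)stable, and a Nitsure-type argument bounds the HN type of $P_0$.

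The gap is in how you get finitely many contributing orbits. Your expectation that $\ker\bigl(H^1(\bF,C_G(\eta))\to H^1(\bF,G)\bigr)$ is finite is false in general. Take $G=\mathrm{SL}_2$ and $\eta$ regular elliptic with splitting field $E$. Then $C_G(\eta)$ is the norm-one torus of $E/\bF$, $H^1(\bF,\mathrm{SL}_2)=1$, and the kernel is $\bF^{*}/N_{E/\bF}(E^{*})$, which is infinite. So a single point of the Hitchin base can carry infinitely many ARE orbits. Your fallback, that only finitely many of them admit integral representatives everywhere, is asserted but not proved. Your idelic alternative is also orbit-by-orbit and does not address this. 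As written, the proposal does not show that only finitely many summands $\mathcal{S}_\Omega^{G(\mathcal{O})}$ are nonzero.

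To close this you need no Galois cohomology, no Hitchin base, and no Chevalley restriction theorem. The HN bound you derive depends only on $G$, $g$ and $\deg(\omega\mathcal{N}^{-1})=2g-2$, not on $\eta$. Combine it with the finiteness of $\pi_0(\bun_G)=\pi_1(G)$ for semisimple $G$; in this route, that is where semisimplicity is used. Together these give only finitely many possible $P_0$ up to isomorphism. For each such $P_0$, the space $H^0(\overline{C},\mathfrak{g}_{P_0}\otimes\omega\mathcal{N}^{-1})$ is finite. Hence there are only finitely many isomorphism classes of $k$-points $(P_0,\phi)$ with $\phi$ generically ARE.

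Each such pair lies in $\mathcal{M}_\eta^{\mathcal{N}^{-1}}(\overline{C})$ for exactly one orbit $\Omega_\eta$. Moreover, $\dim\mathcal{S}_{\Omega_\eta}^{G(\mathcal{O})}=\dim\mathcal{S}(\mathcal{M}_\eta^{\mathcal{N}^{-1}}(\overline{C}),L_\psi)$ is at most the number of isomorphism classes of its $k$-points. Summing over all ARE orbits therefore bounds the whole space at once. This shows both that only finitely many orbits contribute and that each contribution is finite-dimensional.

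That is exactly the paper's proof: ARE Higgs fields force semistability, and semistable twisted Higgs bundles have finitely many $k$-points. Your argument for individual summands already contains the whole proof once you notice that its bound is uniform in $\eta$.
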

\begin{proof}
    First, we observe that a Higgs bundle with almost regular elliptic Higgs field is necessarily semistable. Since the number of $k$-points of the moduli of semistable Higgs bundles are finite and the unramifield strongly cuspidal functions are finitely supported on the semistable Higgs bundles, the finite-dimensionality follows.
\end{proof}

Proposition \ref{are for gln} and Proposition \ref{str cusp = are} imply that
\begin{prop}
    Let $G=\GL_n$. A function $f$ is strongly cuspidal if and only if $f \in \bigoplus_{\Omega: \text{reg. ellip.}} \S'_{\Omega}$.
\end{prop}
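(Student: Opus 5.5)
The statement follows by combining Proposition \ref{are for gln} with Proposition \ref{str cusp = are}; no new analytic input should be needed.

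First, I will invoke Proposition \ref{str cusp = are} for $G=\GL_n$. It says $f$ is strongly cuspidal if and only if $f\in\bigoplus_{\Omega:\text{ARE}}\S'_\Omega$. The characteristic hypothesis of Section \ref{regellip} (characteristic greater than $c(G)-1=n-1$) is in force here, so the Jordan decomposition and the parabolic characterization of ARE elements used in that proof are available.

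Second, I will identify the index sets. The sum in Proposition \ref{str cusp = are} runs over $G(\bF)$-orbits $\Omega\subset\g^\vee\otimes\N^{-1}\omegac(\bF)$. These are identified with adjoint orbits via the trace form, which is $\Ad$-invariant and nondegenerate for $\GL_n$. Choosing a trivialization of the one-dimensional $\bF$-vector space $\N^{-1}\omegac(\bF)$ identifies each orbit with an adjoint orbit in $\mathfrak{gl}_n(\bF)$. This identification is compatible with the notions of regular elliptic and almost regular elliptic: rescaling by a nonzero element of $\bF$ preserves semisimplicity and the centralizer $C_G(\eta)^\circ$, so both properties do not depend on the trivialization.

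Third, I will apply Proposition \ref{are for gln}. It gives that an element of $\mathfrak{gl}_n(\bF)$ is almost regular elliptic exactly when it is regular elliptic. Hence the set of ARE orbits equals the set of regular elliptic orbits, the two direct sums coincide, and the claim follows.

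The only point requiring care is this compatibility of the orbit identification with the ellipticity notions, and it holds for the reasons given in the second step.
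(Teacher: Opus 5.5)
Your proposal is correct and follows the paper's own route: the paper obtains this statement directly by combining Proposition \ref{are for gln} with Proposition \ref{str cusp = are}. Your extra check, that the identification of coadjoint orbits in $\g^\vee\otimes\N^{-1}\omegac(\bF)$ with adjoint orbits respects the (almost) regular elliptic conditions, is a harmless clarification of a point the paper leaves implicit.
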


\subsection {Non-regular semisimple and mixed orbits}\label{nonregss and mixed}
We show that for certain mixed-type orbits (see Proposition \ref{mixed}), i.e. the orbits $\Omega_\eta$ where $\eta$ has nontrivial semisimple and nilpotent part, the $G(\A')$-representations $\S'_{\Omega_\eta}$ contain no finitary or Hecke-finite functions.

Recall that we have the $\St_\eta$-representation $\S'_{\St_\eta}$ and an isomorphism $\ind_{\St_\eta}^{G(\A')}\S'_{\St_\eta} \simeq \S'_{\Omega_\eta}$ of $G(\A')$-representations. 
\begin{lem}\label{gmquot}
Let $\eta \in \mathfrak{g} \otimes \N\inv\omega(\bF)$. Suppose that the center of $\St_\eta(\bA')$ contains a subgroup $L \simeq \bA'^*$ (and we denote by $L_{\bF}$ the subgroup of $L$ that is isomorphic to $\bF^*$), and there is a homomorphism  $p:\St_\eta(\bA')\to \bA'^*$. Let $K$ be a compact open subgroup of $\St_\eta$, and suppose that there exists $h \in L$ such that $\deg(p(h))\neq0$. Then for every smooth $\St_\eta$-subrepresentation $V$ of $\S'_{\St_\eta}$, we have $\dim V^K=\infty \text{ or  }0  $ .
\end{lem}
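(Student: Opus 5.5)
Let $V$ be a smooth $\St_\eta$-subrepresentation of $\S'_{\St_\eta}$ with $V^K\neq 0$, and let $\tilde h\in\St_\eta$ be a lift of the given $h\in L$. The plan is to show that $V^K$ contains infinitely many linearly independent translates of one nonzero function. We start by fixing $0\neq f\in V^K$ and considering $f_n:=\tilde h^n\cdot f$ for $n\in\mathbb{Z}$.

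First we check that each $f_n$ lies in $V^K$, at least after passing to a smaller $K$. Since $h$ is central in $\St_\eta(\bA')$, conjugation by $\tilde h$ acts on $K$ only through the kernel $\Na$. The group $\Na$ acts on $\S'_{\St_\eta}$ by the character $\psi_\eta$, and $K\cap\Na$ acts trivially on $f$. We therefore expect $\tilde h^nK\tilde h^{-n}$ and $K$ to act identically on $V$, up to a character of $K\cap\Na$ which is trivial on $V^K$. If this compatibility fails, we replace $K$ by a smaller compact open subgroup normalized modulo $\Na$ by $\tilde h$. This is harmless, since $\dim V^K=\infty$ for a smaller $K$ is what the later application (Lemma~\ref{nonadmissible stablizer}) needs; still, we should aim to prove the statement for the given $K$. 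The cleanest approach is to note that $\Ad(\tilde h)$ preserves $\eta$, because $\tilde h\in\St_\eta$, and $\psi_\eta$ is $\Ad(\tilde h)$-invariant, so $\tilde h$ commutes with the $K$-action modulo characters that are already trivial.

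Second, we prove linear independence using supports. Consider the map $\St_\eta(F)\backslash\St_\eta\to\St_\eta(\bF)\backslash\St_\eta(\bA')\xrightarrow{p}\bF^*\backslash\bA'^*\xrightarrow{\deg}\mathbb{Z}$. This requires $p$ to send $\St_\eta(\bF)$ into $\bF^*$, which we take as implicit in the hypothesis (rational elements have degree $0$). The support of $f$ is compact modulo $\Na$, so its image under $\deg\circ p$ is a finite set $S\subset\mathbb{Z}$. Since $h$ is central, $\operatorname{supp}(f_n)=\operatorname{supp}(f)\tilde h^{-n}$, and this support has degree image $S-n\deg p(h)$. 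Because $\deg p(h)\neq 0$, these finite sets are pairwise disjoint for $n$ in an arithmetic progression with step larger than $\operatorname{diam}(S)/|\deg p(h)|$. Nonzero functions with pairwise disjoint supports are linearly independent, so $\dim V^K=\infty$.

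The main obstacle is the first step: confirming that $\tilde h^n\cdot f$ is still $K$-invariant for the given $K$. Left multiplication by $L_{\bF}$ does not change degrees, so it adds no difficulty. The hypothesis that $L$ is central in $\St_\eta(\bA')$ (and not merely in $\St_\eta$) is exactly what makes the invariance argument work modulo $\Na$. Here I would use the explicit description $K\cap\Na\subset\ker\psi_\eta$ and the $\Ad(\tilde h)$-stability of $\psi_\eta$.
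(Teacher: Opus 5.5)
Your second step is fine and matches how the paper concludes (the paper phrases it as the union of the supports of the translates having non-compact image under the degree map). The gap is in the first step. You never establish that $\tilde h^n\cdot f\in V^K$, and the reasons you give do not imply it.

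Put $c(x)=\tilde h^{-1}x\tilde h x^{-1}$ for $x\in K$. Centrality of $h$ in $\St_\eta(\bA')$ gives $c(x)\in\Na$. Modulo $\eta^\perp$, which acts trivially, $c(x)$ is central, so $\tilde h^{-n}x\tilde h^{n}\equiv c(x)^n x$ and hence $x\cdot(\tilde h^n f)=\psi_\eta(c(x))^n\,\tilde h^n f$.

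The scalar $x\mapsto\psi_\eta(c(x))$ is a character of all of $K$, not of $K\cap\Na$. The element $c(x)$ lies in $\Na$ but in general not in $K$, so triviality of $\psi_\eta$ on $K\cap\Na$ is irrelevant here. The $\Ad(\tilde h)$-invariance of $\psi_\eta$ only gives $\psi_\eta(c(x))=1$ for $x\in\Na$, where it is automatic. For $x\in K$ with nontrivial image in $\St_\eta(\bA')$, $c(x)$ is a commutator in the central extension $\St_\eta/\eta^\perp$ of $\St_\eta(\bA')$ by $\omega(\bA')$. Nothing you have said puts it in $\ker\psi_\eta$.

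Your fallback of shrinking $K$ does not help either. First, $\tilde h$ already normalizes $K$ modulo $\Na$, so that condition does not touch the problem. Second, $\dim V^{K_0}=\infty$ for $K_0\subset K$ says nothing about $V^K\subset V^{K_0}$. The Hecke-infiniteness application in Proposition \ref{mixed} needs the statement for exactly the groups $gKg^{-1}\cap\St_\eta$ produced by Proposition \ref{sphvec}, not for smaller ones.

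The missing idea, which the paper's proof uses, is that the twisting character only needs to have finite order, not to be trivial. Since $\psi_\eta$ is built from the character $\psi$ of the finite field $k$, it takes values in $p$-th roots of unity, where $p$ is the characteristic of $k$. Hence $\psi_\eta(c(x))^n=1$ for all $x\in K$ as soon as $p\mid n$, and so $\tilde h^n\cdot f\in V^K$ for every $n\in p\mathbb{Z}$. Choosing your arithmetic progression inside $p\mathbb{Z}$ then gives infinitely many nonzero elements of $V^K$ with pairwise disjoint supports, which completes the argument.
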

\begin{proof}
 Consider the following composite
 $$ d:(\Na \St_\eta(F)) \backslash \St_\eta \simeq \St_\eta(\bF) \backslash \St_\eta(\bA')  \to \bA'^*/\bF^* \to \Pic(\overline{C}) \to \mathbb{Z},$$
 where the first map is induced by the natural surjection $\St_\eta \to \St_\eta(\bA')$, the second map is induced by the surjection $p$ and the last map is the degree map.

Let $\eta^\perp \subset \Na$ be the kernel of $\eta$ and consider the following commutative diagram with exact rows
    \[
\xymatrix{
  1 \ar[r] & N_{\mathbb{A}} \ar[d]^\eta \ar[r] & \St_{\eta} \ar[d] \ar[r] & \St_{\eta}(\bA') \ar[r] \ar[d]& 1 \\
  1 \ar[r] & \omega(\bA') \ar[r] & \St_{\eta}/\eta^\perp \ar[r] & \St_{\eta}(\bA') \ar[r] & 1
}
\]
We denote by $p'$ the quotient map $St_\eta/\eta^\perp \to \St_\eta(\bA')$. Since the action of $\St_\eta$ on $\S'_{\St_\eta}$ (and hence on $V$) factors through $\St_\eta/\eta^\perp$, it suffices to show $ V^{K'}$ is infinite-dimensional, where $K'$ is the images $K$ in $\St_\eta/\eta^\perp$. Now, we fix a lift $h''$ of $h$ in $\St_\eta / \eta^\perp$. For $s \in \St_\eta/\eta^\perp$, we let $c(s)=h''^{-1}sh''s^{-1}$. Since $h$ is in the center of $\St_\eta(\bA')$, we have $p'(c(s))=1$ for any $s \in \St_\eta/\eta^\perp$, and hence $c(s)\in \omega(\bA')$. Note that $c$ defines a continuous homomorphism $\St_\eta/\eta^\perp \to \omega(\bA')$.

Let $v \in V^{K' }$ be a nonzero element, and $k\in K'$, for any integer $m$, we have
$$ k \cdot (h''^m\cdot v)= h''^m \cdot ((h''^{-m}kh''^m) \cdot v).$$
Since $c(k)$ is central, we have $ h''^{-m}kh''^m= c(k)^mk$, and therefore
$$  k \cdot (h''^m\cdot v) =  h''^m \cdot ((c(k)^mk) \cdot v) =h''^m(c(k)^m\cdot v)= \psi_\eta(c(k))^m ( h''^m \cdot v).$$
Since $\psi_\eta$ has finite image, and there are infinitely many choices of $m$ such that $\psi_\eta(c(k))^m=1$ for all $k \in K'$, and hence $h''^m \cdot v \in V^{K'}$. Since the union of the supports of $h''^{m}\cdot v$'s is not compact (because its image in $\mathbb{Z}$ via the above map $d$ is not compact), $V^{K'}$ is infinite-dimensional.

\end{proof}

\begin{prop}\label{mixed} Let $\eta=\eta_{ss}+\eta_n$ be the Jordan decomposition of a nonzero element $\eta \in \mathfrak{g} \otimes \N \inv \omega(\bF)
$. 
\begin{enumerate}
    \item If $\eta=\eta_{ss}$ and is not elliptic, then all functions in $\S'_{\Omega_\eta}$ are Hecke-infinite.
    \item If both the semisimple and nilpotent parts are nonzero and $\eta_{ss}$ is not elliptic, then all functions in $\S'_{\Omega_\eta}$ are Hecke-infinite
\end{enumerate}

\end{prop}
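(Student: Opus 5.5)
The plan is to reduce everything to Lemma \ref{gmquot} and then transport the conclusion to $\S'_{\Omega_\eta}$ through the induction isomorphism. By Proposition \ref{heckefinite}, a $K$-invariant $f$ is Hecke-finite exactly when $V_f^K$ is finite-dimensional. So it is enough to show that for every compact open $K\subset G(\A')$, every nonzero smooth subrepresentation $V\subset \S'_{\Omega_\eta}\simeq \ind_{\St_\eta}^{G(\A')}\S'_{\St_\eta}$ has $\dim V^K\in\{0,\infty\}$. By Proposition \ref{jacquet}(2), $V\simeq \ind_{\St_\eta}^{G(\A')}W$ for a smooth subrepresentation $W\subset \S'_{\St_\eta}$. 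Proposition \ref{sphvec} then writes $V^K$ as a direct sum of spaces $W^{gKg^{-1}\cap \St_\eta}$. If each of these is $0$ or infinite-dimensional, then so is $V^K$. Hence the whole statement reduces to verifying the hypotheses of Lemma \ref{gmquot} for $\St_\eta$, uniformly over the compact open subgroups $gKg^{-1}\cap\St_\eta$.

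To verify the hypotheses, I will first treat case (1), where $\eta=\eta_{ss}$ is nonzero semisimple but not elliptic. Then $C_G(\eta)^\circ/Z(G)^\circ$ contains a nontrivial split central torus. Its preimage yields a split torus $S\simeq\mathbb{G}_m$ that is central in $C_G(\eta)^\circ$ and not contained in $Z(G)^\circ$.

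The reduced stabilizer $\St_\eta(\bA')$ is (up to the finite component group) $C_G(\eta)(\bA')$. So I take $L=S(\bA')\simeq\bA'^*$, which lies in the center of $C_G(\eta)^\circ(\bA')$; if the component group is an issue, I will pass to the finite-index subgroup, or replace $S$ by the maximal split central torus of $C_G(\eta)$ itself, which is normalized by the whole centralizer.

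For the homomorphism $p$, I will choose a character $\chi$ of $C_G(\eta)^\circ$ defined over $\bF$ whose restriction to $S$ is a nontrivial power $t\mapsto t^m$, with $m\neq0$. Such a $\chi$ exists because $S$ is a split central torus of the reductive group $C_G(\eta)^\circ$, and the restriction $X^*(C_G(\eta)^\circ)\to X^*(S)$ has finite cokernel. I then set $p=\chi$ on adelic points. Any $h\in S(\bA')$ corresponding to an idele of nonzero degree then satisfies $\deg p(h)=m\deg h\neq0$. Lemma \ref{gmquot} applies, and combining with the previous paragraph gives (1).

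For case (2), I use $C_G(\eta)=C_G(\eta_{ss})\cap C_G(\eta_n)$, where $\eta_n$ is a nilpotent element of the reductive group $M=C_G(\eta_{ss})^\circ$. The torus $S$ from case (1), central in $M$, centralizes $\eta_n\in \mathrm{Lie}(M)$, so $S\subset C_G(\eta)$ and $S$ is central there. The same $\chi$ restricted to $C_G(\eta)$ gives $p$, and the argument runs as before.

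The main obstacle I expect is the stabilizer bookkeeping. The stabilizer $\St_\eta\subset G(\A')$ stabilizes the character $\psi_\eta$, so its image is $C_G(\eta)(\bA')$ and not a larger group. I also need the centrality required in Lemma \ref{gmquot} to hold in all of $\St_\eta(\bA')$, including the non-identity components. If a component acts on $S$ by inversion, I will instead work with a finite-index subgroup. This is harmless, because the infinite-dimensionality argument in Lemma \ref{gmquot} only uses the powers $h''^m$ together with the degree map $d$.
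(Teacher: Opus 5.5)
Your proposal is correct and is essentially the paper's own argument. Both reduce via Proposition \ref{sphvec} to Lemma \ref{gmquot}, take $L$ to be the adelic points of a split torus central in the twisted Levi $M=C_G(\eta_{ss})$, and take $p$ induced by a map $M\to M/[M,M]\to\mathbb{G}_m$ that is nontrivial on that torus (your rational character $\chi$); for (2), both observe that this torus also centralizes $\eta_n$ and is therefore central in $C_G(\eta)\subset M$.

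Your closing component-group hedge is not needed. The paper, like your own case (2) argument, treats $M$ as a connected twisted Levi containing $C_G(\eta)$, so $S$ is automatically central in $\St_\eta(\bA')$. As phrased, the hedge would also not fit Lemma \ref{gmquot}, whose proof needs $h$ to commute with every element of the given compact open subgroup.
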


\begin{proof}
We first prove (1). Let $K$ be a compact open subgroup of $\St_\eta$ and $V$ be a smooth $\St_\eta$-subrepresentation of $\S'_{\St_\eta}$. By Proposition \ref{sphvec}, it suffices to show $V^K$ is infinite-dimensional if it is not zero. Now, by the lemma, it suffices to find a central subgroup $\bA'^* \simeq L \subset \St_\eta(\bA')$, a homomorphism $p:\St_\eta(\bA')\to \bA'^*$ and $h \in L$ with $\deg(p(h))\neq 0 $.

    The centralizer $\St_\eta(\bA')=M(\bA')$ is a proper (twisted) Levi subgroup of $G(\bA')$. $M$ is a proper Levi and its center contains a split torus $\mathbb{G}_m'$ because $\eta$ is not elliptic and we can take $L=\mathbb{G}_m'(\bA')$. Now, we consider 
    $$\pi: \mathbb{G}_m'(\bA') \hookrightarrow M(\bA') \to M/[M,M] (\bA') \to \mathbb{G}_m(\bA'),$$
    where the second map is the map induced by the canonical homomorphism $M\to M/[M,M]$ of algebraic groups, and the third map is a quotient to a split torus (this exists again because $\eta$ is not elliptic). The resulting composition $\pi$ is the induced map on $\bA'$-points of a nontrivial isogeny. Hence we can find $h\in \mathbb{G}_m'(\bA') $ such that $\deg(\pi(h))\neq0$. This proves (1).
    
    To prove (2), we note that the central split torus $\mathbb{G}_m'$ that centralizes $\eta_{ss}$ also centralizes $\eta_n$, because $\eta_n \in C_{\mathfrak{g}}(\eta_{ss})$. So, we have a central split torus contained in $C_G(\eta)$. Note that we have
    $$C_G(\eta) \hookrightarrow M \to M/[M,M]\to \mathbb{G}_m$$
    Now, the same argument for the proof of (1) shows that all functions in $\S'_{\Omega_\eta}$ are Hecke-infinite.

\end{proof}

\begin{cor}\label{pgl3mixed} For $G=\PGL_3$, and let $\eta \in \N \inv \mathfrak{g}\otimes \omega(\bF)$ be a mixed type element (i.e. with nontrivial semisimple and nilpotent parts in the Jordan decomposition), then all functions in $\S'_{\Omega_\eta}$ are Hecke-infinite. 
\end{cor}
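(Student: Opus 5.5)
The plan is to reduce the corollary to Proposition \ref{mixed}(2) by showing that, for $G=\PGL_3$, the semisimple part of every mixed-type element fails to be elliptic. Let $\eta=\eta_{ss}+\eta_n$ be the Jordan decomposition, which exists since we assume $\mathrm{char}(k)>3$. We work with a lift of $\eta$ to $\mathfrak{gl}_3$, or with $\mathfrak{sl}_3\simeq\mathfrak{pgl}_3$ in good characteristic; $\N^{-1}\omega$ is a line bundle, so after trivializing at the generic point the twist plays no role. Since $\eta_n\neq 0$ lies in the centralizer $C_{\mathfrak g}(\eta_{ss})$, the reductive group $C_G(\eta_{ss})^\circ$ cannot be a torus: a torus has abelian Lie algebra consisting of semisimple elements, so it contains no nonzero nilpotent. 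Hence $\eta_{ss}$ is not regular.

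The main step is to show that a nonzero non-regular semisimple element of $\mathfrak{pgl}_3(\bF)$ is not elliptic. Lift $\eta_{ss}$ to a semisimple $x\in\mathfrak{gl}_3(\bF)$. Since $x$ is not regular, its characteristic polynomial has a repeated root. A cubic with a repeated root over a perfect or separable situation, which holds here because $\mathrm{char}>3$, has all its roots in $\bF$: the repeated root is Galois-fixed, and then the remaining root is rational as well. So $x$ is split semisimple with eigenvalues $(a,a,b)$, where $a\neq b$ because $\eta_{ss}$ is nonzero in $\mathfrak{pgl}_3$. Its centralizer is therefore the Levi $\GL_2\times\GL_1$, whose image in $\PGL_3$ has a nontrivial split central torus. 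This is exactly the condition that $\eta_{ss}$ is not elliptic. Alternatively, one can apply Proposition \ref{are for gln} and the lemma preceding it: an elliptic element of $\mathfrak{gl}_3$ is almost regular elliptic, since the twisted Levi $\mathrm{Res}_{E/\bF}\GL_d$ with $d\cdot[E:\bF]=3$ forces $d\in\{1,3\}$, and hence it is regular. That contradicts the conclusion of the first paragraph.

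Combining the two steps, every mixed $\eta$ has non-elliptic semisimple part, so Proposition \ref{mixed}(2) applies and all functions in $\S'_{\Omega_\eta}$ are Hecke-infinite. The only delicate point is passing between $\PGL_3$ and $\GL_3$, both for ellipticity and for regularity. Ellipticity is defined modulo $Z(G)^\circ$, so the split central torus of $\GL_3$ is discarded, and the remaining split torus of $\GL_2\times\GL_1$ modulo scalars is still nontrivial. The hypothesis $\mathrm{char}(k)>3$ makes the identification $\mathfrak{pgl}_3\simeq\mathfrak{sl}_3$ and the Jordan decomposition valid.
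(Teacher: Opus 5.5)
Your proposal is correct and follows the paper's argument. The paper also shows that $\eta_{ss}$ cannot be regular because its centralizer contains the nonzero nilpotent $\eta_n$, and that a nonzero elliptic element of $\mathfrak{pgl}_3$ is regular, and then invokes Proposition~\ref{mixed}(2). The only difference is phrasing: the paper argues ``elliptic $\Rightarrow$ minimal polynomial irreducible of degree $1$ or $3$ $\Rightarrow$ zero or regular,'' while you argue the contrapositive by exhibiting the split Levi $\GL_2\times\GL_1$ as centralizer.

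Two small points. First, $\bF$ is not perfect; the rationality of the roots holds because irreducible polynomials of degree at most $3$ are separable in characteristic $>3$. Second, in your alternative route the case $d=3$ is a scalar, and it is excluded only because $\eta_{ss}\neq 0$ in $\mathfrak{pgl}_3$.
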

\begin{proof}
    We observe that the semisimple part $\eta_{ss}$ of $\eta$ cannot be a regular element, and all elliptic elements in $\mathfrak{pgl}_3$ are regular, and hence the corollary follows from the proposition . Indeed, if $\eta_{ss}$ is regular, then the centralizer of $\eta_{ss}$ is a Cartan. But $[\eta_{ss},\eta_n]=0$ and the only nilpotent element in a Cartan is 0. Also, let $\eta$ be semisimple, we note that $\eta$ is elliptic if and only if any lift of $\eta$ to $\mathfrak{gl}_3$ is elliptic, and an element $\eta$ in $\mathfrak{gl}_3$ is elliptic if and only if its minimal polynomial $f_\eta$ is irreducible. But that $f_\eta$ is irreducible implies that $f_\eta$ is either of degree 1 or degree 3. If $f_\eta$ is of degree 1, then $\eta=0$ because the characteristic of $k$ is not 3. If $f_\eta$ is of degree 3, then the characteristic polynomial of $\eta$ is irreducible and hence separable, and $\eta$ is regular in this case. 
\end{proof}

\section{Regular nilpotent orbits} \label{regnilpmain}
We collect some general facts about nilpotent orbits in Section \ref{prelimnilp} and study the regular nilpotent orbit for split reductive groups $G$ in \ref{regnilp}. 
\subsection{Some preliminary facts about nilpotent orbits }\label{prelimnilp}

We collect some facts about the nilpotent elements and nilpotent orbits $\S'_\eta$, which allow us to pass the study of $\tS_\eta$ to the automorphic representation for $\St_\eta(\bA')$.  First, we need a fact for nilpotent orbits for general split reductive groups $G$. This was already used in \cite{BKP} for $G=\PGL_2$, and their observation generalizes easily.
\begin{lem}

  Let $\eta^\perp$ be the subspace of elements in $\Na$ that is orthogonal to $\eta$. Consider the diagram
    \[
\xymatrix{
  1 \ar[r] & \Na \ar[d]^\eta \ar[r] & \St_{\eta} \ar[d] \ar[r] & \St_{\eta}(\bA') \ar[r] \ar[d]& 1 \\
  1 \ar[r] & \omega(\bA') \ar[r] & \St_{\eta}/\eta^\perp \ar[r] & \St_{\eta}(\bA') \ar[r] & 1
}
\]
Then we have the following:
\begin{enumerate}
    \item The bottom sequence is a central extension.
    \item Let $\mathfrak{st}_\eta$ be the Lie algebra of $\St_\eta$. If $\mathfrak{st}_\eta (\N \bA')$ is contained in $\eta^\perp$, then the bottom sequence is right split and hence $\St_\eta/\eta^\perp \simeq \omega(\bA') \times \St_\eta(\bA')$
    \item If $\eta$ is nilpotent, then $\mathfrak{st}_\eta(\N \bA')$ is contained in  $\eta^\perp$.
\end{enumerate}
    
\end{lem}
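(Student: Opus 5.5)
The plan is to work directly with the explicit description of $\St_\eta$ as the preimage in $G(\A')$ of the stabilizer $\St_\eta(\bA')$, with $\Na\simeq\g\otimes\N(\bA')$ via $X\mapsto 1+\epsilon X$. First we check that $\eta^\perp=\ker(\psi_\eta\text{-pairing})=\{X: \langle\eta,X\rangle=0\}$ is normal in $\St_\eta$. Conjugation by $s\in\St_\eta$ acts on $\Na$ through $\Ad(\bar s)$, where $\bar s$ is the image of $s$ in $\St_\eta(\bA')$. Invariance of the form gives $\langle\eta,\Ad(\bar s)X\rangle=\langle\Ad(\bar s)^{-1}\eta,X\rangle=\langle\eta,X\rangle$. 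So the quotient $\St_\eta/\eta^\perp$ makes sense, and the map $\Na\to\omega(\bA')$, $X\mapsto\langle\eta,X\rangle$, identifies $\Na/\eta^\perp$ with $\omega(\bA')$ (here $\N\otimes\N^{-1}\omega=\omega$, and surjectivity holds since $\eta\ne0$, or trivially if $\eta=0$). The same computation shows that the induced conjugation action of $\St_\eta$ on $\Na/\eta^\perp\simeq\omega(\bA')$ is trivial. This proves (1): the bottom row is exact by the snake lemma/diagram, and the kernel is central.

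For (2), the strategy is to produce a splitting homomorphism $\St_\eta(\bA')\to\St_\eta/\eta^\perp$. Locally at each place, choose the canonical lift. Since $\N$ has a generator $\epsilon_p$ and, over the square-zero extension, $\O'_p$ contains a copy of $\bO_p$ only after choosing a splitting, I would instead argue with a set-theoretic section $\sigma$ and its cocycle $c(h_1,h_2)=\sigma(h_1)\sigma(h_2)\sigma(h_1h_2)^{-1}\in\omega(\bA')$. The tangent directions of $\St_\eta$ inside $\Na$ are $\mathfrak{st}_\eta(\N\bA')$, and the ambiguity in choosing lifts compatibly lives there. The step I would carry out is this: the extension $1\to\Na\to\St_\eta\to\St_\eta(\bA')\to1$ is the restriction of $1\to\Na\to G(\A')\to G(\bA')\to1$. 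The latter splits when $C$ is split; in general it splits locally and its class comes from the Lie-theoretic deformation. After pushing out along $\eta$, the class becomes the image of a class valued in $\mathfrak{st}_\eta(\N\bA')$. Concretely, for $h$ in $\St_\eta(\bA')$, any lift $\tilde h\in G(\A')$ satisfies $\Ad(\tilde h^{-1})\eta=\eta+\epsilon Y$, and modifying $\tilde h$ by $1+\epsilon Z$ changes $Y$ by $[Z,\eta]$-type terms. The obstruction to choosing lifts forming a subgroup then lies in $\mathfrak{st}_\eta$-valued data, which vanishes after pushing out by $\eta$ under the hypothesis. This is the main obstacle: making this cocycle argument precise. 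I would first try it in the split case $C=\bC\times\Spec k[\epsilon]/\epsilon^2$, where $G(\bA')\subset G(\A')$ gives the splitting directly, and then reduce to that case place by place, since $\St_\eta(\bA')$ is a restricted product.

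For (3), assume $\eta$ is nilpotent and let $Z\in\mathfrak{st}_\eta$, i.e.\ $[Z,\eta]=0$. We need $\langle\eta,Z\rangle=0$. The element $Z$ commutes with $\eta$, so $\eta Z$ (in a faithful representation, or abstractly $\ad\eta\circ\ad Z$) is nilpotent. Hence the invariant form satisfies $\langle\eta,Z\rangle=0$. For the trace form of a faithful representation this is $\operatorname{tr}(\eta Z)=0$, because commuting matrices with one nilpotent have nilpotent product; for a general invariant nondegenerate form, I would reduce to the simple factors and the center, where $\eta$ has zero central component. Combined with (2), this gives the splitting for nilpotent $\eta$.
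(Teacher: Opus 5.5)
\textbf{Verdict: genuine gap in part (2).} Your arguments for (1) and (3) are fine, and (3) is the paper's trace argument: $\operatorname{ad}_x$ and $\operatorname{ad}_\eta$ commute and $\operatorname{ad}_\eta$ is nilpotent. Part (2), however, is left open, as you say yourself.

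\textbf{The missing step.} Fix a lift $\tilde\eta$ of $\eta$ over $F$, for instance by writing $\eta=\alpha\eta_0$ with $\eta_0$ expressed in a Chevalley basis. You need every $h\in\St_\eta(\bA')$ to have a lift in $G(\A')$ that actually fixes $\tilde\eta$. In your notation, the correction by $1+\epsilon Z$ must kill $Y$. That requires $Y\in\operatorname{im}(\operatorname{ad}_\eta)$, which is not automatic. It is the infinitesimal lifting property of the centralizer group scheme $C_G(\tilde\eta)$, i.e.\ its smoothness, and this is where hypotheses on the characteristic enter.

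\textbf{How it finishes the proof.} Granting smoothness, $C_G(\tilde\eta)(\A')$ is a subgroup of $\St_\eta$. It surjects onto $\St_\eta(\bA')$ with kernel $\mathfrak{st}_\eta(\N\bA')$ (the paper writes this subgroup as $\St_\eta(\A')$). No cocycle bookkeeping is needed. Under the hypothesis, the composite $C_G(\tilde\eta)(\A')\to\St_\eta/\eta^\perp$ kills $\mathfrak{st}_\eta(\N\bA')$. It therefore factors through $C_G(\tilde\eta)(\A')/\mathfrak{st}_\eta(\N\bA')\simeq\St_\eta(\bA')$, which is the required homomorphic section. This is exactly the paper's one-line proof of (2).

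\textbf{Why the fallback fails.} Your fallback rests on the claim that $1\to\Na\to G(\A')\to G(\bA')\to1$ ``splits locally in general''. This is false when $C$ is not a $k$-scheme, and the paper allows that case. It takes $G$ over $\mathbb{Z}$, and the motivating $C_2$ for $F=\mathbb{Q}_p$ lives over $\mathbb{Z}/p^2$, so $F_p\simeq W_2(k_p)((t))$. Already $\mathrm{SL}_2(\mathbb{Z}/p^2)\to\mathrm{SL}_2(\mathbb{F}_p)$ has no section for $p\ge 5$. Indeed, any lift $I+E_{12}+pM$ of $I+E_{12}$ has $p$-th power $I+pE_{12}\neq I$, and the same computation rules out a section of $G(F_p)\to G(\bF_p)$.

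In equal characteristic, local splittings do exist, but they split the top row outright. That route therefore never uses the hypothesis $\mathfrak{st}_\eta(\N\bA')\subset\eta^\perp$, so it cannot be the mechanism behind (2) in general.
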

\begin{proof}
    (1) follows from the definitions. (2) can be seen by noting that $\St_\eta(\mathbb{A})/\mathfrak{st}_\eta(\bA') \simeq \St_\eta(\bA') $, so the right split to the central extension exists when $\mathfrak{st}_\eta(\N\bA') \in \eta^\perp $.

    For (3), let $x\in \mathfrak{st}_\eta(\N \bA')$, then $(\ad_x\ad_\eta)^r=\ad_x^r\ad_\eta^r$, and since $\eta$ is nilpotent there exists a positive integer $r$ such that $(\ad_x\ad_\eta)^r=0$, and hence $\text{tr}(\ad_x\ad_\eta)=0$ which means $x \in \eta^\perp$.
\end{proof}

Recall that $\S'_\eta$ is the $\St_\eta$-representation consisting of the locally constant and compactly supported functions $f$ on $\St_\eta(F)\backslash \St_\eta$ such that $f(lg)=\psi_\eta(l)f(g)$ for $l\in \Na$. Now, the lemma implies:
\begin{prop}
    The stabilizer $\St_\eta$ of a nilpotent element $\eta$ acts on $\S'_\eta$ via the quotient $\St_\eta \to \omega(\bA') \times \St_\eta(\bA')$, where $\omega(\bA')$ acts via $\psi_\eta$. We have an isomorphism $$\S'_{\eta} \simeq \S'(\St_\eta(\bF) \backslash \St_\eta(\bA'))\times \psi_\eta $$ of $\St_\eta$-representations
\end{prop}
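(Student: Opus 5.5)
The plan is to use the preceding lemma to reduce the $\St_\eta$-action to an action of $\omega(\bA') \times \St_\eta(\bA')$, and then to unwind the definition of $\S'_\eta$ as functions on $\St_\eta(F)\backslash \St_\eta$. Since $\eta$ is nilpotent, part (3) of the lemma gives $\mathfrak{st}_\eta(\N\bA') \subset \eta^\perp$. Part (2) then gives a splitting $\St_\eta/\eta^\perp \simeq \omega(\bA') \times \St_\eta(\bA')$. Concretely, I would write $\St_\eta = \Na \cdot \tilde{s}(\St_\eta(\bA'))$ using the section coming from this splitting. Modulo $\eta^\perp$, this section is a group homomorphism $\St_\eta(\bA') \to \St_\eta/\eta^\perp$ whose image commutes with the central $\omega(\bA')$.

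First I check that the action factors as claimed. For $f \in \S'_\eta$ and $u \in \eta^\perp \subset \Na$ we have $\psi_\eta(u)=1$. Because $\Na$ is normal, $f(gu) = f((gug^{-1})g) = \psi_\eta(gug^{-1})f(g)$. For $g \in \St_\eta$, conjugation preserves $\psi_\eta$, so $\psi_\eta(gug^{-1})=\psi_\eta(u)=1$. Hence right translation by $\eta^\perp$ is trivial, and the action factors through $\St_\eta/\eta^\perp$. The central factor $\omega(\bA') = \Na/\eta^\perp$ acts by $\psi_\eta$, by the same computation.

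Next I build the isomorphism. I restrict $f$ along the composite $\St_\eta(\bA') \to \St_\eta/\eta^\perp$ given by the splitting, and define $\Phi(f)(\bar g) := f(\tilde s(\bar g))$. Left invariance under $\St_\eta(F)$ must descend to left invariance under $\St_\eta(\bF)$ on $\St_\eta(\bA')$. The subtle point is that the splitting must be compatible with rational points: $\tilde s(\St_\eta(\bF))$ should lie in $\St_\eta(F)\eta^\perp$, up to elements of $\Na$ on which $\psi_\eta$ is trivial modulo $\Nf$.

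This rational compatibility is the step I expect to be the main obstacle. I would handle it by constructing the splitting explicitly rather than abstractly. Since $\Na \simeq \g\otimes\N(\bA')$ and $\eta$ is nilpotent, I would pick the splitting using a global lift $\St_\eta(F) \to \St_\eta(\bF)$ when $C$ is split. In general I would argue that the obstruction is a homomorphism $\St_\eta(\bF) \to \omega(\bA')/(\omega(\bF)+\psi\text{-trivial})$, which can be absorbed using $\psi_\eta(\Nf)=1$. Equivalently, the class of the central extension restricted to $\St_\eta(\bF)$ is killed because $\psi_\eta$ vanishes on $\eta(\Nf)\subset \omega(\bF)$, whose residues sum to zero.

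Granting this, $\Phi$ is injective because $f$ is determined by its values on $\tilde s(\St_\eta(\bA'))$ together with the $\psi_\eta$-equivariance. It is surjective by extending by $\psi_\eta$. Compact support modulo $\Na$ corresponds exactly to compact support on $\St_\eta(\bF)\backslash\St_\eta(\bA')$. Equivariance for $\omega(\bA') \times \St_\eta(\bA')$ is immediate, which gives $\S'_\eta \simeq \S'(\St_\eta(\bF)\backslash\St_\eta(\bA'))\otimes\psi_\eta$.
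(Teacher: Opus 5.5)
You follow the paper's route: there the proposition is read off in one line from the lemma. You also correctly isolate the point the paper leaves implicit, namely that the section $\tilde s$ of $\mathrm{St}_\eta/\eta^\perp\to\mathrm{St}_\eta(\bar{\mathbb{A}})$ must be compatible with rational points. \textbf{The gap is in how you secure that compatibility: your argument does not work.}

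Take any section $\tilde s$ and any $\gamma\in\mathrm{St}_\eta(\overline{F})$ with a lift $\hat\gamma\in\mathrm{St}_\eta(F)$, and write $\hat\gamma=\tilde s(\gamma)c(\gamma)$ with $c(\gamma)\in N_{\mathbb{A}}/\eta^\perp\cong\omega(\bar{\mathbb{A}})$. Then $\Phi(f)(\gamma h)=\psi_\eta(c(\gamma))^{-1}\Phi(f)(h)$. So $\Phi$ lands in $\mathrm{St}_\eta(\overline{F})$-invariant functions exactly when the character $\psi_\eta\circ c$ of $\mathrm{St}_\eta(\overline{F})$ is trivial.

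The identity $\psi_\eta(N_F)=1$ (the residue theorem) only shows that $\psi_\eta\circ c$ does not depend on the choice of $\hat\gamma$. It says nothing about triviality, and triviality genuinely depends on the section. Since $\omega(\bar{\mathbb{A}})$ is central, any section may be twisted by a continuous homomorphism $\delta\colon\mathrm{St}_\eta(\bar{\mathbb{A}})\to\omega(\bar{\mathbb{A}})$. For example, take $G=\mathrm{PGL}_2$ and $\eta$ regular nilpotent, so $\mathrm{St}_\eta(\bar{\mathbb{A}})\cong\bar{\mathbb{A}}$. The twist $\delta(x)=\beta x$ with $\beta\in\omega(\bar{\mathbb{A}})\setminus\omega(\overline{F})$ changes $\psi_\eta\circ c$ by the character $x\mapsto\psi(\mathrm{Res}(\beta x))^{-1}$. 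This character is nontrivial on $\overline{F}$, because $\omega(\overline{F})$ is the full annihilator of $\overline{F}$ under the residue pairing. So for an abstractly chosen splitting the obstruction cannot be ``absorbed'' as you claim. Your split-case suggestion also runs the wrong way: what is needed is a map $\mathrm{St}_\eta(\overline{F})\to\mathrm{St}_\eta(F)$.

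The missing idea is to use the specific section produced in the proof of part (2) of the lemma, which is rational by construction.
\begin{enumerate}
\item Write $\eta=\alpha\eta_0$ with $\eta_0\in\mathfrak{g}(\overline{F})$, and lift $\eta_0$ to $\tilde\eta_0\in\mathfrak{g}(F)$ with smooth centralizer $\mathcal{C}$. For the representatives used in the paper, $\eta_0\in\mathfrak{g}(\mathbb{Z})$, and one can take the centralizer of that integral element.
\item The group $\mathcal{C}(\mathbb{A})=\{g\in G(\mathbb{A}):\mathrm{Ad}(g)\tilde\eta_0=\tilde\eta_0\}$ lies in $\mathrm{St}_\eta$ and maps onto $\mathrm{St}_\eta(\bar{\mathbb{A}})$. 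It meets $N_{\mathbb{A}}$ in $\mathfrak{st}_\eta(\mathcal{N}\bar{\mathbb{A}})$, which lies in $\eta^\perp$ by part (3); this is exactly where nilpotency enters.
\item Hence its image in $\mathrm{St}_\eta/\eta^\perp$ is a homomorphic section.
\item Moreover $\mathcal{C}(F)\subset\mathrm{St}_\eta(F)$, and $\mathcal{C}(F)\to\mathrm{St}_\eta(\overline{F})$ is surjective by smoothness (infinitesimal lifting along $\operatorname{Spec}\overline{F}\hookrightarrow\operatorname{Spec}F$).
\end{enumerate}
For this section $c\equiv0$, nothing needs to be absorbed, and the rest of your argument goes through: injectivity, surjectivity by $\psi_\eta$-extension, supports, and equivariance.

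Whether $C$ is split is not the real issue. If $C$ is a $k$-scheme, $F\to\overline{F}$ always has a ring section because $\overline{F}/k$ is separable. For extensions over $W_2(k)$ no ring section exists, and the centralizer construction modulo $\eta^\perp$ is what works in general.
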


We have the following proposition.

\begin{prop}\label{nilpnonadm}
Let $\eta\in \mathfrak{g}\otimes\omega\N\inv(\bF)$ be a nilpotent element. Let $V\simeq \ind_{\St_\eta}^{G(\A')} (W,\psi_\eta) $ for some smooth subrepresentation $W$ of $\S'(\St_\eta(\bF) \backslash \St_\eta(\bA'))$.

\begin{enumerate}
    \item Let $K$ be a compact open subgroup of $G(\A')$. If for any $g_0 \in G(\bA')$ such that $\Ad(g_0^{-1}) \cdot \eta \in \tilde{\Xi}_K$, there exist infinitely many $g  \in \St_\eta(\bA') \backslash G(\bA') / \overline{K} $ such that $\Ad(g^{-1}) \cdot \eta $ are in different $K$-orbits and for any smooth $\St_\eta(\bA')$-subreresentation $W'$ of $W$ we have
$$W'^{g\overline{K}g^{-1} \cap \St_\eta(\bA')}\supset W'^{g_0\overline{K}g_0^{-1}\cap \St_\eta(\bA')}, $$
then all nonzero elements in $V^K$ are $\Hk$-infinite.
\item The condition in (1) is satisfied for all $K$, then $V$ has no admissible subrepresentation.
\end{enumerate}
\end{prop}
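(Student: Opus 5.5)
The plan is to combine Proposition \ref{heckefinite} (Hecke-finiteness of $f$ is equivalent to finite-dimensionality of $V_f^K$) with the explicit description of $K$-invariants of an induced representation in Proposition \ref{sphvec}. For (1), let $0\neq v\in V^K$, and let $V_v\subset V$ be the $G(\A')$-subrepresentation generated by $v$. By Proposition \ref{jacquet}, since $V_v$ is a smooth subrepresentation of $V\simeq \ind_{\St_\eta}^{G(\A')}(W,\psi_\eta)\subset\tS_\eta$, we can write $V_v\simeq \ind_{\St_\eta}^{G(\A')}(W',\psi_\eta)$ for some smooth $\St_\eta$-subrepresentation $W'$. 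We may take $W'=J_\eta(V_v)\subset J_\eta(V)=W$, using exactness of $J_\eta$. Because $\eta$ is nilpotent, the preceding proposition says that $\St_\eta$ acts through $\omega(\bA')\times\St_\eta(\bA')$, with $\omega(\bA')$ acting by $\psi_\eta$. Hence $W'$ is a smooth $\St_\eta(\bA')$-subrepresentation of $W$, and invariants under $gKg^{-1}\cap\St_\eta$ are the same as invariants under $g\overline{K}g^{-1}\cap\St_\eta(\bA')$, provided $\psi_\eta$ is trivial on $\Na\cap gKg^{-1}$. That triviality is exactly the condition $\Ad(g^{-1})\cdot\eta\in\tilde{\Xi}_K$.

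Next I would apply Proposition \ref{sphvec} to $V_v$:
$$V_v^K\simeq \bigoplus_{g} W'^{\,g\overline{K}g^{-1}\cap \St_\eta(\bA')},$$
where $g$ runs over $\St_\eta\backslash G(\A')/K$ with $\Ad(g^{-1})\eta\in\Delta_\Omega(K)$. These double cosets are identified with elements of $\St_\eta(\bA')\backslash G(\bA')/\overline{K}$, with distinct $K$-orbits of $\Ad(g^{-1})\eta$ giving distinct summands. Since $v\neq0$, at least one summand is nonzero, say the one indexed by $g_0$, so $W'^{\,g_0\overline{K}g_0^{-1}\cap\St_\eta(\bA')}\neq0$. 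The hypothesis then produces infinitely many $g$ with pairwise distinct $K$-orbits $\Ad(g^{-1})\eta$, each satisfying $W'^{\,g\overline{K}g^{-1}\cap\St_\eta(\bA')}\supset W'^{\,g_0\overline{K}g_0^{-1}\cap\St_\eta(\bA')}\neq 0$. Each of these indexes a different nonzero summand, so $\dim V_v^K=\infty$, and Proposition \ref{heckefinite} gives that $v$ is $\Hk$-infinite.

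For (2), suppose $V'\subset V$ is an admissible subrepresentation. Pick a nonzero $v\in V'$, and choose $K$ small enough that $v\in V'^K$. Then $V_v\subset V'$ forces $V_v^K$ to be finite-dimensional, which contradicts (1) applied to this $K$. Alternatively, one can invoke Proposition \ref{heckeadmiss} directly.

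The main obstacle is the bookkeeping in Proposition \ref{sphvec}: we must check that the indices $g$ supplied by the hypothesis (elements of $G(\bA')$ modulo $\overline{K}$ with distinct $K$-orbits of $\Ad(g^{-1})\eta$) really give distinct, admissible index elements. This needs lifting $g$ to $G(\A')$ and noting that $\Na\subset\St_\eta$, so that $\St_\eta\backslash G(\A')/K=\St_\eta(\bA')\backslash G(\bA')/\overline{K}$. I would also check that $\Ad(g^{-1})\eta\in\tilde{\Xi}_K$ holds for these $g$, which I would read as implicit in the hypothesis. A secondary point is that the hypothesis is stated for subrepresentations $W'$ of $W$, which is exactly why we pass to $V_v$ and $W'=J_\eta(V_v)$ rather than working with $W$ itself.
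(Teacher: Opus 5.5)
Your proposal is correct and follows the paper's proof essentially step for step. You pass to the subrepresentation $V_v$ generated by a nonzero $v\in V^K$, write it as $\ind_{\St_\eta}^{G(\A')}(W',\psi_\eta)$ with $W'\subset W$ via Proposition \ref{jacquet}, decompose $V_v^K$ by Proposition \ref{sphvec}, use a nonzero summand indexed by some $g_0$ together with the hypothesis to produce infinitely many nonzero summands, conclude (1) by Proposition \ref{heckefinite}, and get (2) from Proposition \ref{heckeadmiss}. The bookkeeping you flag (identifying $\St_\eta\backslash G(\A')/K$ with $\St_\eta(\bA')\backslash G(\bA')/\overline{K}$, passing from $gKg^{-1}\cap\St_\eta$-invariants to $g\overline{K}g^{-1}\cap\St_\eta(\bA')$-invariants, and reading $\Ad(g^{-1})\cdot\eta\in\tilde{\Xi}_K$ into the hypothesis) is left implicit in the paper's proof as well.
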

\begin{proof}
 Let $f\in V^K$ and $V_f$ be the $G(\A')$-representation generated by $f$. By the previous proposition and proposition \ref{jacquet}, we have $V_f \simeq \ind_{\St_\eta}^{G(\A')}(W'_f,\psi_\eta)$, for some smooth $\St_\eta$-subrepresentation $W'_f$ of $W$. By Proposition \ref{heckefinite}, we must show that $\dim V_f^K =\infty$. Now applying Proposition \ref{sphvec}, we have 
 $$ V_f^K \simeq \bigoplus_{g \in \St_\eta(\bA') \backslash G(\bA') / \overline{K}, \text{ } \Ad(g^{-1}) \cdot \eta \in \Delta_K}  W_f'^{g\overline{K}g^{-1} \cap \St_\eta(\bA')},$$
 and there is a $g_0$ such that $\Ad(g_0^{-1}) \cdot \eta \in \tilde{\Xi}_K$ and ${W_f'}^{g_0\overline{K}g_0^{-1} \cap \St_\eta(\bA')}$. But now the assumption that there exists infinitely many $g$ such that ${W_f'}^{g\overline{K}g^{-1} \cap \St_\eta(\bA')}\supset {W_f'}^{g_0\overline{K}g_0^{-1}\cap \St_\eta(\bA')}$ and $\Ad(g^{-1}) \cdot \eta $ are in different $K$-orbits implies $\dim V_f^K =\infty$. This proves (1), and (2) follows by Proposition \ref{heckeadmiss}.
\end{proof}

\subsection{Nilpotent elements}
In this subsection, we recall some standard facts about nilpotent elements in $\mathfrak{g}$. Our reference is \cite{McT} and \cite{CM}.

\begin{defn} 
    A cocharacter $\phi: \mathbb{G}_m \to G$ defines a $\mathbb{Z}$-grading on $\mathfrak{g}=\oplus_{i \in \mathbb{Z}} \mathfrak{g}_i$, where $\mathfrak{g}_i:= \lbrace x \in \mathfrak{g}| \Ad(\phi(t)) \cdot x = t^i x\rbrace$. This also defines a parabolic subalgebra $$\mathfrak{p}_\phi:= \oplus_{i \geq 0} \mathfrak{g}_i$$
    We say that the cocharacter $\phi$ or the parabolic subalgebra $\mathfrak{p}_\phi$ is associated to a nilpotent element $\eta \in \mathfrak{g}$ if $\eta \in \mathfrak{g}_2$ and there is a maximal torus $S$ of the centralizer $C_G(\eta)$ such that the image of $\phi$ lies in $(L,L)$ where $L=C_G(S)$.
\end{defn}
\begin{prop}
    Let $\eta$ be a nilpotent element. We have:
    \begin{enumerate}
        \item There is a cocharacter $\phi$ associated to $\eta$. If $\phi$ and $\phi'$ are two such cocharacters, then $\mathfrak{p}_\phi=\mathfrak{p}_{\phi'}$.
        \item $\eta$ is regular i.e. $\dim C_G(\eta)$ is equal to the rank of $G$ if and only if $\eta$  lies in exactly one Borel subalgebra $\mathfrak{b}$.
    \end{enumerate}
\end{prop}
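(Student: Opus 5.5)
Since both parts are standard structural facts about nilpotent elements, I will assemble them from the literature cited just before the statement (\cite{McT}, \cite{CM}) rather than reprove them from scratch. Throughout I use the running hypothesis that the characteristic of $k$ is large (greater than the Coxeter number, so in particular good for $G$). The organizing tool is a grading of $\mathfrak{g}$ adapted to $\eta$, obtained from an $\mathfrak{sl}_2$-triple or, in positive characteristic, from Premet's theory of associated cocharacters.

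For (1), existence, I will proceed as follows.
\begin{enumerate}
\item By Bala--Carter theory, choose a maximal torus $S\subset C_G(\eta)$ and set $L=C_G(S)$. Then $\eta$ is a distinguished nilpotent element of $\mathrm{Lie}(L)$.
\item Since $p$ is good, Jacobson--Morozov applies inside $(L,L)$. This gives an $\mathfrak{sl}_2$-triple $(\eta,h,f)$, where $h$ has integral eigenvalues and integrates to a cocharacter $\phi\colon \mathbb{G}_m\to (L,L)$.
\item By construction $\eta\in\mathfrak{g}_2$ for this $\phi$, so $\phi$ is associated to $\eta$ in the sense of the definition. In small good characteristic one instead uses Premet's theorem, as presented in \cite{McT}.
\end{enumerate}

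For (1), uniqueness, I will use two facts.
\begin{itemize}
\item Any two cocharacters associated to $\eta$ are conjugate under $C_G(\eta)^\circ$ (Premet, McNinch; see \cite{McT}).
\item $C_G(\eta)\subset P_\phi$. At the Lie algebra level this follows from $\mathfrak{sl}_2$-representation theory: $\ker(\ad\eta)$ is spanned by highest weight vectors, which have nonnegative $h$-weight, so $\mathfrak{c}_\mathfrak{g}(\eta)\subset\bigoplus_{i\ge0}\mathfrak{g}_i$. At the group level, $C_G(\eta)$ is the semidirect product of the reductive part $C_G(\eta)\cap C_G(\phi)$ and a unipotent subgroup of $U_{P_\phi}$.
\end{itemize}
Now if $\phi'=\Ad(c)\phi$ with $c\in C_G(\eta)^\circ\subset P_\phi$, then $\mathfrak{p}_{\phi'}=\Ad(c)\mathfrak{p}_\phi=\mathfrak{p}_\phi$.

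For (2), the plan is to use the Springer fibre $\mathcal{B}_\eta=\{\mathfrak{b}\ \text{Borel}:\eta\in\mathfrak{b}\}$. In good characteristic Steinberg's dimension formula gives
$$\dim\mathcal{B}_\eta=\tfrac12\bigl(\dim C_G(\eta)-\rk G\bigr).$$
\begin{itemize}
\item If $\mathcal{B}_\eta$ is a single point, its dimension is $0$, so $\dim C_G(\eta)=\rk G$ and $\eta$ is regular.
\item Conversely, if $\eta$ is regular, then $\mathcal{B}_\eta$ is finite and nonempty, since every nilpotent element lies in some Borel subalgebra.
\item To conclude that $\mathcal{B}_\eta$ is exactly one point, I would make this explicit. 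Conjugate $\eta$ to $\sum_{\alpha\ \text{simple}}c_\alpha e_\alpha$ with all $c_\alpha\neq0$. The associated cocharacter is then $2\rho^\vee$, so $\mathfrak{p}_\phi=\mathfrak{b}$.
\item Any Borel $\mathfrak{b}'\ni\eta$ is $\Ad(w u)\mathfrak{b}$ for some $u\in U$ and $w$ in the Weyl group. Since $\eta$ has a nonzero component along every simple root vector, $\eta$ can only lie in $\Ad(w u)\mathfrak{b}$ when $w=1$. Alternatively, one can cite the connectedness of Springer fibres.
\end{itemize}

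The main obstacle is the positive-characteristic input: the conjugacy of associated cocharacters and the dimension formula for Springer fibres. Both genuinely need good characteristic, and I would cite them from \cite{McT} rather than reprove them.
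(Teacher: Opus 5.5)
Your proposal is correct and matches the paper, which gives no proof of its own and simply recalls these as standard facts from \cite{McT} and \cite{CM}. Your sketch is the standard argument behind those citations: Bala--Carter/Premet for existence, $C_G(\eta)^\circ$-conjugacy of associated cocharacters together with $C_G(\eta)\subset P_\phi$ for uniqueness of $\mathfrak{p}_\phi$, and the Springer-fibre dimension formula for (2).

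One small correction in the Bruhat step: a general Borel should be written $\Ad(u\dot w)\mathfrak{b}$, not $\Ad(wu)\mathfrak{b}$, since the latter is just $\Ad(w)\mathfrak{b}$. What then forces $w=1$ is that $\Ad(u^{-1})\eta$ has the same nonzero simple-root components as $\eta$.
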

\begin{prop}\cite[5.2.4]{McT} \label{regnilpelem}
    Let $\eta$ be a regular nilpotent element. 
    \begin{enumerate}
        \item The centralizer $C_G(\eta)$ is commmutative
        \item The maximal torus of $C_G(\eta)$ is the identity component of the center $Z(G)$ of $G$.
        \item $C_G(\eta)=Z(G)R_u(C_G(\eta))$, where $R_u(C_G(\eta))$ is the unipotent radical of $C_G(\eta)$.
        \item If $\eta$ is contained in a Borel subalgebra $\mathfrak{b}$, then all the highest root subgroups are contained in $C_G(\eta)$. 
    \end{enumerate}
\end{prop}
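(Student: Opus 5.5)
The plan is to reduce everything to an explicit representative and use the grading given by an associated cocharacter. Since all regular nilpotent elements are conjugate, and all Borel subalgebras are conjugate, I may assume $\mathfrak{b}$ is the standard Borel subalgebra and $\eta = e := \sum_{\alpha \in \Delta} e_\alpha$, where the sum runs over the simple roots. I work under the standing characteristic assumption, so that the characteristic is good for $G$. Take $\phi = 2\rho^\vee$. This cocharacter is associated to $e$: it places $e$ in $\mathfrak{g}_2$, and its parabolic $\mathfrak{p}_\phi = \bigoplus_{i \ge 0} \mathfrak{g}_i$ is exactly $\mathfrak{b}$. By the general theory of associated cocharacters (the preceding proposition together with \cite{McT}), $C_G(e) \subset P_\phi = B$, and moreover
\[
C_G(e) = C_T(e) \ltimes C_U(e),
\]
with $C_U(e)$ connected in good characteristic.

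Parts (2) and (3) then follow from a direct computation of $C_T(e)$. Since $\Ad(t)e = \sum_{\alpha \in \Delta} \alpha(t) e_\alpha$, a torus element $t$ fixes $e$ if and only if $\alpha(t) = 1$ for every simple root $\alpha$. This holds if and only if $t$ acts trivially on every root space, that is, $t \in Z(G)$. Hence $C_T(e) = Z(G)$, which gives (3): $C_G(e) = Z(G)\, C_U(e)$, where $C_U(e) = R_u(C_G(e))$ because it is connected unipotent and normal. For (2), every torus of $C_G(e)$ maps trivially into the unipotent quotient $C_G(e)/Z(G)$, so it lies in $Z(G)$. The maximal torus is therefore $Z(G)^\circ$.

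For (1), $Z(G)$ is central, so it suffices to show that $C_U(e)$, equivalently $C_G(e)^\circ$, is commutative. I would use a closure argument. The regular locus $\mathfrak{g}^{reg}$ is open, irreducible and contains the regular semisimple elements as a dense subset. On $\mathfrak{g}^{reg}$ the map $x \mapsto \mathfrak{c}_{\mathfrak{g}}(x)$ to the Grassmannian of rank-dimensional subspaces is a morphism. For regular semisimple $x$ this centralizer is a Cartan subalgebra, hence abelian, and being abelian is a closed condition. So $\mathfrak{c}_{\mathfrak{g}}(e)$ is abelian. At the group level I would run the same argument with the centralizer group scheme over $\mathfrak{g}^{reg}$: its fibers over regular semisimple points are tori, hence commutative, and commutativity of the identity components of the fibers passes to the limit. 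Alternatively, one can use that $C_U(e)$ is connected with commutative Lie algebra and, in good characteristic, is isomorphic to that Lie algebra via the Springer isomorphism. I expect this step to be the main obstacle, and if the closure argument needs more care I would cite \cite[5.2.4]{McT} directly for it.

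For (4), let $\alpha_h$ be the highest root of a simple factor. For every simple root $\alpha$, the sum $\alpha + \alpha_h$ is not a root. By the commutator formula, the root subgroup $U_{\alpha_h}$ therefore commutes with $U_\alpha$. Differentiating, $\Ad(u)e_\alpha = e_\alpha$ for all $u \in U_{\alpha_h}$; equivalently, $[e_{\alpha_h}, e_\alpha] = 0$ and the higher terms vanish. Hence $U_{\alpha_h} \subset C_G(e)$. For a general Borel subalgebra containing $\eta$, the statement follows by transporting this along the conjugation that brings that Borel subalgebra to the standard one, since by part (2) of the preceding proposition the Borel subalgebra containing $\eta$ is unique.
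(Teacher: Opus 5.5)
\textbf{Comparison with the paper.} Your (4) follows the paper's route: the paper cites \cite[5.2.4]{McT} for (1)--(3) and gets (4) from the Chevalley commutator formula. For (2) and (3) you go one level deeper. You reduce to $e=\sum_{\alpha\in\Delta}e_\alpha$ and use the associated cocharacter $2\rho^\vee$ to write $C_G(e)=C_T(e)\ltimes C_U(e)$. After that, $C_T(e)=\bigcap_{\alpha\in\Delta}\ker\alpha=Z(G)$ is an honest computation. This is correct and shows where $Z(G)$ comes from. However, the substantive inputs are the McNinch--Premet structure theory you would otherwise cite: $C_G(e)\subset B$ (which also follows from uniqueness of the Borel subalgebra containing $e$), and above all connectedness of $C_U(e)$ in good characteristic, without which (3) fails. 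Also, (1)--(3) are geometric statements, so reduce to $e$ only after passing to an algebraic closure of $\overline F$; over $\overline F$ itself regular nilpotents need not be conjugate.

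\textbf{The argument for (1) does not work as written.} That part genuinely rests on the citation. The Lie-algebra step needs $\dim\mathfrak c_{\mathfrak g}(x)=\operatorname{rk}G$ on all of $\mathfrak g^{reg}$, so that $x\mapsto\mathfrak c_{\mathfrak g}(x)$ is a morphism to a Grassmannian. This fails under the standing hypothesis $\operatorname{char}\geq c(G)-1$ for $G=\PGL_p$ in characteristic $p\geq 3$, e.g.\ $\PGL_3$ in characteristic $3$.
\begin{itemize}
\item Realize $e$ as $D=d/dt$ on $k[t]/(t^p)$ and let $M$ be multiplication by $t$. Then $[D,M]=1$, so $\bar M\in\mathfrak c_{\mathfrak g}(e)$.
\item Hence the Lie centralizer has dimension $p>\dim C_G(e)=p-1$.
\item Moreover $[\bar M,\bar D^2]=-2\bar D\neq 0$, so $\mathfrak c_{\mathfrak g}(e)$ is not even abelian, although $C_G(e)=C_{\mathrm{GL}_p}(e)/\mathbb G_m$ is commutative.
\end{itemize}
The group-scheme version does not rescue this. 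Passing commutativity from the regular semisimple locus to $e$ needs the regular centralizer scheme over $\mathfrak g^{reg}$ to be flat, so that the generic fiber is schematically dense. In this example it cannot be flat: otherwise the scheme-theoretic centralizer of $e$, whose Lie algebra is $\mathfrak c_{\mathfrak g}(e)$, would be commutative. Where smoothness of that scheme does hold, it is a theorem of the same depth as (1).

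The Springer-isomorphism alternative has the same problem. In characteristic $p$, a connected unipotent group with abelian Lie algebra need not be commutative. The Springer map is only an isomorphism of varieties, so you would still need it to restrict to a group isomorphism on $C_U(e)$. Your fallback, citing \cite[5.2.4]{McT} (ultimately Springer's theorem that regular centralizers are abelian in good characteristic), is what is needed, and it is what the paper does.

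\textbf{A small fix in (4).} Conjugating $\mathfrak b$ to the standard Borel subalgebra does not by itself turn $\eta$ into $e$. It is simpler to note that $\alpha_h+\beta$ is not a root for any positive root $\beta$. The commutator formula then shows that $U_{\alpha_h}$ centralizes all of $\mathfrak n$, which contains every nilpotent element of $\mathfrak b$. No reduction to $e$ is needed.
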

\begin{proof}
    (1), (2) and (3) can be found in \cite[5.2.4]{McT}. (4) is a direct consequence of the Chevalley commutator formula \cite[Prop. 5.1.14]{Con}.
\end{proof}

\subsection{Regular nilpotent orbits}\label{regnilp} Throughout Section \ref{regnilp}, we assume the characteristic of $\overline{F}$ is at least $c(G)-1$, where $c(G)$ is the Coxeter number of the reductive group $G$.
\subsubsection{Simple groups $G$}Throughout this subsection, we assume $G$ is simple of adjoint type.\footnote{This assumption is not required for most arguments. } We denote by $\eta$ a regular nilpotent element of the Lie algebra $\mathfrak{g}$ of $G$ i.e. a nilpotent element with centralizer of minimal dimension. Since the centralizer of $\eta$ is commutative (Proposition \ref{regnilpelem}), for any character $\chi$ of $\St_\eta(\bF)\backslash \St_\eta(\bA')$, we can consider 
$\tS_{\eta,\chi} := \ind_{\St_\eta}^{G(\mathbb{A})} (\psi_C, \chi)$,
and this gives a decomposition $\tS_\eta =\bigoplus_\chi \tS_{\eta,\chi}$, because $\St_\eta(\bF)\backslash \St_\eta(\bA')$ is compact. But we will give another more useful decomposition below, using only the action of the subgroup $U_h(\bA')/U(\bF)$.

By Proposition \ref{regnilpelem}(4), we can instead consider the action of $U_h(\bA')/U_h(\bF) \subset \St_\eta(\bA')/\St_\eta(\bF)$ on $\S'_\eta$, which gives a decomposition $$\S'_\eta = \bigoplus_\chi \S'_{\eta, \chi}$$ where $\chi$ runs over the additive characters of  the compact group $U_h(\bA')/U_h(\bF)$. Let $\tS_{\eta,\chi}=\ind_{\St_\eta}^{G(\A')} \S'_{\eta,\chi}$, then we have
$$\tS_\eta = \bigoplus_\chi \tS_{\eta, \chi}$$

For the rest of the section, we will fix a regular nilpotent element $\eta$ in $\mathfrak{b}$ as follows. Let $\mathfrak{n}$ be the nilpotent radical of the Lie algebra $\mathfrak{b}$ of our fixed Borel subgroup $B$. We can fix a nonzero element $\eta \in \mathfrak{n}$, then the regularity is equivalent to that the image of $\eta$ in 
$\mathfrak{n} \to \mathfrak{n}/[\mathfrak{n},\mathfrak{n}]=\bigoplus_{\beta \in \Delta} \mathfrak{g}_\beta \to \mathfrak{g}_\beta$
is nonzero for all $\beta \in \Delta$. Let $\alpha \in \omega\N \inv (\bF)$, we set $$\eta=\alpha\sum_{\beta \in \Delta}e_\beta \in \mathfrak{g} \otimes \omega\N \inv (\bF),$$ where $e_\beta$ is the root vector associated to the simple root $\beta$.

\begin{lem}

\begin{enumerate}
    \item Suppose for every $\gamma \in G(F)$ and every maximal standard parabolic $P$ such that $\Ad(\gamma^{-1})\cdot \eta$ is in the Lie algebra $\mathfrak{p}$ of $P$, we have that $\gamma^{-1}U_h\gamma$ is contained in the unipotent radical $U_P$ of $P$. Then $\kappa_\eta \tS_{\eta,\chi}$ is cuspidal for non-trivial $\chi$.

    \item If $\eta$ is a regular nilpotent element in $\mathfrak{b}$, then the condition in (1) is satisfied.
\end{enumerate}
    
\end{lem}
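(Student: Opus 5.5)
For (1), I would compute the constant term of $\kappa_\eta f$ for $f\in\tS_{\eta,\chi}$ and a maximal standard parabolic $P$ with unipotent radical $U_P$; this suffices, since a constant term along a smaller parabolic is an iterated integral over the maximal ones. Unfolding via Proposition \ref{pushforward formula}, we get
$$CT_P(\kappa_\eta f)(g)=\int_{U_P(F)\backslash U_P(\A')}\sum_{\gamma\in \St_\eta(F)\backslash G(F)} f(\gamma u g)\,du .$$
I would regroup the sum over $\St_\eta(F)\backslash G(F)/P(F)$ and unfold the $U_P(F)$-part. Each double coset $\gamma$ then contributes an integral of $f(\gamma u g)$ over $(U_P(F)\cap\gamma^{-1}\St_\eta(F)\gamma)\backslash U_P(\A')$.

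Next I would split this integral in two stages. The first stage integrates over $U_P(\N\A')$, i.e. over $1+\epsilon\,\mathfrak u_P$. Here $f(\gamma(1+\epsilon X)g)=\psi_\eta(\Ad(\gamma)X)f(\gamma g)$, so the integral vanishes unless $\psi_{\Ad(\gamma^{-1})\eta}$ is trivial on $\mathfrak u_P(\N\A')$. Since $\mathfrak u_P$ is the orthogonal complement of $\mathfrak p$ under the invariant form, this happens exactly when $\Ad(\gamma^{-1})\eta\in\mathfrak p$, where $\gamma$ is reduced to $G(\bF)$. So only such $\gamma$ contribute. For these $\gamma$ the hypothesis gives $\gamma^{-1}U_h\gamma\subset U_P$. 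In the remaining integral over the reduced group $U_P(\bA')$ (modulo $U_P(\bF)$), I would then factor off the subgroup $\gamma^{-1}U_h(\bA')\gamma$. On that subgroup, left translation through $\gamma$ acts on $f$ by $\chi$, because $U_h\subset\St_\eta$ and $f$ lies in the $\chi$-isotypic part for $U_h(\bA')/U_h(\bF)$. The inner integral is therefore $\int_{U_h(\bF)\backslash U_h(\bA')}\chi(u)\,du=0$ when $\chi$ is nontrivial. The main technical step is justifying this Fubini/unfolding: I must check that $U_P(F)\cap \gamma^{-1}\St_\eta(F)\gamma$ contains $\gamma^{-1}U_h(F)\gamma$ compatibly, so that the quotient fibres over the compact group $U_h(\bF)\backslash U_h(\bA')$. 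I must also check that the sums are finite, which follows from the finiteness assumption in Proposition \ref{pushforward formula}.

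For (2), let $\eta=\alpha\sum_{\beta}e_\beta$ be regular nilpotent in $\mathfrak b$, and suppose $\Ad(\gamma^{-1})\eta\in\mathfrak p$. Since $\mathfrak p$ contains a Borel subalgebra, there is $p\in P$ with $\Ad((\gamma p)^{-1})\eta$ nilpotent in $\mathfrak b$. A nilpotent element of $\mathfrak b$ lies in $\mathfrak n$, and this element is regular, so it lies only in the Borel $\mathfrak b$ (by the preceding proposition). Hence $\gamma p\in B$, i.e. $\gamma\in P$ over $\bF$. Because $U_h$ is normalized by $B$, we get $\gamma^{-1}U_h\gamma=p U_h p^{-1}$ up to $B$, and $U_h\subset U_P$ since the highest root occurs in $\mathfrak u_P$ for every proper standard parabolic. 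As $U_P$ is normal in $P$, this yields $\gamma^{-1}U_h\gamma\subset U_P$.
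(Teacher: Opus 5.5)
Your proposal is correct and follows essentially the same route as the paper. In (1) you discard the terms with $\mathrm{Ad}(\gamma^{-1})\eta\notin\mathfrak p$ by integrating over $\Na\cap U_P$ (using $\mathfrak p^\perp=\mathfrak u_P$), and you kill the remaining terms by integrating the nontrivial character $\chi$ over $\gamma^{-1}U_h\gamma$. In (2) you deduce $\gamma\in P$ from the uniqueness of the Borel subalgebra containing a regular nilpotent element, exactly as the paper does.

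The one step you compress in (2) is that the nilpotent element $\mathrm{Ad}(\gamma^{-1})\eta\in\mathfrak p$ is $P$-conjugate into $\mathfrak b$. This is the paper's Levi argument: the image in $\mathfrak l$ lies in a Borel $\mathfrak b_{\mathfrak l}$, and $\mathfrak b_{\mathfrak l}\oplus\mathfrak u_P\subset\mathfrak p$ is a Borel subalgebra of $\mathfrak g$. Your explicit use of $U_h\subset U_P$ also supplies a point the paper leaves implicit.
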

\begin{proof}
    This is a direct generalization of \cite[Lemma 6.6 ]{BKP}. Recall the embedding  $\kappa_\eta: \tS_\eta \to \S'(G(F)\backslash G(\A'))$ is given by $$\kappa_\eta f(g)= \sum_{\gamma \in \St_\eta(F) \backslash G(F)} f(\gamma g).$$

    Let $P$ be a standard parabolic and $U$ be its unipotent radical. If $\Ad(\gamma^{-1})\cdot \eta$ is not in $\mathfrak{p}$, then $\psi_{\Ad(\gamma^{-1})\cdot \eta}|_{\Na \cap U(\A') }$ is nontrivial, and hence $\int_Uf(\gamma ug)du=0$.

    If $Ad(\gamma^{-1})\cdot \eta \in \mathfrak{p}$, then we have
    $$\int_{\gamma^{-1}U_h(\A')\gamma/\gamma^{-1}U_h(F)\gamma} f(\gamma u g)du= \int_{\gamma^{-1}U_h(\A')\gamma/\gamma^{-1}U_h(F)\gamma} \chi(\gamma u \gamma^{-1})f(\gamma g)du,$$
    which vanishes by the non-triviality of $\chi$, and this implies $\int_Uf(\gamma ug)du=0$. This proves (1).

     For (2), indeed we can prove that $\gamma$ lies in $P$ if $\Ad(\gamma^{-1})\cdot \eta\in \mathfrak{p}$, and hence $\gamma^{-1}U_P\gamma= U_P$. Note that since $\mathfrak{p}$ contains the regular nilpotent element $\Ad(\gamma^{-1})\cdot \eta$, it contains the Borel subalgebra $\Ad(\gamma^{-1})\cdot \mathfrak{b}$. Indeed, write $\mathfrak{p}=\mathfrak{l}+\mathfrak{u}$ where $\mathfrak{l}$ and $\mathfrak{u}$ are the Levi factor and nilpotent radical respectively, and since the image $\Ad(\gamma^{-1})\cdot \eta$ in $\mathfrak{l}$ is nilpotent, it is contained in a Borel subalgebra $\mathfrak{b}_\mathfrak{l}$ of $\mathfrak{l}$ but that implies $\Ad(\gamma^{-1})\cdot \eta$ is in $\mathfrak{b}_\mathfrak{l}\oplus \mathfrak{u}=:\mathfrak{b}'$ which is a Borel subalgebra of $\mathfrak{g}$. Since $\Ad(\gamma^{-1})\cdot \eta$ is regular nilpotent and any regular nilpotent element is contained in a unique Borel subalgebra (Proposition 5.5), we have $\mathfrak{b}'=\Ad(\gamma^{-1})\cdot \mathfrak{b}$, and hence $\gamma$ is in $P$ (because any two Borel subalgebras in $\mathfrak{p}$ are conjugated by an element in $P$).
\end{proof}

\begin{prop}\label{regnilpresults}
We have the following:
\begin{enumerate}

     \item  Let $\chi:U_h(\mathbb{A})/U_h(F) \to U(1)$ be a nontrivial character, then $\kappa_\eta\tS_{\eta,\chi}$ is cuspidal.
     \item Let $K$ be an open compact subgroup of $G(\A')$ and assume that $G$ is of type $A$, $B$, $C$ or $G_2$. If $\chi$ is trivial, then all elements in $\tS_{\eta,\chi}^{K}$ are Hecke-infinite.
     \item When $\chi$ is trivial, the $G(\A')$-representation $\tS_{\eta,\chi}$ has no admissible subrepresentation.
\end{enumerate}
\end{prop}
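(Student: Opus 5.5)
Part (1) is immediate from the preceding lemma. By part (2) of that lemma, our fixed $\eta=\alpha\sum_{\beta\in\Delta}e_\beta\in\mathfrak{b}$ satisfies the hypothesis of part (1) of the lemma. That part then gives cuspidality of $\kappa_\eta\tS_{\eta,\chi}$ for every nontrivial $\chi$ of $U_h(\A')/U_h(F)$.

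For (2) and (3), write $V=\tS_{\eta,1}\simeq\ind_{\St_\eta}^{G(\A')}(W,\psi_\eta)$, where $W=\S'_{\eta,1}$ is the subspace of functions on $\St_\eta(\bF)\backslash\St_\eta(\bA')$ that are left $U_h(\bA')$-invariant. The plan is to verify the criterion of Proposition \ref{nilpnonadm}(1) for every compact open $K$; then Proposition \ref{nilpnonadm}(2) gives (3). Since $\St_\eta(\bA')$ is commutative and, by Proposition \ref{regnilpelem}, equal to $Z(G)R_u(C_G(\eta))(\bA')$, the invariance condition $W'^{g\overline{K}g^{-1}\cap\St_\eta}\supset W'^{g_0\overline{K}g_0^{-1}\cap\St_\eta}$ holds whenever
\[
g\overline{K}g^{-1}\cap \St_\eta(\bA')\ \subset\ U_h(\bA')\cdot\bigl(g_0\overline{K}g_0^{-1}\cap\St_\eta(\bA')\bigr).
\]
This is because $U_h(\bA')$ acts trivially on $W$. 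So given $g_0$ with $\Ad(g_0^{-1})\eta\in\tilde\Xi_K$, I will take $g_m=h_m g_0$, where $h_m=\lambda(t^{m})$ for a suitable cocharacter $\lambda$ of $T$ and an idele $t$ of positive degree (for instance $t$ a uniformizer at a fixed point $p$). Two things need to be checked for these $g_m$.

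\begin{enumerate}[(a)]
\item $\Ad(g_m^{-1})\eta$ still lies in $\tilde\Xi_K$, and the classes $\Ad(g_m^{-1})\eta$ are pairwise distinct modulo $K$, with the $g_m$ distinct in $\St_\eta(\bA')\backslash G(\bA')/\overline K$.
\item Conjugation by $h_m$ makes the stabilizer smaller modulo $U_h$: the non-highest-root part of $h_m^{-1}\St_\eta h_m\cap \overline K$ shrinks, while the extra part is absorbed in $U_h(\bA')$.
\end{enumerate}

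The choice of $\lambda$ is governed by the principal grading. Take $\lambda=-\rho^\vee$, the negative of the principal cocharacter. Then $\Ad(h_m^{-1})\eta=t^{m}\eta$ has higher valuation, so integrality, i.e.\ membership in $\tilde\Xi_K$, is preserved. Distinct valuations give distinct $K$-orbits, because the invariant polynomials vanish but the $\overline K$-orbit of a regular nilpotent records its valuation, for example via the elementary divisors in the adjoint representation.

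The subtle point is (b). Elements of $R_u(C_G(\eta))$ decompose according to the principal grading in degrees equal to the exponents $e_1<\dots<e_n$, and $\Ad(h_m)$ rescales the degree-$e_i$ component by $t^{\mp m e_i}$. We need the components of degree less than that of the highest root, namely $h-1$, to become more constrained, while only the top-degree part, which is the highest root subgroup $U_h$, is allowed to grow. This has to be checked using the explicit structure of the centralizer in types $A,B,C,G_2$. In these types the centralizer $R_u(C_G(\eta))$ is a vector group whose graded pieces in degrees below $h-1$ are generated by the powers $\eta^{k}$ (in type $A$) or by their analogues (in $B$, $C$, $G_2$). The top piece $\mathfrak g_{h-1}$ coincides with the highest root space in these types. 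This is where the type restriction enters; in type $D$, for example, the exponent $h-1$ interacts with the extra Pfaffian exponent.

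The main obstacle is making (b) precise. One has to compare lattices $g\overline Kg^{-1}\cap\St_\eta(\bA')$ inside a unipotent (non-vector) group with a nonlinear group law. I would first reduce to $\overline K=G(\bO)$-type principal congruence subgroups at finitely many places, using a filtration argument. I would then check the inclusion graded piece by graded piece using the logarithm, which is valid since $\mathrm{char}>c(G)-1$.
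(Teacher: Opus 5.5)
Your proof of (1) is correct. So is your reduction of (2) and (3) to Proposition~\ref{nilpnonadm} via the inclusion $g\overline{K}g^{-1}\cap\St_\eta(\bA')\subset U_h(\bA')\cdot\bigl(g_0\overline{K}g_0^{-1}\cap\St_\eta(\bA')\bigr)$; both match the paper. The gap is in step (b): with $h_m=\rho^\vee(t^{-m})$ the stabilizer does not shrink modulo $U_h$, it grows.

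Since $\Ad(\rho^\vee(s))\eta=s\eta$, the element $h_m$ normalizes $\St_\eta$. Hence $g_m\overline{K}g_m^{-1}\cap\St_\eta(\bA')=h_m\bigl(K_0\cap\St_\eta(\bA')\bigr)h_m^{-1}$ with $K_0=g_0\overline{K}g_0^{-1}$. Now $\Ad(h_m)$ multiplies the degree-$e_i$ component by $t^{-me_i}$, with an exponent of the same sign in every degree. So you cannot make the degrees below $h-1$ more constrained while letting only the top degree grow.

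The sign is moreover forced. $\Ad(g_m^{-1})\eta=t^m\Ad(g_0^{-1})\eta$ must stay in $\tilde\Xi_K$ for infinitely many $m$, so $t$ can have no poles, and then every graded piece of the stabilizer lattice expands at the zeros of $t$. More generally, any torus element keeping $\Ad(g^{-1})\eta$ integral enlarges root coordinates of the stabilizer lattice.

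A concrete failure occurs already for $\PGL_3$ with $\overline{K}=G(\bO)$ and $g_0=1$. Write $\St_\eta(\bA')=\{1+x\eta_0+y\eta_0^2\}$ with $\eta_0=E_{12}+E_{23}$, so $U_h$ is the $y$-coordinate. The old group has $x\in\bO$. For $t$ a uniformizer at $p$, the new group allows $x\in t^{-m}\bO_p$ at $p$. A mean-zero function of $x\in\bF\backslash\bA'/\bO$ is invariant under the old group. But the span of its translates contains no nonzero vector invariant under the new group once $m\deg p>2g-2$, because then $\bF\backslash\bA'/(t^{-m}\bO_p\prod_{q\neq p}\bO_q)=H^1(\overline{C},\mathcal{O}(mp))=0$. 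So the hypothesis of Proposition~\ref{nilpnonadm} fails for your family.

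The missing idea is to rescale non-uniformly and use redundancy within each degree. The paper takes $g=\omega^\vee(t)g_0$, where $\omega^\vee$ is the fundamental coweight of an end node and $t$ is integral with support disjoint from the supports of the lattices $\Lambda_\beta=\pi_\beta(\Lambda_K\cap\mathfrak{st}_\eta(\bA'))$ (Lemma~\ref{regnliplem}). In types $A,B,C,G_2$ each non-top graded piece of $\mathfrak{st}_\eta$ is a line that is not a root space. Hence its coordinate $x_i$ is constrained by at least two lattices $\Lambda_\beta$ of that height, while $\omega^\vee$ relaxes only one of them, $\Lambda_{\gamma_i}\mapsto t^{-w_i}\Lambda_{\gamma_i}$. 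Every $\Lambda_\beta$ is standard where $t$ is not a unit, so $\bigcap_{\beta\neq\gamma_i}\Lambda_\beta\cap t^{-w_i}\Lambda_{\gamma_i}=\bigcap_\beta\Lambda_\beta$. This gives equality modulo $U_h(\bA')$, with the top degree absorbed by $U_h$.

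This is also where the type restriction genuinely enters: it guarantees the one-dimensionality of the weight spaces of $\mathfrak{st}_\eta$. It is not about the degree-$(h-1)$ piece being the highest root space, which holds in every type.
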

\begin{proof}
    (1) follows from Lemma 5.7. Now, we prove (2). By Proposition \ref{nilpnonadm}, it suffices to show that, for any $g_0\in G(\A')$ such that $\Ad(g_0^{-1}) \cdot\eta \in \tilde{\Xi}_K$ and  for any smooth $\St_\eta$-subrepresentation $W$ of $\S'_\eta$, there exist infinitely many $g \in G(\A')$ such that $\Ad(g^{-1}) \cdot \eta \in \tilde{\Xi}_K$ are in different $\overline{K}$-orbits and $W^{gKg^{-1} \cap \St_\eta} \supset W^{g_0Kg_0^{-1} \cap \St_\eta}$. More precisely, we will show that $$g\overline{K}g^{-1} \cap \St_\eta(\bA')+U_h(\bA') = g_0\overline{K}g_0^{-1} \cap \St_\eta(\bA')+U_h(\bA').$$
    Since the character $\chi$ is trivial,  it will follow that $\S'_{\eta,\chi}^{g\overline{K}g^{-1} \cap \St_\eta(\bA')}$ contains $\S'_{\eta,\chi}^{g_0\overline{K}g_0^{-1}\cap \St_\eta(\bA')}$. The existence of such an infinite family of $g$ follows from Lemma \ref{regnliplem}. Finally, (3) follows by (2) and Proposition \ref{heckeadmiss}.

\end{proof}

\begin{lem}\label{regnliplem}
    Let $G$ be of type $A_n, B_n, C_n, \text{ or }G_2$. Let $\eta$ be a regular nilpotent element and $K$ be any compact open subgroup of $G(\bA')$. Suppose $g_0 \in G(\bA')$ be an element such that $\Ad(g_0^{-1})  \cdot \eta \in \tilde{\Xi}_K$, then there exists an infinite family of elements $g \in G(\bA')$ such that $\Ad(g^{-1})  \cdot \eta \in \tilde{\Xi}_K$ lie in disjoint $K$-orbits and $$gKg^{-1} \cap \St_\eta(\bA')+U_h(\bA')=g_0Kg_0^{-1} \cap \St_\eta(\bA') +U_h(\bA').$$
\end{lem}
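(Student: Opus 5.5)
Since $\eta$ is regular nilpotent, Proposition \ref{regnilpelem}(3) gives $\St_\eta(\bA')=Z(G)(\bA')R_u(C_G(\eta))(\bA')$, a commutative group, and $\St_\eta(\bA')/U_h(\bA')$ is the adelic points of a smaller commutative unipotent group (times the center, trivial for adjoint $G$). The plan is to produce the infinite family as $g=t g_0$ with $t$ running through an infinite set of adelic points of a well-chosen one-parameter torus, and to reduce the statement to a local computation at a single place $p$. Concretely, let $\rho^\vee$-type cocharacter $\lambda:\mathbb{G}_m\to T$ be the cocharacter associated to $\eta$ (so $\Ad(\lambda(s))e_\beta=s^{2}e_\beta$ for all simple $\beta$, after rescaling to the principal grading). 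Then $\Ad(\lambda(s))\eta=s^2\eta$, so $\lambda(s)$ normalizes $\St_\eta$, acting on its unipotent radical through the grading: on the root-string piece of height $k$ it acts by $s^{k}$-type scalars, and on $U_h$ (height $c(G)-1$) by $s^{2(c(G)-1)}$ hmm—more precisely by $s^{\langle\lambda,\alpha_h\rangle}$.

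First step: write $g=g_0$ modified at a fixed place $p$ by $t_n=\lambda(\varpi_p^{-n})$ (or a central-in-$\St_\eta$ correction $u_n\in\St_\eta(\bA')$ times $t_n$), i.e. $g_n = t_n' g_0$ where $t_n'$ is chosen so that $\Ad(g_n^{-1})\eta=\Ad(g_0^{-1})\Ad(t_n'^{-1})\eta$ still lies in $\tilde{\Xi}_K$. Since $\Ad(t_n^{-1})\eta=\varpi_p^{2n}\eta$ is \emph{more} integral, integrality ($\Ad(g_n^{-1})\eta\in\tilde\Xi_K$) follows at once. Distinctness of $K$-orbits: the valuation at $p$ of the invariant $\Ad(g_n^{-1})\eta$ up to $\overline K$-conjugacy changes (e.g. its image in $\mathfrak{g}/[\mathfrak n,\mathfrak n]$-type invariants, or simply that $\varpi_p^{2n}\eta'$ for different $n$ are not $\overline{K}$-conjugate because $\overline K$ preserves a lattice filtration), giving infinitely many orbits.

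Second step, the key equality: $g_nKg_n^{-1}\cap\St_\eta(\bA')=\Ad(t_n)(g_0Kg_0^{-1}\cap \St_\eta(\bA'))$ up to the noncommutation of $t_n$ with $g_0$ — to handle this I would instead place $t_n$ so that it conjugates $\St_\eta$ to itself, reducing to showing $\Ad(t_n)(L)+U_h(\bA')=L+U_h(\bA')$ for the compact open $L=g_0Kg_0^{-1}\cap\St_\eta(\bA')$. Since $t_n$ contracts or expands the graded pieces of $\mathrm{Lie}\,R_u(\St_\eta)$ by different powers, this fails in general; so the real choice is a \emph{different} element, namely a $t_n$ which acts trivially on $\St_\eta/U_h$ but nontrivially on $\eta$ modulo $K$. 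The cleanest candidate is to use elements of $U_h$'s opposite/Weyl interplay, or to use the center-of-stabilizer trick: multiply by $\lambda(\varpi^{-n})$ and simultaneously by an element of $T$ acting by scalars fixing all but the highest grade. For types $A,B,C,G_2$ one checks from explicit centralizer descriptions (for $\mathfrak{sl}_n$: polynomials in the Jordan block; for $B,C$: odd powers; for $G_2$: degrees $1,5$) that $R_u(\St_\eta)/U_h$ is spanned by exponents $<c(G)-1$ and one can find $g$ in the normalizer $N_G(\St_\eta)$ of the form $\exp(x)$, $x$ in the centralizer, scaled so it acts trivially on $\St_\eta/U_h$ at $p$; I expect this case-by-case verification of the equality modulo $U_h(\bA')$ — the reason the lemma is restricted to these types — to be the main obstacle, and I would first try directly computing, in each type, how $\Ad(g)$ for $g=t_n$ times a suitable element of $Z(\St_\eta)$ acts on each graded exponent piece.
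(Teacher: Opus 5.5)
\textbf{There is a genuine gap: your proposal never produces the family of $g$'s.} You correctly observe that the principal cocharacter $\lambda$ fails, because it rescales the graded pieces of $\mathfrak{st}_\eta$ by different powers. But every fallback you list keeps the requirement that $g$ normalize, or even lie in, $\St_\eta$, and the two concrete ones provably fail.

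\begin{itemize}
\item \emph{A torus element normalizing $\St_\eta$.} Such an element must preserve the height-one part of $\mathfrak{st}_\eta$, which is spanned by $\eta$. So it lies in $\rho^\vee(\mathbb{G}_m)Z(G)$ and acts on height $k$ by $c^k$. In rank at least two, acting trivially on $\St_\eta/U_h$ forces $c=1$. Then $\Ad(g^{-1})\eta=\eta$, so no new orbit appears. Hence there is no torus element ``fixing all but the highest grade'' of the kind you want.
\item \emph{$g=\exp(x)$ with $x\in\mathfrak{st}_\eta$.} This $g$ lies in $\St_\eta$. It therefore fixes $\eta$ and the double coset $\St_\eta(\bA')g\overline{K}$, so it gives no new orbit at all.
\end{itemize}

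As a result, the core of the lemma is deferred to an unspecified ``case-by-case verification''.

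\textbf{The missing idea is that $g$ does not need to normalize $\St_\eta$.} You only need the lattice conditions that cut out $gKg^{-1}\cap\St_\eta(\bA')$ to stay the same below the highest root. The paper's argument runs as follows.

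\begin{itemize}
\item After reducing to $g_0=1$, it takes $g=\omega^\vee(t)$. Here $\omega^\vee$ is the fundamental coweight of an end node of the Dynkin diagram. The element $t\in\bA'^*$ has no poles and is supported away from the finitely many places where the root-coordinate lattices $\Lambda_\beta$ of $\Lambda_K\cap\mathfrak{st}_\eta(\bA')$ are nonstandard.
\item In types $A$, $B$, $C$, $G_2$ the $\rho^\vee$-weight spaces of $\mathfrak{st}_\eta$ are one-dimensional. For each height below the maximal one, the generator is not a root vector (Chevalley commutator formula), so it has nonzero components along at least two roots.
\item By contrast, $\omega^\vee$ has nonzero weight on exactly one root $\gamma_l$ of each height $l$. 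Conjugation by $\omega^\vee(t)$ therefore only weakens the condition coming from $\gamma_l$, while the unscaled roots still impose the same condition on the coefficient $x_l$. That is, $\bigcap_{\beta\neq\gamma_l}\Lambda_\beta\cap t^{-w_l}\Lambda_{\gamma_l}=\bigcap_\beta\Lambda_\beta$.
\item The top-height coordinate lives in $\mathfrak{g}_h$ alone and is absorbed by $+U_h(\bA')$.
\item Varying $t$ then gives infinitely many distinct $\overline{K}$-orbits of $\Ad(g^{-1})\eta$.
\end{itemize}

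This is also where the type restriction actually enters: through the one-dimensionality of the weight spaces, not through a case-by-case check of some normalizing element.
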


Before proving the lemma in full generality, we illustrate how this is done in the case of $G=\PGL_n$ as an example. Let $a\in \bA'^*$, we define the \textit{support} of $a$ to be the set of points $p \in \overline{C}$ such that $a_p \notin \bO_p^* $. Let $\Lambda=\lambda \bO$ be a lattice in $\bA'$, we call define the support of $\Lambda$ to be the support of $\lambda$.

\begin{exmp}When $G=\PGL_n$, the nilpotent element $\eta=\alpha\sum_{i=1}^{n-1} E_{i,i+1}$ is regular. A direct computation shows that the centralizer of $\eta$ in $\PGL_n$ consists of the matrices
    \[
\begin{pmatrix}
1      & x_1    & x_2    & \cdots & x_{n-1} \\
0      & 1      & x_1    & \cdots & x_{n-2} \\
0      & 0      & 1      & \cdots & \vdots   \\
\vdots & \vdots & \vdots & \ddots & x_1      \\
0      & 0      & 0      & \cdots & 1
\end{pmatrix}
\]

We assume $g_0=1$ (in fact the general statement reduces to $g_0=1$ by replacing $K$ with the compact open subgroup $g_0Kg_0^{-1}$). Since $K \cap \St_\eta(\bA')= (K\cap U(\bA')) \cap \St_\eta(\bA')$, we may assume $K$ is a compact subgroup of $U(\bA')$. Now, by considering the exponential map $\exp: \text{Lie}(U) \to U$, it suffices to construct elements $g \in G(\bA')$ such that $\Ad(g^{-1})\eta$ are in different $G(\bO)$-orbits (and hence in different $\overline{K}$-orbits) and show the equality $$(*): g^{-1}\Lambda_Kg \cap \mathfrak{st}_\eta(\bA')+\mathfrak{g}_h(\bA')=\Lambda_K \cap \mathfrak{st}_\eta(\bA')+\mathfrak{g}_h(\bA'), $$
where $\Lambda_K=\log (K  \cap \St_\eta(\bA'))$ is a compact open subgroup $\mathfrak{st}_\eta(\bA')$ and $\mathfrak{g}_h$ is the highest root space.

 If $\beta \in \Phi$, we denote by $\pi_\beta$ the projection map $\mathfrak{g} \to \mathfrak{g}_\beta$ and let $\Lambda_\beta:= \pi_\beta(\Lambda_K \cap \mathfrak{st}_\eta(\bA'))$. We also let $\Lambda_{ij} = \Lambda_{e_i-e_j}$ We consider the matrices $g=\diag(1,...,1, t)$, where $t \in \bO$ is chosen such that $t$ has disjoint support from $\Lambda_\beta$'s. It is clear that there exist infinitely many such $t$ such that $g\cdot \eta$'s are in different $\overline{K}$-orbits $\Ad(g^{-1})\cdot \eta$. It remains to show that the equality $(*)$. 

   Let 
   $$s= 
\begin{pmatrix}
0     & x_1    & x_2    & \cdots & x_{n-1} \\
0      & 0      & x_1    & \cdots & x_{n-2} \\
0      & 0      & 0      & \cdots & \vdots   \\
\vdots & \vdots & \vdots & \ddots & x_1      \\
0      & 0      & 0      & \cdots & 0
\end{pmatrix}
\in \mathfrak{st}_\eta(\bA'),$$
we have 
$$ g^{-1}sg^=\begin{pmatrix}
0      & x_1    & x_2    & \cdots & tx_{n-1} \\
0      & 0     & x_1    & \cdots & tx_{n-2} \\
0      & 0      & 0      & \cdots & \vdots   \\
\vdots & \vdots & \vdots & \ddots & tx_1      \\
0      & 0      & 0      & \cdots & 0
\end{pmatrix}$$

Therefore, we have $s\in g\Lambda_Kg^{-1} \cap \mathfrak{st}_\eta(\bA')+\mathfrak{g}_h(\bA')$ if and only if the $x_{i} \in t^{-1}\Lambda_{i,n} \bigcap_{j=2}^{n-1}\Lambda_{i,j}$ for $n-1 \geq i \geq 1$. But since $t$ has disjoint support from $\Lambda_{j,i}$, we have $t^{-1}\Lambda_{n,i} \bigcap_{j=2}^{n-1}\Lambda_{j,i}= \Lambda_{n,i} \bigcap_{j=2}^{n-1}\Lambda_{j,i}$, and this implies $s \in \Lambda_K\cap \mathfrak{st}_\eta(\bA') +\mathfrak{g}_h(\bA')$ and hence $g\Lambda_Kg^{-1}\cap \mathfrak{st}_\eta(\bA') +\mathfrak{g}_h(\bA') \subset \Lambda_K\cap \mathfrak{st}_\eta(\bA') +\mathfrak{g}_h(\bA')$. It is clear that we also have the reverse inclusion.

\end{exmp}

\begin{proof}[Proof of Lemma \ref{regnliplem}]
   First, we note that by replacing $K$ by $g_0Kg_0^{-1}$, it is enough to prove the proposition for $g_0=1$. 
   
   Let $\alpha_i$ (for $i=1,...,n$) be the simple roots for $\mathfrak{g}$ and $\alpha_n$ is connected to only one other node in the Dynkin diagram. Let $\rho$ be the half sum of the positive coroots. Let $\mathfrak{st}_\eta$ be the Lie algebra of $\St_\eta$, then to prove the equality of groups in the Proposition, it suffices to show 
   $$ \Ad(g^{-1})(\mathfrak{st}_\eta(\bA'))\cap \Lambda_K+\mathfrak{g}_h(\bA')=(\mathfrak{st}_\eta(\bA'))\cap \Lambda_K+\mathfrak{g}_h(\bA'),$$
  where $\mathfrak{g}_h$ is highest root space and $\Lambda_K=\log (K \cap \St_\eta(\bA'))$. Note that we have a weight decomposition 
   $$\mathfrak{st}_\eta(\mathbb{\bA'})= \oplus_{h_i} \mathfrak{st}_\eta(\mathbb{\bA'})(\rho, 2h_i)$$
   with respect to the action of $\rho(\bA'^*)$, where $h_i$ are the exponents of $G$ and each weight space is one dimensional because $G$ is of types $A,B,C,\text{ or } G_2$, . Since the weight of a root $x \in \Phi^+ $ is given by $\langle \rho, x \rangle$ which is also equal to the height of $x$, we have 
   $$ \mathfrak{st}_\eta(\mathbb{\bA'})(\rho, 2h_i) \subset \bigoplus_{\text{ht}(\alpha)=h_i}\mathfrak{g}_\alpha.$$
   
   If $\beta\in \Phi$, we denote by $\pi_\beta :  \mathfrak{st}_\eta(\bA') \to  \mathfrak{g}_{\beta}(\bA')$ the projection to the root space $\mathfrak{g}_{\beta}$. We observe that $ \mathfrak{st}_\eta(\mathbb{\bA'})(\rho, 2h_i)$ is not equal to any root subspace by the Chevalley commutator formula, except for the highest root subspace, and hence for each height $l$ (except the maximal height), we have at least two positive roots $\beta_l$ such that $\pi_{\beta_l}( \mathfrak{st}_\eta(\bA'))$ is nonzero.  Let $s \in \mathfrak{st}_\eta(\bA')$ and write $s=\sum_{h_i}x_is_i$, where $x_i\in \bA'$ and $s_i \in \bigoplus_{\text{ht}(\alpha)=2h_i}\mathfrak{g}_\alpha$. For any positive root $\beta$ we let $\Lambda_\beta \subset \bA'$ be the lattice such that $  \pi_\beta(\Lambda_K \cap \mathfrak{st}_\eta(\bA'))=\mathfrak{g}_\beta(\Lambda_\beta)$. Then we have $s \in \Lambda_K \cap \mathfrak{st}_\eta(\bA')+\mathfrak{g}_h(\bA')$ if and only if $x_i \in \bigcap_{\text{ht}(\beta)=h_i, \pi_\beta(\mathfrak{st}_\eta(\bA')\neq 0} \Lambda_\beta$.

   Let  $g=\omega^\vee(t)g_0$ where $t\in \bA'^*$ with no pole and $\omega^\vee$ is a fundamental coweight such that the following conditions hold:
   \begin{enumerate}
        \item For each exponent $l$ of $G$, except the largest exponent, there is exactly one positive root $\gamma_l$ of height $l$ such that $\pi_{\gamma_l}(\mathfrak{st}_\eta)\neq0$ and $\omega^\vee$ acts trivially on $\mathfrak{g}_{\gamma_l}$.
       \item $t$ has disjoint support from $\Lambda_\beta$ for all positive roots $\beta$.
      
   \end{enumerate}
Let us explain why such $g$ exists. We can choose $\omega$ to be fundamental coweight corresponding to the simple root $\alpha_1$. The condition (1) is satisfied because for every exponent $l$ the one dimensional space $\mathfrak{st}_{\eta,l}$ is not a root space by Chevalley commutator formula and $\omega^\vee$ acts non-trivially only on one root space of height $l$.

It is clear that there exist infinitely many $t$ such that (1) and (2) are satisfied and that $g \cdot \eta$ are in different $G(\bO)$-orbits. It remains to show that these conditions imply
$$\Ad(g^{-1})(\mathfrak{st}_\eta(\bA'))\cap \Lambda_K +\mathfrak{g}_h(\bA')=\mathfrak{st}_\eta(\bA'))\cap \Lambda_K+\mathfrak{g}_h(\bA').$$
Note that $s\in \Ad(g^{-1})((\mathfrak{st}_\eta(\bA'))\cap \Lambda_K+\mathfrak{g}_h(\bA')$ if and only if $$x_i \in \bigcap_{\text{ht}(\beta)=h_i, \pi_\beta(\mathfrak{st}_\eta(\bA')\neq 0, \beta \neq \gamma_i} \Lambda_\beta \bigcap t^{-w_i}\Lambda_{\gamma_i},$$ where $w_i$ denote the ($\omega^\vee$)- weight of $\mathfrak{g}_{\gamma_i}$. But we have 
$$\bigcap_{\text{ht}(\beta)=h_i, \pi_\beta(\mathfrak{st}_\eta(\bA'))\neq 0, \beta \neq \gamma_i} \Lambda_\beta \bigcap t^{-w_i}\Lambda_{\gamma_i}= \bigcap_{\text{ht}(\beta)=h_i, \pi_\beta(\mathfrak{st}_\eta(\bA'))\neq 0} \Lambda_\beta,$$
because $t$ has disjoint support from $\Lambda_{\beta}$'s.


   

\end{proof}

\begin{prop}\label{regnilfd}
 The space $\bigoplus_{\chi\neq1} \tS_{\eta,\chi}^{G(\O')}$ of $G(\O')$-invariant functions is finite-dimensional.
\end{prop}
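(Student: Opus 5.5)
The plan is to compute $\bigoplus_{\chi\neq 1}\tS_{\eta,\chi}^{G(\O')}$ summand by summand with Proposition \ref{sphvec}, and to show two things: each summand is finite-dimensional, and only finitely many double cosets $g$ together with finitely many characters $\chi$ contribute.

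\textbf{Step 1 (local finiteness for a fixed coset).} Write $K=G(\O')$. By Proposition \ref{sphvec} and the isomorphism $\S'_{\eta}\simeq \S'(\St_\eta(\bF)\backslash\St_\eta(\bA'))\times\psi_\eta$, we have
$$\tS_{\eta,\chi}^{K}\simeq\bigoplus_{g}\S'_{\eta,\chi}{}^{\,g\overline{K}g^{-1}\cap \St_\eta(\bA')},$$
where $g$ runs over $\St_\eta\backslash G(\A')/K$ with $\Ad(g^{-1})\eta$ integral. By Proposition \ref{regnilpelem}, $\St_\eta(\bA')$ is commutative; since $G$ is adjoint, $Z(G)$ is finite, so $\St_\eta$ is essentially unipotent. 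Hence $\St_\eta(\bF)\backslash\St_\eta(\bA')$ is compact. Consequently, for fixed $g$:
\begin{itemize}
\item the double quotient $\St_\eta(\bF)\backslash\St_\eta(\bA')/(g\overline{K}g^{-1}\cap\St_\eta(\bA'))$ is finite, so each summand is finite-dimensional;
\item $\S'_{\eta,\chi}{}^{\,g\overline{K}g^{-1}\cap \St_\eta(\bA')}$ vanishes unless $\chi$ is trivial on $U_h(\bA')\cap g\overline{K}g^{-1}$;
\item by the standing Assumption (applied to the compact group $U_h(\bA')/U_h(\bF)$), only finitely many $\chi$ satisfy this triviality condition.
\end{itemize}
So it remains to show that only finitely many $g$ admit \emph{some} nontrivial $\chi$.

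\textbf{Step 2 (geometric translation).} Via the identification $\M_\eta^{\N^{-1}}(\overline{C})\simeq \St_\eta(\bF)\backslash G(\bA')_\eta/G(\bO)$, such a $g$ is a twisted Higgs bundle $(P_0,\phi)$ whose Higgs field $\phi$ is generically regular nilpotent. The unique Borel containing $\phi$ (Proposition 5.5(2)) extends over $\overline{C}$ and gives a $B$-reduction $P_B$ of $P_0$. For each root $\beta$, let $\mathcal{L}_\beta$ be the associated line bundle; concretely, the lattice $\mathfrak{g}_\beta(\bA')\cap\Ad(g)\mathfrak{g}(\bO)$ defines it.

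\textbf{Step 3 (two-sided degree bounds).}
\begin{itemize}
\item \emph{Lower bound from the Higgs field.} Regularity of $\phi$ means every simple-root component $\phi_\beta$ is a nonzero section of $\mathcal{L}_\beta\otimes\omega\N^{-1}$. This forces $\deg\mathcal{L}_\beta\ge -(2g-2)$ for every simple root $\beta$. Moreover, the divisor of $\phi_\beta$ is effective of degree $\deg\mathcal{L}_\beta+2g-2$.
\item \emph{Upper bound from the character.} A nontrivial character $\chi$ of $\omega(\bA')/\omega(\bF)$-type on $U_h(\bA')/U_h(\bF)$ that is trivial on the lattice $U_h(\bA')\cap g\overline{K}g^{-1}$ corresponds to a nonzero element of $H^0(\overline{C},\omega\otimes\mathcal{L}_{h}^{-1})$. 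This is Serre duality, after identifying characters of $\bA'/\bF$ with $\omega(\bF)$ through $\psi_{\bC}$. It gives $\deg\mathcal{L}_h\le 2g-2$.
\item \emph{Combining.} Write the highest root as $\alpha_h=\sum_\beta n_\beta\beta$ with all $n_\beta>0$. Then $\deg\mathcal{L}_h=\sum_\beta n_\beta\deg\mathcal{L}_\beta$. Together with the lower bounds, this confines every $\deg\mathcal{L}_\beta$ to a finite interval.
\end{itemize}

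\textbf{Step 4 (finiteness of the data).} With the degrees bounded, only finitely many $T$-bundles occur. Only finitely many effective divisors of bounded degree occur for the $\phi_\beta$. The extension data of a $B$-bundle with a fixed $T$-bundle lie in finitely many finite cohomology groups over $k$. Hence only finitely many pairs $(P_0,\phi)$, i.e. finitely many $g$, can carry a nontrivial $\chi$. Combined with Step 1, this gives the finite-dimensionality of $\bigoplus_{\chi\neq1}\tS_{\eta,\chi}^{G(\O')}$.

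I expect the main obstacle to be Step 3's upper bound: making precise that the lattice $U_h(\bA')\cap g\overline{K}g^{-1}$ is exactly the one defining $\mathcal{L}_h$ for the canonical Borel reduction. This requires $\Ad(g^{-1})\eta$ to be integral, and requires handling the twist by $\N$ correctly. I would first reduce to $g\in B(\bA')$, which is possible because $\St_\eta\subset B$ and $\Ad(g^{-1})\eta$ lies in a Borel, and then read everything off from the torus part of $g$ by direct computation.
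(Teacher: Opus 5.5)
Your proposal is correct and follows essentially the same route as the paper. Both arguments run as follows:
\begin{itemize}
\item Proposition~\ref{sphvec} and the $U_h$-equivariance reduce everything, after an Iwasawa reduction to $g\in B(\overline{\mathbb{A}})$, to a conductor condition on the highest-root lattice $tG(\overline{\mathcal{O}})t^{-1}\cap U_h(\overline{\mathbb{A}})$.
\item This condition, combined with the simple-root integrality of $\Ad(g^{-1})\eta$ and the positivity of the coefficients of $\alpha_h$, bounds the torus part.
\item The unipotent part is then finite by counting $B$-Higgs bundles with a fixed $T$-bundle.
\end{itemize}
The only differences are bookkeeping. You bound degrees, invoke finiteness of $\Pic^d(\overline{C})(k)$, and get finiteness of $\chi$ for each fixed $g$. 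The paper instead bounds the divisors of the $t_i$ for a fixed $\chi$, and then shows only finitely many $\chi$ occur via integrality of $\alpha^{c-1}\beta$.
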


\begin{proof}
    
    First, we show that for each nontrivial character $\chi$ the space $\tS_{\eta,\chi}^{G(\O')}$ is finite-dimensional. Using Proposition \ref{sphvec}, it suffices to show that there are finitely many $ g \in \St_\eta(\bA')\backslash G(\bA') / G(\bO)$ such that $\Ad(g^{-1}) \cdot \eta \in \tilde{\Xi}_{G(\bO)}$ are in disjoint $G(\bO)$-orbits and that $\S'_{\eta,\chi}^{gG(\bO) g^{-1} \cap \St_\eta (\bA')} \neq 0$, since $\St_\eta(\bF) \backslash \St_\eta(\bA')$ is compact (hence $\S'_{\eta,\chi}^{gG(\bO) g^{-1} \cap \St_\eta (\bA')}$ is finite-dimensional). Since $U_h(\bA')$ acts on $\S'_{\eta,\chi}$ through the character $\chi$,  it suffices to show for all but finitely many $g\in \St_\eta(\bA')\backslash G(\bA') / G(\bO)$, we have $gG(\bO) g^{-1} \cap U_h(\bA') \not\subset U_h(\Lambda_\chi)$, where $\Lambda_\chi$ is the conductor of $\chi$. 

    By the Iwasawa decomposition, it suffices to consider $g\in B(\bA')$, and we write $g=ut$ where $t\in T(\bA')$ and $u \in \St_\eta(\bA') \backslash U(\bA')$. Since
     $u^{-1}  U_h(\bA') u= U_h(\bA')$ by the Chevalley commutator formula, we have 
     $$g G(\bO) g^{-1} \cap U_h(\bA') =t G(\bO) t^{-1} \cap U_h(\bA')$$
     Let $t=\prod_i \alpha_i^\vee(t_{i}^{-1})$. We compute that
     $$t G(\bO) t^{-1} \cap U_h(\bA') = U_h((\prod_i t_i^{-a_i})\bO), $$
      where $a_i=\langle \omega_i^\vee, \alpha_h \rangle$ and $\alpha_h$:the hightest root, and $\omega_i^\vee$ are the $i$-th fundamental coweights.
      
      We first show that there are finitely many choices for $t \in T(\bO)\backslash T(\bA')$, and then we will show for each choice of $t$ there are finitely many choices of $u \in (t^{-1}B(\bO)\cap U(\bA')) \backslash U(\bA')/ \St_\eta(\bA')$. Note that $\Ad(t^{-1}u^{-1})\cdot \eta \in \Xi_{G(\bO)}$ implies that $t_i\alpha$ are regular, since $\pi_\theta(\Ad(u)\cdot \eta)= \pi_\theta(\eta)$ for any $\theta \in \Delta$. Let $\beta \in \omega(\bF)$ correspond to the character $\chi$, then $t G(\bO) t^{-1} \cap U_h(\bA') \subset \Lambda_\chi$ implies that $(\prod_i t_i^{-a_i })\beta $ is regular. Now the regularities of $t_i\alpha$ and $(\prod_i t_i^{-1})\beta $ imply that each $t_i$ has bounded zeros and poles, and hence there are only finitely many choices for $t$. 

    Now we need to show that for each $t$ there are finitely many choices of $u$ it suffices to show that the double coset
    $$ \St_\eta(\bA') \backslash \lbrace u \in U(\bA') | \Ad(t^{-1}u^{-1})\cdot \eta \in \Xi_{G(\bO)} \rbrace / (tG(\bO)t^{-1}\cap U(\bA'))  $$
    is finite. In fact, we can show 
    $$ H(t) :=\St_\eta(\bF) \backslash \lbrace u \in U(\bA') | \Ad(t^{-1}u^{-1})\cdot \eta \in \Xi_{G(\bO)} \rbrace / (tG(\bO)t^{-1}\cap U(\bA'))  $$ is finite. Using the geometric interpretation $\St_\eta(\bF)\backslash G(\bA')_\eta / G(\bO) \simeq \M_\eta^{\N \inv}(\overline{C})$, fixing $t$ amounts to fixing a $T$-bundle $\mathcal{F}$ on $\overline{C}$, and $H(t)$ is identified with the set of pairs $(\mathcal{F}_B,\phi) $ where $\mathcal{F}_B$ is a $B$-bundle whose induced $T$-bundle is $\mathcal{F}$ (we also denote by $\mathcal{F}_G$ the induced $G$-bundle of $\mathcal{F}_B$) and $\phi \in H^0( \overline{C}, \N \inv \omega \otimes \mathfrak{g}_{\mathcal{F}_G})$ such that $\phi$ is conjugate to $\eta$ at the generic point. This set is evidently finite.


    Finally, we show that there are only finitely many $\chi$ such that $\tS_{\eta,\chi}^{G(\mathcal{O})}\neq 0$. In view of the above the argument, it suffices to show there are only finitely $\beta$ such that there exists $t$ with $(\prod_i t_i^{-a_i})\beta $ and all $t_i\alpha$ being regular. But this implies $\alpha^{c-1}\beta\in H^0(\overline{C},\N ^{c-1} \omega^{c})$, where $c$ is the Coxeter number of $G$, and clearly there are only finitely choices of $\beta$.

\end{proof}
\begin{rem}
    We expect that the above proposition hold for all compact open subgroups $K$. However, for non-maximal compact open $K$, we have to consider $g_0 \notin B(\bA')$ which will be more complicated.
\end{rem}
\subsubsection{Semisimple groups $G$} Now, using the results from the previous subsection, we obtain the cuspidality, Hecke-infiniteness and finite-dimensionality of spherical cuspidal subspace for semisimple group $G$. Let $G$ be a semisimple group of adjoint type. We keep the notation from the previous subsection. Let $G_1,..., G_l $ be the simple factors of $G$ and denote their Lie algebras by $\mathfrak{g}_1,..., \mathfrak{g}_l$. We fix a regular nilpotent element $\eta= (\eta_1,...,\eta_n) \in \mathfrak{g} \otimes \N\inv \omega(\bF)$, where $\eta_i \in \mathfrak{g_i} \otimes \N\inv \omega(\bF)$ are regular nilpotent. Let $U_{h_i}$ be the highest root subgroup of $G_i$, and let $U_h= \prod_i U_{h_i}$. Then we have
$$ \tS_\eta = \bigoplus_\chi \tS_{\eta, \chi},$$
where $\chi=(\chi_1,...,\chi_l)$ ranges over the characters of $U_h(\bA')/U_h(\bF)$. and $\chi_i$'s are the characters of $U_{h_i}(\bA')/U_{h_i}(\bF)$.

 Proposition \ref{regnilpresults}, Lemma \ref{regnilpelem} and Proposition \ref{regnilfd} immediately generalize. We obtain:

 \begin{prop}
     Let $G$ be a semisimple group of adjoint type, and let $K$ be a compact open subgroup of $G(\A')$.
     \begin{enumerate}
         \item If all $\chi_i$'s are nontrivial, then $\kappa_\eta(\tS_{\eta,\chi})$ is cuspidal.
         \item Suppose all simple factors of $G$ are of types $A$, $B$, $C$ or $G_2$. If at least one of $\chi_i$'s is nontrivial, then all elements in $\tS_{\eta,\chi}^K$ are $\Hk$-infinite.
         \item Let $K=G(\O')$. Then $\bigoplus_{\chi: \chi_i\neq 1 \text{ for all }i} \tS_{\eta,\chi}^K$ is finite-dimensional.
     \end{enumerate}
 \end{prop}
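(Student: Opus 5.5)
The plan is to reduce everything to the simple factors, using the product structure $G=\prod_i G_i$. With $\eta=(\eta_i)$, $\St_\eta=\prod_i \St_{\eta_i}$ (together with the common $\Na$) and $U_h=\prod_i U_{h_i}$, the component $\S'_{\eta,\chi}$ is, as a representation of $\prod_i \St_{\eta_i}(\bA')$, a completed tensor product of the components $\S'_{\eta_i,\chi_i}$. Also, $\tilde\Xi_K$, the orbit sets $\Delta_\Omega(K)$ and the intersections $gKg^{-1}\cap \St_\eta$ can be computed factor by factor once $K$ is shrunk to a product $\prod_i K_i$. Shrinking $K$ is harmless: Hecke-infiniteness for a smaller $K$ implies it for $K$, by the argument of Lemma \ref{nonadmissible stablizer}, and the finiteness statement (3) uses $K=G(\O')=\prod_i G_i(\O')$, which is already a product.

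For (1), I would repeat the proof of the cuspidality lemma preceding Proposition \ref{regnilpresults}. A standard maximal parabolic of $G$ has the form $P=P_j\times\prod_{i\neq j}G_i$. If $\Ad(\gamma^{-1})\eta\in\mathfrak p$, then part (2) of that lemma, applied in the factor $G_j$, gives $\gamma_j\in P_j$, and hence $\gamma_j^{-1}U_{h_j}\gamma_j\subset U_P$. Integrating over $U_{h_j}(\A')/U_{h_j}(F)$ then produces the factor $\int\chi_j=0$, because $\chi_j\neq 1$. If instead $\Ad(\gamma^{-1})\eta\notin\mathfrak p$, the term vanishes because $\psi_{\Ad(\gamma^{-1})\eta}$ is nontrivial on $\Na\cap U_P$. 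For (3), I would take the product of the finiteness statements of Proposition \ref{regnilfd} over the factors. The data $t$, $u$ and $\beta$ all split factorwise, so the set of $g$ with a nonzero contribution, and the set of $\chi$ with all $\chi_i\neq1$ giving a nonzero spherical space, are both finite products of finite sets.

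For (2), the mechanism is Proposition \ref{nilpnonadm}, fed by Lemma \ref{regnliplem} applied to a single factor $G_j$. Starting from $g_0$, I modify only the $j$-th coordinate, $g=(g_{0,i})_{i\neq j}\times \omega_j^\vee(t)g_{0,j}$. This gives infinitely many $g$ with $\Ad(g^{-1})\eta$ lying in pairwise distinct $K$-orbits, and with
\[
gKg^{-1}\cap\St_\eta+U_{h_j}(\bA')=g_0Kg_0^{-1}\cap\St_\eta+U_{h_j}(\bA').
\]
The required inclusion $W^{gKg^{-1}\cap\St_\eta}\supset W^{g_0Kg_0^{-1}\cap\St_\eta}$ then holds precisely when $U_{h_j}(\bA')$ acts trivially on $\S'_{\eta,\chi}$, that is, when $\chi_j=1$. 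So the index fed into the argument must be one with $\chi_j$ \emph{trivial}. Whenever $\chi$ has some trivial coordinate, this argument proves that every nonzero element of $\tS_{\eta,\chi}^K$ is $\Hk$-infinite, regardless of how many of the other $\chi_i$ are nontrivial. This covers the part of the hypothesis ``at least one $\chi_i$ nontrivial'' in which not all $\chi_i$ are nontrivial.

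The main obstacle is the remaining case, where all $\chi_i$ are nontrivial. Here I expect no proof, because the claim as worded conflicts with (3). For $K=G(\O')$, any $f\in\tS_{\eta,\chi}^K$ has $V_f^K=\Hk\cdot f\subset \tS_{\eta,\chi}^K$ by Proposition \ref{heckefinite}, and the latter space is finite-dimensional by (3). So every such $f$ is Hecke-finite, and Hecke-infiniteness can only hold if $\tS_{\eta,\chi}^{G(\O')}=0$; this is not generally the case, since (3) is meant to give a nonzero finite-dimensional cuspidal space. The first thing I would try is to confirm that this is consistent with the introduction's summary, which states the Hecke-infinite case as ``at least one of the $\chi_i$'s is trivial''. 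I would then prove (2) in that form by the argument above, and record that the all-nontrivial case is Hecke-finite on spherical vectors, which is exactly (1) and (3), rather than Hecke-infinite.
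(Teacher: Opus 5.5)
Your approach is the paper's own. The paper proves this proposition only by saying that the cuspidality lemma preceding Proposition \ref{regnilpresults}, Proposition \ref{regnilpresults} and Proposition \ref{regnilfd} ``immediately generalize''. Your arguments for (1) and (3) are that generalization written out. In (1) you integrate only over $\gamma^{-1}U_{h_j}\gamma$ in the one factor where the maximal parabolic is proper, which is why every $\chi_j$ must be nontrivial. In (3), $K=G(\O')=\prod_i G_i(\O')$ and the spherical spaces factor. Your reading of (2) is also correct. The introduction states it for $\chi$ with at least one $\chi_i$ \emph{trivial*}, which is what Proposition \ref{regnilpresults}(2) generalizes to, and the printed wording is incompatible with (3) for the reason you give.

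The gap is your reduction of (2) to a product subgroup $K=\prod_i K_i$. A compact open subgroup of $\prod_i G_i(\mathbb{A})$ need not be a product: if two factors are isomorphic, one can impose a congruence linking them at one place. Part (2) is a statement about the given $K$: for $0\neq f\in\tS_{\eta,\chi}^K$ you must show $\dim V_f^K=\infty$. If $K'\subset K$, then $V_f^{K'}\supseteq V_f^K$, so infinite-dimensionality of $V_f^{K'}$ tells you nothing about $V_f^K$. Shrinking goes in the wrong direction. The argument of Lemma \ref{nonadmissible stablizer} does not rescue this, because it proves non-admissibility, where you are free to choose the compact open subgroup. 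As written, you obtain (2) only for product subgroups. That does suffice, via Proposition \ref{heckeadmiss}, to show that $\tS_{\eta,\chi}$ has no admissible subrepresentation, but not (2) for arbitrary $K$.

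To close the gap, do not shrink. Replace $K$ by $K_0=g_0Kg_0^{-1}$ and use that $K_0=K_{0,R}\times\prod_{p\notin R}G(\O'_p)$ for a finite set $R$ of places:
\begin{itemize}
\item openness gives $K_0\supset U_R\times\prod_{p\notin R}G(\O'_p)$;
\item compactness gives $K_0\subset\prod_{p\in R}C_p\times\prod_{p\notin R}G(\O'_p)$ after enlarging $R$;
\item together these force the splitting.
\end{itemize}
Take the modifying idele with $t_p=1$ for $p\in R$; this is the role of the disjoint-support condition in Lemma \ref{regnliplem}. Change only a coordinate $j$ with $\chi_j=1$. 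Then $gKg^{-1}\cap\St_\eta$ and $g_0Kg_0^{-1}\cap\St_\eta$ can differ only at the finitely many places where $t_p$ is not a unit. At those places $K_0$ equals $\prod_iG_i(\O'_p)$, a product over the simple factors. So the comparison modulo $U_{h_j}$ is exactly the local computation of Lemma \ref{regnliplem} in $G_j$. The distinctness of the $K$-orbits of $\Ad(g^{-1})\eta$ can likewise be read off at places outside $R$. With that change, Proposition \ref{nilpnonadm} applies to the given $K$ and the rest of your argument stands.
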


\section{Subregular orbits in $\mathfrak{pgl}_n$}\label{subregnilp}

Recall that a nilpotent element in $\mathfrak{g}$ is called subregular if it belongs to a nilpotent orbit of the second largest dimension in $\mathfrak{g}$ . When $\mathfrak{g} =\mathfrak{pgl}_n$, a subregular nilpotent element is a nilpotent element of type $(n-1,1)$. Throughout this section, we let $n\geq3$ and 
$$ \eta= \sum_{i=2}^{n-1}\alpha E_{i, i+1} \in \mathfrak{pgl}_n\otimes \N \inv \omega  (\bF),$$
where $\alpha \in \N \inv \omega(\bF)$. A direct computation shows that the group $\St_\eta(\bA')$ consists of the matrices of the following form:
 \[
\begin{pmatrix}
a & 0 & 0 & \cdots & 0 & y \\
x & 1 & z_{n-3} & z_{n-4} & \cdots & z_0 \\
0 & 0 & 1 & z_{n-3} & \cdots & z_1 \\
0 & 0 & 0 & 1 & \ddots & \vdots \\
\vdots & \vdots & \vdots & \ddots & \ddots & z_{n-3} \\
0 & 0 & 0 & \cdots & 0 & 1
\end{pmatrix},
\]
where $a\in \bA'^*$ and $x,y,z_i \in \bA'$. Here we normalize the diagonal entries except $(1,1)$-th to be $1$'s.  More formally, these matrices are 

 $$\diag(a,1,...,1)+ xE_{21}+yE_{1,n}+\sum_{i=0}^{n-3}z_i(E_{2,n-i}+...+E_{2+i,n}).$$
 
 From this explicit description of the centralizer, we also obtain the following structural description of $\St_\eta(\bA')$:
 \begin{prop}
Consider the following subgroups of $\St_\eta(\bA')$:

$$H=\lbrace I+ xE_{21}+yE_{1,n}+z_0E_{2,n} \rbrace$$
$$ H_x= \lbrace I+ xE_{21} \rbrace, \text{      } H_y= \lbrace I+ yE_{1,n} \rbrace $$
$$Z= \lbrace I+\sum_{i=0}^{n-3}z_i(E_{2,n-i}+...+E_{2+i,n})\rbrace$$

$$ Z_0 = \lbrace I+z_0E_{2n} \rbrace$$
$$ A = \lbrace \diag(a,1,...,1) \rbrace$$
where $x,y, z_0,..., z_{n-3} \in \bA'$ and $a \in \bA'^*$

  \begin{enumerate}
      \item The group $H$ is a three-dimensional Heisenberg group with the center $Z_0$. 
      \item  The group $\St_\eta(\bA')$ is isomorphic to $((H_x \times H_y) \ltimes Z ) \rtimes \bA'^* $, and its center is $Z$. The $H_x$ (resp., $H_y$) has weight $-1$ (resp., $1$) under the $\bA'^*$-action.

  \end{enumerate}
    
 \end{prop}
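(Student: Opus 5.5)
This is a direct computation with the explicit matrix description of $\St_\eta(\bA')$ above. I will write an element as $s(a,x,y,z)$, with $z=(z_0,\dots,z_{n-3})$. Put $N_z:=\sum_{i=0}^{n-3}z_i(E_{2,n-i}+\dots+E_{2+i,n})$, so that $s(a,x,y,z)=\diag(a,1,\dots,1)+xE_{21}+yE_{1n}+N_z$. The strategy is to compute all relevant commutators and conjugations from the matrix-unit rule $E_{ij}E_{kl}=\delta_{jk}E_{il}$.

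\textbf{Part (1).} First I check that $H$ is closed under multiplication. The products $E_{21}E_{1n}=E_{2n}$ and $E_{1n}E_{21}=0$ give
\[
(I+xE_{21}+yE_{1n}+z_0E_{2n})(I+x'E_{21}+y'E_{1n}+z_0'E_{2n})=I+(x+x')E_{21}+(y+y')E_{1n}+(z_0+z_0'+xy')E_{2n}.
\]
This is exactly the Heisenberg group law. The commutator of $I+xE_{21}$ and $I+yE_{1n}$ equals $I+xyE_{2n}$, which lies in $Z_0$ and is nontrivial whenever $xy\neq0$. Hence the center of $H$ is $Z_0$, and (1) follows.

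\textbf{Part (2).} I proceed in four steps.

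\emph{Step (a): the normal abelian subgroup $Z$.} The products $N_zN_{z'}$ vanish, since each term has the form $E_{2+i,\,\cdot}E_{\cdot,n}$ with the column index $\ge 3$ while every row index after the first factor is $\ge 2$. In fact $N_z$ is a polynomial in the nilpotent Jordan block on the indices $2,\dots,n$ with no constant term, and the products of these polynomials stay upper-triangular. So I will identify $Z$ with unipotent matrices of the form $I+p(J)$, where $J$ is the Jordan block restricted to the relevant range, and note that these commute with one another.

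\emph{Step (b): commutation of $H_x,H_y$ with $Z$.} The terms $E_{21}$ and $E_{1n}$ commute with $N_z$ up to terms in $E_{2n}\subset Z$. For instance, $E_{1n}N_z=0$, and $N_zE_{21}=0$ because the column indices of $N_z$ are $\ge3$. What remains is that $E_{21}$ and $E_{1n}$ fail to commute with each other only up to $Z_0\subset Z$. These facts show that $(H_xH_y)Z$ is a group in which $Z$ is normal, and that it is isomorphic to $(H_x\times H_y)\ltimes Z$. Here the cocycle $xy$ lands in $Z$, so the statement should be read as an extension of $H_x\times H_y$ by $Z$.

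\emph{Step (c): the action of $A$.} Conjugation by $\diag(a,1,\dots,1)$ scales $E_{21}$ by $a^{-1}$ and $E_{1n}$ by $a$. It fixes $N_z$, since $N_z$ has no row or column index equal to $1$. This gives the weights $-1$ and $1$ claimed in (2) and the final semidirect factor $\bA'^*$. Every element factors uniquely as $z\cdot h_x h_y\cdot a$, which one sees by reading off the entries.

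\emph{Step (d): the center.} The group $Z$ is central because it commutes with $A$ (by (c)), with $H_x$ and $H_y$ (since $E_{21}N_z=0$ and $N_zE_{1n}=0$, together with (b)), and with itself (by (a)). Conversely, let $s(a,x,y,z)$ be central. Commuting it with $A$ forces $x=y=0$, using the weight computation. Commuting it with $H_x$ forces $a=1$, because $\diag(a,\dots)$ conjugates $E_{21}$ by $a^{\pm1}$. Hence $s\in Z$.

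The only step needing care is (b), namely keeping track of the correction term $xyE_{2n}$ when elements are reordered into the form $z\,h_xh_y\,a$. Everything else is a routine entry-by-entry computation.
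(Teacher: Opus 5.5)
Your proposal is correct and follows the paper's route: the paper gives no separate argument and simply reads this structure off the explicit matrix description of the stabilizer. Your reading of $(H_x\times H_y)\ltimes Z$ as a central extension of $H_x\times H_y$ by $Z$, rather than a literal semidirect product (since $H_x$ and $H_y$ commute only modulo $Z_0$), is the right interpretation of that notation. There is one slip to fix: the first sentence of Step (a) is false for $n\ge 4$, because $N_zN_{z'}\neq 0$ in general (e.g.\ $J\cdot J=J^2\neq 0$), although the polynomial-in-$J$ argument you give right after it correctly shows that $Z$ is an abelian subgroup. Also, in Step (b), $E_{21}$ and $E_{1n}$ commute with $N_z$ exactly (all four products vanish), not merely up to $E_{2n}$.
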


 Now we have a decomposition 
 $$\S'(\St_\eta (\bF) \backslash \St_\eta(\bA') )= \bigoplus_{\psi_Z} \S'_{\psi_Z}(\St_\eta (\bF) \backslash \St_\eta(\bA')) $$
 into the $\psi_Z$-isotypic components $\S'_{\psi_Z}(\St_\eta (\bF) \backslash \St_\eta(\bA'))$, for all central characters $\psi_Z$. In Section 6.1, we prove that when $\psi_Z$ is trivial there is a class of cuspidal functions. In Section 6.2, we assume $\psi_Z$ is trivial on $Z_0$ and give a decomposition of the $G(\A')$-representation $\ind_{\St_\eta}^{G(\A')} (\S'_{\psi_Z}(\St_\eta (\bF) \backslash \St_\eta(\bA')),\psi_\eta)$ into Hecke-finite and Hecke-infinite pieces. In Section 6.3, we deal with the case $\psi_Z$ is nontrivial on $Z_0$ and prove that $\ind_{\St_\eta}^{G(\A')} (\S'_{\psi_Z}(\St_\eta (\bF) \backslash \St_\eta(\bA')),\psi_\eta)$ contains no Hecke-finite elements.

\subsection{Trivial central character}
We denote $H':=H_x\times H_y$, and $\widetilde{H}(\bA')=\St_\eta(\bA')$. When the central character is trivial, it reduces to the study of automorphic representation of $\widetilde{H}':=\widetilde{H}/Z=(H_x\times H_y)\rtimes \mathbb{G}_m$. 

Let $S_{\widetilde{H}'}$ be the space of automorphic functions of $\tilde{H}'$. By the machinery of Mackey theory (Section \ref{basicmackey}), we have the following decomposition $\widetilde{H}'(\bA')$-representation 
$$ S_{\widetilde{H}'} = \bigoplus_\chi S_{\widetilde{H}',\chi}, $$
where $\chi=(\chi_x,\chi_y)$ runs over $F^*$-orbits of the characters of $H'(\bA')/H'(\bF)$ and $S_{\widetilde{H}',\chi}\simeq \ind_{H'(\bA')}^{\widetilde{H'}(\bA') } \S'_\chi$, where $\S'_\chi$ is the $H'(\bA')$-representation consisting of locally constant and compactly supported functions $f$ on $\St_\chi(\bF)\backslash \St_\chi$ and $\St_\chi \subset H'(\bA')$ is the stabilizer of $\chi$.

Let $S_{\widetilde{H},\chi}$ be the space of functions $f$ on $(H'(\bF)\rtimes Z(\bF) )\backslash \widetilde{H}(\bA')$ such that $f(h,z)=\phi(h)$ where $h\in \tilde{H}'(\bA')$, $z\in Z(\bA')$ and $\phi \in \ind_{H'(\bA')}^{\widetilde{H'}(\bA') } \chi $, and note that it is isomorphic to the inflation of the $\widetilde{H}'(\bA')$-representation $S_{\widetilde{H}', \chi}$ to $\widetilde{H}(\bA')$-representation.

We have an embedding $\kappa_\chi: S_{\widetilde{H},\chi} \hookrightarrow \S'_{\psi_Z=1} (\St_\eta(\bF) \backslash \St_\eta(\bA'))$ of $\St_\eta(\bA')$-representations. When $\chi$ is nontrivial, it given by 
$$ \kappa_\chi f (g) = \sum_{\gamma \in F^*} f(\gamma g).$$

Let $ \S'_{\widetilde{H},\chi} =\ind_{\widetilde{H}(\bA') \times \Na}^{G(\mathbb{A})} (S_{\widetilde{H},\chi}\times \psi_\eta)$. We will consider the embedding of $G(\A')$-representation
$$ \S'_{\widetilde{H},\chi} \hookrightarrow \tS_\eta \xrightarrow{\kappa_\eta}  \S'(G(F)\backslash G(\A')). $$

\begin{prop}\label{subreg cusp}
We have the following cuspidal subrepresentations and Eisenstein series (cf. Definition \ref{eis defn}).
\begin{enumerate}
    \item When $n$ is odd, and both $\chi_x$ and $\chi_y$ are nontrivial, $\kappa_\eta( \S'_{\widetilde{H},\chi})$ is cuspidal.
    \item When $n$ is even, both $\chi_x$ and $\chi_y$ are nontrivial, and the ratio $a_x/a_y \notin (\bF^*)^2$ (where $a_x,a_y \in \omega(\bF)$ correspond the characters $\chi_x$ and $\chi_y$), then $\kappa_\eta( \S'_{\widetilde{H},\chi})$ is cuspidal.
    \item For $n = 3$, if at least one of the characters $\chi_x$ and $\chi_y$ is trivial, then $\kappa_{\eta} (
    \S'_{\widetilde{H},\chi}) \subset  Eis$.
\end{enumerate}
    
\end{prop}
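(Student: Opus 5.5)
We follow the argument of the regular nilpotent lemma (the generalization of \cite[Lemma 6.6]{BKP}). Write $\kappa_\eta f(g)=\sum_{\gamma\in \St_\eta(F)\backslash G(F)} f(\gamma g)$. For a standard maximal parabolic $P$ with unipotent radical $U_P$, we compute $CT_P(\kappa_\eta f)$ by unfolding over the double cosets $\St_\eta(F)\backslash G(F)/P(F)$.

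\textbf{Step 1: discard most cosets.} Suppose $\Ad(\gamma^{-1})\eta\notin\mathfrak{p}$. Then $\psi_{\Ad(\gamma^{-1})\eta}$ is nontrivial on $\mathfrak{u}_P\otimes\N(\bA')$. Integrating over $U_P(\N\A')$, which sits inside $U_P(\A')$, kills the contribution. So only $\gamma$ with $\eta':=\Ad(\gamma^{-1})\eta\in\mathfrak{p}$ survive.

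\textbf{Step 2: find a character in $U_P$.} For each surviving $\gamma$, we look for a subgroup of $\gamma^{-1}\widetilde{H}\gamma$ lying in $U_P$ on which the inflated character $\chi$ is nontrivial. Integrating over it then gives zero. The structure of $\St_\eta$ makes $H_x$ and $H_y$ the natural candidates, with $\chi$ nontrivial on each.

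\textbf{Step 3: classify parabolics containing $\eta'$.} The maximal parabolic subalgebras of $\mathfrak{pgl}_n$ containing a conjugate of $\eta$ correspond to $\eta$-stable flags $0\subset V'\subset k^n$ with $\dim V'=m$. Here $\eta$ has Jordan type $(n-1,1)$: it acts on $e_2,\dots,e_n$ as a single Jordan block and kills $e_1$. An $\eta$-stable subspace has the form $V'=\ker\eta^{j}|_{J}\oplus W$, or is spanned by such a kernel together with a vector $e_1+v$. Rationally over $\bF$, up to $\St_\eta(\bF)$, there are only finitely many types.

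For each type we check that either $H_x$ or $H_y$ (after conjugation by $\gamma$) lands in $U_P$:
\begin{itemize}
\item If $V'\ni e_1$ and $V'\subsetneq$ the whole space, with $V'$ containing $e_1$ plus an initial segment of the block, then $H_y$ (the map $e_n\mapsto e_1$) maps into $V'$ and kills $V'$, so $H_y\subset U_P$.
\item If $V'$ is a segment of the block not containing $e_1$, then $H_x$ (the map $e_1\mapsto e_2$) lies in $U_P$ when $e_2\in V'$, which always holds for nonzero $\eta$-stable subspaces of the block.
\end{itemize}

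The delicate case is the ``diagonal'' subspaces, spanned by $e_1+c\,e_{j}$ and part of the block. Here neither $H_x$ nor $H_y$ lies in $U_P$, but a twisted one-parameter subgroup $\{I+x(E_{21}+\ldots)+y E_{1n}\}$ with a relation between $x$ and $y$ does.

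\textbf{Step 4: the diagonal case (main obstacle).} On the twisted subgroup above, $\chi$ restricts to $x\mapsto \chi_x(x)\chi_y(\lambda x)$ for some $\lambda\in\bF^*$ determined by $c$ and the $A$-weight. This character is trivial exactly when $a_x=-\lambda a_y$. Tracking the torus weights shows that $\lambda$ ranges over $c^{\,2}$ times a fixed sign; the weights of $H_x$ and $H_y$ are $\mp1$ and the relevant element of $A$ acts with combined weight $2$. For $n$ odd, the sign forced by the flag shape makes the triviality condition impossible, or else that configuration does not occur because the required dimension parity fails. For $n$ even, triviality requires $a_x/a_y\in(\bF^*)^2$, which is excluded by the hypothesis in (2). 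This explicit matrix computation is the step I expect to be hardest.

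\textbf{Part (3): Eisenstein series.} For $n=3$, suppose say $\chi_y=1$. Then a function in $\kappa_\eta(\S'_{\widetilde H,\chi})$ is invariant under $H_y(\bA')$ and the center $Z$. Together with $\Na$, these generate $U_P(\A')$ for the parabolic of type $(1,2)$, or $(2,1)$ in the case $\chi_x=1$. Unfolding then expresses the function as $q_{D*}p_D^*$ of a function on $Q\bun_{M_P}$, i.e. an element of the image of $E_{P,D,K}$, hence in $Eis$.

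Since the claim is only an inclusion into $Eis$, it suffices to show orthogonality to cuspidal functions. This follows from the adjunction in the proposition on $E_{P,D,K}$ applied to the invariance above: the function factors through the constant term along $P$.
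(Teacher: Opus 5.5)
Your Steps 1--2 follow the paper's strategy. However, Steps 3--4 misdescribe the ``diagonal'' flags, and as a result part (1) is not proved.

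Take $W_m=\gamma\langle e_1,\dots,e_m\rangle=\langle e_2,\dots,e_m,ae_1+be_{m+1}\rangle$ with $ab\neq 0$. Put $D_i=E_{2,n-i}+\dots+E_{2+i,n}$, and test $u=xE_{21}+yE_{1n}+\sum_i z_iD_i$, with the convention $z_i:=0$ for $i>n-3$.
\begin{itemize}
\item The condition $u(W_m)=0$ forces $z_i=0$ for $i\ge n-m$ and $ax+bz_{n-m-1}=0$.
\item The condition $\operatorname{Im}u\subset W_m$ forces $z_i=0$ for $i\ge m$ and $(y,z_{m-1})=t(a,b)$.
\end{itemize}
Hence for $m<n/2$ you get $x=0$ and the subgroup $t\mapsto I+t(aE_{1n}+bD_{m-1})$. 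For $m>n/2$ you get $y=0$ and the subgroup $x\mapsto I+x(E_{21}-\tfrac{a}{b}D_{n-m-1})$. These are $H_y$, resp.\ $H_x$, altered only by a component in the center $Z$. Because the central character is trivial, $\chi$ restricts to $t\mapsto\chi_y(at)$, resp.\ to $\chi_x$, and this is nontrivial.

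A subgroup that genuinely couples $x$ and $y$ occurs only when $m=n-m$. There $bt=z_{m-1}=-ax/b$, which gives $\chi_x(-b^2t)\chi_y(a^2t)$ and the square-class condition of (2). So your claim that every diagonal flag yields a twisted subgroup with $\lambda=-c^2$ is false for $m\neq n/2$.

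Neither of your proposed ways out for odd $n$ is valid. A sign cannot make $a_x=-\lambda a_y$ impossible for all $\chi$. Diagonal flags do occur for every $m$; what disappears when $m\neq n/2$ is the coupling, and you must invoke $\psi_Z=1$ to discard the $D_i$-components. This computation is the missing step for (1), and also for the parabolics with $m\neq n/2$ in (2).

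For (3), your unfolding argument is the paper's: the functions are left $U_P(\A')$-invariant, so after pushing forward to $Q\bun_P$ their $K$-invariants lie in $p^*$ of functions on $Q\bun_M$, hence in the image of $q_*p^*$. Two corrections are needed.
\begin{itemize}
\item Invariance under $U_P(\A')\cap\Na$ holds because $\mathfrak{u}_P\subset\eta^\perp$. The rest of $\Na$ acts by $\psi_\eta$, not trivially.
\item Pick a representative with $\St_\eta\subset B$ (the paper uses $\alpha E_{13}$). Then $Z$, together with the line on which $\chi$ is trivial, is a standard $U_P(\bA')$. With $\alpha E_{23}$ the group $H_xZ$ is not standard.
\end{itemize}
More seriously, drop your closing paragraph. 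Orthogonality to cuspidal functions does not imply membership in $Eis$: the paper only knows $\S'_{cusp}\oplus Eis\subset\S'$, and equality is open. It is the $q_*p^*$ factorization that proves (3).
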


\begin{proof}
     To prove the cuspidality in both odd and even $n$ cases, we let $f\in \S'_{\widetilde{H}',\chi}$, and consider $\kappa_\eta f(g)=\sum_{\gamma \in \St_\eta(F)\backslash G(F)} f(\gamma g)$. Let $P$ be a standard maximal parabolic with unipotent radical $U_P$, we will show that $\int_{U_P(F)\backslash U_P(\A')} f(\gamma u g) du$ vanishes for all $\gamma \in G(F)$ if the conditions in (1) or (2) are satisfied.

If $\gamma^{-1}\eta\gamma \not\in \mathfrak{p} $, then $\Ad(\gamma^{-1})\cdot \eta$ is nontrivial when restricted to $\Na \cap U_P(\A')$ and hence we obtain

$$\int_{\Na \cap U_P(\A')/\Nf \cap U_P(F)} f(\gamma ug)du= 0, $$ 
which implies the desired vanishing.

Next, we analyze the situation for $\gamma^{-1}\eta\gamma \in \mathfrak{p}$, for the standard parabolic subgroup $P$ of type $(m,n-m)$. First, we set $W_m:= \gamma\langle e_1,...,e_m \rangle$. We claim that $\gamma^{-1}\eta\gamma \in \mathfrak{p}$ implies that
$$ W_m = \langle e_2,..,e_m,ae_1+be_{m+1} \rangle,$$
for some $a,b \in F$. Indeed, if $\sum_{i=1}^n a_ie_i \in W_m$, that would imply 
$$\eta(\sum_{i=1}^n a_ie_i)= \alpha (a_3e_2+...+a_n e_{n-1}) , ..., \eta^{n-2}(\sum_{i=1}^n a_ie_i)=\alpha^{n-2}a_ne_2 \in W_m.$$
This implies $a_i=0$ for $i\geq m+2$ and $e_2,...,e_m \in W_m$, and and hence the claim.

Now we claim that the intersection
$$ L:=\widetilde{H} \cap \gamma U_P \gamma^{-1}$$
contains exactly one of the subgroups $H_x$ and $H_y$ if $m \neq m-n$. This claim (without loss of generality, say $L$ contains $H_x$) implies that  
$$\int_{\gamma^{-1} H_x(\A') \gamma/ \gamma^{-1}H_x(F)\gamma} f(\gamma ug)du= \int_{\gamma^{-1} H_x(\A') \gamma/ \gamma^{-1}H_x(F)\gamma} \chi_x(\gamma u \gamma^{-1})f( \gamma g)du =0, $$
because $\chi_x$ is nontrivial, and that implies $\int_{ U_P(\A')/ U_P(F)} f(\gamma ug)du= 0 $.

Now we will compute $L$ and will prove the claim. First, we note that if $b=0$, then $\gamma \in P$ which would imply $\gamma \in P$ and $L=\widetilde{H} \cap U_P$. In this case, we can directly see $L$ contains exactly one of the subgroups $H_x$ and $H_y$. Now we assume $b\neq0$. It suffices to (and indeed easier to) compute it at the level of Lie algebra, namely compute $\mathfrak{l}=\tilde{\mathfrak{h}} \cap \gamma \mathfrak{u}_P \gamma^{-1}$, where $\tilde{\mathfrak{h}}= \lbrace xE_{21}+yE_{1,n}+\sum_{i=0}^{n-3}z_i(E_{2,n-i}+...+E_{2+i,n}) | x,y,z_i \in \bA' \rbrace $.  Note that $u \in \gamma \mathfrak{u}_P \gamma^{-1}$ if and only if both of the following conditions hold:
\begin{enumerate}
    \item The kernel of $u$ contains $W_m$,
    \item The image of $u$ is contained in $W_m$.   
\end{enumerate}
It is then easy to observe that when $\frac{n}{2}>m \geq1$, we have
$$\mathfrak{l}= 
\lbrace -atE_{1,n}+bt\sum_{i=0}^{m-2} E_{m-i,n-i} + \sum_{i=0}^{m-2}z_i(E_{2,n-i}+...+E_{2+i,n})| \text{ $t, z_i \in \bA'$} \rbrace,$$
i.e. $\mathfrak{l}$ consists of matrices of the form
\[
\begin{pmatrix}
0 & 0 & \cdots & 0 & 0 & \cdots & 0 & -at\\
0 & 0 & \cdots & 0 & bt & z_{m-2} & \cdots & z_0\\
0 & 0 & \cdots & 0 & 0 & bt & \cdots & z_1\\
\vdots & \vdots &  & \vdots & \vdots & \ddots & \ddots & \vdots\\
0 & 0 & \cdots & 0 & 0 & 0 & bt & z_{m-2}\\
0 & 0 & \cdots & 0 & 0 & 0 & 0 & bt\\
0 & 0 & \cdots & 0 & 0 & 0 & 0 & 0\\
\vdots & \vdots &  & \vdots & \vdots & \vdots & \vdots & \vdots\\
0 & 0 & \cdots & 0 & 0 & 0 & 0 & 0
\end{pmatrix}
\]
and when $m > \frac{n}{2}$, 
$$\mathfrak{l}= 
\lbrace -btE_{2,1}+at\sum_{i=0}^{n-m}(E_{2+i,m+i})+ \sum_{i=0}^{n-m-2}z_i(E_{2,n-i}+...+E_{2+i,n})| \text{ $t, z_i \in \bA'$} \rbrace.$$
i.e.  $\mathfrak{l}$ consists of matrices of the form
\[
\begin{pmatrix}
0 & 0 & \cdots & 0 & 0 & 0 & \cdots & 0 & 0\\
-bt & 0 & \cdots & 0 & at & z_{n-m-1} & z_{n-m-2} & \cdots & z_0\\
0 & 0 & \cdots & 0 & 0 & at & z_{n-m-1} & \cdots & z_1\\
0 & 0 & \cdots & 0 & 0 & 0 & at & \ddots & z_2\\
\vdots & \vdots &  & \vdots & \vdots & \vdots & \ddots & \ddots & \vdots\\
0 & 0 & \cdots & 0 & 0 & 0 & \cdots & at & z_{n-m-1}\\
0 & 0 & \cdots & 0 & 0 & 0 & \cdots & 0 & at\\
0 & 0 & \cdots & 0 & 0 & 0 & \cdots & 0 & 0\\
\vdots & \vdots &  & \vdots & \vdots & \vdots &  & \vdots & \vdots\\
0 & 0 & \cdots & 0 & 0 & 0 & \cdots & 0 & 0
\end{pmatrix}
\]

This finishes the proof of the claim for $n \neq 2m$, and hence the cuspidality follows assuming the condition in (1) when $n$ is odd.

To prove (2) it remains to compute the intersection for the case $n=2m$, i.e., for the standard parabolic $P$ of type $(m,m)$. In this case, we have
$$\mathfrak{l}= 
\lbrace -btE_{2,1}+\frac{a^2}{b}tE_{1,n}+at(E_{2,m+1}+...+E_{m+1,n})+ \sum_{i=0}^{m-2}z_i(E_{2,n-i}+...+E_{2+i,n})| \text{ $t, z_i \in \bA'$} \rbrace.$$
i.e.  $\mathfrak{l}$ consists of matrices of the form
\[
\begin{pmatrix}
0 & 0 & \cdots & 0 & 0 & 0 & \cdots & 0 & \dfrac{a^{2}}{b}t\\
-bt & 0 & \cdots & 0 & at & z_{m-2} & z_{m-3} & \cdots & z_0\\
0 & 0 & \cdots & 0 & 0 & at & z_{m-2} & \cdots & z_1\\
0 & 0 & \cdots & 0 & 0 & 0 & \ddots & \ddots & \vdots\\
\vdots & \vdots &  & \vdots & \vdots & \vdots & \ddots & z_{m-2} & z_{m-3}\\
0 & 0 & \cdots & 0 & 0 & 0 & \cdots & at & z_{m-2}\\
0 & 0 & \cdots & 0 & 0 & 0 & \cdots & 0 & at\\
0 & 0 & \cdots & 0 & 0 & 0 & \cdots & 0 & 0\\
\vdots & \vdots &  & \vdots & \vdots & \vdots &  & \vdots & \vdots\\
0 & 0 & \cdots & 0 & 0 & 0 & \cdots & 0 & 0
\end{pmatrix}
\]
When $a=0$, it is still true that $\mathfrak{l}$ contains $H_x$. When $a \neq 0$, we consider the subgroup $L':=\lbrace I -btE_{2,1}+\frac{a^2}{b}tE_{1,n} +z_0E_{2,n} \rbrace \subset L$ and we get
\begin{align*}  \int_{\gamma^{-1} L'(\A') \gamma/ \gamma^{-1}L'(F)\gamma} f(\gamma ug)du &= \int_{\gamma^{-1} L'(\A') \gamma/ \gamma^{-1}L'(F)\gamma} \chi(\gamma u \gamma^{-1})f( \gamma g)du \\
&= \int_{\A'/F} \chi_x(-b^2 t) \chi_y(a^2 t) f(\gamma g)dt \\
&= 0
\end{align*}
Indeed, the last integral vanishes because $\chi_x|_{L'(\bA')} \neq \chi_y|_{L'(\bA')}$ by our assumption that $a_x/a_y \notin (\bF^*)^2$  This finishes the proof of (2).

To prove (3), it is more convenient to work with another representative $\eta$, namely, $\alpha E_{13} $ so that $\St_\eta(\bA')\subset B(\bA')$. More precisely, $\St_\eta(\bA')$ consists of the matrices of the form $\begin{pmatrix}

1 & x & z \\
 & a &  y\\
 &  & 1 
\end{pmatrix}$, where $x,y,z \in \bA'$ and $a\in \bA'^*$.

Now we assume one of the $\chi_i$'s, say $\chi_x$, is trivial. Let $P$ be the standard parabolic subgroup of type $(1,2)$ with unipotent radical $U$. Then 
$\S'_{\widetilde{H},\chi}$ is left $U(\bA')$-invariant. Let $\mathfrak{u}$ be the Lie algebra of $U$. Since $\mathfrak{u} \subset \langle \eta \rangle^\perp$, $\S'_{\widetilde{H},\chi}$ is also left $\mathfrak{u}(\N\bA')$-invariant, and hence it is left $U(\A')$-invariant. Let $K$ be a compact open subgroup of $G(\A')$. Consider the following diagram
    \[
\begin{tikzcd}
 & \St_\eta(F) \backslash G(\A')/K \arrow[d , "\pi"] \arrow[dr , "obv"] & \\
QBun_{M,K} & QBun_{P,K} \arrow[l , "p"] \arrow[r , "q" ] & Bun_{G,K} , 
\end{tikzcd}
\]
then we have $\pi_* (\S'_{\widetilde{H},\chi})^K \subset p^*(\S'(QBun_M))$ because all functions in $\S'_{\widetilde{H},\chi}$ are left $U(\A')$-invariant. This finishes the proof that $\S'_{\widetilde{H},\chi} \subset Eis$ are Eisenstein series. The case of $\chi_y$ is similar.

\end{proof}

\subsection{Central characters $\psi_Z$ that are trivial on $Z_0$}
In this subsection, we assume that $\psi_Z$ is trivial on $Z_0$. We classify all spherical Hecke-finite functions in $\kappa_\eta(\ind_{\St_\eta}^{G(\A')} (\S'_{\psi_Z}(\St_\eta (\bF) \backslash \St_\eta(\bA')),\psi_\eta))$. In this subsection, we will choose a different representative of $\eta$, namely, 
$$ \eta = \alpha (E_{13}+E_{34}+...+E_{n-1,n}).$$
With this representative, we have the centralizer $\St_\eta(\bA')=:\tilde{H}$ consisting of the matrices of the form 
 \[
\begin{pmatrix}
1 & x & z_{n-3} & \cdots & z_1 & z_0 \\
0 & a & 0& 0 & \cdots & y\\
0 & 0 & 1 & 0 & \cdots & z_1 \\
0 & 0 & 0 & 1 & \ddots & \vdots \\
\vdots & \vdots & \vdots & \ddots & \ddots & z_{n-3} \\
0 & 0 & 0 & \cdots & 0 & 1
\end{pmatrix},
\]
where $a\in \bA'^*$ and $x,y,z_i \in \bA'.$ We have subgroups $H,Z,Z_0, A$ in $\tilde{H}$ as in the notation of the previous subsection (recall Proposition 6.1). This representative has the advantage that its stabilizer $\St_\eta(\bA')$ is contained in the Borel subgroup $B$ and will be useful for computing the support of Hecke-finite functions in Section 7.

\begin{prop}\label{subgreg_fd} Let $G=\PGL_n$ for $n \geq 3$ and $a_x$ (resp. $a_y$) be the element in $\omega(\bF)$ that corresponds to $\chi_x$ (resp. $\chi_y)$. Let $K=G(\O')$.
\begin{enumerate}
    \item If both $\chi_x$ and $\chi_y$ are nontrivial, then all functions in $\S'_{\tilde{H},\chi}^K$ are Hecke-finite. 
    
    \item The subspace of the spherical vectors $\bigoplus_{\chi } \S'_{\widetilde{H},\chi}^K$ is finite-dimensional, where $\chi$ runs through the $A$-conjugacy classes of characters $\chi=(\chi_x,\chi_y)$ and both $\chi_x$ and $\chi_y$ are nontrivial.

\end{enumerate}

\end{prop}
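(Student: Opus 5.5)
The strategy is to compute $\S'^K_{\widetilde{H},\chi}$ explicitly via Proposition \ref{sphvec}, in the same way as Proposition \ref{regnilfd}. By Proposition \ref{heckefinite}, Hecke-finiteness of a spherical $f$ is equivalent to $\dim V_f^{K}<\infty$. So part (1) follows from part (2) once we show that $\S'^K_{\widetilde{H},\chi}$ itself is finite-dimensional for each fixed $\chi$ with $\chi_x,\chi_y\neq 1$. Indeed, $V_f\subset \S'_{\widetilde{H},\chi}$, since the latter is a $G(\A')$-subrepresentation.

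Using the Iwasawa decomposition $G(\bA')=B(\bA')G(\bO)$, and the fact that $\St_\eta(\bA')\subset B(\bA')$ for the chosen representative, Proposition \ref{sphvec} gives
\[
\S'^K_{\widetilde{H},\chi}\simeq\bigoplus_{g} (S_{\widetilde{H},\chi})^{gG(\bO)g^{-1}\cap \widetilde{H}},
\]
where $g=ut$ runs over $\widetilde{H}\backslash B(\bA')/B(\bO)$ with $\Ad(g^{-1})\eta$ integral.

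Each summand is finite-dimensional. The space $S_{\widetilde{H},\chi}$ is induced from $H'(\bA')$ to $\widetilde{H}'(\bA')=H'\rtimes\bA'^*$, and $H'$ acts on the $\chi$-piece through the characters $\chi_x,\chi_y$. For $(\ind\chi)^{K'}$ with $K'$ compact open, a point $a\in\bA'^*/\bF^*$ contributes only if $a\cdot\chi$ is trivial on $K'\cap H'$. The conductor condition on $a\cdot\chi_x$ (weight $-1$) and on $a\cdot\chi_y$ (weight $+1$) bounds the divisor of $a$ from above and from below, because both $a_x,a_y\neq 0$. This leaves finitely many classes in $\bA'^*/\bF^*\bO^*$, each with a finite-dimensional fiber. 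This is exactly where nontriviality of both characters is used: if one character were trivial, the divisor of $a$ would be bounded on one side only, giving infinitely many contributions.

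It remains to show finiteness in $g$ and in $\chi$. Write $t=\diag(t_1,\dots,t_n)$.

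The integrality of $\Ad(t^{-1}u^{-1})\eta$ forces $t_{i}^{-1}t_{i+1}\alpha$ (for $i\geq3$) and $t_1^{-1}t_3\alpha$ to be integral, since the $U$-conjugation does not change these simple-root coordinates.

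The nonvanishing of the invariants forces $a_x$ and $a_y$ to satisfy conductor conditions twisted by $t$. Concretely, $gG(\bO)g^{-1}\cap H_x$ and $gG(\bO)g^{-1}\cap H_y$ must lie in the kernel of some $A$-conjugate of $\chi_x$, respectively $\chi_y$; the $A$-twist is the $a$ above. Multiplying the two conditions cancels $a$. The result is that a product of the form $\bigl(\prod t\text{-ratios}\bigr)\,a_xa_y$ is integral, while the ratios $t_i^{-1}t_{i+1}\alpha$ are also integral.

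Together these bound the divisors of the $t_i$ (modulo the center), so only finitely many $t$ occur. They also bound $a_xa_y$ as a section of a fixed line bundle of the form $\omega^2\otimes(\N^{-1}\omega)^{k}$. Since $\chi$ is taken modulo $A$-conjugacy, which rescales $(a_x,a_y)\mapsto(c^{-1}a_x,ca_y)$, the product $a_xa_y$ together with a bounded divisor datum determines only finitely many classes of $\chi$.

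For fixed $t$, finiteness of the admissible $u$ follows from the Hitchin-fiber argument of Proposition \ref{regnilfd}. The $u$'s are identified with pairs $(\mathcal F_B,\phi)$ with $\mathcal{F}_B$ a $B$-bundle of fixed $T$-type and $\phi$ a global twisted Higgs field, and these form a finite set. Alternatively, we can use the identification with $\M^{\N^{-1}}_\eta(\overline C)$ restricted to a fixed Harder–Narasimhan type.

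The main obstacle is the bookkeeping of the twisted conductors. One must compute $tG(\bO)t^{-1}\cap \widetilde{H}$ and check that its images in $H_x$ and $H_y$ are the lattices $t_1^{-1}t_2\bO$ and $t_2^{-1}t_n\bO$ (up to the $u$-conjugation, which is harmless because $Z$ is central and only shifts $x,y$ by $Z$-terms). One must then verify that the resulting inequalities, combined with the $\eta$-integrality, really bound every coordinate of $t$ rather than leaving a free direction. I would first carry out this computation explicitly for $n=3$ and then handle general $n$ by the same coordinate computation along the chain $E_{34},\dots,E_{n-1,n}$.
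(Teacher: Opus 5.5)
Your proposal follows essentially the same route as the paper. You reduce via Proposition \ref{sphvec} and Iwasawa to torus representatives, with the $t_2$-direction absorbed by $A\subset\widetilde{H}$. You then extract the chain integrality conditions from $\Ad(g^{-1})\eta$ and the two-sided conductor conditions on the induction parameter $a$, multiply these to eliminate $a$ and bound $t$, and bound $a_xa_y\alpha^{n-2}$ as a global section, which alone determines the $A$-class of $\chi$.

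The one flaw is your parenthetical justification that $u$-conjugation is harmless because $Z$ is central. For $n\ge 4$, $U$ does not normalize $\widetilde{H}$: for example, $(I+sE_{23})(I+xE_{12})(I+sE_{23})^{-1}=I+xE_{12}-sxE_{13}\notin\widetilde{H}$, since the $(1,3)$ and $(3,4)$ entries of elements of $\widetilde{H}$ must agree. So the passage from $g=ut$ to $g=t$ is not automatic. The paper, however, makes the same reduction with only an appeal to Proposition \ref{regnilfd}.
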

\begin{proof}

(1) Let $f \in \S'_{\widetilde{H},\chi}^K$. By Proposition \ref{nilpnonadm}, we must show that there are only finitely many $g \in \St_\eta(\bA') \backslash G(\bA')/ \overline{K}$ such that $\Ad(g^{-1}) \cdot \eta$ are in different $K$-orbits in $\tilde{\Xi}_K$ and  $\S'_{\widetilde{H},\chi}^{gKg^{-1}\cap \widetilde{H}(\bA')}\neq 0,$ 
and each of them is finite-dimensional. We prove this by a similar way as in the proof of Proposition \ref{regnilfd}.

    By the Iwasawa decomposition, it suffices to consider $g\in  \widetilde{H}(\bA') \backslash B(\bA') / (\overline{K}\cap B(\bA'))$, and similar to the proof of Proposition \ref{regnilfd}, it is enough to consider $g\in A\backslash T(\bA') / T(\bO)$ such that  $\Ad(g^{-1}) \cdot \eta$ are in different $\overline{K}$-orbits in $\tilde{\Xi}_{\overline{K}}$, and we may take $g$ of the form $g=\diag(1,1,t_1^{-1},...,t_{n-2}^{-1})$ where $t_i \in \bA'^*$ and 
   \begin{align}
        t_it_{i+1}^{-1} \alpha \in \N \inv \omega(\bO)
   \end{align} 
    for $n-3\geq i\geq 0$ (we set $t_0=1$).  We compute that
    
    $$ gKg^{-1} \cap \widetilde{H}(\bA') \subset (H_x(\bO)\times H_y(t_{n-2}\bO) \ltimes Z(\bA') )\rtimes \bO^*. $$
     Let $\Lambda_i$ be the conductor of $\chi_i$ where $i=x,y$. Since $\S'_{\widetilde{H},\chi}$ is isomorphic to inflating the $\widetilde{H}'(\bA')$-representation $\ind_{\widetilde{H}'(\bA')}^{\widetilde{H}(\bA')} \chi$, applying Proposition \ref{sphvecgen}, we have $\S'_{\widetilde{H},\chi}^{gKg^{-1}\cap \widetilde{H}(\bA')}\neq 0$  if and only if there exists 
     $a\in \bA'^*$ such that 
     \begin{align}   a^{-1}\in \Lambda_x \text{   and  }at_{n-2}\in \Lambda_y,\end{align}
     or equivalently $ a^{-1}a_x, at_{n-2} a_y\in \omega(\bO)$. But this, together with (6.4), implies that 
     \begin{align}
\begin{cases}
    t_{n-2}a_xa_y  \in \omega^2(\bO) \\
   t_{n-3}a_xa_y \alpha \in \N \inv \omega^3(\bO)\\
   ... \\
   t_1a_xa_y \alpha^{n-3}\in \N^{-n+3}\omega^{n-1}(\bO)
   \end{cases}
\end{align}

On the other hand, by successively multiplying the elements in (6.4) (starting with $i=0$), we have

   \begin{align}
\begin{cases}
   t_1^{-1}\alpha \in \N \inv \omega(\bO) \\
 t_2^{-1} \alpha^2 \in \N ^{-2} \omega^2(\bO)\\
   ... \\
    t_{n-2}^{-1} \alpha^{n-2}\in \N^{-n+2}\omega^{n-2}(\bO) 
\end{cases}
\end{align}

These imply that each $t_i$'s and $a$ have bounded poles and zeros, and hence there are only finitely many choices for $g \in A \backslash T(\bA') /  T(\bO)$. This finishes the proof of (1)

To prove (2), it remains to observe that there are finitely conjugacy classes of $\chi$ such that there exists $t$ and $a$ satisfying (6.4) and (6.5), and hence  only finitely many summands are nonzero. Indeed, (6.5) and (6.6) imply $a_xa_y\alpha^{n-2} \in H^0(\overline{C},\N ^{-n+2} \omega^n$). So there are finitely many possibilities for the product $a_xa_y$, but the conjugacy classes of $\chi$ is determined by the product $a_xa_y$ since $c \cdot (a_x,a_y) = (ca_x,c^{-1}a_y)$.

\end{proof}
\begin{rem}
    The above calculations show that if a $\PGL_n$- Higgs bundle $P$ is in the support of a function in $\S'_{\widetilde{H},\chi}^{G(\O')}$ (viewed as a function on the groupoid of Higgs bundles with generically subregular nilpotent Higgs fields), then the underlying vector bundle of $P$ on $\overline{C}$ can be obtained as iterative extensions of line bundles $L_i$ and $|\deg L_i - \deg L_j|$ is bounded by a constant depending on $n$ and the genus of $\overline{C}$. We will come back to this in Section 7.
\end{rem}
\begin{rem}
     When $n$ is even and $a_x/a_y \in (\bF^*)^2$, we know the functions are Hecke-finite but we do not know how to prove its cuspidality (it is cuspidal if Conjecture \ref{conj1}(1) is true). 
\end{rem}

\begin{prop}
   If at least one of the $\chi_i$'s is trivial then $\S'_{\widetilde{H},\chi}$ has no admissible subrepresentation and hence $\S'_{\widetilde{H},\chi}^K$ contains no nonzero Hecke-finite functions for every compact open subgroup $K$ of $G(\A')$.
\end{prop}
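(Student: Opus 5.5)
We follow the strategy of Proposition \ref{regnilpresults}(2): apply Proposition \ref{nilpnonadm}, which reduces the claim to a statement about the stabilizer $\widetilde{H}(\bA')=\St_\eta(\bA')$. Concretely, fix a compact open $K\subset G(\A')$ and some $g_0$ with $\Ad(g_0^{-1})\cdot\eta\in\tilde{\Xi}_K$. We will produce infinitely many $g\in\widetilde{H}(\bA')\backslash G(\bA')/\overline{K}$ such that the elements $\Ad(g^{-1})\cdot\eta$ lie in pairwise distinct $K$-orbits of $\tilde{\Xi}_K$, and such that for every smooth subrepresentation $W'\subset S_{\widetilde{H},\chi}$,
$$W'^{g\overline{K}g^{-1}\cap\widetilde{H}(\bA')}\supset W'^{g_0\overline{K}g_0^{-1}\cap\widetilde{H}(\bA')}.$$
Proposition \ref{nilpnonadm}(1) then gives $\Hk$-infiniteness of every nonzero element of $\S'^K_{\widetilde{H},\chi}$. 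Since this holds for every $K$, part (2) of that proposition gives the absence of admissible subrepresentations.

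As in the proof of Lemma \ref{regnliplem}, replacing $K$ by $g_0Kg_0^{-1}$ lets us assume $g_0=1$. Assume $\chi_x$ is trivial; the case $\chi_y$ trivial is symmetric. The key structural observation is this: $S_{\widetilde{H},\chi}$ is inflated from $\widetilde{H}'$, on which $Z$ acts trivially, and it is built from $\ind_{H'}^{\widetilde{H}'}\chi$. If $\chi_x=1$, then $H_x(\bA')$ acts trivially on every function in $S_{\widetilde{H},\chi}$, because $H_x$ is normal and all $A$-conjugates of $\chi_x$ remain trivial. Hence the invariants under a compact open $J\subset\widetilde{H}(\bA')$ depend only on the image of $J$ in $\widetilde{H}(\bA')/(H_x(\bA')Z(\bA'))\simeq H_y(\bA')\rtimes\bA'^*$. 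It therefore suffices to find $g$ with
$$(g\overline{K}g^{-1}\cap\widetilde{H}(\bA'))\,H_x(\bA')Z(\bA')=(\overline{K}\cap\widetilde{H}(\bA'))\,H_x(\bA')Z(\bA').$$

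For the candidates, take torus elements $g$ from a cocharacter that acts on the $H_x$ direction and on the part of $Z$ through which the Higgs field $\Ad(g^{-1})\cdot\eta$ changes, but acts trivially on $H_y$ and on $A$. In the representative $\eta=\alpha(E_{13}+E_{34}+\dots+E_{n-1,n})$, one natural choice is $g=\diag(1,t,1,\dots,1)$, which scales only the entries $x$ (row $1$, column $2$) and $y$ (row $2$, column $n$). That choice also scales $y$, so we must balance it. The alternative is to scale row/column $2$ together with the $Z$-block so that $y$ is fixed; for example, $g=\diag(s,s\,t,s,\dots,s,\,?)$ is adjusted so that the weight on $E_{2,n}$ vanishes while $x$ gets weight $t^{\pm1}$. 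We take $t\in\bO\cap\bA'^*$ with support disjoint from the lattices cut out by $\overline{K}$ in each root coordinate, exactly as in the $\PGL_n$ example preceding the proof of Lemma \ref{regnliplem}. This support condition makes the projection to the $(y,a)$ coordinates unchanged. We also choose $t$ so that $\Ad(g^{-1})\cdot\eta$ stays integral, which is possible once $t$ has no poles. Distinct divisors of $t$ then give distinct $G(\bO)$-orbits, hence distinct $\overline{K}$-orbits.

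The main obstacle is finding a cocharacter that moves $\eta$ nontrivially, so that we get infinitely many orbits, while fixing the $H_y\rtimes A$ quotient of the stabilizer lattice. $\eta$ lives in the $Z$-chain directions, so the torus must act with nonzero weight on some $E_{i,i+1}$-type entry of $\eta$ but zero weight on $E_{2n}$ and on $y$. If no pure torus element works, we will instead use $g$ in the Borel, as in the Iwasawa reduction in Proposition \ref{subgreg_fd}, and verify the equality of images coordinatewise via the support-disjointness trick.
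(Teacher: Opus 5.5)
\textbf{Verdict: there is a genuine gap, though the route is repairable.}

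Your reduction is sound. When $\chi_x=1$, both $H_x(\overline{\mathbb{A}})$ and $Z(\overline{\mathbb{A}})$ act trivially on $S_{\widetilde{H},\chi}$. So via Proposition \ref{nilpnonadm} everything comes down to one step: exhibit infinitely many $g$ such that the $\Ad(g^{-1})\cdot\eta$ lie in pairwise distinct $\overline{K}$-orbits, and such that $g\overline{K}g^{-1}\cap\widetilde{H}(\overline{\mathbb{A}})$ has the same image in $H_y\rtimes A$ as $\overline{K}\cap\widetilde{H}(\overline{\mathbb{A}})$. That is the only substantive step, and you do not carry it out.

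Your one explicit candidate, $g=\diag(1,t,1,\dots,1)$, cannot work. The index $2$ does not occur in $\eta=\alpha(E_{13}+E_{34}+\dots+E_{n-1,n})$, so this $g$ lies in $A\subset\widetilde{H}(\overline{\mathbb{A}})=\St_\eta(\overline{\mathbb{A}})$. Hence $\Ad(g^{-1})\cdot\eta=\eta$, every such $g$ represents the trivial double coset, and your claim that distinct divisors of $t$ give distinct orbits is false for it. Your second candidate has an unspecified entry, you never check that it moves $\eta$, and you end with a contingency plan. As written, the argument stops at the reduction.

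The route can be completed as follows.
\begin{itemize}
\item The condition you impose (trivial weight on $E_{2n}$) forces the missing entry to be $st$, so $g=\diag(1,t,1,\dots,1,t)$. Alternatively, $g=\diag(t^{-1},1,\dots,1)$ also works.
\item For $g=\diag(1,t,1,\dots,1,t)$ one gets $\Ad(g^{-1})\cdot\eta=\alpha(E_{13}+\dots+E_{n-2,n-1}+tE_{n-1,n})$ (for $n=3$, simply $t\alpha E_{13}$). Since $(\Ad(g^{-1})\cdot\eta)^{n-2}=\alpha^{n-2}tE_{1n}$, the $G(\overline{\mathcal{O}}_p)$-orbit detects $v_p(t)$, so distinct $v_p(t)$ give distinct orbits.
\item Conjugation $h\mapsto g^{-1}hg$ only rescales the $x$-entry and some $Z$-entries of $h\in\widetilde{H}$, leaving $(a,y)$ untouched.
\item Choose $t\in\overline{\mathcal{O}}$ with $t_p=1$ at the finitely many places where $K$ is non-standard. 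This keeps $\Ad(g^{-1})\cdot\eta\in\tilde{\Xi}_K$ and gives equal images in $H_y\rtimes A$.
\item The case $\chi_y=1$ is not literally symmetric: there you need trivial weight on $E_{12}$ instead, e.g.\ $g=\diag(1,\dots,1,t)$.
\end{itemize}

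For comparison, the paper never moves $\eta$. It proves that every nonzero smooth subrepresentation of the stabilizer representation has $K'$-invariants that are zero or infinite-dimensional, for every compact open $K'\subset\widetilde{H}(\overline{\mathbb{A}})$. It does this with a second Mackey layer $V\simeq\ind_{H'}^{\widetilde{H}'}\chi$, whose infinitely many summands are indexed by $\bar a\in A$ with many zeros (again because $H_x$ acts trivially), together with the theory of automorphic functions on $\mathbb{G}_m$ when $\chi=1$. It then transfers this to $G(\mathbb{A})$ via Proposition \ref{sphvec}. The torus $A$ you tried is exactly what the paper uses, but inside the stabilizer's own induced representation rather than as double-coset representatives in $G$.
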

\begin{proof}
By Proposition \ref{heckeadmiss}, it suffices to show that $\S'_{\widetilde{H},\chi}^K$ has no nonzero $\Hk$-finite vector. To show that $\S'_{\widetilde{H},\chi}^K$ has no nonzero $\Hk$-finite vector, it suffices to show that for every smooth subrepresentation $V$ of $S_{\widetilde{H},\chi}$ and every compact open subgroup $K'$ of $\widetilde{H}(\bA')$, $V^{K'}$ is either zero or infinite-dimensional. 

Since the characteristic of $\bF$ is greater than $n$, we have a splitting $Z \simeq Z_0 \times Z'$, the problem is reduced to show that $V^K$ is zero or infinite-dimensional for any smooth $H(\bA') \rtimes \bA'^*$-subrepresentation $V$ of $S_{H \rtimes \mathbb{G}_m,\chi}$ and $K$ is any compact open subgroup of $H(\bA') \rtimes \bA'^*$, where $S_{H \rtimes \mathbb{G}_m,\chi}$ is the subrepresentation of $\S'((H(\bF) \rtimes \bF^* )\backslash (H(\bA') \rtimes \bA'^*))$ that $Z_0(\bA')$ acts trivially and $H_x(\bA') \times H_y(\bA')$ acts on the left via $\chi$. Since  $Z_0(\bA')$ acts trivially, the problem is further reduced to show the same statement for any smooth $H'(\bA') \rtimes \bA'^*$-subrepresentation $V$ of $S_{\widetilde{H}',\chi}$. Now, we let $K$ be a compact open subgroup of $H'(\bA') \rtimes \bA'^*$, and prove that $V^K$ is either zero or infinite-dimensional in the following two cases.
 
First, if $\chi$ is trivial, then the stabilizer of $\chi$ is $\widetilde{H}'(\bA')$ and all functions in $S_{\widetilde{H}',\chi}$ descend to functions on $\bF^* \backslash \bA'^*$, and the desired result follows by the automorphic representation theory of $\mathbb{G}_m$.

Second, if exactly one of $\chi_i$'s is trivial, say $\chi_x$ is trivial, then the stabilizer of $\chi$ is $H'(\bA')=H_x(\bA') \times H_y(\bA')$. Since $V$ is either zero or isomorphic to the induction of $\chi$, similar to Proposition \ref{nilpnonadm}, it suffices to show that there exists an infinite family of $\overline{a}=\diag(a^{-1},1,...,1) \in A$ such that $\overline{a}^{-1} \cdot \chi$ are trivial when restricted to $K':=K \cap H'(\bA')$, are in different $K$-orbits, and $\chi^{K'} \subset \chi^{\overline{a}K\overline{a}^{-1} \cap H'(\bA')}$. Now, we note that any $a \in \bA'^*$ with sufficiently many zeros would work because we have
$$\overline{a}K\overline{a}^{-1} \cap H'(\bA') \subset K' \text{  modulo  }H_x(\bA')$$
for any $a$ with no poles, and since $H_x(\bA')$ acts trivially, this implies $\chi^{K'} \subset \chi^{\overline{a}K\overline{a}^{-1} \cap H'(\bA')}$. This finishes the proof.

\end{proof}

\subsection{Central characters $\psi_Z$ that are nontrivial on $Z_0$}

We keep the notation from the previous subsection. Recall that, in particular, we have $$\St_\eta(\bA')= \widetilde{H}(\bA')\simeq ((H_x(\bA') \times H_y(\bA') )\ltimes Z(\bA')) \rtimes \bA'^*.$$

Since our ground field $\bF$ has characteristic greater than $n$, there is a splitting $Z=Z_0 \times Z'$, where $Z' \simeq \bA'^{n-3}$. Hence we have an isomorphism
$$ \widetilde{H}(\bA') \simeq (H(\bA') \rtimes \bA'^*) \times Z'(\bA'), $$
 where $H$ is the 3-dimensional Heisenberg group which is acted on by $\bA'^*$ with weights 1, -1, and 0. We denote $L_1(\bA')$ and $L_2(\bA')$ be the weight-1 and weight-(-1) lines in $H(\bA')$ (which we previously denoted by $H_y$ and $H_x$ respectively). Fixing a character $Z'(\bA')/Z'(\bF)$, the study of automorphic representation for $ \widetilde{H}(\bA')$ is reduced to that of $ H(\bA') \rtimes \bA'^* $. From now on (for the rest of this section), we set $\widetilde{H}= H \rtimes \mathbb{G}_m$.
 
 For the rest of this section, we further fix a nontrivial character $\psi_0$ of $Z_0(\bA')/Z(\bF)$, and show that
 $$\S'_{\psi_0}:= \S'_{\psi_0}(\widetilde{H}(\bF)\backslash \widetilde{H}(\bA'))$$
 has no admissible subrepresentation, and hence $\ind_{\widetilde{H}(
 \bA') \times \Na}^{G(\A')} (\tS_{\psi_0},\psi_\eta) $ has no admissible subrepresentation.

A direct computation shows that we have the following description $\widetilde{H}(\bA')$-representation $ \S'_{\psi_0} $, namely, it is the space of locally constant and compactly supported (modulo the $\overline{F}$-points from the left) functions $f$ on $(L_1(\bA')\times L_2(\bA'))\rtimes \bA'^*$ satisfying that 
\begin{enumerate}
    \item  $f(l_1+l_1',l_2+\lambda l_2',\lambda)=\psi_0(l_1l_2')f(l_1,l_2,\lambda)$
    \item  $f(\lambda'l_1, l_2,\lambda'\lambda)=f(l_1,l_2,\lambda)$,
\end{enumerate}
for every $(l_1',l_2',\lambda') \in L_1(\bF) \times L_2(\bF) \times \bF^*$.
 The $\widetilde{H}(\bA')$-action on $\S'_{\psi_0}$ can be described as follows, nameley, 
 $$ (((l_1,'l_2',\lambda') \cdot f)(l_1,l_2, \lambda) = \psi_0 (-l_1'l_2)f(l_1+\lambda l_1', \lambda' l_2+l_2', \lambda' \lambda),$$
 where $(l_1,l_2,\lambda), (l_1',l_2',\lambda') \in (L_1(\bA') \times L_2(\bA')) \rtimes \bA'^* $, and $Z_0$ acts via the character $\psi_0$. 


Next, we will give an alternative description for $\S'_{\psi_0}$. We rewrite $f\in \S'_{\psi_0}$ by considering its Fourier expansion along $L_1(\bA'/\bF)$, namely,
$$ f(l_1,l_2,\lambda)= \sum_{a \in \bF} \psi(al_1 )\phi (l_2- \lambda a,\lambda),$$
where $\phi$ are functions on $L_2(\bA')\times (\bA'^*/\bF^*)$ and the support of $\phi$ on the $(\bA'^*/\bF^*)$-projection is compact.  The action of $(L_1(\bA') \times L_2(\bA')) \rtimes \bA'^*$ on $S':=\lbrace \phi: \text{compactly supported functions on } L_2(\bA')\times (\bA'^*/\bF^*)\rbrace$ is given by 
$$ (l_1',l_2',\lambda') \cdot \phi(l_2, \lambda)= \psi_0(-l_1'l_2)\phi(\lambda'l_2+l_2',\lambda' \lambda),$$
and $Z_0$ acts via the character $\psi_0$. From now on, we will work with this realization of the  $\widetilde{H}(\bA')$-representation $\S'_{\psi_0}$.

\subsubsection{Subreps and non-admissibility}
    Let $V$ be a subrepresentation of $\S'_{\psi_0}$, we let $J^*$ to be the Jacquet functor (taking coinvariant with respect to the subgroup $L_1(\bA')$). The functor $J^*$ restricts to a functor from the category of $\St_\eta(\bA')$-subrepresentations of $\S'_{\psi_0}$ to the category of $\bA'^*$-subrepresentations of $\S'(\bA'^*/\bF^*)$. Let $V$ be a smooth $\St_\eta(\bA')$-subrepresentations of $\S'_{\psi_0}$, we have
    $$J^*(V) \simeq \lbrace \phi|_{0\times \bA'^*}|\phi\in V\rbrace.$$ 
    Indeed, it is straightforward to verify that the map $V \to \lbrace \phi|_{0\times \bA'^*}|\phi\in V\rbrace$ descends to a bijective map $J^*(V) \to \lbrace \phi|_{0\times \bA'^*}|\phi\in V\rbrace$.
    

    Also, $J^*$ admits a right adjoint $J_*$. When $W$ is a $\bA'^*$-subrepresentation of $\S'(\bA'^*/\bF^*)$, we have
    $$J_*: W \mapsto \lbrace \phi \in \S'_{\psi_0} | (\lambda \mapsto \phi(l_2,\lambda)) \in W \text{ for each $l_2$ in $L_2(\bA')$} \rbrace.$$
\begin{prop}
$J^*$ defines an equivalence between smooth subrepresentations of $\S'_{\psi_0}$ and smooth subrepresentations of $\S'(\bA'^*/\bF^*)$.
\end{prop}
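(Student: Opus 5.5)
This is a Mackey/Stone–von Neumann type argument for the Heisenberg group $H(\bA')$ with nontrivial central character $\psi_0$. I would show that $J^*$ and $J_*$ are mutually inverse on lattices of smooth subrepresentations, i.e. $J_*J^*(V)=V$ and $J^*J_*(W)=W$. Both functors are visibly inclusion-preserving, so this suffices.

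The key input is an explicit projector. In the realization on $S'$, $L_1(\bA')$ acts by the multiplication operators $\phi(l_2,\lambda)\mapsto \psi_0(-l_1'l_2)\phi(l_2,\lambda)$. Since $\psi_0$ is nontrivial and $\bA'$ is self-dual via $\psi_0$, these operators generate all multiplications by locally constant compactly supported functions of $l_2$. Concretely, for a compact open $\Lambda\subset L_2(\bA')$, averaging $l_1'\cdot\phi$ over the dual lattice $\Lambda^\perp\subset L_1(\bA')$ multiplies $\phi$ by the indicator $1_\Lambda(l_2)$. The translations $L_2(\bA')$ then move such indicators to $1_{l+\Lambda}$.

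Step 1 is $J^*J_*(W)=W$. The inclusion $\subseteq$ is by definition of $J_*$. For $\supseteq$, given $w\in W$, take $\phi=1_\Lambda\otimes w$ for a small $\Lambda$. This $\phi$ lies in $\S'_{\psi_0}$ and in $J_*W$, and its restriction to $0\times\bA'^*$ is $w$.

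Step 2 is $J_*J^*(V)=V$. The inclusion $V\subseteq J_*J^*V$ is the substantive direction. Given $\phi\in V$, restricting to $l_2=l$ gives $\phi(l,\cdot)$. Translating by $l_2'=l$ (up to the character twist) and then using the restriction map shows that $\phi(l,\cdot)\in J^*V$. Here one checks that $J^*V$ is stable under $\bA'^*$ and that translation by $L_2$ commutes with restriction modulo the $\lambda$-scaling; this uses the action formula $\phi(\lambda'l_2+l_2',\lambda'\lambda)$.

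For the reverse inclusion, take $\phi\in J_*J^*V$. By local constancy and compact support, $\phi=\sum_j 1_{l_j+\Lambda}\otimes w_j$ is a finite sum with $w_j\in J^*V$, where I shrink $\Lambda$ using smoothness in $\lambda$ on a compact set. Each $w_j$ is the restriction $v_j(0,\cdot)$ of some $v_j\in V$. Applying the projector $1_\Lambda$ (which lies in $V$ by the averaging argument above) gives $1_\Lambda\otimes v_j(0,\cdot)\in V$, after shrinking $\Lambda$ so that $v_j$ is constant in $l_2$ on $\Lambda$ locally in $\lambda$. Translating by $l_j$ and summing puts $\phi$ in $V$.

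I expect the main obstacle to be this shrinking step. Here $v_j(l_2,\lambda)$ need only be locally constant jointly, and $\lambda$ ranges over a noncompact $\bA'^*/\bF^*$ modulo compact support. The fix is to choose $\Lambda$ uniformly over the compact $\lambda$-support of $v_j$, and to check compatibility with the $\lambda$-dependence of the translation action, $\lambda'l_2$ versus $l_2$. I would first verify the whole argument locally, using smoothness under a compact open subgroup of $L_2(\bA')\rtimes\bA'^*$.

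Finally, $J^*$ sends smooth subrepresentations to smooth subrepresentations, since the restriction map is $\bA'^*$-equivariant. Together with the two identities above, $J^*$ is an equivalence (a lattice isomorphism) with inverse $J_*$.
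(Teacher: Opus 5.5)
Your proposal is correct and follows essentially the same route as the paper. In both, $J^*J_*W=W$ is checked directly, and the substantive inclusion $J_*J^*V\subseteq V$ is proved by cutting $\phi$ into finitely many pieces supported on cosets $x_i+\Lambda_i\subset L_2$, using the $L_1$-averaging projector over $\Lambda_i^\perp$; the paper centers this projector at $x_i$ via the factor $\psi_0(ax_i)$, while you project at the origin and translate afterwards, which is equivalent.

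Two small remarks. First, you have swapped the labels in Step 2: the inclusion $V\subseteq J_*J^*V$ is the easy one. Second, the ``shrinking'' you flag as the main obstacle is automatic. Invariance of $v_j$ under a compact open subgroup of $L_2$ already gives $v_j(l_2+l_2',\lambda)=v_j(l_2,\lambda)$ uniformly in $\lambda$, after which you only need to refine each coset $l_j+\Lambda_0$ into finitely many smaller cosets.
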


\begin{proof}
It is direct to check from the definition that $J^*J_*W \simeq W$ for every smooth $\bA'^*$-subrepresentation $W$ of $S(\bA'^*/\bF^*)$. It remains to show that the natural inclusion
$$
V\hookrightarrow J_*J^*(V)
$$
is surjective for every smooth $\St_\eta(\bA')$-subrepresentation $V$ of $\S'_{\psi_0}$.

Let $\phi\in J_*J^*(V)$. By the definition of $J_*$, for every $x\in L_2(\bA')$, the function $\lambda\mapsto \phi(x,\lambda)$ belongs to $J^*(V)$. Hence there exists $v_x\in V$ whose restriction to $\{x\}\times \bA'^*$ agrees with $\phi$. By smoothness, there is a compact open subgroup $\Lambda_x\subset L_2(\bA')$ such that both $v_x$ and $\phi$ are $\Lambda_x$-invariant. Therefore $v_x(l_2,\lambda)=\phi(l_2,\lambda)$ for every $l_2\in x+\Lambda_x$.

We next observe that the projection of the support of $\phi$ to $L_2(\bA')$ is compact. Indeed, by smoothness there exists a compact open subgroup $\Lambda_1\subset L_1(\bA')$ fixing $\phi$. Since
$(a\cdot\phi)(l_2,\lambda)=\psi_0(-al_2)\phi(l_2,\lambda)$ for $a\in L_1(\bA')$, we have $\phi(l_2,\lambda)\neq 0$ only if $l_2\in {\Lambda_1'}^\perp:=\{a\in L_2(\bA')\mid \psi_0(ab)=1\text{ for every }b\in\Lambda_1\}$. As ${\Lambda_1'}^\perp$ is compact, the assertion follows.

Thus, after refining an open cover given by the open sets $x+\Lambda_x$ for $x$ in the $L_2$-projection of support of $\phi$, we may write the projection of the support of $\phi$ as a finite disjoint union of compact open cosets $C_i=x_i+\Lambda_i$, $1\leq i\leq r$, such that for each $i$ there exists $v_i\in V$ satisfying $v_i=\phi$ on $C_i\times \bA'^*$.

For each $i$, let
$$
\Lambda_i^\perp=\{a\in L_1(\bA')\mid \psi_0(ab)=1\text{ for every }b\in\Lambda_i\},
$$
and for every $v\in V$ we define
$$
P_i(v)=\frac{1}{vol(\Lambda_i^\perp)}
\int_{\Lambda_i^\perp}\psi_0(ax_i)(a\cdot v)\,da.
$$
Since $V$ is an $L_1(\bA')$-subrepresentation, we have
$P_i(v_i)\in V$. Here the integral can be rewritten as a finite sum:
by the smoothness of $v_i$ and $\psi_0$, the integrand is constant
on the cosets of some open subgroup of $\Lambda_i^\perp$, so the
normalized integral can be written as a finite linear combination
of translates of $v_i$. On the other hand, using the formula for the $L_1(\bA')$-action, we have
$$
P_i(v_i)(l_2,\lambda)
=
\left(
\frac{1}{vol(\Lambda_i^\perp)}
\int_{\Lambda_i^\perp}\psi_0(a(x_i-l_2))\,da
\right)v_i(l_2,\lambda).
$$
By orthogonality of characters, the factor in parentheses is the characteristic function of $x_i+\Lambda_i$. Hence
$P_i(v_i)=\delta_{C_i}\phi$. Since the $C_i$ form a disjoint cover of the $L_2(\bA')$-support of $\phi$, we obtain
$$
\phi=\sum_{i=1}^r P_i(v_i)\in V.
$$
Therefore $V=J_*J^*(V)$, and this finishes the proof.
\end{proof}

\begin{cor} We have:
\begin{enumerate}
    \item  Let $K$ be a compact open subgroup of $\widetilde{H}(\bA')$, and let $V$ be a smooth subrepresentation of $\S'_{\psi_0}^K$. The space $V^K$ is either zero or infinite-dimensional.
    \item For any compact open subgroup $K$ of $G(\A')$, the nonzero elements in $(\ind_{\tilde{H}(\bA') \times \Na}^{G(\A')}(\tS_{\psi_0},\psi_\eta))^{K}$ are Hecke-infinite.
\end{enumerate}
   
\end{cor}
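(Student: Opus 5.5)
The plan is to transport the problem, through the equivalence $J^*$ of the preceding proposition, to the automorphic theory of $\mathbb{G}_m$. There, non-admissibility is elementary because of the degree argument already used in Lemma \ref{gmquot}. Part (2) will then follow from part (1) by the induction formalism of Section \ref{secorbitdecomp}.

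For (1), I will read $V$ as a smooth $\widetilde{H}(\bA')$-subrepresentation of $\S'_{\psi_0}$ and $K$ as a compact open subgroup of $\widetilde{H}(\bA')$. Shrinking $K$ only enlarges $V^K$, so I may assume
$$K=(\Lambda_1\times\Lambda_2)\rtimes K_0,$$
with $\Lambda_1\subset L_1(\bA')$, $\Lambda_2\subset L_2(\bA')$ compact open lattices and $K_0\subset\bO^*$ open. Suppose $0\neq v\in V^K$. I will produce infinitely many linearly independent $K$-invariant vectors by acting with central-degree translates, as in Lemma \ref{gmquot}.

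Fix $h\in\bA'^*$ with $\deg h\neq0$, and take $h\in\bO$ (no poles), for example the idele of an effective divisor. The element $\lambda'=h^m$ acts by $\phi(l_2,\lambda)\mapsto\phi(h^ml_2,h^m\lambda)$. This action does not preserve $K$-invariance, since conjugation by $h^m$ rescales $\Lambda_1$ and $\Lambda_2$ with weights $\pm1$. I will therefore average instead. Set
$$v_m=\int_K k\cdot(h^m\cdot v)\,dk.$$
This is a finite sum of translates, so it lies in $V^K$.

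To see that the $v_m$ are nonzero, I will use the Jacquet picture. Restricting to $l_2=0$, the $L_1$-action becomes trivial and the $L_2$-averaging becomes harmless. Hence $J^*(v_m)$ is the $K_0$-average of the $h^m$-translate of $J^*(v)$. This is a function on $\bA'^*/\bF^*$ whose support has degree shifted by $m\deg h$.

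The step I expect to be the main obstacle is arranging $J^*(v)\neq0$ while keeping $v$ $K$-invariant. I will handle it as follows. Since $J^*$ is an equivalence, $J^*V\neq0$. By the computation in the proof of that proposition, some Fourier-localized projection $P_i$ (a finite combination of $L_1$-translates) moves any $l_2$-slice to $l_2=0$ after an $L_2$-translation. Applying this, and then re-averaging over $K$, I expect to obtain a $K$-invariant $v$ with $v(0,\cdot)\neq0$, possibly after replacing $K$ by a smaller subgroup, which is harmless.

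Once that is arranged, the supports of the $J^*(v_m)$ lie in distinct degree components of $\bA'^*/\bF^*$. They are therefore linearly independent, so $\dim V^K=\infty$.

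For (2), let $f$ be a nonzero element of the induced representation that is fixed by a compact open $K\subset G(\A')$. By Proposition \ref{jacquet}, the representation generated by $f$ is $\ind(W,\psi_\eta)$ for some nonzero smooth subrepresentation $W\subset\S'_{\psi_0}$. Proposition \ref{sphvec} expresses its $K$-invariants as a direct sum of spaces $W^{gKg^{-1}\cap\St_\eta}$. Since $f\neq0$, at least one summand is nonzero, and by (1) that summand is infinite-dimensional. Proposition \ref{heckefinite} then shows that $f$ is $\Hk$-infinite.
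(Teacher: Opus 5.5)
Your proposal has a genuine gap at the step you flagged as the main obstacle: it cannot be carried out, and the fallback you propose changes the statement being proved. Put $K_0:=K\cap\bA'^*$. For $v\in V^K$, invariance under $K_0$ gives $v(0,k_0\lambda)=v(0,\lambda)$, so $J^*(v)=v(0,\cdot)$ always lies in $(J^*V)^{K_0}$. That space can be zero while $V^K\neq 0$.

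For example, take a character $\nu$ of $\bO^*$ that is trivial on $\bO^*\cap\bF^*$ and nontrivial on $K_0=\bO^*$. Let $W\subset\S'(\bA'^*/\bF^*)$ consist of the $w$ with $w(x\lambda)=\nu(x)w(\lambda)$ for $x\in\bO^*$, and put $V=J_*W$. Choose a locally constant, compactly supported $a$ on $L_2(\bA')$ with $a(xl_2)=\nu(x)^{-1}a(l_2)$; in particular $a(0)=0$. Then $a\otimes w\in V^K$ for suitable lattices in $K$. But $W^{K_0}=0$, so every $K$-invariant $v$ has $J^*(v)=0$, and all your $J^*(v_m)$ vanish. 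No combination of $L_1$-translates, $L_2$-translates and $K$-averaging yields a $K$-invariant vector with nonzero slice at $l_2=0$.

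Replacing $K$ by a smaller $K'$ is not harmless. Since $V^K\subset V^{K'}$, knowing $\dim V^{K'}=\infty$ says nothing about $V^K$. Moreover, in (2) you need (1) exactly for the groups $gKg^{-1}\cap\St_\eta$ coming from Proposition \ref{sphvec}. The same wrong-direction reduction already appears in your first step: ``shrinking $K$ only enlarges $V^K$, so I may assume $K=(\Lambda_1\times\Lambda_2)\rtimes K_0$.''

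The missing idea is to translate only in the $\lambda$-variable rather than by a group element. The element $h^m$ acts by $v\mapsto v(h^ml_2,h^m\lambda)$, and the rescaling of $l_2$ is what destroys $K$-invariance and forces you to average. Instead use $R_\mu\colon\phi\mapsto\phi_\mu$ with $\phi_\mu(l_2,\lambda)=\phi(l_2,\lambda\mu)$.
\begin{itemize}
\item By the action formula, $R_\mu$ commutes with the whole $\widetilde{H}(\bA')$-action: both orders give $\psi_0(-l_1'l_2)\phi(\lambda'l_2+l_2',\lambda'\lambda\mu)$.
\item $R_\mu$ preserves $V$, because $V=J_*J^*V$ and $J^*V$ is stable under $\bA'^*$-translations.
\item Hence $R_\mu$ maps $V^K$ injectively into itself for every $K$. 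This needs no averaging and no assumption on the shape of $K$.
\item Letting $\mu$ have unbounded degree pushes the image of the supports in $\bA'^*/\bF^*$ off to infinity. This gives infinitely many linearly independent elements of $V^K$.
\end{itemize}
This is the paper's argument. With (1) established for arbitrary $K$, your deduction of (2) via Propositions \ref{jacquet}, \ref{sphvec} and \ref{heckefinite} is fine.
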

\begin{proof}
 
  Let $W=J^*(V)$ and let $\phi \in V^K$ be a nonzero element. We claim that for any $l_2' \in \bA'^*$, the map $\phi_{\lambda'}:(l_2, \lambda) \mapsto \phi(l_2,\lambda \lambda')$ is nonzero and belongs to $V^K$. The claim immediately implies the infinite-dimensionality of $V^K$, because the union of the image of the support of $\phi_{\lambda'}$'s in $\bA'^*/\bF^*$ is noncompact. Now we prove the claim. Since $\phi \in V = J_*(W)$, for any $l_2 \in L_2(\bA')$, the map $\lambda \mapsto \phi(l_2,\lambda) $ belongs to $W$. This implies the map $\lambda \mapsto \phi(l_2,\lambda \lambda') $ also belongs to $W$, and hence $\phi_{\lambda'} \in V$. It remains to check $\phi_{\lambda'}$ is $K$-invariant, but this is immediate from the definition (note that the operator $\phi\mapsto \phi_{\lambda'}$ commutes with the $\widetilde{H}(\bA')$-action). This finishes the proof of (1). (2) follows by Propositions \ref{heckefinite}, \ref{heckeadmiss} and \ref{sphvec}.



\end{proof}

\section{Proof of Theorem \ref{main theorem}}
In this section, we gather the results obtained from the previous sections to prove Theorem \ref{main theorem}. Throughout we let $G=\PGL_3$ unless specified otherwise, and $K=G(\O')$. In Section 7.1, we explain how our results from previous sections imply the finite-dimensionality of $\S'_{HF}(\bun_G(C))$ and the containment $\S'_{HF}(\bun_G(C)) \subset \S'_{cusp}(\bun_G(C))$, and hence complete the proof of Theorem \ref{main theorem} (1) and (2). In section 7.2, we compute the support of Hecke-finite functions obtained in various orbits. As a result, we complete the proof of Theorem \ref{main theorem} (3).

\subsection{Finite-dimensionality } We need a simple lemma:

\begin{lem}
    Let $f,g \in \S'(\bun_G(C))$ and $\Hk \cdot f \cap \Hk\cdot g =0$, then $f+g$ is Hecke-finite if and only if both $f$ and $g$ are Hecke-finite.
\end{lem}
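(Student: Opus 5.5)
The proof is a short linear-algebra argument about the cyclic modules $\Hk\cdot f$, $\Hk\cdot g$ and $\Hk\cdot(f+g)$. Throughout, $\Hk$ is the (unital) Hecke algebra $\mathbb{C}_c(K\backslash G(\A')/K)$ with $K=G(\O')$, and $\Hk\cdot h$ is the left $\Hk$-submodule generated by $h$. By definition, $h$ is Hecke-finite iff $\dim \Hk\cdot h<\infty$. Note that $h\in \Hk\cdot h$, since the unit of $\Hk$ is the characteristic function of $K$.

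\emph{The easy direction.} Suppose $f$ and $g$ are Hecke-finite. For every $T\in\Hk$ we have $T(f+g)=Tf+Tg$. Hence
$$\Hk\cdot(f+g)\subset \Hk\cdot f+\Hk\cdot g,$$
and the right-hand side is finite-dimensional. So $f+g$ is Hecke-finite. The hypothesis $\Hk\cdot f\cap\Hk\cdot g=0$ is not needed here.

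\emph{The converse.} Suppose $f+g$ is Hecke-finite. Put $M=\Hk\cdot f\oplus \Hk\cdot g$; the sum is direct precisely by the hypothesis $\Hk\cdot f\cap \Hk\cdot g=0$. Let $\pi_1:M\to \Hk\cdot f$ be the projection along $\Hk\cdot g$. Since both summands are $\Hk$-submodules, $\pi_1$ is $\Hk$-linear: for $m=a+b$ with $a\in\Hk\cdot f$ and $b\in\Hk\cdot g$, we have $Tm=Ta+Tb$ with $Ta\in \Hk\cdot f$ and $Tb\in\Hk\cdot g$. The submodule $\Hk\cdot(f+g)$ lies in $M$, and
$$\pi_1(\Hk\cdot(f+g))=\Hk\cdot\pi_1(f+g)=\Hk\cdot f.$$
Thus $\Hk\cdot f$ is the image of a finite-dimensional space under a linear map, so it is finite-dimensional. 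The same argument with the other projection gives that $\Hk\cdot g$ is finite-dimensional.

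The only point needing care, and the place the hypothesis is used, is the $\Hk$-linearity of the projection. This holds because the decomposition $M=\Hk\cdot f\oplus\Hk\cdot g$ is a direct sum of $\Hk$-submodules, so non-commutativity of $\Hk$ causes no problem. In the application, $f$ and $g$ lie in different orbit summands $\S'_\Omega$; these are $G(\A')$-stable, hence $\Hk$-stable on $K$-invariants, so the hypothesis holds automatically. Iterating the lemma lets one test Hecke-finiteness summand by summand.
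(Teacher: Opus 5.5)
Your argument is correct and is essentially the paper's proof: the paper embeds $\Hk\cdot(f+g)$ into $\Hk\cdot f\oplus\Hk\cdot g$ via $h(f+g)\mapsto(hf,hg)$ and uses the resulting surjection $\Hk\cdot(f+g)\to\Hk\cdot f$, which is exactly your $\Hk$-linear projection $\pi_1$ restricted to $\Hk\cdot(f+g)$. Your observation that the hypothesis $\Hk\cdot f\cap\Hk\cdot g=0$ is needed only for the converse, where it makes this map well defined, is a small clarification of the same argument.
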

\begin{proof} Since $\Hk \cdot f \cap \Hk\cdot g =0$, we have an injection $\Hk \cdot (f+g) \hookrightarrow \Hk \cdot f \oplus \Hk \cdot g : h(f+g) \mapsto (hf,hg)$ of $\Hk$-modules. If $f$ and $g$ are Hecke-finite, it is clear that $f+g$ is also Hecke-finite. Now assume $f+g$ is Hecke-finite. Consider the surjective map $p_f: \Hk \cdot (f+g) \to \Hk \cdot f$ of $\Hk$-modules, then $\Hk \cdot f$ is finite-dimensional, and similarly we also have $\Hk \cdot g$ is finite-dimensional.
\end{proof}

\begin{proof}[Proof of Theorem \ref{main theorem} (1) and (2)]
By the Lemma, to prove that the space $\S'_{HF}(\bun_G(C))$ is finite-dimensional, it suffices to have a decomposition of $H_{G(\O')}$-modules 
$$\S'(\bun_G(C))=\S'_{HF}(\bun_G(C)) \bigoplus \S'_{HI}(\bun_G(C)),$$
where $\S'_{HI}(\bun_G(C))$ is a direct sum of Hecke-infinite components, and that $\S'_{HF}(\bun_G(C))$ is finite-dimensional. Indeed, we obtain this by first taking the $G(\O')$-invariants of the orbit decomposition
$$\S'(\bun_G(C)) =\bigoplus_{\Omega \text{}}\S'(\bun_G(C))_{\Omega}, $$
and for each orbit $\Omega$, we already have a decomposition of $\S'_\Omega^{G(\O')}$ into the direct sum of the Hecke-finite component (and all Hecke-finite pieces are also cuspidal) and the Hecke-infinite component and the finite-dimensionality of the Hecke-finite component:

\begin{itemize}
    \item For the zero orbit $\Omega$, this holds for $\S'(\bun_G(C))_{\Omega}=\S'(\bun_G(\overline{C}))$ by classical theory of automorphic functions for $\overline{C}$.

    \item For regular elliptic orbits $\Omega$, $\bigoplus_\Omega \S'(\bun_G(C)))_{\Omega}$ is finite-dimensional by Corollary \ref{regellipfd}. 

    \item For semisimple orbits that are not regular elliptic or mixed orbits $\Omega$, we have $\S'(\bun_G(C)))_{\Omega}$ contains only Hecke-infinite functions by Proposition \ref{mixed} and Corollary \ref{pgl3mixed}.

    \item For regular nilpotent orbit $\Omega$, we have $\S'(\bun_G(C)))_{\Omega}= \bigoplus_{\chi:U_h(\bA')/U_h(\bF) \to U(1)}\S'(\bun_G(C)))_{\Omega,\chi}$. By Propositions \ref{regnilpresults} and \ref{regnilfd}, we have the Hecke-finite (which is also cuspidal) part equals $\bigoplus_{\chi\neq1}\S'(\bun_G(C)))_{\Omega,\chi}$ and is finite-dimensional by Proposition \ref{regnilfd}.

    \item For subregular nilpotent orbit $\Omega_\eta$, following the notation in Section 6, we have a decomposition of $\Hk$-module $\S'(\bun_G(C)))_{\Omega}= \bigoplus_{\psi_Z}\S'(\bun_G(C)))_{\Omega, \psi_Z}$, where $\psi_Z$ are the central characters of $\St_\eta(\bA')$. When $\psi_Z=0$, we have a further decomposition $\S'(\bun_G(C)))_{\Omega, \psi_Z}= \bigoplus_\chi \S'(\bun_G(C)))_{\Omega, \psi_Z, \chi}$  By Propositions 6.2, 6.3, 6.10 and Corollary 6.12, the Hecke-finite component equals the direct sum of $\S'(\bun_G(C)))_{\Omega, \psi_Z, \chi}$, where $\psi_Z=0$ and both $\chi_x$ and $\chi_y$ are nontrivial, and the Hecke-finite component is finite-dimensional by Proposition 6.3.
\end{itemize}
This concludes the proof for Theorem \ref{main theorem}(1). In fact, Theorem \ref{main theorem}(2) also follows because all Hecke-finite pieces are indeed cuspidal as noted above.

\end{proof}

.

\subsection{Total support of Hecke-finite functions }

Now we compute the support $S_h$ of the spherical Hecke-finite functions directly and prove Theorem \ref{main theorem}(3). Recall that $r: \bun_G(C) \to \bun_G(\overline{C})$ is the reduction map from $G$-bundles on $C$ to $G$-bundles on $\overline{C}$.

\begin{thm}(Theorem \ref{main theorem}(3))
    Let $G=\PGL_3$. Then we have $S_h \subset r^{-1}(\mathfrak{U}_3)$. 
\end{thm}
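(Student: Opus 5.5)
The plan is to reduce the statement to a separate support computation for each orbit, using the orbit decomposition of the spherical Hecke-finite space obtained in the proof of Theorem \ref{main theorem}(1). That proof shows
$$\S'_{HF}(\bun_G(C))=\bigoplus_\Omega \S'_{HF}(\bun_G(C))_\Omega .$$
The summand for $\Omega$ is nonzero only in the following cases: $\Omega$ is the zero orbit; $\Omega$ is regular elliptic; $\Omega$ is regular nilpotent with $\chi\neq 1$; or $\Omega$ is subregular with $\psi_Z=1$ and $\chi_x,\chi_y\neq 1$. The support of a sum is contained in the union of the supports of its summands. Moreover, $r^{-1}(\mathfrak{U}_3)$ is a union of fibers of $r$. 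Hence it suffices to show $r(S_{h,\Omega})\subset \mathfrak{U}_3$ for each of these four types of $\Omega$, i.e. $b_\Omega\le 3$ in every case.

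\textbf{Zero orbit and regular elliptic orbits.} For the zero orbit, $\S'_\Omega^{G(\O')}=\S'(\bun_G(\overline{C}))$ pulled back along $r$. Its Hecke-finite functions are the classical cusp forms. The classical bound $\langle\lambda,\alpha_i\rangle\le 2g-2$ for both simple roots gives $\langle\lambda,\alpha_h\rangle\le 2(2g-2)$, so these supports lie in $r^{-1}(\mathfrak{U}_2)$.

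For regular elliptic $\Omega$, I will use Proposition \ref{geom_sphvec}: the support of $\kappa_\eta f$ is $q^{-1}(pp'(\mathrm{supp} f))$. The points in the image of $pp'$ are underlying bundles of $\N^{-1}\omega$-twisted Higgs bundles whose Higgs field is generically regular elliptic. Such a Higgs field preserves no subbundle of a lift to $\GL_3$. The standard argument then applies: the Higgs field maps the HN piece of slope $\mu_i$ nontrivially into the quotient of slope $\mu_{i+1}$ twisted by $\omega\N^{-1}$, which has degree $2g-2$. This yields $\mu_i-\mu_{i+1}\le 2g-2$, and again the support lies in $\mathfrak{U}_2$.

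\textbf{Regular nilpotent orbit.} I will revisit the proof of Proposition \ref{regnilfd}. There a spherical vector with $\chi\neq 1$ is supported on $g=ut$ with $t=\prod\alpha_i^\vee(t_i^{-1})$, subject to two regularity conditions: the $t_i\alpha$ must be regular, and $(t_1t_2)^{-1}\beta$ must be regular. By Proposition \ref{geom_sphvec}, the reduced bundle is the $G$-bundle of $\overline{sg}$ with $s\in\St_\eta(\bA')\subset B(\bA')$. It therefore carries a $B$-reduction whose $T$-bundle has degrees $\deg t_i$. These degrees are bounded by the two conditions. Since the canonical (HN) reduction dominates every $B$-reduction in the appropriate sense, this controls $\langle\lambda,\alpha_h\rangle$ and gives the bound $(c-1)(2g-2)=2(2g-2)$.

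\textbf{Subregular orbit (the main case).} I will use the representative $\eta=\alpha(E_{13})$, whose stabilizer lies in $B$. The proof of Proposition \ref{subgreg_fd} shows that the relevant $g$ are $\diag(1,1,t_1^{-1})$ times elements of $B$, subject to conditions (6.4)–(6.6) together with the existence of $a$ such that $a^{-1}a_x$ and $a t_1 a_y$ are integral. The reduced bundle is then an iterated extension of line bundles whose degrees are determined by $a$ and $t_1$. From (6.4)–(6.6), $\deg t_1$ lies in an interval of length governed by $\deg(a_xa_y)\le 2(2g-2)$ and $\deg \alpha$. The free parameter $a$ is constrained by the conductors, and this shifts the position of the $\diag(a,1,1)$ factor. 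Combining these, the differences of the torus degrees along the highest root should be at most $3(2g-2)=(2n-3)(2g-2)$.

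\textbf{Expected obstacle.} The delicate step is converting bounds on the degrees of this particular $B$-reduction into a bound on the HN type $\lambda$, since the reduction need not be the HN one. I will first try to bound the maximal degree of a line subbundle and the minimal degree of a line quotient of the rank-3 lift. This works because $\langle\lambda,\alpha_h\rangle=\mu_1-\mu_3$ is controlled by (maximal slope of a subbundle) minus (minimal slope of a quotient). Both of these can be estimated from any filtration with known graded degrees, by using the constraint that the Higgs field maps a destabilizing subbundle nontrivially. Putting the four cases together gives $S_h\subset r^{-1}(\mathfrak{U}_3)$.
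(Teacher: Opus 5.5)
Your proposal is correct and follows the paper's route. You reduce $S_h$ to the Hecke-finite pieces of the zero, regular elliptic, regular nilpotent ($\chi\neq 1$) and subregular ($\chi_x,\chi_y\neq 1$) summands, and read off a $B$-reduction of $r(E)$ with bounded torus degrees from the proofs of Propositions \ref{regnilfd} and \ref{subgreg_fd}, as in Lemma \ref{supportnilp} and Proposition \ref{pgl3 iterative extension}. You then convert this to an HN bound via $\mu_{\max}(\E)\le\max_i\deg L_i$ and $\mu_{\min}(\E)\ge\min_i\deg L_i$ for any filtration with line-bundle quotients, which is exactly Lemma \ref{ext gap implies HN gap}.

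One correction: drop the appeal to the Higgs field in that conversion step. The filtration estimate needs no Higgs input. Moreover, in the nilpotent summands the Higgs field preserves a flag (its kernel filtration), so the constraint "it maps a destabilizing subbundle nontrivially" is not available there.
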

\begin{proof}
    This follows by Lemma \ref{ext gap implies HN gap} and Proposition \ref{pgl3 iterative extension}.
\end{proof}

\begin{lem}\label{ext gap implies HN gap}
    Let $G=\GL_n$ and $\E$ be a rank $n$ vector bundles on $\overline{C}$ and $\E$ admits a filtration $0=\E_0 \subset \E_1 \subset \E_2 \subset ... \subset \E_m=\E$ such that each $\E_{i}/\E_{i-1}$ is a semistable bundle of slope $d_i$. If $\E \in \bun_G^\lambda(\overline{C})$ for some dominant coweight $\lambda = (\lambda_1,...,\lambda_n)$, then $\lambda_1-\lambda_n \leq \max \lbrace d_i \rbrace - \min \lbrace d_i \rbrace $.
\end{lem}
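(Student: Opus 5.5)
We read $\lambda=(\lambda_1\ge\dots\ge\lambda_n)$ as the slope vector of the Harder--Narasimhan filtration of $\E$: $\lambda_1$ is the maximal slope $\mu_{\max}(\E)$ (the slope of the maximal destabilizing subbundle) and $\lambda_n$ is the minimal slope $\mu_{\min}(\E)$ (the slope of the last HN quotient). The plan is to bound $\mu_{\max}(\E)\le \max_i d_i$ and $\mu_{\min}(\E)\ge \min_i d_i$ separately; subtracting gives $\lambda_1-\lambda_n\le \max\{d_i\}-\min\{d_i\}$.

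For the upper bound, let $\mathcal F\subset\E$ be the maximal destabilizing subbundle, which is semistable of slope $\lambda_1$. Choose the smallest $j$ with $\mathcal F\subset \E_j$. Then the composite $\mathcal F\to \E_j\to \E_j/\E_{j-1}$ is nonzero, since otherwise $\mathcal F\subset\E_{j-1}$. The standard fact is that a nonzero map from a semistable bundle $A$ to a semistable bundle $B$ forces $\mu(A)\le\mu(B)$. To see this, let $I$ be the image, viewed as a subsheaf of $B$. Semistability of $A$ applied to the quotient $A\twoheadrightarrow I$ gives $\mu(A)\le\mu(I)$. Semistability of $B$ applied to the saturation of $I$ in $B$ gives $\mu(I)\le\mu(B)$. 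Hence $\lambda_1\le d_j\le\max_i d_i$.

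For the lower bound we argue dually. Let $\E\twoheadrightarrow\mathcal Q$ be the last HN quotient, which is semistable of slope $\lambda_n$. Choose the smallest $j$ such that $\E_j\to\mathcal Q$ is nonzero. Then $\E_{j-1}$ maps to zero, so the map factors through a nonzero map $\E_j/\E_{j-1}\to\mathcal Q$ between semistable bundles. By the same fact, $d_j\le\lambda_n$, so $\lambda_n\ge\min_i d_i$. One could instead apply the upper bound to $\E^\vee$ with the dual filtration, whose slopes are $-d_i$.

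The only point needing care is convention rather than mathematics: we must check that the paper's coweight $\lambda$ for $\bun_G^\lambda(\overline{C})$ with $G=\GL_n$ is the HN slope vector with each slope repeated rank-many times. Granting that, $\lambda_1=\mu_{\max}$ and $\lambda_n=\mu_{\min}$, and the argument above applies. No step is a genuine obstacle; the key input is the Hom-vanishing between semistable bundles in the direction of decreasing slope.
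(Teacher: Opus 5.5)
Your proof is correct, but it takes a different route from the paper.

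The paper first proves an auxiliary inequality for extensions: for any short exact sequence $0\to A'\to A\to A''\to 0$,
\[
\mu_{max}(A)\le\max\{\mu_{max}(A'),\mu_{max}(A'')\},
\]
and dually for $\mu_{min}$. It does this by cutting an arbitrary subbundle $B\subset A$ into $B\cap A'$ and the image of $B$ in $A''$, so that $\mu(B)$ becomes a rank-weighted average of the slopes of the two pieces. It then applies this inequality step by step along the filtration.

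You instead look only at the maximal destabilizing subbundle and the last Harder--Narasimhan quotient. You find the first filtration step that each of them meets nontrivially, and you conclude with the Hom-vanishing between semistable bundles in the direction of decreasing slope.

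Your argument is shorter and avoids the induction. Its cost is that it uses the semistability of the first and last Harder--Narasimhan pieces. The paper's averaging argument uses only the definition of $\mu_{max}$ as a supremum of slopes of subbundles, and it gives a general extension inequality that does not need the graded pieces to be semistable. Your method extends to that setting too: replace $d_j$ by $\mu_{max}(\mathcal{E}_j/\mathcal{E}_{j-1})$, since a nonzero map from a semistable bundle to any bundle $B$ forces its slope to be at most $\mu_{max}(B)$.

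Your identification of $\lambda_1,\lambda_n$ with $\mu_{max}(\mathcal{E})$ and $\mu_{min}(\mathcal{E})$ is the same convention the paper uses implicitly in its last step.
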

\begin{proof}
    First, for a vector bundle $V$ we define $\mu_{max}(V)$ (resp., $\mu_{min}(V))$ to be the maximal (resp., minimal) slope in the Harder-Narasimhan slope sequence of $V$. We claim that if $0 \to A' \to A \to A'' \to 0$ is a short exact sequence of vector bundles on $\overline{C}$, then we have \begin{enumerate}
        \item $\mu_{max}(A) \leq \max \lbrace \mu_{max}(A'), \mu_{max}(A'')\rbrace$
        \item $\mu_{min}(A) \geq \min \lbrace \mu_{min}(A'), \mu_{min}(A'')\rbrace$
    \end{enumerate}
    
    \begin{proof}[proof of the claim]
        It is enough to prove that $\mu(B) \leq \max \lbrace \mu_{max}(A'), \mu_{max}(A'')\rbrace $ for all subbundle $B$ of $A$. Let $B'=A'\cap B $ and $B''=\text{Im}(B \to A'')$. Then we have a short exact sequence
        $$ 0 \to B' \to B \to B'' \to 0.$$
        
        First, suppose $ B'$ and $B''$ are nonzero. Note that $\mu_{max}(A') \geq \mu(B')$ and $\mu_{max}(A'') \geq \mu(B'')$, and hence we have

        \begin{align*}
            \mu(B) = \frac{(\deg B' + \deg B'')}{ \rk B} 
        &\leq \frac{\mu_{max}(A') \rk B' + \mu_{max}(A'')\rk B'' }{ \rk B}
        \\& \leq  \frac{\max \lbrace\mu_{max}(A'),\mu_{max}(A'')\rbrace (\rk B' + \rk B'') }{\rk B}
        \\& = \max \lbrace\mu_{max}(A'),\mu_{max}(A'')\rbrace 
        \end{align*}
        If either $B'$ or $B''$ is zero, it is clear that we still have the desired inequality. This finishes the proof of (1). The proof of (2) is similar (or by applying (1) to the dual bundles of $A$, $A'$ and $A''$).
    \end{proof}
    Applying the claim successively to the filtration $0=\E_0 \subset \E_1 \subset \E_2 \subset ... \subset \E_m=\E$, we get 
    \begin{itemize}
        \item $\mu_{max}(\E) \leq \max \lbrace \mu (\E_1) , \mu_{max}(\E/\E_0)    \rbrace \leq ... \leq  \max \lbrace \mu (\E_1) ,..., \mu (\E_m/\E_{m-1})   \rbrace$
        \item $\mu_{min}(\E) \geq \min \lbrace \mu (\E_1) , \mu_{min}(\E/\E_0)    \rbrace \geq ... \geq  \min \lbrace \mu (\E_1) ,..., \mu(\E_m/\E_{m-1})   \rbrace$
    \end{itemize}
    Finally these inequalities imply
    \begin{align*}
     \mu_{max}(\E)-\mu_{min}(\E)   \leq  \max \lbrace d_1 ,..., d_m   \rbrace-  \min \lbrace d_1 ,..., d_m   \rbrace        
    \end{align*}
        This finishes the proof of the lemma.

\end{proof}

\begin{prop}\label{pgl3 iterative extension}
  Let $G=\PGL_3$ and $E \in S_h$, then $r(E) \in \bun_G(\overline{C})$ can be obtained by iterative extensions of line bundles $L_i$ for $i=1,2,3$, with $|\deg L_i -\deg L_{j}|\leq 6g-6$ for $1\leq i <j \leq 3$.
\end{prop}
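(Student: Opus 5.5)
We reduce to the orbit decomposition established in the proof of Theorem \ref{main theorem}(1)--(2): every spherical Hecke-finite function is a finite sum of Hecke-finite components lying in the zero orbit, the regular elliptic orbits, the regular nilpotent summands $\S'_{\Omega,\chi}$ with $\chi\neq 1$, and the subregular summands with $\psi_Z=1$ and $\chi_x,\chi_y$ both nontrivial. By Proposition \ref{geom_sphvec}, the support of $\kappa_\eta f$ equals $q^{-1}(pp'(\mathrm{supp} f))$. It therefore suffices to show, orbit by orbit, that every $G$-bundle $\overline{sg}$ on $\overline{C}$ arising from a nonzero summand $W^{gKg^{-1}\cap \St_\eta}$ in Proposition \ref{sphvec} is an iterated extension of line bundles whose degrees differ pairwise by at most $6g-6$. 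For $\PGL_3$ we work with a lift to $\GL_3$, so the statement is about differences of degrees.

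\emph{Zero and regular elliptic orbits.} For the zero orbit we use the classical bound for cuspidal functions on $\bun_{\PGL_3}(\overline{C})$. The Harder--Narasimhan filtration has consecutive slope gaps at most $2g-2$, so the total gap is at most $4g-4$. For the regular elliptic orbits the Higgs bundle is semistable (Corollary \ref{regellipfd}). A direct argument gives the gap: the maximal destabilizing subbundle maps nontrivially under $\phi$ into $\E/\E_1\otimes \N^{-1}\omega$ at each step, so consecutive slopes differ by at most $\deg\omega=2g-2$. We then take the HN pieces refined into line bundles via iterated extensions, using Lemma \ref{ext gap implies HN gap}-type reasoning in reverse, or argue directly with the HN filtration.

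\emph{Regular nilpotent, $\chi\neq1$.} In the proof of Proposition \ref{regnilfd} we take $g=ut$ with $u\in U(\bA')$ and $t=\prod_i\alpha_i^\vee(t_i^{-1})$. The bundle $\overline{sg}$ then lies in $B$, hence is an iterated extension of the line bundles determined by $t$ (the stabilizer is in $U$ up to center, so $s$ does not change the $T$-part). The constraints are the regularity of $t_i\alpha$, meaning $\deg$ of $t_i$-twists bounded by $\deg\N^{-1}\omega=2g-2$, and the regularity of $t_1^{-1}t_2^{-1}\beta$ with $\beta\in\omega(\bF)$, since $a_i=1$ for $\PGL_3$. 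For $\PGL_3$ the line bundles $L_1,L_2,L_3$ satisfy $\deg L_1-\deg L_2\le 2g-2$ and $\deg L_2-\deg L_3\le 2g-2$ from the $t_i\alpha$ conditions. The highest root condition gives $\deg L_3-\deg L_1\le 2g-2$. Combining these bounds, every pairwise difference is at most $4g-4$ in absolute value: the lower bounds come from the other inequalities, for instance $\deg L_1-\deg L_3\le 4g-4$ and $\deg L_1-\deg L_3\ge -(2g-2)$.

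\emph{Subregular, $\psi_Z=1$, $\chi_x,\chi_y\neq 1$.} This is the main case. We use the representative $\eta=\alpha(E_{13})$, whose stabilizer lies in $B$, and the computation in Proposition \ref{subgreg_fd} with $n=3$: $g=\diag(1,1,t_1^{-1})$, the twist by $a\in\bA'^*$ coming from the $A$-part of $s$, and conditions (6.4)--(6.6). The relevant torus element is $\diag(a,1,t_1^{-1})$ up to the $A$-action. The conditions are:
\begin{itemize}
\item $a^{-1}a_x\in\omega(\bO)$,
\item $at_1a_y\in\omega(\bO)$,
\item $t_1^{-1}\alpha\in\N^{-1}\omega(\bO)$,
\item the integrality of $\Ad(g^{-1})\eta$.
\end{itemize}
Each gives a one-sided degree inequality of size $2g-2$, using $\deg\N=0$ and the fact that sections of degree-$(2g-2)$ bundles force the degree bounds. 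Chaining them yields three inequalities whose sums bound every pairwise difference of $\deg L_i$ by $3(2g-2)=6g-6$. The expected obstacle is bookkeeping: tracking which adelic element (including $s\in\St_\eta$, whose $x,y,z$ entries are unipotent and hence do not affect the graded pieces) determines each $L_i$, and verifying that the worst chain indeed has length three. This matches the sharpness $2n-3=3$ claimed for the subregular orbit.
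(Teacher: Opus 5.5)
Your reduction to the four families of orbits and your handling of the two nilpotent families follow the paper's route: Lemma~\ref{supportnilp}, fed by the constraints of Propositions~\ref{regnilfd} and~\ref{subgreg_fd}. The regular nilpotent bound of $4g-4$ and the subregular bound are correct.

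The gap is in the zero and regular elliptic orbits. There you only prove a Harder--Narasimhan statement: consecutive slope gaps are at most $2g-2$. The step ``refine the HN pieces into line bundles using Lemma~\ref{ext gap implies HN gap} in reverse'' is not valid. That lemma goes from a filtration to an HN bound, and its converse is false as stated. A semistable HN piece of rank $2$ or $3$ need not be an iterated extension of line bundles of nearly equal degree, and splitting it costs extra degree spread of order $g$.

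The missing input is the Riemann--Roch argument of Lemma~\ref{semistable bound}(1): any bundle of slope $\mu$ has a line subbundle of degree $\ge\lceil\mu\rceil-g$. Combine it with semistability for the upper bounds:
\begin{itemize}
\item HN types $(1,2)$ and $(2,1)$ with gap $\le 2g-2$ refine to line bundles with degree spread at most $3g-2$.
\item A semistable rank-$3$ bundle refines to line bundles with degrees in $[\mu-g,\mu+\tfrac32 g]$.
\end{itemize}
Both spreads are $\le 6g-6$ provided $g\ge 2$; you need to state that assumption.

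The paper does not do this refinement either. For these orbits it only records the HN bound from Corollary~\ref{type A cusp support}, which places $r(E)$ in $\mathfrak{U}_2\subset\mathfrak{U}_3$. That is all Theorem~\ref{main theorem}(3) needs. So if that theorem is your goal, you can bypass the refinement. For the proposition as literally stated, you must supply it.

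The bookkeeping obstacle you anticipate in the subregular case is harmless for an upper bound. Write $e(x)=\sum_p v_p(x)\deg p$ and $c=2g-2$. Your three constraints become $e(t_1)\le c$, $e(a)\le c$ and $e(a)+e(t_1)\ge -c$. They confine $(e(a),e(t_1))$ to the triangle with vertices $(c,c)$, $(c,-2c)$, $(-2c,c)$. On this triangle every quantity $\pm e(a)$, $\pm e(t_1)$, $\pm e(a)\pm e(t_1)$ is at most $3c$. So $6g-6$ holds whatever the position or sign of $a$ in the torus element.

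Do not expect to verify that a chain of length three is attained, though. The stabilizer of $\alpha E_{13}$ and the conductor conditions $a^{-1}a_x,\ at_1a_y$ integral put $a$ in the $(2,2)$ slot, giving torus element $\operatorname{diag}(1,a,t_1^{-1})$. The slot you wrote, $\operatorname{diag}(a,1,t_1^{-1})$, gives the same set of pairwise differences. Those differences are $e(a)$, $e(t_1)$ and $e(a)+e(t_1)$, and all are bounded by $2c=4g-4$.
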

\begin{proof}

Since the space $\S'_{HF}(\bun_G(C))$ of Hecke-finite functions is the direct sum of the regular elliptic pieces, the cuspidal part of the zero piece, the Hecke-finite part of the regular nilpotent piece (cf. Propositons 5.10(1) and 5.13) and the Hecke-finite part of the subregular nilpotent piece (cf. Proposition 6.3), it suffices to prove that the support of functions in each of these orbits satisfy the proposition. This follows from Corollary \ref{type A cusp support} and  Lemma \ref{supportnilp} below, which are valid for more general $G$. In fact, for $G=\PGL_n$, we show that something stronger holds for the zero and almost regular orbits $\Omega$, namely, if $\E$ is in the support of a function in $\S'_\Omega$ and $r(\E) \in \bun_G^\lambda(\overline{C})$ then $\langle \lambda, \alpha \rangle \leq (n-1)(2g-2)$ for all simple root $\alpha$ of $G$ (see Lemma \ref{supportregellip} and Corollary \ref{type A cusp support}).

\end{proof}

\subsubsection{Drinfeld-Gaitsgory Strangeness and $H^1$-vanishing}\label{drinfeld-gaitsgory} For computing the cuspidal support of the zero and regular elliptic orbits, we need the notion of strangeness introduced by Drinfeld and Gaitsgory in \cite{DG2}, which is already important for bounding Harder-Narasimhan factors for cuspidal functions in the classical setting for curves over finite fields. In this subsection, we assume that $C$ is a smooth projective curve over a field $k$, $G$ is a connected split reductive group over $k$ and $Z_0(G)$ is the identity component of the center of $G$. In this subsection, we do not have any assumption on the characteristic of $k$, unless specified otherwise.

\begin{defn/prop}(\cite[Lemma 10.3.2 and Definition 10.3.4]{DG2})
    Let $V$ be a $G$-representation over $k$ where $Z_0(G)$ acts via a character $\chi_V$, the \textit{strangeness} $\str(G,V))$ is the smallest rational number $c$ such that for any semistable bundle $\mathcal{E} \in \bun_G(C)$, all line subbundles of the associated bundle $V_{\mathcal{E}}$ have degrees less than or equal to $\langle \deg(\mathcal{E}) , \chi_V \rangle + c$. 
\end{defn/prop}

\begin{rem} \label{strangeness remark}

\begin{enumerate}
\item  When $G=\GL_n$ and $V$ is the standard representation, the number $\langle \deg(\mathcal{E}),\chi \rangle = \mu(V_{\mathcal{E}})$, then it follows by the definition of semistability of $\mathcal{E}$ that $\str(G,V)=0$. 
    \item If the ground field $k$ is of characteristic $0$, then $\str(G,V)=0$. (\cite[10.4.3]{DG2}, \cite{RR})
    \item If every simple factor of $G$ is of type $A$, then $\str(G,V)=0$ for any ground field $k$ (\cite[Corollary 10.5.2]{DG2}). In this case, Proposition \ref{vanishingH1} can be proved using Harder-Narasimhan filtration of vector bundles.
    \item There are nonzero strangeness known in the literature, but they seem to be all in small characteristics. It would be interesting to see whether the condition that the characteristic of the ground field is greater than Coxeter number of the group would guarantee zero strangeness. 
    
\end{enumerate}
    
\end{rem}

\begin{defns}
Let $M$ be a Levi subgroup of $G$, and let $\alpha$ be a root of $G$ which is not a root of $M$. Following \cite{DG2}, we consider the $M$-module
$$
V_{M,\alpha}:=\bigoplus_{\gamma,\ \gamma-\alpha\in R(M)}\mathfrak{g}_\gamma,
$$
where $R(M)$ is the root lattice of $M$.

For each simple root $\beta$ of $G$, we define $c_\beta$ to be the smallest rational number $c$ satisfying
$$
c\geq
\frac{2g-2+\str(M,V_{M,\alpha}^*)}
{\operatorname{coeff}_\beta(\alpha)}
$$
for every Levi subgroup $M$ such that $\beta$ is not a root of $M$, and every root $\alpha$ such that $\operatorname{coeff}_\beta(\alpha)>0$. Here $\operatorname{coeff}_\beta(\alpha)$ denotes the coefficient of $\beta$ in $\alpha$.
\end{defns}

\begin{exmp}
    When the simple factors of $G$ are all of type A, by Remark we have \ref{strangeness remark}, $c_\beta=2g-2$ for all simple roots $\beta$.
\end{exmp}

\begin{prop}\label{vanishingH1}(\cite[Proposition 10.4.5]{DG2})
Let $M$ be a Levi quotient of a parabolic subgroup $P$ of $G$. If $\E \in \bun_M^\lambda(C)$, where $\lambda \in \Lambda_G^{+,\mathbb{Q}}$, and $ \langle \lambda,\alpha \rangle > c_\alpha$ for every $\alpha \in \Delta_G-\Delta_M$, and let $L\in \Pic^0(C)$, then $H^1(C,\text{Lie}(N)_{\E} \otimes L)=0$ where $\text{Lie}(N)_{\E}$ is the associated vector bundle.
\end{prop}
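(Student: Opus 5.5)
This is cited from [DG2, Proposition 10.4.5], so the plan below reproduces the standard argument rather than a new one.

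\textbf{Reduction to a weight decomposition.} Let $Z_0(M)$ be the identity component of the center of $M$. Decompose $\mathrm{Lie}(N)$ under the adjoint action of $M$ as
$$\mathrm{Lie}(N)=\bigoplus_{\alpha}V_{M,\alpha},$$
where $\alpha$ runs over representatives of the classes of roots in $\mathrm{Lie}(N)$ modulo the root lattice $R(M)$. Each $V_{M,\alpha}$ is an $M$-module on which $Z_0(M)$ acts by a character $\chi_\alpha$, namely the restriction of $\alpha$. After twisting by $\E$, this decomposition splits the associated bundle, so it suffices to show
$$H^1\bigl(C,(V_{M,\alpha})_{\E}\otimes L\bigr)=0 \quad\text{for each }\alpha.$$
The argument only uses that $\E$ is semistable as an $M$-bundle, which is how I read the hypothesis $\E\in\bun_M^\lambda(C)$ with $\lambda$ recording $\deg\E$.

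\textbf{Serre duality and strangeness.} By Serre duality,
$$H^1\bigl(C,(V_{M,\alpha})_\E\otimes L\bigr)^\vee=\Hom\bigl(L\otimes\omega^{-1},(V_{M,\alpha}^*)_\E\bigr).$$
Suppose this has a nonzero element $s$. The saturation of the image of $s$ is a line subbundle of $(V^*_{M,\alpha})_\E$ of degree at least $\deg(L\otimes\omega^{-1})=-(2g-2)$, using that $\deg L=0$. Now $Z_0(M)$ acts on $V^*_{M,\alpha}$ by $-\chi_\alpha$. Since $\E$ is semistable, the definition of strangeness bounds this line subbundle's degree by
$$\langle\deg\E,-\alpha\rangle+\str(M,V^*_{M,\alpha}).$$
Combining the two bounds gives
$$\langle\lambda,\alpha\rangle\le 2g-2+\str(M,V^*_{M,\alpha}).$$

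\textbf{Contradiction with the hypothesis (the main point).} Because $\alpha$ is a root of $\mathrm{Lie}(N)$, we can write $\alpha=\sum_\beta\operatorname{coeff}_\beta(\alpha)\,\beta$. The pairing of $\lambda$ with roots of $M$ vanishes modulo the center, so only the simple roots $\beta\in\Delta_G-\Delta_M$ contribute:
$$\langle\lambda,\alpha\rangle=\sum_{\beta\in\Delta_G-\Delta_M}\operatorname{coeff}_\beta(\alpha)\,\langle\lambda,\beta\rangle.$$
All these coefficients are nonnegative and at least one of them, say for $\beta_0$, is positive. Dropping the other (nonnegative) terms and using $\langle\lambda,\beta_0\rangle>c_{\beta_0}$ together with the definition of $c_{\beta_0}$, we get
$$\langle\lambda,\alpha\rangle\ \ge\ \operatorname{coeff}_{\beta_0}(\alpha)\,\langle\lambda,\beta_0\rangle\ >\ \operatorname{coeff}_{\beta_0}(\alpha)\,c_{\beta_0}\ \ge\ 2g-2+\str(M,V^*_{M,\alpha}).$$
This contradicts the inequality from the previous step, so $s=0$ and the $H^1$ vanishes.

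I expect the care to be in two places. First, the identification of the central pairing $\langle\deg\E,\chi_{V^*}\rangle$ with $-\langle\lambda,\alpha\rangle$ requires tracking sign conventions. Second, the other terms $\operatorname{coeff}_\beta(\alpha)\langle\lambda,\beta\rangle$ must be nonnegative, which holds because $\lambda$ is dominant.
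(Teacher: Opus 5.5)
\textbf{The gap: you only prove the semistable case.} Your argument is fine when $\E$ is semistable as an $M$-bundle. That is not what $\E\in\bun_M^\lambda(C)$ means here. $\bun_M^\lambda$ is the Harder--Narasimhan stratum of $\bun_M$ of type $\lambda$, and $\lambda$ is only assumed $G$-dominant. Whenever $\langle\lambda,\beta\rangle>0$ for some simple root $\beta$ of $M$, the bundle $\E$ is unstable as an $M$-bundle.

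This is the case the paper actually needs. In Lemma \ref{supportregellip} the proposition is applied to $E_M$ for a maximal parabolic $P_\beta$ containing the canonical parabolic of $E$, and such $E_M$ is typically unstable.

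For unstable $\E$ your central step fails. Strangeness bounds line subbundles of $(V^*_{M,\alpha})_\E$ by $\langle\deg\E,-\alpha\rangle+\str$ only when $\E$ is semistable; otherwise the excess is unbounded. For example, take $\GL_2\times\GL_1\subset\GL_3$ and $\E=(\mathcal{O}(a)\oplus\mathcal{O}(b),\mathcal{O}(c))$ with $a>b>c$. Then $\mathrm{Lie}(N)_\E^\vee$ contains $\mathcal{O}(c-b)$, which exceeds the central bound $c-\tfrac{a+b}{2}$ by $\tfrac{a-b}{2}$. Your identity $\langle\lambda,\alpha\rangle=\sum_{\beta\in\Delta_G-\Delta_M}\operatorname{coeff}_\beta(\alpha)\langle\lambda,\beta\rangle$ also fails once $\lambda$ pairs nontrivially with roots of $M$.

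\textbf{The missing step: pass to the canonical reduction first.} This is what the paper's sketch does.
\begin{enumerate}
\item Let $P_\lambda\subset M$ be the parabolic whose Levi $M_\lambda$ has as roots those roots $\beta$ of $M$ with $\langle\lambda,\beta\rangle=0$. Then $\E$ has a $P_\lambda$-reduction whose induced $M_\lambda$-bundle $\E_\lambda$ is semistable.
\item The $P_\lambda$-stable filtration of $\mathrm{Lie}(N)$ gives a filtration of $\mathrm{Lie}(N)_\E$ with associated graded $\mathrm{Lie}(N)_{\E_\lambda}=\bigoplus_\gamma (V_{M_\lambda,\gamma})_{\E_\lambda}$.
\item By right exactness of $H^1$ on a curve, it suffices to show $H^1\bigl(C,(V_{M_\lambda,\gamma})_{\E_\lambda}\otimes L\bigr)=0$ for each $\gamma$.
\item Your Serre duality and strangeness argument now applies verbatim with $M_\lambda$ and the semistable $\E_\lambda$ in place of $M$ and $\E$. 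It reduces the claim to $\langle\lambda,\gamma\rangle>2g-2+\str(M_\lambda,V^*_{M_\lambda,\gamma})$.
\end{enumerate}

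Your final estimate then goes through with two changes:
\begin{itemize}
\item The inequality $\langle\lambda,\gamma\rangle=\sum_{\beta\in\Delta_G}\operatorname{coeff}_\beta(\gamma)\langle\lambda,\beta\rangle\geq\operatorname{coeff}_{\beta_0}(\gamma)\langle\lambda,\beta_0\rangle$ must use $G$-dominance of $\lambda$ on all simple roots, including those of $M$. It is no longer an equality over $\Delta_G-\Delta_M$.
\item The bound $\operatorname{coeff}_{\beta_0}(\gamma)\,c_{\beta_0}\geq 2g-2+\str(M_\lambda,V^*_{M_\lambda,\gamma})$ relies on $c_{\beta_0}$ being defined by quantifying over all Levi subgroups not containing $\beta_0$, in particular $M_\lambda\subset M$. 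This is why $c_\beta$ is defined that way rather than using $M$ alone.
\end{itemize}
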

\begin{proof}(Sketch)
    When $L=\mathcal{O}_C$, this is exactly the Proposition 10.4.5 in \cite{DG2}. We sketch the proof here, and explain how it easily adapts to arbitrary line bundle $L$ of degree zero.
    Let $P_\lambda$ be the parabolic subgroup of $M$ corresponding the subset of roots $\lbrace \beta: \text{roots of $M$}| \langle \lambda, \beta \rangle=0 \rbrace$ and $M_\lambda$ the corresponding Levi. Since $\E \in  \bun_M^\lambda(C)$ . It admits a $P_\lambda$-reduction and we denote the corresponding semistable $M_\lambda$-bundle by $\E_\lambda$. Then $\text{Lie}(N)_{\E}$ has a canonical filtration whose associated graded is $\text{Lie}(N)_{\E_\lambda}$. As an $M_\lambda$-module, $\text{Lie}(N)$ decomposes into $V_{M_\lambda, \gamma}$'s.  Now, it suffices to show that
    $$H^1(C,{V_{M_\lambda, \gamma,}}_{\E_\lambda} 
    \otimes L)\simeq \Hom(L\omega_C^{-1},{V_{M_\lambda, \gamma,}}_{\E_\lambda} ^*
    )^*=0.$$
   By Remark 10.3.5 \cite{DG}, it is enough to show for any positive root $\gamma$ of $G$ which is not a root of $M$ we have
    $$ 2g-2 < \langle \lambda ,\gamma \rangle - \str(M_\lambda, V_{M_\lambda, \gamma} ^*)$$
Finally, the above inequality follows by our assumption on $\lambda$.

\end{proof}

\subsubsection{The zero and regular elliptic orbits} We resume the assumption that the characteristic of $\overline{F}$ is at least $c(G)-1$, where $c(G)$ is the Coxeter number.
\begin{lem}\label{supportregellip}
    Let $G$ be a connected split reductive group and set $\mathfrak{B}=\bigcup_\lambda \bun^\lambda_G$ where $\langle \lambda, \beta\rangle \leq c_\beta$ for every simple root $\beta$ of $G$. Then we have:
    \begin{enumerate}
        \item The total support of $\S'_{HF}(\bun_G(\overline{C}))=\S'_{cusp}(\bun_G(\overline{C}))$ is contained in $\mathfrak{B}$
        \item Let $\Omega$ be a regular elliptic orbit. The total support of $\S'_{\Omega}(\bun_G(C))$ is contained in $\mathfrak{B}$.
    \end{enumerate}
\end{lem}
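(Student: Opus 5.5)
The strategy is the classical argument via constant terms. The key input is the $H^1$-vanishing of Proposition \ref{vanishingH1}: if a $G$-bundle sits in a stratum with a large gap at some simple root, then every function in question must be recovered from its constant term along the corresponding maximal parabolic, and that constant term vanishes.

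For (1), let $\mathcal{E}\in \bun_G^\lambda(\overline{C})$ with $\langle\lambda,\beta\rangle>c_\beta$ for some simple root $\beta$. Let $P$ be the standard parabolic whose Levi $M$ has simple roots $\Delta_G-\{\beta\}$ together with all simple roots $\alpha$ with $\langle\lambda,\alpha\rangle\le c_\alpha$. More precisely, take $P$ to be the parabolic attached to the set of simple roots where the gap exceeds $c_\alpha$. The Harder–Narasimhan reduction then gives a $P$-reduction $\mathcal{E}_P$, and the induced $M$-bundle $\mathcal{E}_M$ satisfies the hypothesis of Proposition \ref{vanishingH1} (with $L=\mathcal{O}$). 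Hence $H^1(\overline{C},\mathrm{Lie}(U_P)_{\mathcal{E}_M})=0$.

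This vanishing implies that the fiber of $q:\bun_P\to\bun_G$ over $\mathcal{E}$ is a single point: the map $\bun_P\to\bun_M$ near $\mathcal{E}_M$ has fibers that are torsors under the contractible (affine-space) stack $H^0/H^1$ of $U_P$-twists, and the $P$-reduction is unique by Harder–Narasimhan uniqueness. Using the formula $CT_P=p_*q^*$ up to volume factors (the classical analogue of Proposition 2.6(1)), we get $f(\mathcal{E})=c\cdot CT_P(f)(\mathcal{E}_M)$ with $c\neq 0$. So $f(\mathcal{E})=0$ whenever $f$ is cuspidal. This is the standard argument of \cite[10]{DG2}, and I will essentially cite it.

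For (2), let $\Omega=\Omega_\eta$ with $\eta$ regular elliptic, and use Proposition \ref{geom_sphvec}. The support of $\kappa_\eta f$ is $q^{-1}(pp'(\mathrm{supp} f))$, so it suffices to show that every $G$-Higgs bundle $(P_0,\phi)\in\M_\eta^{\N^{-1}}(\overline{C})$ has $P_0\in\mathfrak{B}$. Suppose instead $P_0\in\bun_G^\lambda$ with $\langle\lambda,\beta\rangle>c_\beta$, and take $P$ and $\mathcal{E}_M$ as above. Consider the projection of $\phi$ to $(\mathfrak{g}/\mathfrak{p})_{\mathcal{E}_P}\otimes\omega\N^{-1}$. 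This bundle is filtered with graded pieces the duals of pieces of $\mathrm{Lie}(U_P)_{\mathcal{E}_M}$, twisted by $\omega\N^{-1}$. By Serre duality, its $H^0$ is dual to $H^1(\mathrm{Lie}(U_P)_{\mathcal{E}_M}\otimes\N)$, which vanishes by Proposition \ref{vanishingH1} with $L=\N\in\Pic^0$. Since the filtration has vanishing $H^0$ on each graded piece, the projection of $\phi$ is zero. Therefore $\phi\in H^0(\mathfrak{p}_{\mathcal{E}_P}\otimes\omega\N^{-1})$, so generically $\eta$ lies in a proper parabolic subalgebra. This contradicts the Lemma of Section \ref{regellip} characterizing almost regular elliptic (in particular regular elliptic) elements.

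The main obstacle is the bookkeeping in the choice of $P$, namely making sure Proposition \ref{vanishingH1} applies to $\mathcal{E}_M$ in the Levi attached to all roots with large gaps. The $H^0$-vanishing for the filtered bundle $\mathfrak{g}/\mathfrak{p}$ is the other delicate point, since one must pass from the graded pieces $V_{M_\lambda,\gamma}$ to the whole bundle; I will handle this by induction on the filtration as in the sketch of Proposition \ref{vanishingH1}.
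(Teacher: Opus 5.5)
Your proposal is correct and follows the paper's proof. In (1), the $H^1$-vanishing of Proposition \ref{vanishingH1} makes $CT_P f(\mathcal{E}_M)$ a nonzero multiple of $f(\mathcal{E})$. In (2), Serre duality plus the same vanishing with $L=\N$ forces the Higgs field into $\mathfrak{p}_{\mathcal{E}_P}\otimes\omega\N^{-1}$.

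The differences are cosmetic. The paper takes the maximal parabolic $P_\beta$, for which Proposition \ref{vanishingH1} needs only $\langle\lambda,\beta\rangle>c_\beta$, so your bookkeeping worry disappears. It also ends (2) by contradicting semistability of the Higgs bundle, which amounts to the same thing as your appeal to the parabolic characterization of almost regular elliptic elements.

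One correction. What $H^1=0$ gives, and what the formula $CT_P=p_*q^*$ needs, is that the fiber of $p\colon\bun_P\to\bun_M$ over $\mathcal{E}_M$ is a single point, i.e.\ the unique $P$-structure on $\mathcal{E}_M$. The fiber of $q$ over $\mathcal{E}$ is in general not a point: for $\GL_2$, the bundle $L_1\oplus L_2$ has both $L_1$ and $L_2$ as line subbundles. That fiber plays no role in the argument.
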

\begin{proof}
For curve $\overline{C}$ over a finite field, it is known that $\S'_{HF}(\bun_G(\overline{C}))=\S'_{cusp}(\bun_G(\overline{C}))$ (\cite[Lemme 8.15]{Laf}), and we show that the support of cuspidal functions is in $\mathfrak{B}$. Let $E \in \bun_G^\lambda(\overline{C}) $ for some dominant coweight $\lambda$ and denote by $P_E$ (resp. $M_E$ and $N_E$) the canonical parabolic of $E$ (resp. its Levi component and unipotent radical). Let $f$ be a cuspidal function, if there is a simple root $\beta$ such that $\langle \lambda, \beta \rangle > 2g-2+c_\beta$, then we can consider the maximal parabolic subgroup $P=P_\beta$ corresponding to the Dynkin diagram with $\beta$ removed, and denote by $M$ the Levi factor, and consider the constant-term $CT_{P}$. Let $E_{P}$ be a $P$-reduction of $E$ (this exists because $P$ contains the canonical parabolic of $E$) and $E_{M}$ the corresponding $M$-bundle, we must show that $E_{M}$ has a unique $P$-structure and hence $CT_{P}f(E_{M})= f(E)$ and hence vanishes by the cuspidality of $f$. By \cite[Remark B.3.3]{DG}, it suffices to show $H^1(\overline{C},\mathfrak{n}_{E_{M}})=0$, where $\mathfrak{n}$ is the Lie algebra of the unipotent radical of $P$ and $\mathfrak{n}_{E_{M}}$ is the associated vector bundle. Now, the vanishing follows by Proposition \ref{vanishingH1}. This finishes the proof for (1).
 
Now we prove (2). Let $f\in \S'_{\Omega_\eta}$ for some regular elliptic orbit $\Omega$ and $E\in \bun_G(C)$ such that $f(E)\neq0 $. Then $r(E)$ is the underyling $G$-bundle of a semistable Higgs $G$-bundle on $\overline{C}$ (see the proof of Corollary \ref{regellipfd}) , and we will show that $r(E) \in \mathfrak{B}$. If $r(E)$ is a semistable $G$-bundle then we are done. Now assume the canonical parabolic of $\overline{E}:=r(E)$ is is not $G$ and $\overline{E}\in \bun_G^\lambda(\overline{C})$, and there is a simple root $\beta$ such that $\langle \lambda, \beta\rangle >c_\beta$. Then by Serre's duality and Proposition \ref{vanishingH1},  we have 
$$H^0(\overline{C},\mathfrak{n}_{\overline{E}_M}^\vee \otimes \N \inv \omegac)) \simeq H^1(\overline{C}, \mathfrak{n}_{\overline{E}_M}\otimes \N)^*=0.$$
But we have $\mathfrak{n}_{\overline{E}_M}^\vee \simeq \mathfrak{n}_{\overline{E}_M}^-$, where $\mathfrak{n}^-$ is the cokernel of $P$-module map $\mathfrak{p} \to \mathfrak{g}$, so we have $H^0(\overline{C}, \mathfrak{n}^-_{\overline{E}_M} \otimes \N \inv \omegac)=0$. Since $\mathfrak{n}^-_{\overline{E}_M}$ can be identified with the direct sum of the graded pieces of the filtration on $\mathfrak{n}^-_{\overline{E}_P}$ (see the proof of \cite[Lemma 10.2.1]{DG}), we have $H^0(\overline{C}, \mathfrak{n}^-_{\overline{E}_P} \otimes \N \inv \omegac)=0$. Now, we consider the short exact sequence of vector bundles
$$ 0 \to \mathfrak{p}_{\overline{E}_P} \otimes \N \inv \omegac \to \mathfrak{g}_{\overline{E}_P} \otimes \N \inv \omegac \to \mathfrak{n}^-_{\overline{E}_P} \otimes \N \inv \omegac  \to 0,$$
and since $\mathfrak{g}_{\overline{E}_P}\simeq \mathfrak{g}_{\overline{E}}$, we get $$H^0(\overline{C},\mathfrak{g}_{\overline{E}}\otimes \N \inv \omegac)) \simeq H^0(\overline{C}, \mathfrak{p}_{\overline{E}_P} \otimes \N \inv \omegac).$$
But this means $\overline{E}$ cannot be the underlying $G$-bundle of a semistable $G$-Higgs bundle, and hence it cannot be in the support of a function in $\tS_\eta^{G(\O')}$.
\end{proof}

\begin{rem}
    In the proof of Lemma \ref{supportregellip}(2), the identification $\mathfrak{n}^\vee \simeq \mathfrak{n}^-$ of $M$-representations does not need to exist over ground field of arbitrary characteristic. In general, without such an identification we may need to choose different constants $c_\beta$ for the almost regular elliptic orbits. This reflects the fact that in \cite[Proposition 10.4.5]{DG}, $c_i'-c_i''$ may not be $2g-2$.
\end{rem}

The lemma and Remark \ref{strangeness remark} yields:
\begin{cor}\label{type A cusp support}
    Let $G$ be a product of adjoint groups of type $A$, and $\Omega$ be any regular elliptic orbit. Then the total support of $\S'_{HF}(\bun_G(\overline{C}))=\S'_{cusp}(\bun_G(\overline{C}))$ and $\S'_\Omega(\bun_G(C))$ are both contained in $\mathfrak{U}_{n-1}$, where $G$ is the semisimple rank of $G$.
\end{cor}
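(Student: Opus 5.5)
The plan is to specialize Lemma \ref{supportregellip} to type $A$ and then turn simple-root bounds into a highest-root bound. Write $G=\prod_j G_j$ with $G_j=\PGL_{n_j}$; every simple factor is of type $A$. By Remark \ref{strangeness remark}(3), $\str(M,V)=0$ for every Levi $M$ of $G$ and every $M$-module $V$, since every Levi of a type $A$ group again has only type $A$ simple factors. So in the definition of $c_\beta$ we only need to maximize $(2g-2)/\operatorname{coeff}_\beta(\alpha)$ over roots $\alpha$ with $\operatorname{coeff}_\beta(\alpha)>0$. In type $A$ all coefficients of positive roots are $0$ or $1$, so this maximum is $2g-2$, and $c_\beta=2g-2$ for every simple root $\beta$. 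This recovers the Example following the definition of $c_\beta$.

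Applying Lemma \ref{supportregellip}(1) and (2), the total supports of $\S'_{cusp}(\bun_G(\overline{C}))$ and of $\S'_\Omega(\bun_G(C))$ (the latter via its reduced image under $r$) lie in
$$\mathfrak{B}=\bigcup_{\lambda} \bun_G^\lambda(\overline{C}), \qquad \langle\lambda,\beta\rangle\le 2g-2 \text{ for all simple } \beta.$$
The remaining step is to show $\mathfrak{B}\subset \mathfrak{U}_{n-1}$.

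Pass to the image $\lambda'$ of $\lambda$ in $\Lambda^{ad}_{\mathbb{Q}}$. Pairings with roots are unchanged by this passage, because roots factor through the adjoint group. In a factor of type $A_{n_j-1}$, the highest root is $\alpha_h=\sum_{i=1}^{n_j-1}\beta_i$, where every coefficient equals $1$. Since $\lambda$ is dominant,
$$\langle \lambda',\alpha_h\rangle=\sum_i\langle\lambda,\beta_i\rangle\le (n_j-1)(2g-2)\le (n-1)(2g-2),$$
where $n$ is the semisimple rank of $G$. So $\lambda\in\Sigma_{n-1}$, which gives the containment in $\mathfrak{U}_{n-1}$. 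For the $C$-side statement, the inclusion is read through $r$, i.e.\ in $r^{-1}(\mathfrak{U}_{n-1})$.

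The only delicate point is the characteristic hypothesis needed in Lemma \ref{supportregellip}(2), namely the identification $\mathfrak{n}^\vee\simeq\mathfrak{n}^-$. For $\PGL_n$ this identification comes from the trace form, which is nondegenerate on $\mathfrak{sl}_n$ when $p\nmid n$. It is covered by the standing assumption $\text{char}>c(G)-1$. If this fails, I would instead argue directly with Harder--Narasimhan filtrations of vector bundles, as in Remark \ref{strangeness remark}(3).
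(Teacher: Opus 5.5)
Your route is the paper's: the corollary follows from Lemma~\ref{supportregellip} and Remark~\ref{strangeness remark}(3), which give $c_\beta=2g-2$ for every simple root, and then you sum the simple-root bounds along $\alpha_h=\sum_i\beta_i$. The step that fails is your last inequality $(n_j-1)(2g-2)\le(n-1)(2g-2)$ with $n$ the semisimple rank. Since $n=\sum_j(n_j-1)$, you only get $n_j-1\le n$. Already for $G=\PGL_3$ (semisimple rank $2$) your chain asserts $2(2g-2)\le 2g-2$, which is false for $g\ge 2$. This cannot be patched under your reading of $n$. For $G=\PGL_{n+1}$ simple, the dominant $\lambda$ with $\langle\lambda,\beta_i\rangle=2g-2$ for all simple roots lies in $\mathfrak{B}$ but has $\langle\lambda,\alpha_h\rangle=n(2g-2)$. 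So $\mathfrak{B}\not\subset\mathfrak{U}_{n-1}$ when $n$ is the semisimple rank.

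What your argument (and the paper's) proves is containment in $\mathfrak{U}_{c_G-1}$, where $c_G=\max_j n_j$. This is contained in $\mathfrak{U}_r$ for $r$ the semisimple rank. For $G=\PGL_n$ it is $\mathfrak{U}_{n-1}$, which is exactly the bound stated in the introduction. It is also the bound used for $\PGL_3$ in Theorem~\ref{main theorem}(3), namely $\mathfrak{U}_2\subset\mathfrak{U}_3$. So read the garbled phrase ``where $G$ is the semisimple rank'' with $n$ the size of $\PGL_n$, not the semisimple rank; with that correction your proof is complete.

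A minor side point: in type $A$, the identification $\mathfrak{n}^\vee\simeq\mathfrak{n}^-$ needs no restriction on $p$. The trace pairing between the off-diagonal blocks $\mathfrak{n}^-$ and $\mathfrak{n}$ is perfect and $P$-equivariant in every characteristic. Your appeal to nondegeneracy on $\mathfrak{sl}_n$ for $p\nmid n$ is therefore unnecessary. It is also not implied by $\mathrm{char}>c(G)-1$, which allows $p=n$.
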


\subsubsection{Regular and subregular nilpotent orbits}
\begin{lem} \label{supportnilp} We have:
\begin{enumerate}
    \item Let $G$ be a semisimple adjoint group with simple factors of types $A_n$, $B_n$, $C_n$ or $G_2$. Let $E \in \bun_G(C)$ be in the support of a Hecke-finite function in the regular nilpotent summand. Then $r(E)$ admits a $B$-structure whose induced $T$-bundle $r(E)_T$ satisfying that the associated line bundle $(\mathfrak{g}_\gamma)_{r(E)_T}$ has degree bounded by $-(2g-2)(c-1)$ and $(2g-2)(c-1)$ for each positive root $\gamma$, where $c$ is the Coxeter number of $G$.
    \item Let $G=\PGL_n$. Let $E \in \bun_G(C)$ be in the support of a Hecke-finite function in the subregular nilpotent summand. Then $r(E)$ can be obtained by iterative extensions of line bundles $M_i$ with $|\deg M_i -\deg M_{j}|\leq (2g-2)(2n-3)$ for $1 \leq i <j \leq n$.
    
\end{enumerate}
    
\end{lem}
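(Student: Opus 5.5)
We will use Proposition \ref{geom_sphvec} to translate the support of a Hecke-finite spherical function into a statement about $G(\bA')$-representatives. By that proposition, if $E$ lies in the support of $\kappa_\eta(\delta_{g,w})$, then $r(E)=pp'(P)$ is the $G$-bundle on $\overline{C}$ attached to $\overline{sg}\in G(\bA')$. Here $s$ runs over the support of $w\in \S'_{\St_\eta}$, and $g$ satisfies $\Ad(g^{-1})\cdot\eta\in\tilde\Xi_{G(\O')}$.

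In both the regular case (representative $\eta=\alpha\sum_{\beta\in\Delta}e_\beta$) and the subregular case (representative $\eta=\alpha(E_{13}+E_{34}+\dots+E_{n-1,n})$), the stabilizer $\St_\eta(\bA')$ lies in $B(\bA')$. Moreover, by the Iwasawa decomposition we may take $g=ut\in B(\bA')$, as in the proofs of Propositions \ref{regnilfd} and \ref{subgreg_fd}. Hence $sg\in B(\bA')$, so $r(E)$ carries a canonical $B$-structure. Its induced $T$-bundle is given by the torus part of $sg$, namely $a\cdot t$, where $a$ is the torus component of $s$: trivial in the regular case, and $\diag(1,a,1,\dots,1)$ in the subregular case. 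Thus everything reduces to bounding degrees of these torus elements using the inequalities already extracted in Sections 5 and 6.

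For (1), write $t=\prod_i\alpha_i^\vee(t_i^{-1})$. Nonvanishing of the Hecke-finite piece forces two conditions. First, $\Ad(g^{-1})\eta\in\tilde\Xi_{G(\O')}$, which gives $t_i\alpha\in\N\inv\omega(\bO)$ for every simple root. Second, the character $\chi\neq 1$ is trivial on $tG(\bO)t^{-1}\cap U_h(\bA')$, which gives $(\prod_i t_i^{-a_i})\beta\in\omega(\bO)$ with $\beta\ne0$. Since $\deg\alpha=\deg\beta=2g-2$ (as $\deg\N=0$), taking degrees yields
\[
\deg t_i\ge-(2g-2),\qquad \sum_i a_i\deg t_i\le 2g-2,
\]
with all $a_i\ge1$ and $\sum_i a_i=c-1$. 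Together these give an upper bound of the form $a_j\deg t_j\le (2g-2)+(2g-2)\sum_{i\ne j}a_i$, hence $|\deg t_i|\le (2g-2)(c-1)$ after adjusting the sign conventions. The degree of $(\mathfrak g_\gamma)_{r(E)_T}$ is $\pm\sum_i\mathrm{coeff}_{\alpha_i}(\gamma)\deg t_i$, up to sign. For positive $\gamma$ this is sandwiched between the two extremes $\gamma=\alpha_i$ and $\gamma=\alpha_h$. The lower bound on each $\deg t_i$ and the weighted upper bound for $\alpha_h$ then combine to give $|\deg|\le(2g-2)(c-1)$: the lower side uses $\mathrm{coeff}\le a_i$, and the upper side uses the $\alpha_h$ inequality minus the positive contributions from the other roots. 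The main care is in this linear-programming step over the coefficients.

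For (2), we use $g=\diag(1,1,t_1^{-1},\dots,t_{n-2}^{-1})$ and an element $a$ satisfying the inequalities (6.4)–(6.6) from the proof of Proposition \ref{subgreg_fd}. The successive diagonal entries give line bundles $M_i$ whose degrees are, up to a common shift, $0$, $\deg a$, $-\deg t_1,\dots,-\deg t_{n-2}$, and $r(E)$ is an iterated extension of these because its $B$-structure filters it. From (6.4) we get $0\le \deg t_{i+1}-\deg t_i+(2g-2)$, so consecutive $t$-degrees differ by at most $2g-2$ in one direction. Relations (6.5) and (6.6) bound $\deg t_i$ on both sides: (6.6) gives $\deg t_i\ge -i(2g-2)$, and (6.5) gives $\deg t_i\le (n-i)(2g-2)$ roughly, using $\deg a_xa_y\ge0$ constraints. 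Condition (6.3) bounds $\deg a$ between $-(2g-2)$ and $\deg t_{n-2}+(2g-2)$. Taking the maximum pairwise difference should give $(2g-2)(2n-3)$.

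The main obstacle is the bookkeeping of exact constants, in particular confirming that the extreme pair, plausibly $a$ versus $t_1$ or $t_{n-2}$, attains exactly $2n-3$ and not something larger. It is also necessary to check that the $A$-conjugacy freedom $a_x\mapsto ca_x$ does not spoil the bounds, so one must fix $a_x$ up to a bounded-degree normalization first.
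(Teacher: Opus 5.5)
Part (1) is the paper's argument. Regularity of $t_i\alpha$ bounds each simple-root value below, $\chi\neq 1$ bounds the highest-root value above, and a general positive root $\gamma$ is handled via $\mathrm{ht}(\gamma)\le c-1$ and via $\gamma=\alpha_h-(\alpha_h-\gamma)$. As the paper does, you should also say that one first reduces to simple factors.

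Part (2) follows the paper's route but is not yet a proof. The decisive pairwise comparison is only asserted (``should give''), and the inequalities you feed into it are not consistent. No single sign convention makes them all hold. Your consecutive and two-sided bounds on $\deg t_i$ are reversed relative to what the defining conditions give. Your bound on $\deg a$ is reversed in $a$ but not in $t_{n-2}$. Taken literally, they do not give the statement. For $n=3$ and $g\ge 2$, the values $\deg t_1=2(2g-2)$ and $\deg a=3(2g-2)$ satisfy everything you list. Yet the entries $\deg a$ and $-\deg t_1$ of your list of $M_i$-degrees then differ by $5(2g-2)>3(2g-2)$. Since $n=3$ is exactly the case feeding Theorem~\ref{main theorem}(3), this bookkeeping cannot be waved through.

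The correct inputs come from the proof of Proposition~\ref{subgreg_fd}. Set $d=2g-2$ and $t_0=1$.
\begin{itemize}
\item The condition $t_it_{i+1}^{-1}\alpha\in\N^{-1}\omega(\bO)$ gives $\deg t_j-\deg t_i\le (j-i)d$ for $i<j$, hence $\deg t_r\le rd$.
\item The conditions $a^{-1}a_x,\ at_{n-2}a_y\in\omega(\bO)$ give $\deg a\le d$ and $\deg a+\deg t_{n-2}\ge -d$.
\item Chaining these gives $\deg t_r\ge -(n-r)d$ and $\deg a\ge -(n-1)d$.
\end{itemize}
Hence $|\deg a|$, $|\deg t_r|$ and $|\deg t_i-\deg t_j|$ are all at most $(n-1)d$. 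For the mixed pairs there are two cases.
\begin{itemize}
\item If the second diagonal entry is $a^{-1}$ (the paper's normalization), then $\deg t_r-\deg a\le \deg t_r+\deg t_{n-2}+d\le (n-1+r)d\le (2n-3)d$, with the worst case at $r=n-2$. Also $\deg a-\deg t_r\le (n-r+1)d\le nd$.
\item If it is $a$ (your normalization), then $\deg a+\deg t_r\le (r+1)d$. Also $-(\deg a+\deg t_r)\le \deg t_{n-2}-\deg t_r+d\le (n-1-r)d$.
\end{itemize}
In either case every pairwise difference is at most $(2n-3)(2g-2)$ for $n\ge 3$.

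Your concern about the $A$-conjugacy $a_x\mapsto ca_x$ is moot. Only $\deg\operatorname{div}(a_x)=\deg\operatorname{div}(a_y)=2g-2$ enters, and this does not depend on $c\in\bF^*$.
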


\begin{proof}
The proofs for the regular and subregular nilpotent orbits rely on the geometric interpretation (Proposition \ref{geom_sphvec}) of Propositions \ref{regnilfd} and \ref{subgreg_fd}.

We first prove (1), i.e., the regular nilpotent case. Note that the statement for $G$ holds if that holds for each of its simple factors, so it suffices to prove it for simple $G$. Now we assume in addition that $G$ is simple, and we will follow the notation in Proposition \ref{regnilfd}. We have that if $E$ is in the support of some function in $\S'_{\eta, \chi }^{G(\O')}$, where $\eta$ is a regular nilpotent element and the character $\chi:U_h(\bA')/U_h(\bF) \to U(1)$ is nontrivial, then $r(E)$ is an iterative extension of the line bundles $L_i$ (where $(L_1,...,L_n)$ is a $T$-bundle associated to $t=(t_1^{-1},...,t_n^{-1}) \in T(\bA')$ in the proposition). It was shown that $\gamma(t)\alpha \in \N\inv \omega(\bO)$ for all simple roots $\gamma$ which implies $\deg \gamma(t) \geq -2g+2$. But this implies that for any positive roots $\gamma'$ we have 
$$\deg \gamma'(t) \geq -(c-1)(2g-2),$$ and the equality holds only if $\gamma'$ is the highest root $\gamma_h$.  It was also shown that we have $\gamma_h(t)^{-1}\beta \in \omega(\bO) $, which implies that $\deg \gamma_h(t) \leq 2g-2$. Let $\gamma'$ be a positive root that is not the highest root. Then we have
 $$ \deg \gamma'(t) \leq (2g-2)+(c-2)(2g-2)=(c-1)(2g-2).$$

For the subregular nilpotent case, we follow the notation in Proposition \ref{subgreg_fd}. It is immediate by the computation in the proposition that if $E$ is in the support of a function $f$ in $\S'_{\tilde{H},\chi}^{G(\O')}$ (where $\chi_x$ and $\chi_y$ are nontrivial), then it has a $B$-structure whose induced $T$-bundle is $(M_1:=\mathcal{O}_{\overline{C}},M_2:= L_a,M_3:= L_1,..., M_n:=L_{n-2})$  (given by $(1,a^{-1},t_1^{-1},...,t_{n-2}^{-1}) \in T(\bA')$). The condition (6.4) implies that for for $1\leq i <j \leq n-2$ we have $$\deg L_{i} -\deg L_{j} \leq (j-i)(2g-2) \leq (n-3)(2g-2). $$ 

Note that (6.6) implies $ \deg L_r \geq -(n-r)(2g-2)$, and that (6.7) implies
$\deg L_r \leq r(2g-2)$. Using these inequalities we get 
$$ \deg L_i -\deg L_j \geq -(n-i+j)(2g-2) \geq -(2n-3)(2g-2)$$

By (6.5), $a^{-1}a_x$ and $aa_yt_{n-2}$ are regular, we have 
$$ -(n-1)(2g-2)=(2-n)(2g-2)-(2g-2)\leq \deg L_a \leq 2g-2.$$
Finally, we can easily see that $|\deg M_i -\deg M_j| \leq (2n-3)(2g-2)$ for any $i,j$.

\end{proof}

\begin{rem}
    By the main result of the orbit decomposition for $G=\PGL_2$ in \cite{BKP}, the space of spherical Hecke-finite (equivalently cuspidal) functions is contained in $\mathfrak{U}_1$ by the above arguments
\end{rem}

\section{ (2,...,2)-nilpotent orbits in $\mathfrak{pgl}_{2n}$}\label{misc}
In this section, we let $G=\PGL_{2n}$ and keep our usual notation that $\overline{C}$ is a smooth projective curve over a finite field $k$ and $C$ is a square-zero extension. As usual, we assume the characteristic of $k$ is greater than $2n$. We study automorphic functions in the nilpotent orbit $\S'_{\Omega_\eta}$ of type $(2,...,2)$. The main observation and calculations relate a subrepresentation of $\S'_{\Omega_\eta}$ to the automorphic representation of $\PGL_n(\A'_0)$, where $\A'_0$ is the ring of adeles of the split extension $\overline{C}\times_k k[\epsilon]$.

Throughout this section we will work with the following representative   

$$\eta= 
\begin{pmatrix}
0 &  \alpha I_{n\times n} \\
 & 0  

\end{pmatrix}
 \in \mathfrak{pgl}_{2n} \otimes \N \inv \omega(\bF),$$
where $\alpha \in \N\inv \omegac(\bF)$. We may and do assume that $\alpha$ is regular. Then a straightforward calculation shows 
\begin{prop}
The stabilizer $\St_\eta(\bA')$ consists of the matrices 
      \[
\begin{pmatrix}
A &  X \\
 & A  

\end{pmatrix},
\]
where $A\in \GL_n$ and $X \in \mathfrak{gl}_n$. We have a short exact sequence
$$ 1 \to U_{n,n}(\bA') \to \St_\eta(\bA') \to M'_{n,n}(\bA')/\bA'^* \to 1,$$
where $U_{n,n}(\bA')$ is the unipotent radical of the $(n,n)$- standard parabolic subgroup and $M'_{n,n}(\bA')$ is the subgroup of the Levi quotient consisting of elements $\diag(A,A)$. We also have $\St_\eta(\bA')$
isomorphic to  $\PGL_n(\bA')\ltimes \mathfrak{gl}_n(\bA')$, with one dimensional center $Z\simeq \bA'$, and the action of $\PGL_n(\bA')$ on the quotient $\mathfrak{gl}_n(\bA')/Z \simeq \mathfrak{pgl}_n(\bA')$ is given by the adjoint action.
        
\end{prop}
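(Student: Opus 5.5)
The computation is a direct matrix calculation in $\GL_{2n}$, followed by passage to $\PGL_{2n}$. I will write a general element of $\GL_{2n}(\bA')$ in $n\times n$ blocks as $g=\begin{pmatrix} A & X\\ Y & D\end{pmatrix}$. Since $\alpha$ is regular (in particular invertible in $\bF$), it suffices to compute the stabilizer of $N=\begin{pmatrix}0& I\\0&0\end{pmatrix}$ under conjugation. In $\PGL_{2n}$ the stabilizer condition is $gNg^{-1}=cN$ for a scalar $c$. I will first handle the equation $gN=cNg$. Comparing blocks, $gN=\begin{pmatrix}0&A\\0&Y\end{pmatrix}$ and $cNg=\begin{pmatrix}cY&cD\\0&0\end{pmatrix}$. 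These give $Y=0$ and $A=cD$. Invertibility of $g$ then forces $A$ and $D$ to be invertible.

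Since we work in $\PGL_{2n}$, I can rescale $g$ by a central scalar. I will argue that one may take $c=1$, so that $A=D$. In the projective setting a nontrivial $c$ is in principle allowed, and I will dispose of it by a determinant/eigenvalue argument. Because $N$ is nilpotent, the equation $gNg^{-1}=cN$ lets $c$ be any unit a priori. However, $\diag(cI,I)$ already realizes scaling by $c$, so the actual stabilizer in $\PGL_{2n}$ may include such elements. The statement's description, in which the diagonal blocks are equal, should correspond to the centralizer $\{gNg^{-1}=N\}$. The paper consistently uses $\St_\eta$ to mean the centralizer of the adjoint action on $\eta$ itself, as in the earlier subregular computation where the diagonal entries are normalized to $1$. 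I will follow that convention and check that it matches the image of the centralizer in $\GL_{2n}$ modulo scalars. With $c=1$, the conclusion is that $\St_\eta(\bA')$ consists of elements $\begin{pmatrix}A&X\\0&A\end{pmatrix}$ modulo scalars.

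The exact sequence follows from the map $\begin{pmatrix}A&X\\0&A\end{pmatrix}\mapsto \diag(A,A)$. This map is a homomorphism, because the block product gives $\begin{pmatrix}AA'&AX'+XA'\\0&AA'\end{pmatrix}$. Its kernel consists of the elements with $A$ scalar, which modulo the center is exactly $U_{n,n}(\bA')=\{\begin{pmatrix}I&X\\0&I\end{pmatrix}\}$ after normalizing $A=I$. Its image is $M'_{n,n}(\bA')$ modulo scalars, that is $M'_{n,n}(\bA')/\bA'^*\simeq \PGL_n(\bA')$.

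For the semidirect product, I will write each element as $\begin{pmatrix}A&0\\0&A\end{pmatrix}\begin{pmatrix}I&A^{-1}X\\0&I\end{pmatrix}$. The section $\diag(A,A)$ splits the sequence, and conjugation acts on $U_{n,n}\simeq\mathfrak{gl}_n$ by $X\mapsto AXA^{-1}$, which is the adjoint action and factors through $\PGL_n$. The center is determined in two parts. An element $(\bar A, X)$ is central only if $\bar A$ is central in $\PGL_n$, which forces $\bar A=1$, and $X$ must be fixed by all $\Ad(A)$, which forces $X$ to be scalar. Hence $Z\simeq\bA'$, embedded as $X=zI$, and the action on $\mathfrak{gl}_n/Z\simeq\mathfrak{pgl}_n$ is the induced adjoint action.

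The main obstacle is the scalar $c$, that is, making the stabilizer convention precise in $\PGL_{2n}$ versus $\GL_{2n}$. The rest is routine block-matrix algebra.
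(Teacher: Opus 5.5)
\paragraph{Verdict.} Your route is the same direct block-matrix computation the paper intends; the paper gives no detail beyond ``a straightforward calculation''. Your treatment of the exact sequence, the splitting by $\diag(A,A)$, the adjoint action on $U_{n,n}\simeq\mathfrak{gl}_n$, and the center $Z=\{X=zI\}$ is correct. The gap is in the first step, where you derive $A=D$.

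\paragraph{The scalar $c$.} The multiplicative scalar $c$ you introduce does not exist. Replacing a lift $g\in\GL_{2n}$ by $ug$ with $u$ scalar does not change $gNg^{-1}$. So $\Ad$ is a well-defined linear action of $\PGL_{2n}$ on $\mathfrak{pgl}_{2n}$, and $\Ad(\diag(cI,I))\eta=c\eta\neq\eta$ for $c\neq 1$. This has three consequences:
\begin{itemize}
\item Your claim that the actual stabilizer in $\PGL_{2n}$ ``may include such elements'' is false.
\item Rescaling $g$ cannot be used to arrange $c=1$.
\item No determinant or eigenvalue argument removes $c$; as you note yourself, $gNg^{-1}=cN$ is consistent for every $c$.
\end{itemize}
Settling this by appealing to a convention is not a proof. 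It is also not a convention: by the definition in Section 3, $\St_\eta$ is the stabilizer of $\eta$ itself (equivalently of the character $\psi_\eta$), not of the line through $\eta$.

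\paragraph{What to use instead.} The ambiguity you actually need to handle is additive, coming from $\mathfrak{pgl}_{2n}=\mathfrak{gl}_{2n}/(\text{scalars})$. For a lift $g$, the condition $\Ad(g)\eta=\eta$ reads $gNg^{-1}=N+\lambda I$ for some scalar $\lambda$. Here you only need $\alpha\neq 0$; in this paper ``regular'' means pole-free, not invertible. Taking traces gives $2n\lambda=0$, so $\lambda=0$ because $\operatorname{char}k>2n$. Alternatively, $gNg^{-1}$ is nilpotent, while $N+\lambda I$ is nilpotent only when $\lambda=0$. Hence $gN=Ng$, and your block comparison with $c=1$ gives $Y=0$ and $A=D$ directly. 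With this replacement, the rest of your argument goes through as written.
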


Note that we have $$\St_\eta(\bA')\simeq \PGL_{2n}(\bA')\ltimes (\bA'^{n^2}/Z) \simeq \PGL_n(\mathbb{A}_0),$$ where $\mathbb{A}_0$ is the adeles of $C_0 :=\bar{C} \times_k \Spec k[\epsilon]$ (we will also denote the total ring of fractions of $\mathcal{O}_{C_0}$ by $F_0$), and  where we identify $(U_{n,n}/Z)$ with $\mathfrak{pgl}_n$. We will use this identification for the rest of this section.

Let $\psi_Z$ be the central characters of $\St_\eta(\bA')$, then we have the following decomposition
$$ \S'(\St_\eta(\bF) \backslash \St_{\eta}(\bA')) \simeq \bigoplus_{\psi_Z}  \S'_{\psi_Z}(\St_\eta(\bF) \backslash \St_{\eta}(\bA'))  $$
$\St_\eta(\bA')$-representations, and each $\S'_{\psi_Z}(\St_\eta(\bF) \backslash \St_{\eta}(\bA'))$ is isomorphic to the automorphic representation $\S'(\PGL_n(F_0)\backslash \PGL_n(\mathbb{A}_0))$ twisted by the central character $\psi_Z$.

When $\psi_Z=1$, there is a decomposition of $\PGL_n(\mathbb{A}_0)$-representations
    $$ \S'(\PGL_n(F_0)\backslash \PGL_n(\mathbb{A}_0))\simeq \bigoplus_{\eta'\in \mathfrak{pgl}_n\otimes \omegac(\bF)/ \PGL_n(\bF)} \S'_{\Omega_{\eta'}}, $$
     by the orbit decomposition, where $ \S'_{\Omega_{\eta'}}:= \ind_{\St_{\eta'}}^{(U_{2,2}/Z)(\bA') \rtimes M'_{2,2}(\bA') } \S'_{\eta'}$ and $\S'_{\eta'}$ is the $\St_{\eta'}$-representation consisting of functions on $\St_{\eta'}$.... In general, for any $\psi_Z$, we let $V_{\eta', \psi_Z}:=\ind_{\St_\eta}^{G(\mathbb{A})} ((\S'_{\Omega_{\eta'}}, \psi_Z), \psi_{\eta})$, then we have a decomposition of $G(\A')$-representations $$\tS_{\eta, \psi_Z} \simeq \bigoplus_{\eta'} V_{\eta',\psi_Z} .$$ 
    When $\psi_Z=1$, we denote  $ V_{\eta'}:=V_{\eta',\psi_Z}$. When $\eta'=0$, we have 
    $$ V_{\eta',\psi_Z} = V_{\eta',\psi_Z, cusp} \oplus V_{\eta',\psi_Z, Eis},$$
    where the summands correspond to the cuspidal $\PGL_n(\bA')$-representation and Eisenstein series respectively.

\subsection{Hecke-infinite functions}
In this subsection, we assume $G=\PGL_4$, unless specified otherwise. We deduce Hecke-infiniteness of some subrepresentation of the $(2,2)$-nilpotent orbit by using results from the orbit decomposition of the automorphic representation of $\PGL_2(\A'_0)$.

If $\eta'= \alpha' E_{14} \in U_{2,2} (\omega(\bF))$, i.e. $\eta'$ is nonzero nilpotent, we further have 
    $$ V_{\eta',\psi_Z} = \bigoplus_\chi V_{\eta',\psi_Z, \chi},$$
    where $\chi$ runs through the characters $U'_h(\bA')/U'_h(\bF)$, where $U'_h:=\St_{\eta'}$.
\begin{prop} Let $K$ be a compact open subgroup of $G(\A')$. We have:
\begin{enumerate}

    \item Any nonzero element in each of the following $\Hk$-modules is Hecke infinite:
    
    \begin{itemize}
        \item $V_{\eta', \psi_Z}$ for split semisimple element $\eta'$
        \item $V_{\eta', \psi_Z , \chi}$ for nonzero nilpotent $\eta'$ and $\chi=1$
        \item $V_{\eta', \psi_Z, Eis}$ for $\eta'=0$ 
    \end{itemize}

    \item If $\psi_Z$ is trivial and $\eta'=0$, then the $G(\mathbb{A})$-representations $V_{\eta'}$ has no admissible subrepresentation.
\end{enumerate}

\end{prop}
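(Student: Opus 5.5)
The strategy is to reduce everything to non-admissibility statements for representations of the stabilizer $\St_\eta(\bA')\simeq \PGL_2(\mathbb{A}_0)$, and then lift them through the induction $\ind_{\St_\eta}^{G(\A')}$ with the recipe already used for the regular and subregular orbits. For each of the three pieces $W$ (one for split semisimple $\eta'$, one for nilpotent $\eta'$ with $\chi=1$, one for the Eisenstein part at $\eta'=0$), $W$ is a smooth subrepresentation of $\S'_{\psi_Z}(\St_\eta(\bF)\backslash \St_\eta(\bA'))$. The corresponding $G(\A')$-representation is $\ind_{\St_\eta}^{G(\A')}(W,\psi_\eta)$. By Proposition \ref{jacquet}, every subrepresentation $V_f$ generated by a $K$-invariant $f$ has the form $\ind(W'_f,\psi_\eta)$ with $W'_f\subset W$. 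Proposition \ref{sphvec} then expresses $V_f^K$ as a direct sum of the spaces $W_f'^{\,g\overline{K}g^{-1}\cap \St_\eta(\bA')}$. So by Proposition \ref{heckefinite}, it suffices to show that for every compact open $K'\subset \St_\eta(\bA')$ and every smooth $W'\subset W$, the space $W'^{K'}$ is either $0$ or infinite-dimensional; this is the same reduction as in Lemma \ref{nonadmissible stablizer}. Part (2) then follows from (1) together with Proposition \ref{heckeadmiss}, once the cuspidal piece at $\eta'=0$, $\psi_Z=1$ is also handled (see the last paragraph).

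For the first bullet, take $\eta'$ a nonzero split semisimple element of $\mathfrak{pgl}_2\otimes\omega(\bF)$. Its centralizer in $\PGL_2(\bA')$ is a split torus $\bA'^*$, so we are in the setting of Lemma \ref{gmquot}. Applied inside $\PGL_2(\mathbb{A}_0)$ with the central character $\psi_Z$ carried along, that lemma gives $\dim = 0$ or $\infty$. If $\psi_Z$ is nontrivial, I would observe that the proof of Lemma \ref{gmquot} only uses a central $\bA'^*$ in the stabilizer and the translates $h^m v$. The extra character $\psi_Z$ on $Z\simeq\bA'$ is fixed by conjugation by that torus (it acts trivially on the center). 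So the same translate argument goes through: a power $h^m$ preserves $K'$-invariance up to a character of finite order.

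For the second bullet, take $\eta'$ nonzero nilpotent and $\chi=1$. This is the $\PGL_2$ instance of Proposition \ref{regnilpresults}(2) for the regular nilpotent orbit, here of $\PGL_2$ over the split extension $C_0$. Concretely, I would use the argument of Lemma \ref{regnliplem} / Proposition \ref{nilpnonadm}, one level down. Conjugate by $\diag(1,t)$ with $t$ integral and of support disjoint from the relevant lattices; this gives infinitely many distinct double cosets whose stabilizer intersections agree modulo $U'_h(\bA')$, on which $\chi$ is trivial. For the third bullet, the Eisenstein part of the zero orbit of $\PGL_2(\mathbb{A}_0)$ is the $\PGL_2$ case treated in \cite{BKP}, where it is shown to contain no admissible subrepresentations. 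I would cite this (equivalently Conjecture \ref{conj1}(1) for $\PGL_2$ as proved there) to get the $0$-or-$\infty$ dichotomy for its $K'$-invariants.

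The main obstacle is (2), because $V_{0}$ also contains the cuspidal piece $V_{0,cusp}$, which is built from classical cuspidal automorphic forms on $\PGL_2(\bA')$; those are admissible as $\PGL_2(\bA')$-representations. The point is that admissibility must fail after induction to $G(\A')$. For $\psi_Z=1$ and $\eta'=0$, the whole unipotent $U_{n,n}(\bA')$ acts trivially, so $\St_\eta(\bA')$ acts through $\PGL_2(\bA')$. I would then exhibit infinitely many $g\in \St_\eta(\bA')\backslash G(\bA')/\overline{K}$ with the required properties, arguing as in Proposition \ref{nilpnonadm}(1). Take $g=\diag(I,tI)$ with $t$ integral and of disjoint support: these give distinct orbits $\Ad(g^{-1})\eta=t\eta$ in $\tilde{\Xi}_K$, and $g\overline{K}g^{-1}\cap\St_\eta(\bA')$ differs from $\overline{K}\cap\St_\eta(\bA')$ only in the $U_{n,n}$-component, which acts trivially. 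Hence each $V_0^K\neq 0$ is infinite-dimensional, and $V_0$ has no admissible subrepresentation. The delicate check is the equality of the stabilizer intersections modulo $U_{n,n}(\bA')$.
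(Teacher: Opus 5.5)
Your proposal is correct and follows essentially the paper's argument: part (1) transports the non-admissibility of the relevant $\PGL_2(\mathbb{A}_0)$-orbit pieces through the inductions up to $G(\A')$, and part (2) applies Proposition \ref{nilpnonadm}, using that $U_{2,2}(\bA')$ acts trivially when $\psi_Z=1$ and $\eta'=0$. Your test family $\diag(I,tI)\,g_0$ is slightly better than the paper's $\diag(1,1,t^{-1},1)$: it normalizes $\St_\eta(\bA')$ and acts trivially on its Levi quotient, so the stabilizer intersections agree modulo $U_{2,2}(\bA')$ exactly for every $K$ and $g_0$, and the check you call delicate is automatic; the paper's elements instead cut the Levi component down to an Iwahori-type subgroup, which yields only a containment of invariants (though that containment is exactly what Proposition \ref{nilpnonadm} needs).
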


\begin{proof}
  Since $\S'_{\Omega_{\eta'}}$ has no admissible subrepresentation except possibly when $\eta'$ is nilpotent or regular elliptic, (1) follows immediately.

Now we prove (2). First, we note that $ (U_{2,2}/Z)(\bA') \subset \St_{\eta'}=\St_\eta(\bA')=(U_{2,2}/Z)(\bA') \rtimes M'_{2,2}(\bA')$ acts trivially on $\S'_{\Omega_{\eta'}}$. Let $f\in \S'_{\Omega_{\eta'}} $ and $V_f$ be the $\St_{\eta'}$-representation generated by $f$, by Proposition \ref{nilpnonadm} we must construct an infinite family of $g\in \overline{K}\backslash G(\bA')/\St_\eta(\bF)$ such that $\Ad(g)\eta$ are in different orbits and $ V_f^{\overline{K} \cap \St_\eta(\bA')} = V_f^{g^{-1}\overline{K}g \cap \St_\eta(\bA')}$. For this, we consider $g=\diag(1,1,t^{-1},1)$, where $t\alpha \in \omega(\bO)$. Now, we compute that $g^{-1}\overline{K}g \cap \St_\eta(\bA')$ equals $\overline{K} \cap \St_\eta(\bA') $ modulo $ (U_{2,2}/Z)(\bA')$ . But that immediately implies 
$$ V_f^{\overline{K} \cap \St_\eta(\bA')} = V_f^{g^{-1}\overline{K}g \cap \St_\eta(\bA')},$$
because $(U_{2,2}/Z)(\bA')$ acts trivially on $V_f$. This finishes the proof for (2).

\end{proof}

\begin{prop}
Let $G=\PGL_{2n}$. When $\eta'=0$ and $\psi_Z=0$, all elements in $\kappa_nV_{\eta', \psi_Z}$ are Eisenstein series. 
\end{prop}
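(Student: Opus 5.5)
We follow the strategy of Proposition \ref{subreg cusp}(3): show that every function in $V_{\eta',\psi_Z}$ with $\eta'=0$ and $\psi_Z=1$ is left invariant under $U_{n,n}(\A')$, the full unipotent radical of the standard $(n,n)$-parabolic $P$ of $G=\PGL_{2n}$. Then push it down along $\St_\eta(F)\backslash G(\A')/K \to Q\bun_{P,K}$ and recognize the result as a pullback from $Q\bun_{M,K}$.

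First, invariance under $U_{n,n}(\bA')$. Under the identification $\St_\eta(\bA')\simeq \PGL_n(\mathbb{A}_0)$, we have $U_{n,n}(\bA')$ modulo $Z$ corresponding to $\mathfrak{pgl}_n(\bA')$, the kernel of $\PGL_n(\mathbb{A}_0)\to\PGL_n(\bA')$. The summand $\S'_{\Omega_{\eta'}}$ with $\eta'=0$ is the zero-orbit summand of the orbit decomposition for $\PGL_n(\mathbb{A}_0)$. As recalled from \cite{BKP}, this summand consists of functions invariant under that kernel, i.e. classical automorphic functions on $\PGL_n(\bA')$. Since $\psi_Z=1$, $Z(\bA')$ also acts trivially, so $U_{n,n}(\bA')$ acts trivially on $(\S'_{\Omega_0},\psi_Z)$.

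Next, invariance under $U_{n,n}(\N\bA')\subset \Na$. Here $\Na$ acts through $\psi_\eta$, and we need $\mathfrak{u}_{n,n}\subset\langle\eta\rangle^\perp$. This holds because $\eta\in\mathfrak{u}_{n,n}$ and the Killing-type pairing of $\mathfrak{u}_{n,n}$ with itself vanishes, since $\mathfrak{u}\cdot\mathfrak{u}=0$ in block form. Combining the two steps, the functions in $\ind_{\St_\eta}^{G(\A')}((\S'_{\Omega_0},1),\psi_\eta)$, viewed on $\St_\eta(F)\backslash G(\A')$, are left invariant under $U_{n,n}(\A')$, which is generated by $U_{n,n}(\N\A')$ and lifts of $U_{n,n}(\bA')$. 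Both of these subgroups lie in $\St_\eta$: the first because $\Na\subset\St_\eta$, the second because $U_{n,n}\subset\St_\eta$ by the block description. One still has to check that left translation by $u\in U_{n,n}(\A')$ on $\St_\eta(F)\backslash G(\A')$ matches the left $\St_\eta$-action in the induced model, which is immediate.

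Finally, we conclude exactly as in Proposition \ref{subreg cusp}(3). Note that $\St_\eta(F)\subset P(F)$, since its reduction lies in $P$ and $\Nf$ is contained in everything. Fix a compact open $K$ and use the diagram $\St_\eta(F)\backslash G(\A')/K\xrightarrow{\pi} Q\bun_{P,K}\xrightarrow{q}\bun_{G,K}$ together with $p:Q\bun_{P,K}\to Q\bun_{M,K}$. Then $\kappa_\eta f = q_*\pi_* f$. Since $f$ is left $U_{n,n}(\A')$-invariant, $\pi_*f$ is $U_{n,n}(\A')$-invariant on $P(F)\backslash G(\A')/K$, hence lies in $p^*\S'(Q\bun_{M,K})$, after checking that finite support descends. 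Therefore $\kappa_\eta f\in E_{P,K}$ images $\subset Eis$.

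The main obstacle is the first step: pinning down precisely that the zero-orbit summand for $\PGL_n(\mathbb{A}_0)$ (split extension) is exactly the kernel-invariant functions, and how $Z$ versus $\mathfrak{gl}_n$ interacts with the projectivization. If the identification $U_{n,n}/Z\simeq\mathfrak{pgl}_n$ leaves a discrepancy, I would redo the decomposition directly with characters of $U_{n,n}(\bA')/U_{n,n}(\bF)$.
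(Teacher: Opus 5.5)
Your first two steps are exactly the paper's, and the paper also finishes with the same pull--push diagram. Those steps are the triviality of the lifted $U_{n,n}(\bA')$ on the piece with $\eta'=0$, $\psi_Z=1$, and the triviality of $\psi_\eta$ on $1+\epsilon\,\mathfrak{u}(\N\bA')$ because $\mathfrak{u}$ is isotropic.

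The genuine gap is in how you justify the last step: the claim $\St_\eta(F)\subset P(F)$ is false. By definition $\St_\eta(F)=\St_\eta\cap G(F)$ contains all of $\Nf=1+\epsilon\,\mathfrak{g}(\N\bF)$, whereas $\Nf\cap P(F)=1+\epsilon\,\mathfrak{p}(\N\bF)$. So $\Nf$ is not contained in everything; you only have $\St_\eta(F)\subset P(F)\Nf$, the preimage of $P(\bF)$. Hence there is no projection $\pi\colon \St_\eta(F)\backslash G(\A')/K\to Q\bun_{P,K}$. The obvious substitute $h_0(x)=\sum_{\delta\in(\St_\eta(F)\cap P(F))\backslash P(F)}f(\delta x)$ is left $P(F)U(\A')$-invariant, but it does not work:
\begin{itemize}
\item $h_0$ is not finitely supported on $Q\bun_{M,K}$.
\item $E_Ph_0=\sum_{\gamma\in(\St_\eta(F)\cap P(F))\backslash G(F)}f(\gamma x)$ diverges. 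The fibres over $\St_\eta(F)\backslash G(F)$ are $\cong(\mathfrak{g}/\mathfrak{p})(\N\bF)$, which is infinite, and $f$ is constant along them because it is left $\Nf$-invariant.
\end{itemize}
So ``checking that finite support descends'' is exactly the step that fails, rather than the first step you flagged. The paper's own proof draws the same diagram and leaves this point implicit.

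What is missing is a transversal cut-off. Let $U^\flat=U(\A')\cdot(1+\epsilon\,\mathfrak{p}(\N\bA'))$ and $P_2(F)=\{p\in P(F):\bar p\in U(\bF)\}=\St_\eta(F)\cap U^\flat$. The construction rests on three facts:
\begin{itemize}
\item Since $\eta\in\mathfrak{u}=\mathfrak{p}^\perp$, $\psi_\eta$ is trivial on all of $\mathfrak{p}(\N\bA')$, not just on $\mathfrak{u}(\N\bA')$. Hence $f$ is left $U^\flat$-invariant.
\item $\St_\eta(F)$ normalizes $U^\flat$, because $U$ acts trivially on $\mathfrak{g}/\mathfrak{p}$.
\item $\Gamma'=\St_\eta(F)/P_2(F)$ acts with finite stabilizers on the discrete set $\mathcal{W}=U^\flat\backslash G(\A')/K$.
\end{itemize}
Define $\Phi$ on $\mathcal{W}$ to be $1/|\mathrm{Stab}_{\Gamma'}(w)|$ on a set of $\Gamma'$-orbit representatives and $0$ elsewhere, so that $\sum_{\gamma'\in\Gamma'}\Phi(\gamma'w)=1$. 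Put $h(x)=\sum_{\delta\in P_2(F)\backslash P(F)}(f\Phi)(\delta x)$. Then:
\begin{itemize}
\item $h$ is left $P(F)U(\A')$-invariant and right $K$-invariant.
\item $h$ is finitely supported, since the support of $f$ meets only finitely many $\Gamma'$-orbits in $\mathcal{W}$.
\item Unfolding $P_2(F)\backslash G(F)$ through $\St_\eta(F)\backslash G(F)$ gives $E_Ph=\kappa_\eta f$.
\end{itemize}
This places $\kappa_\eta f$ in $Eis$, which is what the statement requires.
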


\begin{proof}
    Let $P\subset \PGL_{2n}$ be the parabolic subgroup of type $(n,n)$ with unipotent radical $U=U_{n,n}$ with Lie algebra $\mathfrak{u}$ and Levi $M$. Since both $U(\bA')$ and $1+\epsilon \mathfrak{u}(\bA') \subset \Na$ act trivially on ${\S'}_{\eta', \psi}$ (because $\eta'$ and $\psi_Z$ are trivial and $\mathfrak{u} \subset \eta^\perp$), we have 
    $$V_{\eta', \psi_Z}=V_{\eta', \psi_Z}^{U(\mathbb{A})}. $$

    Let $K$ be a compact open subgroup of $G(\A')$. We consider the diagram
    \[
\begin{tikzcd}
 & \St_\eta(F) \backslash G(\A') /K  \arrow[d , "\pi"] \arrow[dr , "obv"] & \\
QBun_{M,K} & QBun_{P,K} \arrow[l , "p"] \arrow[r , "q" ] & Bun_{G,K} , 
\end{tikzcd}
\]
then we have $\pi_* (V_{\eta', \psi_Z})^K \subset p^*(\S'(QBun_M))$ because all functions in $V_{\eta', \psi_Z}$ are left $U(\A')$-invariant. This finishes the proof.
\end{proof}

\subsection{Hecke-finite functions} In the subsection, we work with $\eta'=0$ and $\psi_Z\neq 0$, and hence $\St_{\eta'}=\St_\eta(\bA')$. We also assume the genus $g$ of $\overline{C}$ at least 2.
\begin{prop} 
   Assume $\psi_Z$ is nontrivial and $\eta'=0$. Then we have
   
   \begin{enumerate}
    \item All elements in $V_{ \psi_Z, 0, cusp }^{G(\O')}$ are Hecke-finite, and $V_{ \psi, 0, cusp }^{G(\O')}$ is finite-dimensional.
    \item The support of any function in $\kappa_\eta V_{ \psi_Z, 0, cusp }^{G(\O')}$ is contained in the $r^{-1}(\mathfrak{U}_{3n+3})$.
   \end{enumerate}
   
\end{prop}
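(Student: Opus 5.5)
The plan is to follow the same template as Propositions \ref{regnilfd} and \ref{subgreg_fd}: use Proposition \ref{sphvec} (through Proposition \ref{nilpnonadm}) to write $V_{\psi_Z,0,cusp}^{G(\O')}$ as a direct sum, over double cosets $g\in \St_\eta(\bA')\backslash G(\bA')/G(\bO)$ with $\Ad(g^{-1})\cdot\eta$ integral, of spaces $W^{gG(\bO)g^{-1}\cap \St_\eta(\bA')}$. Here $W=(\S'_{cusp}(\PGL_n(\bF)\backslash\PGL_n(\bA')),\psi_Z)$ is the $\PGL_n(\mathbb{A}_0)$-representation. Part (1) then reduces to two claims: only finitely many $g$ contribute, and each summand is finite-dimensional. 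Since Hecke-finiteness of a spherical vector is equivalent to finite-dimensionality of $V_f^{G(\O')}$ (Proposition \ref{heckefinite}), and $V_f^{G(\O')}\subset V_{\psi_Z,0,cusp}^{G(\O')}$, finite-dimensionality of the whole space gives Hecke-finiteness of every element.

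\emph{Step 1 (reduction to the torus/Levi).} By the Iwasawa decomposition $G(\bA')=P(\bA')G(\bO)$ for the $(n,n)$ parabolic, take $g=u\cdot\diag(A_1,A_2)$. Since $U_{n,n}\subset\St_\eta$, the unipotent part can be absorbed modulo a finite ambiguity, as in Proposition \ref{regnilfd}. Moreover $\diag(A,A)\in\St_\eta$, so we may normalize $A_1=1$ and $g=\diag(1,D)$ with $D\in \GL_n(\bA')/\GL_n(\bO)$. Integrality of $\Ad(g^{-1})\eta=\alpha D$ then forces $\alpha D\in \mathfrak{gl}_n\otimes\N\inv\omega(\bO)$. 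Geometrically, this says the rank $2n$ bundle is an extension $0\to\mathcal{E}_1\to\mathcal{E}\to\mathcal{E}_2\to0$ with a nonzero twisted map $\mathcal{E}_2\to\mathcal{E}_1\otimes\N\inv\omega$, generically an isomorphism.

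\emph{Step 2 (the constraint from $\psi_Z$).} Next I compute $gG(\bO)g^{-1}\cap Z(\bA')$. The center $Z\simeq\bA'$ is the scalar part of $U_{n,n}$, and conjugating by $\diag(1,D)$ gives $\{1+xI: xD^{-1}\ \text{integral}\}$-type lattices, roughly $\det$-controlled by $\bigcap$ of $D^{-1}$ entries. The summand is nonzero only if $\psi_Z$ is trivial on this lattice. Writing $\beta\in\omega(\bF)$ for $\psi_Z\neq1$, this yields an upper bound on the size of $D$ ($\beta\cdot(\text{entries of }D)$ integral), while $\alpha D$ integral bounds $D$ from the other side. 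I also need to handle the cuspidal $\PGL_n$-part: the intersection $gG(\bO)g^{-1}\cap\St_\eta$ projects to a compact open subgroup of $\PGL_n(\bA')$, and cuspidal spherical-type invariants under a compact open subgroup are finite-dimensional by classical theory. The cuspidal support bound (Corollary \ref{type A cusp support}) also controls the $\PGL_n$-bundle part $D$ up to elementary divisor data.

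\emph{Step 3 (finiteness and support).} Combining the two-sided bounds with elementary divisor theory gives finitely many classes of $D$. This finishes (1); the finite ambiguity from $U_{n,n}$ is treated as in Proposition \ref{regnilfd} via finiteness of the Higgs-type data. For (2), Proposition \ref{geom_sphvec} shows the reduced bundle is an extension of $\mathcal{E}_2$ by $\mathcal{E}_1$ with $\mathcal{E}_1\cong\mathcal{E}_2\otimes(\text{bounded twist})$. Its Harder–Narasimhan gaps are then bounded via Lemma \ref{ext gap implies HN gap} by three contributions: the $\PGL_n$-cuspidal gap $(n-1)(2g-2)$ in each of $\mathcal{E}_1,\mathcal{E}_2$; the $\alpha$-constraint, worth at most $2g-2$ per slope; and the $\beta$-constraint, another $2g-2$. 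Summing these and translating to $\langle\lambda,\alpha_h\rangle$ should give a bound of the shape $(3n+3)(2g-2)$.

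\emph{Main obstacle.} The hard part is Step 2: computing $gG(\bO)g^{-1}\cap\St_\eta(\bA')$ for non-diagonal $D$ and showing the central-character condition really bounds all elementary divisors of $D$, not just $\det D$. I would first try reducing $D$ to diagonal form via $\diag(A,A)$-conjugation together with right $\GL_n(\bO)$, accepting the cuspidal $\PGL_n$-invariants as the remaining data. The constant $3n+3$ will likely need careful bookkeeping, including the assumption $g\ge2$.
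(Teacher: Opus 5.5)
\textbf{There is a genuine gap in Step 2, and it is the obstacle you flag at the end.} As set up, your argument cannot give finiteness of the contributing double cosets.

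First, non-diagonal $D$ is not the problem. Since $U_{n,n}\subset\St_\eta$ and $\diag(A,A)\in\St_\eta$, the map $\diag(A_1,A_2)\mapsto A_1^{-1}A_2$ identifies $\St_\eta(\bA')\backslash G(\bA')/G(\bO)$ with $\GL_n(\bO)\backslash\GL_n(\bA')/\GL_n(\bO)$. By the Cartan decomposition you may therefore take $g=\diag(I_n,D)$ with $D=\diag(a_1,\dots,a_n)$, and there is no residual ambiguity.

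For such $g$ one has $g^{-1}\left(\begin{smallmatrix} I & X\\ 0 & I\end{smallmatrix}\right)g=\left(\begin{smallmatrix} I & XD\\ 0 & I\end{smallmatrix}\right)$. Hence $gG(\bO)g^{-1}\cap Z(\bA')=\bigcap_i a_i^{-1}\bO$; note that it is $xD$, not $xD^{-1}$, that must be integral. Triviality of $\psi_Z$ on this lattice only bounds $\min_i v_x(a_i)$ from above at each place. The other $a_i$ can have arbitrarily large valuation while $\alpha D$ stays integral, so infinitely many double cosets pass your test. The cuspidal support bound cannot rescue this: $D$ is the relative position of the two blocks, not data attached to the $\PGL_n$ cusp form.

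\textbf{The missing idea} is to use all of $gG(\bO)g^{-1}\cap U_{n,n}(\bA')=\{X: XD\in\mathfrak{gl}_n(\bO)\}$, not only its central part. Because $\eta'=0$, the traceless part $\mathfrak{pgl}_n(\bA')$ of $U_{n,n}(\bA')$ acts trivially on $W$. So $\left(\begin{smallmatrix} I & X\\ 0 & I\end{smallmatrix}\right)$ acts by $\psi_Z(\operatorname{tr}X/n)$. The paper takes $X=\diag(a,0,\dots,0)$ with $a\in a_1^{-1}\bO$ and splits it as $\frac{a}{n}I$ plus a traceless matrix. It concludes that invariants vanish unless $a_i^{-1}\bO\subset\Lambda_{\psi_Z}$ for every $i$. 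Together with $a_i\alpha\in\N^{-1}\omega(\bO)$, this bounds each $a_i$ on both sides, leaving finitely many $D$; admissibility of cusp forms then gives (1).

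\textbf{Part (2).} Once you have these per-$a_i$ bounds, your plan is actually cleaner than the paper's.
\begin{enumerate}
\item The bounds give generically injective maps $\mathcal{E}_2\to\mathcal{E}_1\otimes\N^{-1}\omega$ and $\mathcal{E}_1\to\mathcal{E}_2\otimes\omega$.
\item Hence $\mu_{\max}(\mathcal{E}_2)\le\mu_{\max}(\mathcal{E}_1)+2g-2$ and $\mu_{\min}(\mathcal{E}_2)\ge\mu_{\min}(\mathcal{E}_1)-(2g-2)$.
\item Use the $\mu_{\max}/\mu_{\min}$ inequalities from the proof of Lemma \ref{ext gap implies HN gap}, not its statement, which needs semistable subquotients. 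They bound the total spread by $(n-1)(2g-2)+2(2g-2)=(n+1)(2g-2)$.
\end{enumerate}
So your three contributions sum to $n+1$, not $3n+3$, and only $\mathcal{E}_1$, not $\mathcal{E}_2$, carries a cuspidal bound. The paper's route through Lemmas \ref{semistable bound}(3) and \ref{cartan type} is lossier.

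One caveat is shared with the paper. The $\diag(A,A)$-part of $gG(\bO)g^{-1}\cap\St_\eta$ is $\GL_n(\bO)\cap D\GL_n(\bO)D^{-1}$, not all of $\PGL_n(\bO)$ as the paper writes. The relevant cusp forms therefore have that level, and applying the spherical gap bound $2g-2$ to $\mathcal{E}_1$ needs an extra argument.
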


\begin{proof}
    Let $f\in V_{ \psi_Z, 0, cusp }^{G(\O')}$ and $W=V_f$ be the $\St_{\eta}(\bA')$-representation generated by $f$. By Lemme 8.15 \cite{Laf}, $W_{\psi_Z,0, cusp}$ is admissible $\PGL_n(\bA')$-representation, and indeed for any compact open subgroup $K$ of  $\PGL_n(\bA')$, $W_{\psi_Z,0, cusp}^K$ is Hecke-finite. In view of Proposition \ref{sphvec}, it suffices to show that $W_{\psi_Z,0, cusp}^{gG(\mathcal{O)}g^{-1} \cap \St_\eta }=0$ for all but finitely many $g$. 

     As in the proof of \ref{regnilfd}, it suffices to consider $g=\diag(1,..., 1, a_1,...,a_n)$ where $a_i \in \bA'^*$ such that $a_i\alpha \in \N \inv \omega(\bO)$. Next, we check that $W_{\psi_Z,0, cusp}^{gG(\mathcal{O)}g^{-1} \cap \St_\eta }\neq 0$ only if  $a_i^{-1}\bO $ is contained in the conductor $\Lambda_{\psi_z}$ of the $\psi_z$ for all $i$. First, we compute that 
     $$gG(\bO) g^{-1} \cap \St_\eta = \Lambda \rtimes \PGL_n(\bO),$$
     where $\Lambda \subset \mathfrak{gl}_n(\bA')$ consists of the matrices $\left( \begin{smallmatrix}
I_n & A \\
   & I_n\\
\end{smallmatrix} \right)$, where the entries of the $i$-th columns of $A$ are in $a_i^{-1}\bO$.

We claim that $ W_{\eta',0, \psi_Z}^{gG(\bO) g^{-1} \cap \St_\eta }\neq 0$ only if $a_i^{-1}\bO$ are contained in the conductor of $\psi_Z$ for all $a_i$'s. Suppose $a_i^{-1}\bO $ is not contained in the conductor of $\psi$ for at least one $i$ (WLOG, say for $i=1$), then we consider $f\in W_{\eta',0, \psi_Z}^{gG(\bO) g^{-1} \cap \St_\eta}$, $a \in a_1^{-1}\bO \backslash \Lambda_{\psi_z}$ and $a' \in a_2^{-1}\bO$, then we have

\begin{align*}
f(g)=f(g \begin{pmatrix}
I_n & \operatorname{diag}(a,0,\dots,0)\\
0 & I_n
\end{pmatrix})
&= \psi_Z(\frac{a}{n}) f(g \begin{pmatrix}
I_n & \operatorname{diag}(\frac{(n-1)a}{n},-\frac{a}{n},\dots,-\frac{a}{n})\\
0 & I_n
\end{pmatrix})
\\ &=  \psi_Z(\frac{a}{n}) f(g),\end{align*}
the last equality holds because every function in $W_{\eta',0, \psi_Z}$ is right invariant under the subgroup $\mathfrak{pgl}_n(\bA') \subset \St_\eta(\bA')$.  This implies $f=0$. This finishes the proof for (1).

To prove (2), we note that the above calculation shows that if $E'\in \bun_G(C)$ is in the support of a function $f \in V_{ \psi_Z, 0, cusp }^{G(\O')}$, then $E:=r(E')\in \bun_G(\overline{C})$ is the bundle corresponding to 
$$g_E=u \diag(A,A)  \diag(h,h) \diag(I_n, \diag(a_1,...,a_n)u')  \in G(\bA') $$
for some $A \in \GL_n(\bA')$, $u \in U_{n,n}(\bA')$, $h\in \GL_n(\bO)$ and $u'$ is a unipotent upper triangular matrix with $\bA'$-coefficients in $\GL_n(\bA')$. Since $A$ is in the support of a cuspidal function on $\PGL_n(\bF)\backslash \PGL_n(\bA') / \PGL_n(\bO)$, by Lemma \ref{semistable bound}(3), $A$ can be written as $A= A' g$ where $g \in \GL_n(\bO)$ and $A'$ is upper triangular with diagonal $\diag(a_1',...,a_n')$ such that $|\deg a_i' -\deg a_j' |\leq (n+1)(2g-2)$. So we have 
$$ g_E = u \diag (A',A') \diag (gh,gh) \diag (I_n, \diag(a_1,...,a_n)u') $$
Since $(\alpha I_n) (\diag(a_1,...,a_n)u') = \alpha\diag(a_1,...,a_n)u' \in \mathfrak{gl}_n \otimes \N \inv \omega (\bO)$, we have $a_i\alpha \in \N \inv \omega (\bO)$ and hence $v_x(a_i) \geq -v_x(\alpha)$ for every $x\in \overline{C}$. On the other hand, since $a_i^{-1} \bO \subset \Lambda_{\psi_Z}$, we also have $v_x(a_i) \leq v_x(a_Z) $, where $a_Z \in \omega(\bF)$ corresponds to the character $\psi_Z$. Also, since all entries $u'_{ij}$ of $u'$ satisfy that $\alpha a_i u_{ij} \in \N \inv \omega(\bO)$, we have $pole_x(u_{ij}) \leq v_x(\alpha)+v_x(a_i)$ for every $x \in \overline{C}$.  Now by Lemma \ref{cartan type}, we have
$$ g_E = u \diag (A',A') \diag (I_n, \diag(a_1'',...,a_n'')\diag(a_1''',...,a_n''')y) \diag (gh,g'),$$
where $g' \in \GL_n(\bO)$, $a_i'', a_i''' \in \bA'^*$ with $|\deg a_i'' -\deg a_j''| \leq 4g-4$ and $|\deg a_i''' -\deg a_j'''| \leq 2n(2g-2)$. This shows that $E$ is an iterative extension of line bundles $M_i$ such that $|\deg M_i-\deg M_j| \leq (3n+3)(2g-2)$ for $1 \leq i <j \leq 2n$.

\end{proof}
\begin{defn}\label{pole}
   Let $a\in \bA'$.  For each $p \in \overline{C}$, we define the local pole of $a$ at $p$ by $pole_p(a)= \max(-v_p(a), 0)$, and the total pole $pole(a)= \sum_p pole_p(a)\deg(p)$.
\end{defn}

\begin{lem}\label{cartan type} Let $G=\GL_n$, $B(\bA')$ be the Borel subgroup of upper triangular matrices with unipotent radical $U(\bA')$ and quotient $T(\bA')$. For $t=(t_1,...,t_n) \in T(\bA')$, we set $m(t):= \max \lbrace \deg t_i -\deg t_j \rbrace$.
    \begin{enumerate}
        \item Let $t \in T(\bA')$ and $g \in G(\bO)$, then there exists $t' \in T(\bA')$, $u\in U(\bA')$ and $g' \in G(\bO)$ such that $gt=t'ug'$ and $$m(t')\leq \sum_{x \in \overline{C}} (\max_i(v_x(t_i))-\min_i(v_x(t_i)) )\deg(x).$$
        \item Let $u \in U(\bA')$ and $g \in G(\bO)$. Let the $(i,j)$-th entry of $u$ be $u_{ij}$. Then there exists $t \in T(\bA')$, $u' \in U(\bA')$ and $g' \in G(\bO)$ such that $gu=tu'g'$ and 
     $$m(t) \leq n \sum_{x\in \overline{C}} \max_{i<j} (pole_x(u_{ij})) \deg(x).$$

    \end{enumerate}
\end{lem}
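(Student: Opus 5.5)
Both parts are local statements, so I would argue place by place and then sum over $x\in\overline{C}$. At each place $x$ the ring $\bO_x$ is a DVR. The claim is then a quantitative Iwasawa decomposition in $G(\bF_x)$ that controls the valuations of the new diagonal entries. Multiplying by elements of $T(\bO_x)$ does not change valuations. So it suffices to produce, at each $x$, a decomposition $g_x t_x = t'_x u_x g'_x$ such that the spread $\max_i v_x(t'_i)-\min_i v_x(t'_i)$ is at most the corresponding local quantity. At the finitely many places where $g_x\in G(\bO_x)$ and the input is integral-diagonal, I take $t'_x=t_x$ and $u_x=1$. The global $t'$ is then the product of the local pieces. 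Finally, $\deg t'_i-\deg t'_j=\sum_x (v_x(t'_i)-v_x(t'_j))\deg x$ up to sign convention, so the local spread bounds add up to the stated bound on $m(t')$.

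For (1), I would read $gt$ as a lattice $L=gt\bO_x^n$. An Iwasawa decomposition $L=t'u\bO_x^n$ amounts to choosing the flag $L\cap \bF_x^{i}$ relative to the standard flag $\bF_x^1\subset\cdots\subset\bF_x^n$. Then $v_x(t'_i)$ is the valuation of the generator of the rank-one quotient $(L\cap\bF_x^i)/(L\cap\bF_x^{i-1})$. Now $t\bO_x^n$ is sandwiched:
\[
\varpi^{M}\bO_x^n\subset t\bO_x^n\subset\varpi^{m}\bO_x^n,
\]
where $m=\min_i v_x(t_i)$ and $M=\max_i v_x(t_i)$. Since $g\in G(\bO_x)$ preserves $\bO_x^n$, the same sandwich holds for $L$. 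Each graded quotient of $L$ along the standard flag then lies between the corresponding quotients of $\varpi^M\bO_x^n$ and $\varpi^m\bO_x^n$. This forces $m\le v_x(t'_i)\le M$ for every $i$, which gives the local spread bound $M-m$.

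For (2), I would use the same lattice argument with $L=gu\bO_x^n$. Set $p=\max_{i<j} pole_x(u_{ij})$. I claim
\[
\varpi^{(n-1)p}\bO_x^n\subset u\bO_x^n\subset \varpi^{-p}\bO_x^n.
\]
The right inclusion is immediate from the entries of $u$. For the left one, $u^{-1}=\sum_{k<n}(-N)^k$ with $N=u-1$ strictly upper triangular, so the entries of $u^{-1}$ have poles at most $(n-1)p$. As in (1), left multiplication by $g$ preserves this sandwich. The diagonal valuations of the Iwasawa torus part therefore lie in an interval of length at most $np$, which gives the factor $n$ in the statement. Summing over $x$ gives the global bound.

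The main obstacle is the sandwich-to-valuation step: showing that $\varpi^{a}\bO^n\subset L\subset \varpi^{b}\bO^n$ bounds each Iwasawa diagonal valuation between $a$ and $b$. I would prove this by induction on $n$. The last coordinate gives $v(t'_n)$, namely the image of $L$ under projection to the last coordinate, which is squeezed between $\varpi^a\bO$ and $\varpi^b\bO$. Then I pass to $L\cap\bF^{n-1}$, which satisfies the same sandwich.
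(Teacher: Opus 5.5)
Your proposal is correct and is essentially the paper's argument in lattice language. The paper notes that the fractional ideal generated by the matrix entries is unchanged under multiplication by $G(\bO)$ on either side, so the diagonal entries $t'_i$ of $t'u$ (resp.\ $(t'_i)^{-1}$ of $u^{-1}t'^{-1}$) have valuation at least $\min_i v(t_i)$ (resp.\ $-\max_i v(t_i)$); this is exactly your sandwich $\varpi^{M}\bO^n\subset L\subset\varpi^{m}\bO^n$ read off along the standard flag, and for (2) the paper gets the same bounds $-p\le v(t_i)\le (n-1)p$ from the poles of $u$ and $u^{-1}$. The only slip is in your globalization: the places where you may take $u_x=1$ are all but finitely many (those where the input is integral), not finitely many.
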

\begin{proof} Both parts are reduced to local problems, and the existence, except the condition on $m(t')$ (resp., on $m(t)$) in (1) (resp., in (2)), is due to the Iwasawa decomposition. It remains to show the inequalities $m(t')\leq \sum_{x \in \overline{C}} (\max_i(v_x(t_i))-\min_i(v_x(t_i)) )\deg(x)$ in (1) and  $m(t) \leq n \sum_{x\in \overline{C}} \max_{i<j} (pole_x(u_{ij})) \deg(x)$ in (2). Note that for the first inequality, it suffices to prove $\max_i v_x(t_i') - \min_i v_x(t_i')\leq \max_i v_x(t_i) - \min_i v_x(t_i)$ for all $x \in \overline{C}$, and for the second inequality, it suffices to prove $\max_i v_x(t_i) - \min_i v_x(t_i) \leq n\max_{i<j} pole_x(u_{ij})$ for all $x\in \overline{C}$. We now work locally at a point on $\overline{C}$ for the rest of the proof, and denote by $v$ the corresponding valuation.

For (1), we write $t'=(t_1',...,t_n')$. Note that the fractional ideal generated by the entries of $t$ is the same as the fractional ideal generated by the entries of $t'u$, and this implies the $v(t_i') \geq \min \lbrace v(t_i)\rbrace$. Applying the same argument to $t^{-1}$ and $(t'u)^{-1}$, we obtain $-v(t_i') \geq - \max\lbrace v(t_i)\rbrace $ and hence  $v(t_i') \leq  \max\lbrace v(t_i)\rbrace $. These imply 
$$\max_i v(t_i') - \min_i v(t_i')\leq \max_i v(t_i) - \min_i v(t_i).$$

For (2), we denote by $\max(u)$ the maximal pole of all entries of $u$, i.e. $\max(u)=\max_{i<j} pole(u_{ij})$. By considering the fractional ideal generated by the entries of $u$ and that by entries of $tu'$ we have $v(t_i) \geq -\max(u)$. On the other hand, by considering $u^{-1}$ and $(tu')^{-1}$, we have $-v(t_i) \geq -\max(u^{-1})$ and hence $v(t_i) \leq \max(u^{-1})\leq (n-1)\max(u)$, where the last inequality can be seen by explicitly looking at the entries of $u^{-1}$.
    
\end{proof}

\begin{lem}\label{semistable bound}
    Let $\E$ be a  vector bundle of rank $n$ and slope $\mu$ on a smooth projective curve over a finite field. 
    \begin{enumerate}
        \item There exists a line subbundle $L$ of $\E$ whose degree is at least $\mu-g$. If $\E$ is semistable rank 2 bundle, then $\E$ is an extension of a line bundle of degree at most $\lfloor \mu \rfloor +g$ by a line bundle of degree at least $\lceil \mu \rceil -g$.
        \item If $\E$ is semistable, then $\E$ is an iterative extension of line bundles $L_i$ such that $ \mu-g\leq \deg L_i  \leq \mu+b$ and $b\leq (1+\frac{n-1}{e})g \leq \frac{n+1}{2}(2g-2)$.
        \item Let $(\mu_1,...,\mu_n)$ be the slope sequence of $\E$. If $\mu_i-\mu_{i+1}\leq 2g-2$ for all $i$, then $\E$ is an iterative extension of line bundles $L_i$ such that  $$\max_i{ \deg L_i} - \min_i{ \deg L_i }\leq (n+1)(2g-2).$$
    \end{enumerate}

\end{lem}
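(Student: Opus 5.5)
Part (1) comes first, since (2) and (3) build on it. I would use Riemann--Roch. Take any line bundle $M$ of degree $d$ with $d \geq -\mu + g$; more precisely, choose $d$ minimal with $\chi(\E\otimes M)=n(\mu+d+1-g)>0$. Then $h^0(\E\otimes M)>0$, so there is a nonzero map $M^{-1}\to\E$. Its saturation is a line subbundle $L$ of degree at least $-d$. Taking $d=\lceil g-1-\mu\rceil+1$ or so gives $\deg L\geq \mu-g$; the paper does not need the sharp integer bookkeeping. When $\E$ is semistable of rank $2$, semistability gives $\deg L\leq \mu$, so $\deg L\leq\lfloor\mu\rfloor$. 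Since $\deg L$ is an integer $\geq \mu-g$, we get $\deg L\geq\lceil\mu\rceil-g$. The quotient $\E/L$ has degree $2\mu-\deg L\leq \lfloor\mu\rfloor+g$, up to the same rounding.

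For (2) I would induct on rank. By (1), choose a line subbundle $L_1\subset\E$ with $\mu-g\leq\deg L_1\leq\mu$; the upper bound is semistability. The quotient $\E/L_1$ has slope
\[\mu'=\frac{n\mu-\deg L_1}{n-1}\in\Bigl[\mu,\ \mu+\frac{g}{n-1}\Bigr],\]
but it need not be semistable. This is the main obstacle. Instead of recursing on $\E/L_1$ directly, I would take its Harder--Narasimhan filtration. Every HN slope of $\E/L_1$ is at least $\mu_{min}(\E)\geq \mu$, because $\E$ surjects onto it. The total degree is bounded above, so each slope is at most $\mu+g$-ish: the maximal destabilizing subsheaf $Q_1$ has $\deg Q_1\leq n\mu-\deg L_1-(\text{rank})\cdot\mu$ by the lower bound on the remaining pieces. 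I would then apply the rank-one-lower induction to each semistable HN piece. The lower bounds $\mu_i-g\geq\mu-g$ survive, and the upper bounds accumulate to $\mu+b$ with $b$ of size $(1+\frac{n-1}{e})g$, where $e$ is whatever normalizing constant arises in the accounting. I would then check the crude inequality $b\leq\frac{n+1}{2}(2g-2)$ using $g\geq 2$, which is the standing assumption in that subsection. If this accounting is too lossy, the fallback is to use the lower bound on every sub-line-bundle step together with the fixed total degree $n\mu$. If all $n$ graded degrees are at least $\mu-g$ and they sum to $n\mu$, then each one is at most $\mu+(n-1)g$. That is a weaker but clean bound, and I would then adjust the constant.

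For (3), take the HN filtration of $\E$ with semistable graded pieces of slopes $\mu_1>\dots>\mu_k$. Refine each piece using (2) into line bundles with degrees in $[\mu_i-g,\mu_i+b]$. The extreme degrees then differ by at most $(\mu_1-\mu_k)+g+b$. The hypothesis on consecutive slopes would be used to bound $\mu_1-\mu_k$ by the number of gaps times $2g-2$. I expect the delicate point here to be bookkeeping: the gaps are indexed by the $n$-entry slope sequence, so only the distinct-slope jumps count. The aim is to combine this with $g+b\leq$ the remaining allowance, so that the total is at most $(n+1)(2g-2)$. If the constants do not close, I would sharpen (2) so that the bound on an HN piece of rank $r$ depends on $r$, not $n$, and sum over the pieces.
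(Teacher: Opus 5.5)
\textbf{Verdict: genuine gap in part (2).} Parts (1) and (3) are essentially the paper's arguments.

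In (1) you still need to justify that a line bundle of the required degree exists. Over a finite field this needs a degree-one line bundle, which the paper gets from the Weil bounds.

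In (3) no sharpening of (2) is needed, because (2) holds in every rank. Apply it to an HN piece of rank $r_s\le n-k+1$ and use only the $k-1$ slope gaps. This gives
$(k-1)(2g-2)+\frac{n-k+2}{2}(2g-2)+g\le\frac{n+k+2}{2}(2g-2)\le(n+1)(2g-2)$, using $g\le 2g-2$.

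The gap is in (2). There $e$ is Euler's number, not a constant to be fixed later, and the accounting you defer is the whole content of the bound. Your estimate controls the HN slopes of $\E/L_1$ only through $\mu(Q_1)\le\mu+g/r_1$, i.e.\ by roughly $\mu+g$. Fed into the induction, this gives only $b(n)\le g+b(n-1)$, so $b\le(n-1)g$, which is exactly your fallback. That exceeds $(1+\frac{n-1}{e})g$ for every $n\ge3$, and exceeds $\frac{n+1}{2}(2g-2)$ once $n>2g-1$. Keeping the factor $1/r_1$ does not help: the case $r_1=1$, $r_2=n-2$ still forces growth of order $ng/2$. So ``adjusting the constant'' proves a weaker statement than the one claimed.

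The missing idea is that the successive quotients never need to be semistable. The paper applies the first half of (1), which holds for arbitrary bundles, to $\E_r:=\E_{r-1}/L_r$. It uses semistability of $\E$ only through $\deg\E_r\ge(n-r)\mu$. Set $p_r:=r\mu-\sum_{i\le r}\deg L_i\ge 0$. Then $\deg L_{r+1}\ge\mu(\E_r)-g=\mu+\frac{p_r}{n-r}-g\ge\mu-g$, which gives $p_{r+1}\le\frac{n-r-1}{n-r}p_r+g$. Hence $p_r\le g(n-r)\sum_{j=n-r}^{n-1}\frac{1}{j}\le(1+\frac{n-1}{e})g$ by the logarithmic bound on harmonic sums, and $\deg L_r=\mu+p_{r-1}-p_r\le\mu+p_{r-1}$.

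Your HN route can also be repaired, but the slope of each piece must be bounded in terms of its own position in the filtration, not in terms of $Q_1$. Let $F_j$ be the $j$-th step of the HN filtration of $\E/L_1$, of rank $s=r_1+\dots+r_j$. Its preimage in $\E$ has rank $s+1$, so semistability of $\E$ and $\deg L_1\ge\mu-g$ give $\mu(Q_j)\le\mu(F_j)\le\mu+g/s\le\mu+g/r_j$. This yields the recursion $b(n)\le\max_{r<n}\bigl(g/r+b(r)\bigr)$. Hence $b(n)\le g\sum_{j=1}^{n-1}\frac{1}{j}\le(1+\ln(n-1))g\le(1+\frac{n-1}{e})g$, since $\ln x\le x/e$. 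This is even slightly stronger than the paper's bound.
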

\begin{proof}
(1) Let $L'$ be a line bundle of degree $\lceil\mu \rceil-g$. We can construct $L'$ by choosing a degree 1 line bundle bundle $M$ and take $L'=M^{\otimes \lceil \mu \rceil-g}$. Such a line bundle $M$ exists because the Weil bounds of $\overline{C}$ implies that for sufficiently large $N$ (relative to $g$), $\overline{C}$ has a point of degree $N$, and we can take $M= \mathcal{O}(p-p')$ where $p$ (resp. $p'$) is a point in $\overline{C}$ of degree $N+1$ (resp. $N$). By Riemann-Roch we have 
$$ h^0(L'^\vee \otimes \E) \geq n (\mu-\deg L') - n (g-1)>0$$
which is nonzero by the assumption on $\deg L'$. This implies $\Hom (L',\E) \neq 0$. Replacing $L'$ by its saturation $L$ in $\E$ if necessary, we obtain a line subbundle $L$ of $\E$ satisfying $\deg L \geq \deg L'=\lceil\mu \rceil-g$. If $\E$ is a rank 2 bundle, then the quotient line bundle $\E/L$ has degree at most $\lceil\mu\rceil -g$. This finishes the proof of (1).

To prove (2), starting with $\E_0:=\E$, for $0\leq r \leq n-2$, we apply (1) successively to choose a line bundle $L_{r+1} \subset \E_r$ satisfying that 
$\deg L_{r+1} \geq \mu(\E_{r})-g$, and set $\E_{r+1}=\E_r/L_{r+1}$, and finally, set $L_n=\E_{n-1}$.

Let $d=\deg \E$ and $l_i= \deg L_i$. Since  $\E_r$ is a rank $n-r$ bundle with $\deg \E_r = d - \sum_{i=1}^{r} l_i$, we have the inequalities
\begin{align}
   l_{r+1} \geq  \frac{d-\sum_{i=1}^{r} l_i }{n-r}-g 
\end{align}

On the other hand, since $\E_r$'s are quotient of the semistable bundle $\E$, we have
\begin{align}
    d - \sum_{i=1}^{r} l_i \geq (n-r)\mu,
\end{align} or equivalently,
\begin{align}
    p_r:= r\mu-\sum_{i=1}^r l_i \geq 0.
\end{align}
We also have $p_0=p_n=0$. Note that (8.8) and (8.9) give a lower bound for each $l_{r+1}$, namely, 
$$l_{r+1} \geq  \mu+\frac{p_r}{n-r}-g \geq \mu-g.$$ It remains to find an upper bound for each $l_r$. Since $l_r=\mu+p_{r-1} - p_r$, and $p_r \geq 0$, it suffices find upper bounds for $p_r$'s. Note that (8.8) implies that
\begin{align} p_{r+1} \leq p_r - \frac{r\mu -\sum_{i=1}^r l_i }{n-r} +g \leq \frac{n-r-1}{n-r} p_r +g \end{align}
Solving these inequalities iteratively (recall $p_0=0$), for $0 \leq r \leq n-1$, we obtain

\begin{align}
    p_r \leq g(n-r)\sum_{j=n-r}^{n-1}\frac{1}{j}
\end{align}
By considering the usual logarithmic upper bound of harmonic sum, one can directly check that $(n-r)\sum_{j=n-r}^{n-1}\frac{1}{j} \leq 1+\frac{n-1}{e}$, and hence $$l_r \leq \mu+ (1+\frac{n-1}{e})g$$ Finally, it remains to check that for $g\geq 2$ we have 
$$ (1+\frac{n-1}{e})g\leq  \frac{n+1}{2}(2g-2). $$

We now prove (3). Let $0=\E_0\subset \E_1\subset\cdots\subset \E_k=\E$
be the Harder--Narasimhan filtration of $E$, and let
$$
Q_s:=\E_s/\E_{s-1},\qquad 1\leq s\leq k,
$$
be the semistable quotients. Write $r_s=\operatorname{rk}(Q_s)$ and $\lambda_s=\mu(Q_s)$, so that $\lambda_1>\lambda_2>\cdots > \lambda_k$ and $\lambda_s-\lambda_{s+1}\leq 2g-2$
for $1\leq s<k$, and hence we have
$$
\lambda_1-\lambda_k\leq (k-1)(2g-2).
$$

Applying (2) to each $Q_s$, we can write $Q_s$ as an iterative extension of line bundles whose degrees lie between $\lambda_s-g$ and $\lambda_s+\frac{r_s+1}{2}(2g-2)$. It follows that $E$ is an iterative extension of all these line bundles.

Since $r_1+\cdots+r_k=n$ and $r_s\geq 1$, we have $r_s\leq n-k+1$ for every $s$. Therefore, every line bundle occurring in the above iterative extension has degree at most $\lambda_1+\frac{n-k+2}{2}(2g-2)$ and at least $\lambda_k-g$.
Thus we have 
$$
\begin{aligned}
\max_i{\deg L_i}-\min_i{\deg L_i}
&\leq
\lambda_1-\lambda_k
+\frac{n-k+2}{2}(2g-2)+g\
\\&\leq
(k-1)(2g-2)
+\frac{n-k+2}{2}(2g-2)+g.
\end{aligned}
$$
Since $g\geq 2$, we have $g\leq 2g-2$. Therefore, we obtain
$$\begin{aligned}
\max_i{\deg L_i}-\min_i{\deg L_i}
&\leq
\frac{n+k+2}{2}(2g-2)
&\leq (n+1)(2g-2)
\end{aligned}$$

\end{proof}

\bibliographystyle{alpha}
\bibliography{references}

\end{document}